\newcommand{\A}{\mathbb{A}}
\newcommand{\C}{\mathbb{C}}
\newcommand{\K}{\mathbb{K}}
\renewcommand{\P}{\mathbb{P}}
\newcommand{\Q}{\mathbb{Q}}
\newcommand{\R}{\mathbb{R}}
\newcommand{\Z}{\mathbb{Z}}
\newcommand{\mmD}{\mathcal{D}}
\newcommand{\mmK}{\mathcal{K}}
\newcommand{\mmP}{\mathcal{P}}
\newcommand{\mmZ}{\mathcal{Z}}
\newcommand{\mcH}{\mathcal{H}}
\newcommand{\whZ}{\widehat{Z}}
\newcommand{\whmZ}{\widehat{\mmZ}}
\DeclareMathOperator*{\Div}{div}
\DeclareMathOperator*{\Hom}{Hom}\DeclareMathOperator*{\im}{im}
\DeclareMathOperator*{\Id}{Id}
\DeclareMathOperator{\amap}{a}
\numberwithin{equation}{section}
\newtheorem{theo}[equation]{Theorem}
\newtheorem{prop}[equation]{Proposition}
\newtheorem{cor}[equation]{Corollary}
\newtheorem{lema}[equation]{Lemma}
\theoremstyle{definition}
\newtheorem{df}[equation]{Definition}
\newtheorem{obs}[equation]{Remark}
\newenvironment{enumerate*}[1][{}]{\begin{itemize}}{\end{itemize}}
\title{Higher arithmetic Chow groups}
\author{J. I. Burgos Gil}
\address{Instituto de Ciencias Matem\'aticas CSIC, Spain}
\email{jiburgosgil@gmail.com}
\author{E .Feliu}
\address{Facultat de Matem\`atiques, Universitat de Barcelona, Spain}
\email{efeliu@ub.edu}
\date{\today}
\thanks{This work was partially supported by the project MTM2006-14234-C02-01}
\newif\ifprivate
\begin{document}

\begin{abstract}
We give a new construction of \emph{higher arithmetic Chow groups}
for quasi-projective arithmetic varieties over a field.
Our definition agrees with the higher arithmetic Chow groups defined
by Goncharov for projective arithmetic varieties over a field.
These groups are the analogue, in the Arakelov context, of the higher
algebraic Chow groups defined by Bloch. The degree zero group agrees
with the arithmetic Chow groups of Burgos. Our new construction is
shown to be a contravariant functor and is endowed with a product
structure, which is commutative and associative.

\bigskip \noindent \tiny{AMS 2000 Mathematics subject classification: 14G40, 14C15, 14F43 }
\end{abstract}

\maketitle

\section*{Introduction}
Let $X$ be an arithmetic variety, i.e. a regular scheme which is
flat and quasi-projective over an arithmetic ring. In
\cite{GilletSouleIHES}, Gillet and Soul{\'e} defined the
\emph{arithmetic Chow groups} of $X$, denoted as
$\widehat{CH}^p(X)$, whose elements are classes of pairs
$(Z,g_Z)$, with $Z$ a codimension $p$ subvariety of $X$ and $g_Z$
a Green current for $Z$. Later, in \cite{Burgos2}, the first author
gave an
alternative definition for the arithmetic Chow groups, involving
the Deligne complex of differential forms with logarithmic
singularities along infinity, $\mmD^{*}_{\log}(X,p)$, that
computes real Deligne-Beilinson cohomology,
$H_{\mmD}^{*}(X,\R(p))$. When $X$ is proper, the two definitions are
related by a natural isomorphism that takes into account the different
normalization of both definitions. In this paper, we follow the latter
definition.

It is shown in \cite{Burgos2} that the following properties are
satisfied by $\widehat{CH}^{p}(X)$:
\begin{itemize}
\item The groups $\widehat{CH}^p(X)$ fit into an exact sequence:
\begin{equation*}\label{introchowseq}\tag{1}
CH^{p-1,p}(X) \xrightarrow{\rho} \mmD^{2p-1}_{\log}(X,p)/\im d_{\mmD}
\xrightarrow{a} \widehat{CH}^p(X) \xrightarrow{\zeta} CH^p(X)
\rightarrow 0,
\end{equation*}
 where $CH^{p-1,p}(X)$ is the term $E_2^{p-1,-p}(X)$ of the Quillen
spectral sequence (see \cite{Quillen}, \S 7) and  $\rho$ is the
Beilinson regulator.
\item There is a
pairing
$$\widehat{CH}^{p}(X) \otimes \widehat{CH}^{q}(X)  \xrightarrow{{\cdot}}
 \widehat{CH}^{p+q}(X)_{\Q}$$
turning $\bigoplus_{p\geq 0}\widehat{CH}^p(X)_{\Q}$ into a
commutative graded unitary $\Q$-algebra. \item If $f:X\rightarrow Y$
is a morphism, there exists a
pull-back morphism
$$f^*: \widehat{CH}^p(Y) \rightarrow \widehat{CH}^{p}(X).$$
\end{itemize}

Assume that $X$ is proper and defined over an arithmetic field.
Then the arithmetic Chow groups have been extended to higher
degrees by Goncharov, in \cite{Goncharov}. These groups are
denoted by $\widehat{CH}^p(X,n)$ and are constructed in order to
extend the exact sequence \eqref{introchowseq} to a long exact
sequence of the form
\begin{align*}
\cdots \rightarrow \widehat{CH}^{p}(X,n) \xrightarrow{\zeta}
  CH^{p}(X,n) \xrightarrow{\rho}  H_{\mmD}^{2p-n}(X,\R(p))
 \xrightarrow{a} \widehat{CH}^{p}(X,n-1)  \rightarrow \cdots \\ \cdots \rightarrow CH^p(X,1)
   \xrightarrow{\rho}
\mmD^{2p-1}_{\log}(X,p) / \im d_{\mmD} \xrightarrow{a}
\widehat{CH}^p(X) \xrightarrow{\zeta} CH^p(X)  \rightarrow 0. \end{align*}

Explicitly, Goncharov defined a regulator morphism
$$ Z^p(X,*) \xrightarrow{\mmP}\  \mmD^{2p-*}_{D}(X,p),$$
where
\begin{itemize}
 \item $Z^p(X,*)$ is the chain complex given by Bloch in
   \cite{Bloch1}, whose homology groups are, by definition,
$CH^p(X,*)$.
\item $\mmD^{*}_{D}(X,\ast)$ is the Deligne complex of currents.
\end{itemize}
Then the \emph{higher arithmetic Chow groups} of a regular complex
variety $X$ are defined as $\widehat{CH}^{p}(X,n):=H_n(s(\mmP'))$, the
homology groups of the simple of the
induced morphism
$$\mmP': Z^p(X,*) \xrightarrow{\mmP}\
\mmD^{2p-*}_{D}(X,p)/\mmD^{2p}(X,p).$$

For $n=0$, these groups agree with the ones given by Gillet and
Soul{\'e}. However, this construction leaves the following
questions open:
\begin{enumerate}[(1)]
\item Does the composition of the isomorphism $K_n(X)_{\Q}\cong
\bigoplus_{p\geq 0} CH^p(X,n)_{\Q}$ with the morphism induced by
$\mmP$ agree with the Beilinson regulator? \item Can one define a product
structure on $\bigoplus_{p,n} \widehat{CH}^{p}(X,n)$? \item Are
there well-defined pull-back morphisms?
\end{enumerate}
The use of the complex of currents in the definition of $\mmP$ is the
main obstacle encountered when trying to answer these questions, since
this complex does not behave well under pull-back or
products. Moreover, the usual techniques for the comparison of
regulators
apply to morphisms defined for the class of quasi-projective
varieties, which is not the case of $\mmP$.

In this paper we develop a higher arithmetic intersection theory by
giving a new definition of the higher arithmetic Chow groups, based on
a representative of the Beilinson regulator at the chain complex
level. Our strategy has been to use the Deligne complex of
differential forms instead of the Deligne complex of currents in the
construction of the representative of the Beilinson
regulator. The obtained regulator turns out to be a minor modification
of the regulator described by Bloch in \cite{Bloch4}.

The present definition of higher arithmetic Chow groups is valid for
quasi-projective arithmetic varieties over a field, pull-back
morphisms are well-defined and can be given a commutative and
associative product structure. Therefore, this construction overcomes
the open questions left by Goncharov's construction.

In a paper under preparation, the authors, jointly with Takeda, prove
that this definition agrees with Goncharov's definition when the
arithmetic variety is projective. Moreover, by a direct comparison of
our regulator with $\mmP$, it is also proved that the regulator
defined by Goncharov induces the Beilinson regulator. In this way, the
open questions (1)-(3) are answered positively. Moreover, the question
of the covariance of the higher arithmetic Chow groups with respect to
proper morphisms will also be treated elsewhere.

Note that since the theory of higher algebraic Chow groups given by Bloch,
$CH^{p}(X,n)$, is only fully established for schemes over a field,
we have to restrict ourselves to arithmetic varieties over a
field. Therefore, the following question remains open:
\begin{enumerate}
\item Can we extend
the definition to arithmetic varieties over an arithmetic ring?
\end{enumerate}

\medskip

Let us now briefly describe the constructions  presented in this
paper. First, for the construction of the higher Chow groups, instead
of using the simplicial complex defined by Bloch  in
\cite{Bloch1}, we use its cubical
analog, defined by
Levine in \cite{Levine1}, due to its suitability for describing
the product structure on $CH^*(X,*)$. Thus $Z^p(X,n)_{0}$ will denote
the normalized chain complex associated to a cubical abelian group.
Let $X$ be a complex algebraic manifold. For every $p\geq 0$, we
define two cochain complexes, $\mmD_{\A,\mmZ^p}^*(X,p)_0$ and
$\mmD_{\A}^*(X,p)_0$, constructed out of differential forms on
$X\times \square^n$ with logarithmic singularities along infinity
($\square=\P^1\setminus\{1\}$). For every $p\geq 0$, the following
isomorphisms are satisfied:
$$\begin{array}{rclcr} H^{2p-n}(\mmD_{\A,\mmZ^p}^*(X,p)_0) & \cong &
  CH^p(X,n)_{\R},& \qquad & n\geq 0,
   \\
H^{r}(\mmD_{\A}^*(X,p)_0) & \cong & H_{\mmD}^{r}(X,\R(p)), & \qquad &
r\leq 2p,
\end{array}$$
where the first isomorphism is obtained by a explicit
quasi-isomorphism
\begin{displaymath}
  \mmD_{\A,\mmZ^p}^{2p-*}(X,p)_0\longrightarrow
  Z^{p}(X,\ast)_{0}\otimes{\mathbb{R}}
\end{displaymath}
(see $\S$\ref{diffformsaffine} and $\S$\ref{diffformschow}).

We show that there is a natural chain morphism (see $\S$\ref{regulator3})
$$\mmD_{\A,\mmZ^p}^{2p-*}(X,p)_0 \xrightarrow{\rho} \mmD_{\A}^{2p-*}(X,p)_0 $$
which induces, after composition with the isomorphism
$$K_{n}(X)_{\Q}\cong \bigoplus_{p\geq 0} CH^p(X,n)_{\Q}$$ described
by Bloch in \cite{Bloch1}, the \emph{Beilinson regulator} ({\bf
Theorem \ref{beichow}}):
$$K_{n}(X)_{\Q} \cong \bigoplus_{p\geq 0} CH^p(X,n)_{\Q} \xrightarrow{\rho} \bigoplus_{p\geq 0}
H_{\mmD}^{2p-n}(X,\R(p)).$$

In the second part of this paper we use the morphism $\rho$ to
define the higher arithmetic Chow group $\widehat{CH}^p(X,n)$, for
any arithmetic variety $X$ over a field. The formalism underlying
our definition is the theory of diagrams of complexes and their
associated simple complexes, developed by Beilinson in
\cite{Beilinson}. Let $X_{\Sigma }$ denote the complex manifold
associated with $X$ and let $\sigma $ be the involution that acts as
complex conjugation on the space and on the coefficients. As usual
$\sigma $ as superindex will mean the fixed part under $\sigma $.
Then one considers the diagram of chain
complexes
{\small $$
\widehat{\mmZ}^p(X,*)_0=\left(\begin{array}{c}\xymatrix@C=0.5pt{
& Z^p(X_{\Sigma },*)^{\sigma }_0\otimes \mathbb{R} & &
\mmD_{\A}^{2p-*}(X_{\Sigma },p)^{\sigma }_0 \\
Z^{p}(X,*)_{0} \ar[ur]^{f_1} & & \ar[ul]_{g_1}^{\sim}
\mmD^{2p-*}_{\A,\mathcal{Z}^p}(X_{\Sigma },p)^{\sigma }_0 \ar[ur]^{\rho} &&
Z\mmD^{2p}(X_{\Sigma },p)^{\sigma } \ar[ul]_{i} }
\end{array}\right)$$}
where $ Z\mmD^{2p}(X_{\Sigma },p)^{\sigma }$ is the group of closed elements of
$D^{2p}(X_{\Sigma },p)^{\sigma }$ considered as a complex concentrated in degree $0$.
Then, the higher arithmetic Chow groups of $X$ are given by the
homology groups of the simple of the diagram
$\widehat{\mmZ}^p(X,*)_0$ ({\bf Definition \ref{definitionchow}}):
$$\widehat{CH}^p(X,n):= H_n(s(\widehat{\mmZ}^p(X,*)_0)). $$

The following properties are shown:
\begin{itemize}
\item {\bf Theorem \ref{chowagreement}}: Let $\widehat{CH}^p(X)$
denote the arithmetic Chow group defined in \cite{Burgos2}. Then, there is
a natural isomorphism
$$
\widehat{CH}^p(X)   \xrightarrow{\cong}  \widehat{CH}^p(X,0). $$
It follows that  if $X$ is proper, $\widehat{CH}^p(X,0)$ agrees with
the arithmetic Chow group
defined by Gillet and Soul{\'e} in \cite{GilletSouleIHES}.
\item {\bf Proposition \ref{zeta}}: There is a long exact sequence
\begin{align*}
\cdots \rightarrow \widehat{CH}^{p}(X,n) \xrightarrow{\zeta}
  CH^{p}(X,n) \xrightarrow{\rho}  H_{\mmD}^{2p-n}(X_{\Sigma },\R(p))^{\sigma }
 \xrightarrow{a} \widehat{CH}^{p}(X,n-1)  \rightarrow \cdots \\ \cdots
 \rightarrow CH^p(X,1)
   \xrightarrow{\rho}
\mmD^{2p-1}_{\log}(X_{\Sigma },p)^{\sigma } / \im d_{\mmD} \xrightarrow{a}
\widehat{CH}^p(X) \xrightarrow{\zeta} CH^p(X)  \rightarrow 0, \end{align*}
with $\rho$ the Beilinson regulator.
\item {\bf Proposition \ref{chowpullbackth}} (\emph{Pull-back}):
Let $f:X\rightarrow Y$ be a morphism between two arithmetic
varieties over a field. Then, there is a pull-back morphism
$$\widehat{CH}^p(Y,n) \xrightarrow{f^*} \widehat{CH}^p(X,n), $$ for every
$p$ and $n$, compatible with the pull-back maps on the groups
$CH^p(X,n)$ and $H_{\mmD}^{2p-n}(X,\R(p))$. \item {\bf Corollary
\ref{homotopyinvariancecor}} (\emph{Homotopy invariance}): Let
$\pi: X\times \A^m \rightarrow X$ be the projection on $X$. Then,
the pull-back map
$$\pi^*: \widehat{CH}^p(X,n)  \rightarrow \widehat{CH}^p(X\times \A^m,n),\quad n\geq 1   $$
is an isomorphism.
 \item {\bf Theorem \ref{chowproductth}} (\emph{Product}): There exists a product
on
$$\widehat{CH}^*(X,*):=\bigoplus_{p\geq 0,n\geq 0} \widehat{CH}^p(X,n), $$ which is associative, graded commutative with
respect to the degree $n$.
\end{itemize}

\medskip

The paper is organized as follows.
The first section is a preliminary section. It is devoted to fix the notation and state the main facts
used in the rest of the paper. It includes general results on homological algebra, diagrams of complexes, cubical abelian groups and Deligne-Beilinson cohomology. In the second section we recall the
definition of the higher Chow groups of Bloch and introduce the complexes of differential forms
being the source and target of the regulator map. We proceed in the next section to the definition
of the regulator $\rho$ and we prove that it agrees with Beilinson's
regulator. In sections 4 and 5, we
develop the theory of higher arithmetic Chow groups. Section 4 is
devoted to the definition and basic properties of the higher arithmetic Chow groups
 and to the comparison with the arithmetic Chow group for $n=0$. Finally, in section 5 we
define the product structure on $\widehat{CH}^*(X,*)$ and prove
that it is commutative and associative.

\emph{Acknowledgments.} During the elaboration of this paper, the second author spent an academic year in the University of Regensburg
with a pre-doc grant from the European Network ``Arithmetic
Algebraic Geometry''. She wants to thank all the members of the
Arithmetic Geometry group, specially U. Jannsen and K. K{\"u}nneman.
We would also like to acknowledge M. Levine and H. Gillet for many useful conversations on the subject of this paper.

\section{Preliminaries}

\subsection{Notation on (co)chain complexes}
We use the standard conventions on (co)chain complexes. By a (co)chain
complex we mean a (co)chain complex over the category of abelian
groups.

The \emph{cochain complex associated to a chain complex} $A_*$ is simply denoted by
$A^*$ and the \emph{chain complex associated to a cochain
complex} $A^*$ is denoted by $A_*$. The \emph{translation of a cochain
complex} $(A^*,d_A)$ by an integer $m$ is denoted by $A[m]^*$.
Recall that $A[m]^{n}=A^{m+n}$ and the differential of $A[m]^*$ is
$(-1)^m d_A$. If $(A_*,d_A)$ is a \emph{chain complex}, then the
translation of $A_*$ by an integer $m$ is denoted by $A[m]_*$. In this
case the differential is also $(-1)^m d_A$ but $A[m]_{n}=A_{n-m}$.

The \emph{simple complex} associated to an iterated chain complex
$A_*$ is denoted by $s(A)_*$ and the analogous notation is used for
the simple complex associated to an iterated cochain complex (see
\cite{BurgosKuhnKramer} $\S$2 for definitions).

The \emph{simple of a cochain map} $A^*\xrightarrow{f} B^*$ is the
cochain complex $(s(f)^*,d_s)$ with
$s(f)^n=A^n\oplus B^{n-1}$, and differential $d_s(a,b)=(d_A a,f(a)-d_Bb)$.
Note that this complex is the cone of $-f$ shifted by 1. There is an
associated long exact sequence
\begin{equation}\label{longsimple}
\dots \rightarrow H^n(s(f)^*) \rightarrow H^n(A^*) \xrightarrow{f}
H^n(B^*) \rightarrow H^{n+1}(s(f)^*) \rightarrow \cdots
\end{equation}
If $f$ is surjective, there is a quasi-isomorphism
\begin{equation}\label{simplekernel}
  \ker f  \xrightarrow{i}  s(-f)^*   \qquad  x \mapsto  (x,0),
\end{equation}
and if $f$ is injective, there is a quasi-isomorphism
\begin{equation}\label{simplequotient}
  s(f)[1]^* \xrightarrow{\pi} B^*/A^*   \qquad  (a,b) \mapsto  [b].
\end{equation}
Analogously, equivalent results and quasi-isomorphisms can be stated for chain complexes.

Following Deligne \cite{DeligneHodgeII}, given a cochain complex $A^*$
and an integer $n$, we denote by  $\tau_{\leq n}A^*$ the
\emph{canonical truncation of $A^*$ at degree $n$}.

\subsection{The simple of a diagram of complexes}\label{diagrambei4}
We describe here Beilinson's ideas on the simple complexes associated to a diagram
of complexes (see \cite{Beilinson}).
A diagram of chain complexes is a diagram of the form
{\small
\begin{equation}\label{diagrambei2}
\mathcal{D}_*=\left(\begin{array}{c}\xymatrix{
& B^1_* & & B^2_*  &  \dots & \dots   & B^n_*  \\
A^1_* \ar[ur]^{f_1} & & \ar[ul]_{g_1}  A^2_* \ar[ur]^{f_2}  & \dots & \dots &  A^{n}_*  \ar[ur]^{f_n} & & A^{n+1}_* \ar[ul]_{g_n}
}
\end{array}\right).\end{equation}}

Consider the induced chain morphisms
\begin{equation}
  \label{eq:1}
\bigoplus_{i=1}^{n+1} A^i_*
\xrightarrow{\varphi,\varphi_{1},\varphi_{2}}  \bigoplus_{i=1}^{n}
B^i_*,\qquad
\begin{aligned}
  \varphi_{1}(a_i)&=f_i(a_i) \quad \textrm{if }a_i\in A^i_*,\\
  \varphi_{2}(a_i)&=g_{i-1}(a_i) \quad \textrm{if }a_i\in A^i_*,\\
  \varphi(a_i)&=(\varphi_{1}-\varphi_{2})(a_{i})=(f_i-g_{i-1})(a_i)
  \quad \textrm{if }a_i\in A^i_*.
\end{aligned}
\end{equation}
(where we set $f_{n+1}=g_{0}=0$).
The \emph{simple complex associated to the diagram} $\mmD_*$ is defined to be the simple of the morphism $\varphi$:
\begin{equation}\label{diagrambei7}
 s(\mmD)_*:= s(\varphi)_*.
\end{equation}

\subsection{Morphisms of diagrams}\label{diagrambei5} A \emph{morphism between two diagrams} $\mmD_*$ and $\mmD'_*$
consists of a collection of morphisms
$$A^i_* \xrightarrow{h_i^A} A_{*}^{'i},\qquad B^i_* \xrightarrow{h_i^B} B^{'i}_{*},  $$
commuting with the morphisms $f_i$ and $g_i$, for all $i$. Any  morphism of diagrams $\mmD_*\xrightarrow{h} \mmD'_*$ induces
a morphism on the associated simple complexes $s(\mmD)_*\xrightarrow{s(h)} s(\mmD')_*.$
Observe that if, for every $i$, $h_i^A$ and
$h_i^B$ are quasi-isomorphisms, then $s(h)$ is also a
quasi-isomorphism.

\subsection{Product structure on the simple of a
diagram} \label{productdiagram}
Let $\mmD_*$ and $\mmD'_*$ be two diagrams as \eqref{diagrambei2}.
Consider the diagram obtained by the tensor product of complexes:
{\small
\begin{equation}\label{diagrambei3} (\mathcal{D}\otimes
\mmD')_*=\left(\begin{array}{c}\xymatrix@C=-5pt@R=30pt{
& B^1_*\otimes B^{'1}_* & & B^2_*\otimes B^{'2}_* &  \dots & \dots   & B^n_* \otimes B^{'n}_*    \\
A^1_*\otimes A_*^{'1} \ar[ur]^{f_1\otimes f_1'} & &
\ar[ul]_{g_1\otimes g_1'} A^2_*\otimes A^{'2}_*
\ar[ur]_{f_2\otimes f_2'}  & \dots & \dots &  A^{n}_*\otimes A^{'n}_*  \ar[ur]^{f_n\otimes f_n'} & & A^{n+1}_*\otimes A^{'n+1}_* \ar[ul]_{g_n\otimes g_n'}  }
\end{array}\right).\end{equation}}
In \cite{Beilinson},  Beilinson defined, for every $\beta\in \Z$, a morphism
$$s(\mmD)_*\otimes s(\mmD')_* \xrightarrow{\star_{\beta}} s(\mmD\otimes \mmD')_*$$
 as follows. For $a\in A,a'\in A',b\in B$ and $b'\in
B'$, set:
\begin{eqnarray*}
a\star_{\beta} a'&=& a\otimes a', \\ b \star_{\beta } a'& = &
b\otimes ((1-\beta)\varphi_1(a')+\beta \varphi_2(a')), \\
a\star_{\beta} b' & =&
(-1)^{\deg a}  (\beta \varphi_1(a)+(1-\beta)\varphi_2(a))\otimes b', \\
b\star_{\beta} b' &=& 0,
\end{eqnarray*}
where the tensor product between elements in different spaces is
defined to be zero.

If $B_*,C_*$ are chain complexes, let
$$\sigma: s(B_*\otimes C_*) \rightarrow s(C_*\otimes B_*) $$ be
the map sending $b\otimes c \in B_n\otimes C_m$ to $(-1)^{nm}c\otimes
b \in C_m\otimes B_n$.

\begin{lema}[Beilinson]\label{star}
\begin{enumerate}[(i)]
\item The map $\star_{\beta}$ is a morphism of complexes. \item
For every $\beta,\beta'\in \Z$, $\star_{\beta}$ is homotopic to
$\star_{\beta'}$. \item There is a commutative diagram
$$\xymatrix{
s(\mmD)_*\otimes s(\mmD')_{*} \ar[r]^{\star_{\beta}} \ar[d]_{\sigma} &
s(\mmD \otimes \mmD')_* \ar[d]^{\sigma} \\
s(\mmD')_*\otimes s(\mmD)_{*} \ar[r]^{\star_{1-\beta}} & s(\mmD'
\otimes \mmD)_*. }$$  \item The products $\star_{0}$ and
$\star_1$ are associative.
\end{enumerate}
\end{lema}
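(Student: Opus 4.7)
The general strategy is to reduce each of the four claims to a direct check on the four bigraded summands $A\otimes A'$, $A\otimes B'$, $B\otimes A'$, $B\otimes B'$ of $s(\mmD)_*\otimes s(\mmD')_*$. The only structural inputs needed are the defining relation $\varphi=\varphi_1-\varphi_2$ and the fact that, on the tensor product diagram, $\varphi_k^{\mmD\otimes\mmD'}=\varphi_k\otimes\varphi_k'$ for $k=1,2$.

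For (i), the $A\otimes A'$ computation contains the essential point. The $B\otimes B'$-component of $\star_\beta\circ d(a\otimes a')$ works out to
\[
\varphi(a)\otimes((1-\beta)\varphi_1'+\beta\varphi_2')(a')+(\beta\varphi_1+(1-\beta)\varphi_2)(a)\otimes\varphi'(a'),
\]
and the elementary identity $\varphi_1\otimes\varphi_1'-\varphi_2\otimes\varphi_2'=(\varphi_1-\varphi_2)\otimes\varphi_1'+\varphi_2\otimes(\varphi_1'-\varphi_2')$ collapses this to $\varphi^{\mmD\otimes\mmD'}(a\otimes a')$, matching the $B\otimes B'$-component of $d\circ\star_\beta(a\otimes a')$. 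The other three summands are handled analogously; in particular the $B\otimes B'$ case is automatic since $b\star_\beta b'=0$.

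For (ii), one computes that $\star_{\beta+1}-\star_\beta$ vanishes on $A\otimes A'$ and $B\otimes B'$ and equals $(-1)^{|a|}\varphi(a)\otimes b'$ on $a\otimes b'$ and $-b\otimes\varphi'(a')$ on $b\otimes a'$. These are exactly the $B\otimes B'$-components arising from the $\varphi^{\mmD\otimes\mmD'}$-jump in the simple-complex differential of $s(\mmD\otimes\mmD')_*$ applied to natural lifts of $a\otimes b'$ and $b\otimes a'$ into its $A\otimes A'$-part. The resulting homotopy is the standard comparison of the two mapping-cone product conventions; iteration then handles arbitrary $\beta,\beta'\in\mathbb{Z}$.

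For (iii), I verify commutativity on each bigraded piece, with the $a\otimes a'$ and $b\otimes b'$ cases immediate. On $b\otimes a'$, both routes produce elements of the form $\pm((1-\beta)\varphi_1'+\beta\varphi_2')(a')\otimes b$; the sign from $\sigma$ on $s(\mmD)_*\otimes s(\mmD')_*$ uses the simple-complex degree $|b|-1$ of $b$, while $\star_{1-\beta}\circ\sigma$ additionally picks up $(-1)^{|a'|}$ from the defining formula of $a'\star_{1-\beta}b$, and these combine via the identity $(|b|-1)|a'|+|a'|=|b||a'|$ to produce the Koszul sign of $\sigma$ on the target $s(\mmD\otimes\mmD')_*$; the $a\otimes b'$ case is symmetric. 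For (iv), associativity at $\beta=0$ is an eight-case check on the trigraded summands: the cases containing a $b\star_0 b'$-factor are trivial, and the remaining cases reduce to associativity of the ordinary tensor product together with the definitions $b\star_0 a'=b\otimes\varphi_1'(a')$ and $a\star_0 b'=(-1)^{|a|}\varphi_2(a)\otimes b'$, consistent with the tensor structure $\varphi_k^{\mmD\otimes\mmD'}=\varphi_k\otimes\varphi_k'$; the $\beta=1$ case follows by symmetry using (iii). The main obstacle throughout is the Koszul sign bookkeeping arising from the interplay of the simple-complex degree shifts with tensor product signs; once the conventions are fixed every claim reduces to an elementary identity.
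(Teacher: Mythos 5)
The paper states this lemma without proof, attributing it to Beilinson, so there is no argument of the authors' to compare against; your proposal has to stand on its own. Parts (i), (iii) and (iv) are correct: the summand-by-summand reduction is the right strategy, the cancellation in (i) is exactly the affine-combination identity you quote (for general $\beta$ one uses both $\varphi\otimes\varphi_1'+\varphi_2\otimes\varphi'=\varphi_1\otimes\varphi_1'-\varphi_2\otimes\varphi_2'=\varphi\otimes\varphi_2'+\varphi_1\otimes\varphi'$ and takes the $(1-\beta,\beta)$-combination), and your sign bookkeeping in (iii), $(|b|-1)|a'|+|a'|=|b||a'|$, is precisely what makes the square commute.

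The gap is in (ii). Your computation of $\star_{\beta+1}-\star_{\beta}$ is right, but the homotopy you describe does not exist: an element $a\otimes b'$ of $s(\mmD)_*\otimes s(\mmD')_*$ has no ``natural lift into the $A\otimes A'$-part'' of $s(\mmD\otimes\mmD')_*$ (its second factor lives in $B'$, not $A'$), and the map $\varphi^{\mmD\otimes\mmD'}=\varphi_1\otimes\varphi_1'-\varphi_2\otimes\varphi_2'$ applied to anything in $A\otimes A'$ produces only terms of the form $\varphi_i(x)\otimes\varphi_i'(y)$, never $\varphi(a)\otimes b'$. The correct homotopy is instead supported on the $B\otimes B'$ summand of the \emph{source}: set $h(b\otimes b')=(-1)^{\deg b}\,b\otimes b'$ (with $\deg b$ the internal degree of $b$ in $B$), viewed in the $B\otimes B'$ part of $s(\mmD\otimes\mmD')_*$, and $h=0$ on the other three summands. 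Then $dh+hd$ vanishes on $A\otimes A'$ and on $B\otimes B'$; on $a\otimes b'$ the composite $hd$ picks up $h(\varphi(a)\otimes b')=(-1)^{\deg a}\varphi(a)\otimes b'$ from the $\varphi$-component of $d_s a$, and on $b\otimes a'$ it picks up $(-1)^{\deg b-1}h(b\otimes \varphi'(a'))=-\,b\otimes\varphi'(a')$ from the $\varphi'$-component of $d_s a'$ — exactly the differences you computed. With this correction (and transitivity of the homotopy relation for arbitrary $\beta,\beta'$) the proof is complete.
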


\subsection{A specific type of diagrams}
In this work we will use diagrams of the following form:
{\small
\begin{equation}\label{ourdiagrambei2}
\mathcal{D}_*=\left(\begin{array}{c}\xymatrix{
& B^1_* & & B^2_*   \\
A^1_* \ar[ur]^{f_1} & & \ar[ul]_{g_1}^{\sim}  A^2_* \ar[ur]^{f_2}
}
\end{array}\right),\end{equation}}
with $g_1$ a quasi-isomorphism. For this type of diagrams, since $g_1$ is a quasi-isomorphism, we obtain a long exact sequence
equivalent to the long exact sequence related to the simple of a
morphism. Since a diagram like this induces a map
$A^1_*\longrightarrow B^2_*$ in the derived category, we obtain

\begin{lema}\label{sucllargachow}
Let $\mmD_*$ be a diagram like \eqref{ourdiagrambei2}. Then there is
a well-defined morphism
$$H_n(A^1_*)  \xrightarrow{\rho}  H_n(B^2_*), \qquad
\ [a_1]  \mapsto  f_2g_1^{-1}f_1[a_1]. $$
Moreover, there is a long exact sequence
\begin{equation}\label{diagramlong} \cdots \rightarrow
H_n(s(\mmD)_*) \rightarrow H_n(A^1_*) \xrightarrow{\rho}
  H_n(B^2_*) \rightarrow H_{n-1}(s(\mmD)_*)\rightarrow \cdots
  \end{equation}
\end{lema}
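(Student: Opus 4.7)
The plan is as follows. For the first assertion, since $g_1$ is a quasi-isomorphism, $g_{1*}\colon H_n(A^2_*)\to H_n(B^1_*)$ is invertible, so $\rho:=f_{2*}\circ g_{1*}^{-1}\circ f_{1*}$ is well defined on homology. For the long exact sequence, I would exhibit a short exact sequence of chain complexes whose associated long exact sequence, after identifying terms, specialises to \eqref{diagramlong}, and then verify that the connecting homomorphism matches $\rho$.

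Explicitly, let $\varphi_L\colon A^1_*\oplus A^2_*\to B^1_*$, $(a_1,a_2)\mapsto f_1(a_1)-g_1(a_2)$, be the morphism built from the left half of $\mmD_*$. Forgetting the $B^2$-coordinate produces a short exact sequence
\begin{equation*}
0\longrightarrow K_*\longrightarrow s(\mmD)_*=s(\varphi)_*\longrightarrow s(\varphi_L)_*\longrightarrow 0,
\end{equation*}
where $K_n=B^2_{n+1}$ with differential $-d_{B^2}$, so $H_n(K_*)=H_{n+1}(B^2_*)$. On the other hand, $s(\varphi_L)_*$ contains $\{(0,a_2,b_1)\}$ as a subcomplex canonically isomorphic to $s(-g_1)_*$, with quotient $A^1_*$; since $-g_1$ is a quasi-isomorphism, $s(-g_1)_*$ is acyclic by the chain analogue of \eqref{longsimple}, so the projection $s(\varphi_L)_*\twoheadrightarrow A^1_*$ is a quasi-isomorphism. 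Feeding these identifications into the long exact sequence of the displayed short exact sequence reproduces \eqref{diagramlong}.

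It remains to check that the connecting map equals $\rho$. Given $[a_1]\in H_n(A^1_*)$, I would choose a cycle $a_2\in A^2_n$ and $b_1\in B^1_{n+1}$ with $g_1(a_2)=f_1(a_1)-d_{B^1}(b_1)$, which is possible because $g_{1*}$ is surjective on homology and $f_1(a_1)$ is a cycle; then $(a_1,a_2,b_1)\in s(\varphi_L)_n$ is a cycle lifting $[a_1]$. Lifting further to $(a_1,a_2,b_1,0)\in s(\mmD)_n$ and applying the differential yields $(0,0,0,f_2(a_2))\in K_{n-1}$, whose class in $H_n(B^2_*)$ equals $f_{2*}\,g_{1*}^{-1}\,f_{1*}[a_1]=\rho[a_1]$. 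The only real obstacle I anticipate is bookkeeping: pinning down the degree shift $H_n(K_*)=H_{n+1}(B^2_*)$ and the signs on $K_*$ and $s(-g_1)_*$ carefully enough that the snake-lemma long exact sequence has exactly the indexing appearing in \eqref{diagramlong}.
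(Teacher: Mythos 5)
Your proposal is correct and is exactly the chain-level implementation of the argument the paper merely sketches (the paper only remarks that the diagram induces $f_2g_1^{-1}f_1$ in the derived category and that the long exact sequence is the one of the simple of that morphism). Your short exact sequence $0\to K_*\to s(\mmD)_*\to s(\varphi_L)_*\to 0$, the acyclicity of $s(-g_1)_*$, and the computation identifying the connecting map with $\rho$ all check out, including the degree shift $H_n(K_*)=H_{n+1}(B^2_*)$.
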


Consider now a diagram of the form
{\small
\begin{equation}\label{ourdiagrambei3}
\mathcal{D}_*=\left(\begin{array}{c}\xymatrix{
& B^1_* & & B^2_*   \\
A^1_* \ar[ur]^{f_1} & & \ar[ul]_{g_1}^{\sim}  A^2_* \ar[ur]^{f_2} & &
A^3_* \ar@{>->}[ul]_{g_2}
}
\end{array}\right),\end{equation}}
with $g_1$ a quasi-isomorphism and $g_2$ a monomorphism.

\begin{lema}\label{quasiisodiagrams} Let $\mmD$ be a diagram as \eqref{ourdiagrambei3} and let $\mmD'$ be the diagram
{\small
\begin{equation}
\mathcal{D'}_*=\left(\begin{array}{c}\xymatrix{
& B^1_* & & B^2_*/A^3_*   \\
A^1_* \ar[ur]^{f_1} & & \ar[ul]_{g_1}^{\sim}  A^2_* \ar[ur]^{f_2}
}
\end{array}\right),\end{equation}}
Then, there is a quasi-isomorphism  between the simple complexes associated to $\mmD$ and to $\mmD'$:
$$s(\mmD)_* \xrightarrow{\sim} s(\mmD')_*.$$
\end{lema}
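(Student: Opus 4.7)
The plan is to exhibit a natural surjective chain map $\Phi\colon s(\mmD)_*\longrightarrow s(\mmD')_*$ whose kernel is acyclic. Concretely, I take as $\Phi$ the morphism of diagrams (in the sense of $\S$\ref{diagrambei5}) given by the identity on $A^1_*$, $A^2_*$ and $B^1_*$, by the canonical projection $\pi\colon B^2_*\to B^2_*/A^3_*$, and by $0$ on the $A^3_*$ summand. The compatibility conditions of $\S$\ref{diagrambei5} are trivial on the unchanged summands, while on the modified part one has $\pi\circ f_2=\bar{f}_2$ and $\pi\circ g_2=0$ by the very definition of the quotient $B^2_*/A^3_*$. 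Hence $\Phi$ descends to a well-defined chain map of the associated simple complexes and is surjective by design.

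Next I identify the kernel $\mmK_*:=\ker\Phi$. By construction $\mmK_*$ is concentrated on the $A^3_*$ summand of the lower row of $\mmD$ together with the subcomplex $g_2(A^3_*)\subset B^2_*$ of the upper row. Since $g_2$ is injective, $g_2(A^3_*)\cong A^3_*$ canonically. A direct computation of the differential of $s(\mmD)_*$ via formula \eqref{eq:1} shows that, under this identification, $\mmK_*$ is isomorphic to the simple (mapping cone) of the identity map $A^3_*\xrightarrow{\mathrm{id}} A^3_*$, up to an overall sign coming from $\varphi=\varphi_1-\varphi_2$. The mapping cone of an isomorphism is acyclic, so $\mmK_*$ is acyclic.

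Finally, the short exact sequence of chain complexes
\[
0\longrightarrow \mmK_*\longrightarrow s(\mmD)_*\xrightarrow{\;\Phi\;} s(\mmD')_*\longrightarrow 0
\]
together with the acyclicity of $\mmK_*$ yields, through the associated long exact sequence in homology, that $\Phi$ is a quasi-isomorphism, which is the desired conclusion.

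The main technical obstacle is purely bookkeeping: one has to verify carefully that the restriction of the differential on $s(\mmD)_*$ to the subcomplex $\mmK_*$ really does reproduce, up to sign, the mapping-cone differential of $\operatorname{id}_{A^3_*}$. This requires tracking the degree shifts and signs embedded in the definition of $\varphi$ and of $s(\varphi)_*$ (cf.\ \eqref{eq:1} and \eqref{diagrambei7}). Once the sign conventions are fixed the check is mechanical, but it is the step where sign errors can easily occur.
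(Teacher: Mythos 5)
Your proof is correct, but it takes a genuinely different route from the paper's. The paper first replaces the pair $(A^3_*,B^2_*)$ inside $\mmD$ by the single complex $s(A^3_*\xrightarrow{g_2}B^2_*)[1]$, obtaining an intermediate diagram $\mmD''$ whose simple complex is identified with that of $\mmD$; it then applies the quasi-isomorphism \eqref{simplequotient} (cone of an injection maps quasi-isomorphically to the quotient) together with the observation of \S\ref{diagrambei5} that levelwise quasi-isomorphisms of diagrams induce quasi-isomorphisms of the associated simple complexes. You instead build a direct surjective comparison map $\Phi\colon s(\mmD)_*\to s(\mmD')_*$ and compute its kernel: by the injectivity of $g_2$ the kernel is the cone of $\pm\operatorname{id}_{A^3_*}$ (strictly, of $-\operatorname{id}$, given the sign in $\varphi=\varphi_1-\varphi_2$), hence acyclic, and the long exact homology sequence finishes the argument. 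Your version is more self-contained — it avoids both the intermediate diagram and the general functoriality statement — at the cost of the sign and degree bookkeeping you flag; the paper's version is shorter because it reuses machinery already established in \S 1. One small caveat: $\mmD$ and $\mmD'$ do not have the same shape, so $\Phi$ is not literally a morphism of diagrams in the sense of \S\ref{diagrambei5} unless you pad $\mmD'$ with a zero complex in the third lower slot (with which the compatibility $\pi\circ g_2=0$ is exactly what is needed); alternatively one checks the chain-map identity on $s(\varphi)_*$ directly, which your computation effectively does. Note also that both arguments use the injectivity of $g_2$ at the decisive point: you need it to identify $g_2(A^3_*)$ with $A^3_*$ so that the kernel is the cone of an isomorphism, while the paper needs it for \eqref{simplequotient} to apply.
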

\begin{proof}
It follows directly from the definition that the simple complex associated to $\mmD_*$ is quasi-isomorphic to the simple
associated to the diagram
{\small
\begin{equation}
\mathcal{D''}_*=\left(\begin{array}{c}\xymatrix{
& B^1_* & & s(A^3_*\xrightarrow{g_{2}} B^2_*)[1]   \\
A^1_* \ar[ur]^{f_1} & & \ar[ul]_{g_1}^{\sim}  A^2_*. \ar[ur]^{f_2} }
\end{array}\right),\end{equation}}
Then, the quasi-isomorphism given in \eqref{simplequotient}
induces a quasi-isomorphism
$$s(\mmD')_* \xrightarrow{\sim} s(\mmD'')_*$$
as desired.
\end{proof}

\begin{cor}\label{sucllargachow2}
  For any diagram of the form \eqref{ourdiagrambei3}, there is a long exact sequence
  \begin{equation}\label{diagramlong2} \cdots \rightarrow
H_n(s(\mmD)_*) \rightarrow H_n(A^1_*) \xrightarrow{\rho}
  H_{n-1}(s(g_2)) \rightarrow H_{n-1}(s(\mmD)_*)\rightarrow \cdots
  \end{equation}
  \end{cor}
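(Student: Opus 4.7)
The plan is to reduce the corollary mechanically to the two results that precede it, namely Lemma \ref{sucllargachow} (which provides the long exact sequence for diagrams of shape \eqref{ourdiagrambei2}) and Lemma \ref{quasiisodiagrams} (which shows how to collapse the fourth slot of a diagram of shape \eqref{ourdiagrambei3} into a quotient). The only genuinely non-formal step is the identification of that quotient's homology with $H_{*-1}(s(g_2))$, which is a degree-shifted consequence of \eqref{simplequotient}.

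First, I would invoke Lemma \ref{quasiisodiagrams} to obtain a quasi-isomorphism $s(\mmD)_* \xrightarrow{\sim} s(\mmD')_*$, where $\mmD'_*$ is the diagram of shape \eqref{ourdiagrambei2} with top-right vertex $B^2_*/A^3_*$ (using that $g_2$ is injective, so the quotient makes sense and is well-behaved). In particular this yields a canonical isomorphism $H_n(s(\mmD)_*) \cong H_n(s(\mmD')_*)$ for every $n$.

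Next, since $g_1$ is a quasi-isomorphism, Lemma \ref{sucllargachow} applies to $\mmD'_*$ and produces the long exact sequence
\begin{equation*}
\cdots \rightarrow H_n(s(\mmD')_*) \rightarrow H_n(A^1_*) \xrightarrow{\rho} H_n(B^2_*/A^3_*) \rightarrow H_{n-1}(s(\mmD')_*) \rightarrow \cdots
\end{equation*}
It remains to rewrite the middle term. By the chain-complex analog of the quasi-isomorphism \eqref{simplequotient}, applied to the monomorphism $g_2 : A^3_* \hookrightarrow B^2_*$, one has $s(g_2)[1]_* \xrightarrow{\sim} B^2_*/A^3_*$. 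With the chain-complex translation convention $A[1]_n = A_{n-1}$, this gives $H_n(B^2_*/A^3_*) \cong H_{n-1}(s(g_2))$ naturally in $n$.

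Substituting this identification into the exact sequence above and replacing $H_*(s(\mmD')_*)$ by $H_*(s(\mmD)_*)$ via the first step yields exactly the sequence \eqref{diagramlong2}. The only point that requires care is bookkeeping the degree shift between the cone/simple and the quotient, but this is already settled once and for all by \eqref{simplequotient} in the preliminaries; no further computation is needed.
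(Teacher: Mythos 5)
Your proposal is correct and follows exactly the route the paper takes: its proof is the one-line remark that the corollary ``follows from the previous lemma together with Proposition \ref{sucllargachow}'', i.e.\ collapse $\mmD$ to $\mmD'$ via Lemma \ref{quasiisodiagrams}, apply Lemma \ref{sucllargachow} to $\mmD'$, and identify $H_n(B^2_*/A^3_*)\cong H_{n-1}(s(g_2))$ via the shift in \eqref{simplequotient}. You have merely made explicit the degree bookkeeping that the paper leaves implicit, and your handling of the translation convention $A[1]_n=A_{n-1}$ is accurate.
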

  \begin{proof}
    It follows from the previous lemma together with Proposition \ref{sucllargachow}.
  \end{proof}

\subsection{Cubical abelian groups and chain complexes}\label{cubical}
Let $C_{\cdot}=\{C_n\}_{n\geq 0}$ be a cubical abelian group with face maps
$\delta_i^j: C_n\rightarrow C_{n-1}$, for $i=1,\dots,n$ and $j=0,1$,
and degeneracy maps $\sigma_i:
C_n\rightarrow C_{n+1}$, for $i=1,\dots,n+1$. Let $D_n\subset C_n$ be
the subgroup of \emph{degenerate elements} of $C_n$, and let
$\widetilde C_{n}=C_{n}/D_{n}$.

Let $C_*$ denote the \emph{associated chain complex}, that is, the
chain complex whose $n$-th graded
piece is $C_n$ and whose differential is given by
$\delta= \sum_{i=1}^n \sum_{j=0,1}(-1)^{i+j}\delta_i^j.$ Thus
$D_{\ast}$ is a subcomplex and $\widetilde C_{\ast}$ is a quotient
complex.
We fix the \emph{normalized chain complex} associated to $C_{\cdot}$,
$NC_*$, to be the chain
complex whose $n$-th graded group is
$NC_n := \bigcap_{i=1}^{n} \ker \delta_i^1,$ and whose
differential is $ \delta=\sum_{i=1}^n (-1)^{i}\delta_i^{0}.$
It is well-known that there is a decomposition of chain complexes
$C_* \cong NC_* \oplus D_* $ giving an isomorphism
$NC_* \cong \widetilde C_{\ast}.$

For certain cubical abelian groups, the normalized chain complex
can be further simplified, up to homotopy equivalence, by considering the
elements which belong to the kernel of all faces but $\delta_1^0$.

\begin{df}\label{normalized2}
Let $C_{\cdot}$ be a cubical abelian  group. Let $N_0C_*$ be the
complex defined by
\begin{equation}
N_0C_n= \bigcap_{i=1}^{n} \ker \delta_i^1 \cap \bigcap_{i=2}^{n}
\ker \delta_i^0,\quad \textrm{and differential }\
\delta=-\delta_1^0.\end{equation}
\end{df}

The proof of the next proposition is analogous to the proof of
Theorem 4.4.2 in \cite{Bloch3}. The result is proved there only
for the cubical abelian group defining  the higher Chow complex
(see $\S$\ref{higherchow} below). We give here the abstract version of
the statement,
valid for a certain type of cubical abelian groups.

\begin{prop}\label{normalized3}
Let $C_{\cdot}$ be a cubical abelian group. Assume that it comes
equipped with a collection of maps
$$h_j: C_n \rightarrow C_{n+1},\qquad j=1,\dots,n, $$
such that, for any $l=0,1$, the following identities are satisfied:
\begin{eqnarray}
\delta_j^1h_j &=& \delta_{j+1}^1h_j =s_j\delta_j^1, \notag \\
\delta_j^0
h_j &=& \delta_{j+1}^0 h_j=id, \label{eq:3}\\
\notag \delta_i^lh_j &=&
\left\{\begin{array}{ll} h_{j-1}\delta_i^l & i<j, \\
h_j \delta_{i-1}^l & i>j+1.
\end{array} \right.
\end{eqnarray}
Then, the inclusion of complexes
$$i: N_0C_* \hookrightarrow NC_* $$
is a homotopy equivalence.
\end{prop}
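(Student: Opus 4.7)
The plan is to interpolate between $N_0C_*$ and $NC_*$ by a finite increasing filtration of subcomplexes, and to show that each step is a chain homotopy equivalence. For $k\ge 0$ set
$$F^{(k)}NC_n := \{\, x\in NC_n : \delta_i^0 x = 0 \text{ for every } k+2\le i\le n\,\},$$
so that $F^{(0)}NC_*=N_0C_*$ and $F^{(k)}NC_n=NC_n$ as soon as $k\ge n-1$. Using the standard cubical commutation $\delta_i^0\delta_j^0=\delta_{j-1}^0\delta_i^0$ ($i<j$), one checks that each $F^{(k)}NC_*$ is a subcomplex and in fact $d\bigl(F^{(k+1)}NC_n\bigr)\subseteq F^{(k)}NC_{n-1}$. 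Since the filtration stabilises at each level in finitely many steps, it suffices to prove that every inclusion $F^{(k)}NC_*\hookrightarrow F^{(k+1)}NC_*$ is a chain homotopy equivalence.

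For the inductive step I would take as candidate retraction
$$r_k:F^{(k+1)}NC_n\longrightarrow F^{(k)}NC_n,\qquad r_k(x)=x-h_{k+1}\bigl(\delta_{k+2}^0 x\bigr),$$
and for the chain homotopy $s_k:F^{(k+1)}NC_n\to F^{(k+1)}NC_{n+1}$ an appropriately signed version of $h_{k+1}$. The identity $\delta_{k+2}^0 h_{k+1}=\mathrm{id}$ kills the $(k+2)$-th face of $r_k(x)$, while for $i\ge k+3$ the relation $\delta_i^0 h_{k+1}=h_{k+1}\delta_{i-1}^0$ combined with the commutation $\delta_{i-1}^0\delta_{k+2}^0=\delta_{k+2}^0\delta_i^0$ shows that $r_k(x)\in F^{(k)}NC_n$. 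That $h_{k+1}(y)$ automatically lies in $NC_{n+1}$ when $y\in NC_n$ uses the identities $\delta_j^1 h_j=\delta_{j+1}^1 h_j=\sigma_j\delta_j^1$.

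The main obstacle is that $r_k$ is not a chain map on the nose: a direct computation, in which the cancellation $(-1)^{k+1}+(-1)^{k+2}=0$ coming from the pair $\delta_{k+1}^0 h_{k+1}=\delta_{k+2}^0 h_{k+1}=\mathrm{id}$ plays the decisive role, yields
$$d r_k(x) - r_k(dx) \;=\; \sum_{i=1}^{k}(-1)^{i+1}\, h_k\bigl(\delta_{k+1}^0\delta_i^0 x\bigr),$$
a genuine defect. The homotopy $s_k$ must be designed so as to absorb this defect in the identity $ds_k+s_k d=\mathrm{id}-i_k\circ r_k$. Verifying this on the nose, after a careful choice of signs and a telescoping use of the three families of identities in the proposition, is a mechanical but intricate bookkeeping exercise. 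Once the inductive step is in place, composing the finitely many retractions and homotopies provides the desired equivalence $N_0C_*\simeq NC_*$. The whole argument runs in parallel with Bloch's proof of Theorem 4.4.2 in \cite{Bloch3}, the novelty being that the axioms displayed in the statement isolate exactly the minimal combinatorial data under which the argument goes through in the abstract setting.
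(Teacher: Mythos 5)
Your overall strategy --- interpolating between $N_0C_*$ and $NC_*$ by a finite filtration and contracting one step at a time using the maps $h_j$ --- is the same one the paper uses (and goes back to Bloch), but your choice of filtration breaks the argument at its central step, and the proposed repair cannot work. You correctly compute that $r_k(x)=x-h_{k+1}\bigl(\delta_{k+2}^0 x\bigr)$ fails to be a chain map, with defect $\sum_{i=1}^{k}(-1)^{i+1}h_k\bigl(\delta_{k+1}^0\delta_i^0 x\bigr)$. But then \emph{no} homotopy $s_k$ can satisfy $ds_k+s_kd=\mathrm{id}-i_k\circ r_k$: for any degree-one map $s_k$ the left-hand side commutes with $d$, hence is automatically a chain map, while the right-hand side is not. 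The defect must be absorbed into the retraction itself, not into the homotopy, and your write-up does not do this; the concluding ``mechanical but intricate bookkeeping exercise'' is, as stated, an impossible equation.

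The root of the problem is that your filtration $F^{(k)}$ fixes the vanishing threshold at the absolute index $k+2$, independent of the homological degree $n$. The cancellation that drives this argument is between the low-index faces of $\delta h_{j}(x)$, which by the third family of identities produce terms $h_{j-1}\delta_i^0(x)$, and the corresponding terms of the other summand of the homotopy formula; these cancel only if the copy of $h$ used one degree down has index $j-1$, i.e.\ only if the index of $h$ shifts with $n$. The paper therefore uses the degree-dependent filtration $G^jNC_n=\{x\mid \delta_i^0(x)=0,\ i>\max(n-j,1)\}$ and defines the step map as $H_j=\Id+\delta g_j+g_j\delta$ with $g_j=(-1)^{n-j}h_{n-j}$ on $NC_n$: this is a chain map homotopic to the identity \emph{by construction}, and one only checks that it is the identity on $G^{j+1}$ and carries $G^j$ into $G^{j+1}$. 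On $G^j$ it reduces to exactly your formula $x\mapsto x-h_{n-j-1}\delta_{n-j}^0(x)$, but the terms that spoil the chain-map property in your setting are exactly the ones cancelled globally by the $g_j\delta$ term with its degree-shifted index. To repair your proof, either switch to this filtration and define each step as $\Id+\delta g+g\delta$ from the outset, or produce explicit correction terms for both $r_k$ and $s_k$; the latter is not routine.
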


\begin{proof}
Let $g_j: NC_n \rightarrow NC_{n+1}$ be defined as
$g_j=(-1)^{n-j}h_{n-j}$ if $0\leq j\leq n-1$ and $g_j=0$ otherwise. Then there
is a well-defined morphism of chain complexes
$$H_j=(\Id + \delta g_{j} + g_{j} \delta): NC_* \rightarrow NC_*. $$
This morphism is homotopically equivalent to the identity.

Let $x\in NC_n$ and $0\le j \le n-1$. Then,
\begin{eqnarray*}
\delta h_{n-j}(x) &=& \sum_{i=1}^{n+1}(-1)^{i}\delta_i^0
h_{n-j}(x)\\ &=& \sum_{i=1}^{n-j-1}(-1)^ih_{n-j-1}\delta_i^0(x) +
\sum_{i=n-j+2}^{n+1}(-1)^i h_{n-j}\delta_{i-1}^0(x),
\\ h_{n-j-1}\delta(x) &=& \sum_{i=1}^{n}(-1)^i h_{n-j-1} \delta_i^0(x).
\end{eqnarray*}
Hence,
\begin{eqnarray*}
\delta
g_{j}(x)+g_j\delta(x)=
(-1)^{n-j}\sum_{i=n-j+1}^{n}(-1)^{i-1}h_{n-j}\delta_i^0(x)
+(-1)^{n-j-1}\sum_{i=n-j}^n
(-1)^{i}h_{n-j-1}\delta_i^0(x).
\end{eqnarray*}
We consider the decreasing filtration $G^{\ast}$ of $NC_{\ast}$, given by
\begin{equation}
  \label{eq:2}
  G^{j}NC_{n}=\{x\in NC_{n}\mid \delta _{i}^{0}(x)=0, i > \max(n-j,1)\}.
\end{equation}
Then $G^{0}NC_{\ast}=NC_{\ast}$ and for $j\ge n-1$,
$G^{j}NC_{n}=N_{0}C_{n}$.
If $x\in G^{j+1}NC_{\ast}$, then
$\delta g_{j}(x)+g_j\delta(x)=0$ and thus, $H_j(x)=x$. Moreover, if
$x\in G^{j}NC_{\ast}$, then $H_{j}(x)\in G^{j+1}NC_{\ast}$. Thus,
$H_{j}$ is the projector from $G^{j}NC_{\ast}$ to $G^{j+1}NC_{\ast}$.

Thus, the morphism
$\varphi: NC_\ast \rightarrow N_{0}C_\ast  $ given, on $NC_{n}$, by
$\varphi:=H_{n-2} \circ \cdots \circ H_{0} $ forms a chain morphism
homotopically equivalent to the identity. Moreover $\varphi$ is the
projector from $NC\ast$ to
$N_0C_*$. Hence, $\varphi \circ i$ is the identity of $N_0C_*$
while $i\circ \varphi$ is homotopically equivalent to the identity of
$NC_*$.
\end{proof}

\begin{obs} To every cubical abelian group $C_{\cdot}$
there are associated four chain complexes: $C_*,NC_*,N_0C_*$ and
$\widetilde{C}_*$. In some situations it will be necessary to
consider the cochain complexes associated to these chain
complexes. In this case we will write, respectively,
$C^*,NC^*,N_0C^*$ and $\widetilde{C}^*.$
\end{obs}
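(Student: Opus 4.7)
The final statement is a Remark (\texttt{obs} environment) that is purely notational: it records that any cubical abelian group $C_{\cdot}$ gives rise to the four chain complexes $C_*$, $NC_*$, $N_0C_*$, $\widetilde{C}_*$ already introduced in the preceding paragraphs, and it fixes the convention that the corresponding cochain complexes will be denoted $C^*$, $NC^*$, $N_0C^*$, $\widetilde{C}^*$. There is no mathematical assertion here whose truth would need to be established, so strictly speaking no proof is required.

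If one wanted to spell out the content that justifies the remark, the plan would be simply to check that the four associations are well-defined. First, $C_*$ is the chain complex built from the face maps via $\delta = \sum_{i,j}(-1)^{i+j}\delta_i^j$, and $\delta\circ\delta=0$ follows from the cubical identities $\delta_i^\epsilon \delta_j^\eta = \delta_{j-1}^\eta \delta_i^\epsilon$ for $i<j$, by the standard sign-cancellation argument. Second, $D_*\subset C_*$ is a subcomplex since differentials of degenerate elements are degenerate (again by the cubical relations between $\delta_i^j$ and $\sigma_k$), so $\widetilde C_* = C_*/D_*$ inherits a differential. Third, $NC_*$ is a subcomplex of $C_*$ because the differential $\sum_i (-1)^i \delta_i^0$ sends $\bigcap_i \ker \delta_i^1$ into itself, which is again a direct consequence of the cubical identities. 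Fourth, $N_0C_*$ was shown above (Proposition \ref{normalized3} and its hypotheses) to be a subcomplex of $NC_*$ with differential $-\delta_1^0$.

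The cochain convention is then nothing more than the general prescription stated in the Notation subsection: to any chain complex $A_*$ one associates the cochain complex $A^*$ with $A^n = A_{-n}$ (or via the standard relabeling used throughout the paper). Applying this to each of $C_*$, $NC_*$, $N_0C_*$, $\widetilde C_*$ produces $C^*$, $NC^*$, $N_0C^*$, $\widetilde C^*$ respectively, and this completes the verification.

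Since none of these checks are subtle and all follow mechanically from the cubical identities and from the definitions already given, there is no genuine obstacle; the remark is included purely to fix notation for later sections where both the chain and cochain viewpoints will be needed.
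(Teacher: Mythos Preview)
Your assessment is correct: this is a purely notational remark fixing conventions, and the paper provides no proof for it. Your observation that no proof is required, together with the brief sanity checks you offer, is entirely appropriate and matches the paper's treatment.
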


\subsection{Cubical cochain complexes}
Let $X^*_{\cdot}$ be a cubical cochain complex. Then, for every $m$,
the cochain complexes $NX_m^*, N_0X_m^*$ and $\widetilde{X}_m^*$
are defined.

\begin{prop}\label{cubchain}
Let $X^*_{\cdot},Y^*_{\cdot}$ be two cubical cochain complexes and
let $f:X^*_{\cdot}\rightarrow Y^*_{\cdot}$ be a morphism. Assume
that for every $m$, the cochain morphism
$$X^*_m\xrightarrow{f_m} Y^{*}_m $$ is a quasi-isomorphism.
Then, the induced morphisms
$$ NX^{*}_m \xrightarrow{f_m}  NY^{*}_m \quad \textrm{and}\quad
\widetilde{X}^{*}_m
\xrightarrow{f_m}  \widetilde{Y}^{*}_m $$
are quasi-isomorphisms.
\end{prop}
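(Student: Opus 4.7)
My plan is to deduce the result from the canonical direct sum splitting $C_\ast\cong NC_\ast\oplus D_\ast$ for a cubical abelian group recalled in Subsection~\ref{cubical}, applied componentwise in the cochain direction. For each fixed cochain degree $n$, the object $X^n_{\cdot}$ is a cubical abelian group, so at each bidegree $(n,m)$ one obtains a canonical decomposition
\begin{equation*}
X^n_m \;=\; NX^n_m \oplus DX^n_m
\end{equation*}
of abelian groups, and similarly for $Y$.

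The first thing I would verify is that, for each fixed cubical degree $m$, this decomposition respects the cochain differential, so that it promotes to a direct sum of cochain complexes $X^{\ast}_m \cong NX^{\ast}_m \oplus DX^{\ast}_m$. Since a cubical cochain complex is by definition a cubical object in the category of cochain complexes, the cochain differential $d$ commutes with every face $\delta_i^j$ and every degeneracy $\sigma_i$. In particular $d$ preserves $NX^{\ast}_m = \bigcap_i \ker \delta_i^1$ and preserves the subgroup $DX^{\ast}_m$ generated by images of the degeneracies. This is the only step where anything really needs to be checked, and I expect no obstacle since all the maps that build the splitting are constructed from faces and degeneracies.

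With the decomposition as cochain complexes in hand, the conclusion is essentially immediate. Because $f$ is a morphism of cubical cochain complexes it commutes with faces and degeneracies and hence preserves the decomposition, so $f_m = f_m^N \oplus f_m^D$. The hypothesis that $f_m$ is a quasi-isomorphism, together with the fact that cohomology commutes with finite direct sums, forces both $f_m^N$ and $f_m^D$ to be quasi-isomorphisms; in particular $NX^{\ast}_m \to NY^{\ast}_m$ is a quasi-isomorphism. For the quotient complex $\widetilde X^{\ast}_m = X^{\ast}_m/DX^{\ast}_m$, the composite $NX^{\ast}_m \hookrightarrow X^{\ast}_m \twoheadrightarrow \widetilde X^{\ast}_m$ is an isomorphism of cochain complexes by the decomposition (and likewise for $Y$), so the map $\widetilde X^{\ast}_m \to \widetilde Y^{\ast}_m$ is identified with $NX^{\ast}_m \to NY^{\ast}_m$ and is therefore also a quasi-isomorphism.
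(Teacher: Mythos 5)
Your proof is correct and follows essentially the same route as the paper: both rest on the canonical splitting $X^*_m = NX^*_m \oplus DX^*_m$ being compatible with the cochain differential and with $f$, so that the quasi-isomorphism hypothesis descends to each summand. Your additional remark identifying $\widetilde{X}^*_m$ with $NX^*_m$ is the same identification the paper records in its preliminaries, so nothing genuinely new is being done.
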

\begin{proof}
The proposition follows from the decompositions
\begin{eqnarray*}
 H^r(X^*_{m}) & = &  H^r(NX^*_{m}) \oplus  H^r(DX^*_{m}), \\
 H^r(Y^*_{m}) & = & H^r(NY^*_{m}) \oplus  H^r(DY^*_{m}),
 \end{eqnarray*}
and the fact that $f_m$ induces cochain maps
$$NX^*_m \xrightarrow{f_m} NY^*_m,\qquad DX_m^*\xrightarrow{f_m} DY^*_m.$$
\end{proof}

\begin{prop}\label{cubchain2}
Let $X^*_{\cdot}$ be a cubical cochain complex. Then the natural
morphism
$$ H^r(NX^*_{n}) \xrightarrow{f} NH^r(X^*_{n})$$
is an isomorphism for all $n\geq 0$.
\end{prop}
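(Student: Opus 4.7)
The plan is to show that $H^r(NX^*_n)$ and $NH^r(X^*_n)$ coincide as subgroups of $H^r(X^*_n)$, by comparing two direct-sum decompositions of the latter.

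First I would exploit that $d$ commutes with the face and degeneracy maps. This gives two things: for each $r$, the cocycles $Z^r_n := \ker d$ and coboundaries $B^r_n := \im d$ in $X^r_n$ form sub-cubical abelian groups of $X^r_\cdot$; and the decomposition $X^r_n = NX^r_n \oplus DX^r_n$ is respected by $d$, yielding $X^*_n = NX^*_n \oplus DX^*_n$ as cochain complexes. A formal fact about sub-cubical abelian groups $A \subseteq C$ asserts $NA = A \cap NC$ and $DA = A \cap DC$ (the first by definition; for the second, any $a \in A \cap DC$ decomposed in $A$ as $a_N + a_D$ has $a_N = a - a_D \in NC \cap DC = 0$). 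Applied to $Z^r$ and $B^r$, this identifies $H^r(NX^*_n)$ with $NZ^r_n/NB^r_n$ as a subgroup of $H^r(X^*_n) = Z^r_n/B^r_n$, which already lies inside $NH^r(X^*_n)$ since $\delta_i^1 y = 0$ for $y \in NZ^r_n$.

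For the reverse inclusion, I would take $[y] \in NH^r(X^*_n)$ and decompose $y = y_N + y_D$ in $Z^r_n = NZ^r_n \oplus DZ^r_n$. Since $\delta_i^{1,*}[y_N] = 0$ automatically, $[y_D]$ also lies in $NH^r(X^*_n)$. The crucial step is to rewrite $y_D$ using $DZ^r_n = \sum_j \sigma_j(Z^r_{n-1})$, the degenerate part of the sub-cubical group $Z^r_\cdot$ (by the formal fact above). Then $y_D = \sum_j \sigma_j(v_j)$ with $v_j \in Z^r_{n-1}$ cocycles, so $[y_D] = \sum_j \sigma_j^*[v_j] \in DH^r(X^*_n)$. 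Since $H^r(X^*_\cdot)$ is itself a cubical abelian group with $NH^r(X^*_n) \cap DH^r(X^*_n) = 0$, one concludes $[y_D] = 0$ and hence $[y] = [y_N] \in H^r(NX^*_n)$.

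The main obstacle is the identity $DZ^r_n = Z^r_n \cap DX^r_n$, which says that every cocycle lying in the degenerate subgroup of $X^r_n$ is actually a sum of degeneracies applied to \emph{cocycles}. This would fail for arbitrary subgroups and uses essentially the sub-cubical structure on $Z^r_\cdot$. Once it is established, the rest is a formal manipulation with the two direct-sum decompositions of $H^r(X^*_n)$.
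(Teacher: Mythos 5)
Your proof is correct and follows essentially the same route as the paper: both arguments rest on comparing the decomposition $H^r(X^*_n)=NH^r(X^*_n)\oplus DH^r(X^*_n)$ coming from the cubical structure on cohomology with the decomposition $H^r(X^*_n)=H^r(NX^*_n)\oplus H^r(DX^*_n)$ coming from the splitting of complexes $X^*_n=NX^*_n\oplus DX^*_n$. Your only addition is to make explicit, via the sub-cubical-group lemma applied to cocycles and coboundaries, the compatibility of the two decompositions that the paper's proof asserts in one line.
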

\begin{proof} The  cohomology groups $ H^r(X^*_{\cdot})$ have a cubical abelian group structure.
Hence there is a decomposition
$$ H^r(X^*_{\cdot})= NH^r(X^*_{\cdot}) \oplus  DH^r(X^*_{\cdot}).$$
In addition, there is a decomposition $X^*_n=NX^*_n\oplus DX^*_n.$
Therefore
$$ H^r(X^*_{\cdot})= H^r(NX^*_{\cdot}) \oplus  H^r(DX^*_{\cdot}).$$
The lemma follows from the fact that the identity morphism in
$H^r(X^*_{\cdot})$ maps $NH^r(X^*_{\cdot})$ to $H^r(NX^*_{\cdot})$
and $DH^r(X^*_{\cdot})$ to $ H^r(DX^*_{\cdot}).$
\end{proof}

\subsection{Deligne-Beilinson cohomology }\label{dbcohomology}\label{deligne}
In this paper we use the definitions and conventions on Deligne-Beilinson cohomology
given in \cite{Burgos2} and \cite{BurgosKuhnKramer}, chapter 5.

One denotes $\R(p)=(2\pi i)^p \cdot \R \subset \C$.
Let $X$ be a complex algebraic manifold and denote by $E_{\log,\R}^*(X)(p)$
the complex of \emph{real differential forms with logarithmic
  singularities along infinity}, twisted
by $p$. Let $(\mathcal{D}_{\log}^*(X,p),d_{\mmD})$ be the Deligne
complex of differential forms with logarithmic singularities, as
described in \cite{Burgos2}. It computes real Deligne-Beilinson
cohomology of $X$, that is,
$$H^n(\mmD^*_{\log}(X,p)) = H_{\mmD}^n(X,\R(p)).$$
This complex is functorial on $X$.

The  product structure in Deligne-Beilinson cohomology can be
described by a cochain morphism on the Deligne
complex (see \cite{Burgos2}):
\begin{eqnarray*}
 \mmD^n(X,p) \otimes \mmD^m(X,q) & \xrightarrow{\bullet} & \mmD^{n+m}(X,p+q) \\
x \otimes y & \mapsto & x\bullet y.
\end{eqnarray*}
This product satisfies the expected relations:
\begin{enumerate}
\item Graded commutativity: \quad  $x\bullet y = (-1)^{nm}y\bullet
x.$
\item Leibniz rule: \quad $d_{\mmD}(x\bullet y)=
d_{\mmD}x\bullet y +(-1)^n x\bullet d_{\mmD}y.$
\end{enumerate}

\begin{prop}\label{commutassoc}
The Deligne product $\bullet$  is associative up to a natural
homotopy, i.e. there exists
$$h: \mmD^{r}(X,p)\otimes \mmD^{s}(X,q)\otimes \mmD^{t}(X,l)
\rightarrow \mmD^{r+s+t}(X,p+q+l) $$
such that
$$d_{\mmD}h(\omega_1 \otimes \omega_2\otimes \omega_3) +
hd_{\mmD}(\omega_1 \otimes \omega_2\otimes \omega_3)
= (\omega_1 \bullet \omega_2)\bullet \omega_3 - \omega_1 \bullet(
\omega_2\bullet \omega_3).  $$
Moreover, if $\omega_1\in \mmD^{2p}(X,p)$,
$\omega_2\in \mmD^{2q}(X,q)$ and $\omega_3\in \mmD^{2l}(X,l)$
satisfy $d_{\mmD}\omega_i=0$ for all $i$, then
\begin{equation}\label{commutassoc2}
h(\omega_1 \otimes \omega_2\otimes \omega_3)=0.
\end{equation}
\end{prop}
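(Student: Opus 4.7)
The plan is to use the explicit piecewise description of the Deligne product $\bullet$ given in \cite{Burgos2}. Recall that each group $\mmD^n(X,p)$ has a two-component decomposition: a "lower" component (when $n\le 2p-1$) consisting of real forms of degree $n-1$ with prescribed Hodge type, and an "upper" component (when $n\ge 2p$) consisting of real forms of degree $n$ satisfying a Hodge-pure condition. Correspondingly, the product $\bullet$ is defined case by case: in the fully upper range it coincides with the wedge product (followed by a real projection), while in the lower range it carries correction terms built from projection operators onto the real and Hodge-filtered pieces. The failure of strict associativity comes precisely from the interaction of these correction terms.

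First I would set $h\equiv 0$ on every stratum in which all three factors lie in the upper range, since on such triples both iterated products reduce to $r(\omega_1\wedge\omega_2\wedge\omega_3)$ for the same projection $r$, which is strictly associative. This already gives the second assertion: if $\omega_i\in\mmD^{2p_i}(X,p_i)$ with $d_{\mmD}\omega_i=0$, then each factor sits at the boundary of the upper range and satisfies the Hodge-type conditions needed for the product $\omega_i\bullet\omega_j$ to coincide with $\omega_i\wedge\omega_j$. Hence $(\omega_1\bullet\omega_2)\bullet\omega_3=\omega_1\bullet(\omega_2\bullet\omega_3)$ on the nose, and $h$ can be taken to be zero on this stratum; the closedness hypothesis is only used to ensure that the intermediate wedge products land back in the Deligne complex, so that the case distinction applies.

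For the remaining strata, where at least one of $\omega_1,\omega_2,\omega_3$ lies in the lower range, I would define $h(\omega_1\otimes\omega_2\otimes\omega_3)$ by an explicit formula, linear in each variable, built from the triple wedge $\omega_1\wedge\omega_2\wedge\omega_3$ together with the projection operators appearing in the definition of $\bullet$ (essentially, the combination that "pairs up" the two Leibniz-style correction terms of the product). The identity $d_{\mmD}h+hd_{\mmD}=(\omega_1\bullet\omega_2)\bullet\omega_3-\omega_1\bullet(\omega_2\bullet\omega_3)$ then follows from the graded Leibniz rule for $d_{\mmD}$, together with the standard properties of the projections ($r^2=r$ and commutation with $d_{\mmD}$) and the sign rule $(-1)^{\deg}$ inherent to the definition of $\bullet$.

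The main obstacle is the case analysis: depending on which subset of the three factors lies in the lower range, the formula for $h$ and the identity it must satisfy take different shapes, and the sign bookkeeping is delicate because both $\bullet$ itself and the Leibniz rule contribute $(-1)^{\deg}$ factors. However, once the strict-associativity observation on the upper stratum is in place, each remaining sub-case reduces to a short algebraic manipulation with $\partial$, $\bar\partial$ and wedge products, and the vanishing \textit{on closed middle-degree forms} drops out automatically from the definition of $h$ without any further argument.
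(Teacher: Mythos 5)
The paper's own ``proof'' is a citation to \cite{Burgos2}, Theorem 3.3, so the real content of the statement is the explicit construction of $h$ carried out there. Your proposal correctly identifies the right framework --- the piecewise definition of $\bullet$ on the two ranges of the Deligne complex, strict associativity when all three factors lie in the range $n\ge 2p$, and the fact that the obstruction comes from the correction terms involving the Hodge/real projections --- but at the decisive point it stops: the formula for $h$ on the strata where some factor lies in the lower range is never written down, and the verification of $d_{\mmD}h+hd_{\mmD}=(\omega_1\bullet\omega_2)\bullet\omega_3-\omega_1\bullet(\omega_2\bullet\omega_3)$ is asserted to ``follow from the Leibniz rule'' rather than performed. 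Since producing that formula and checking the identity \emph{is} the theorem, this is a genuine gap, not a stylistic omission. Relatedly, the final claim that the vanishing \eqref{commutassoc2} ``drops out automatically from the definition of $h$'' is circular as long as $h$ is undefined on any stratum that could be relevant.

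There is also a structural point you gloss over: the strata you use to define $h$ piecewise are not stable under $d_{\mmD}$. For instance, if $\omega_1\in\mmD^{2p-1}(X,p)$ lies in the lower range, then $d_{\mmD}\omega_1=-2\partial\bar\partial\omega_1$ lands in $\mmD^{2p}(X,p)$, i.e.\ in the upper range. Consequently the term $hd_{\mmD}(\omega_1\otimes\omega_2\otimes\omega_3)$ on a ``mixed'' stratum involves the values of $h$ on adjacent strata (including the one where you set $h\equiv 0$), so the homotopy identity cannot be checked stratum by stratum in isolation; the case analysis must be organized so that these boundary interactions cancel, and this is precisely where the sign bookkeeping and the choice of formula for $h$ are delicate. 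Your outline would become a proof only after writing the explicit $h$ (as in \cite{Burgos2}, built from triple wedge products of the forms and their projections $r_p$) and tracking these cross-stratum terms; as it stands, the argument defers the entire difficulty.
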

\begin{proof}
This is \cite{Burgos2}, Theorem 3.3.
\end{proof}

\subsection{Cohomology with supports}\label{supports} Let $Z$ be a
closed subvariety of a complex algebraic manifold $X$. Consider the
complex
$\mathcal{D}_{\log}^*(X\setminus Z,p), $
i.e. the Deligne complex of differential forms in $X\setminus Z$
with logarithmic singularities along $Z$ and infinity.

\begin{df}
The \emph{ Deligne complex with supports in $Z$} is
defined to be
$$\mmD_{\log,Z}^*(X,p)=s(\mathcal{D}_{\log}^*(X,p)\rightarrow
\mathcal{D}_{\log}^*(X\setminus Z,p)). $$
The \emph{Deligne-Beilinson cohomology with supports
in $Z$} is defined as the cohomology groups of the Deligne complex
with supports in $Z$:
$$H_{\mmD,Z}^n(X,\R(p)):= H^n(\mmD_{\log,Z}^*(X,p)).$$
\end{df}

 \begin{lema}\label{delignesupports}
Let $Z,W$  be two closed subvarieties of a complex algebraic
manifold $X$. Then there is a short exact sequence of Deligne
complexes,
$$0\rightarrow \mmD^*_{\log}(X\setminus Z\cap W,p)
\xrightarrow{i} \mmD^*_{\log}(X\setminus Z,p)\oplus
\mmD^*_{\log}(X\setminus W,p)\xrightarrow{j}
\mmD^*_{\log}(X\setminus Z\cup W,p)\rightarrow 0, $$ where
$i(\alpha)=(\alpha,\alpha)$ and $j(\alpha,\beta)=-\alpha+\beta$.
\end{lema}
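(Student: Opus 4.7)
The assertion is a Mayer--Vietoris short exact sequence for the open cover of $X\setminus(Z\cap W)$ by $U:=X\setminus Z$ and $V:=X\setminus W$, whose intersection is $X\setminus(Z\cup W)$ and whose union is $X\setminus(Z\cap W)$. My plan is to verify injectivity and middle exactness by direct sheaf-theoretic arguments, and then to obtain surjectivity of $j$ by a smooth partition of unity compatible with the logarithmic structure at infinity.

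\emph{Injectivity of $i$.} If $\omega\in\mmD^*_{\log}(X\setminus(Z\cap W),p)$ restricts to zero on both $U$ and $V$, then $\omega$ vanishes on $U\cup V=X\setminus(Z\cap W)$, which is its domain, so $\omega=0$. The identity $j\circ i=0$ is immediate from the definitions. For $\ker j\subseteq\im i$, suppose $(\alpha,\beta)$ satisfies $\alpha|_{U\cap V}=\beta|_{U\cap V}$; then since differential forms form a sheaf and the logarithmic singularity condition along a normal crossings divisor is a local (pointwise on a compactification) condition, $\alpha$ and $\beta$ glue uniquely to an element $\omega\in\mmD^*_{\log}(X\setminus(Z\cap W),p)$ with $i(\omega)=(\alpha,\beta)$.

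\emph{Surjectivity of $j$.} Using Hironaka, fix a smooth compactification $\bar X$ of $X$ in which the closures of $Z$, $W$, $Z\cup W$, $Z\cap W$ together with $\bar X\setminus X$ form a normal crossings divisor. The closed subsets $Z\setminus W$ and $W\setminus Z$ of $X\setminus(Z\cap W)$ are disjoint, so we may choose a function $\phi\in C^{\infty}(\bar X)$ with $\phi\equiv 1$ on a neighborhood of $\overline{Z\setminus W}$ and $\phi\equiv 0$ on a neighborhood of $\overline{W\setminus Z}$. Given $\gamma\in\mmD^*_{\log}(X\setminus(Z\cup W),p)$, define
\begin{equation*}
\alpha:=-\phi\cdot\gamma,\qquad \beta:=(1-\phi)\cdot\gamma,
\end{equation*}
each a priori on $X\setminus(Z\cup W)$. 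Since $\phi$ vanishes in a neighborhood of $W\setminus Z$, the form $\phi\gamma$ extends smoothly by zero across $W\setminus Z$ to all of $U=X\setminus Z$; the log singularities coming from $W$ are killed by $\phi$, so $\alpha\in\mmD^*_{\log}(X\setminus Z,p)$. By the symmetric argument, $\beta\in\mmD^*_{\log}(X\setminus W,p)$. Finally, $j(\alpha,\beta)=\phi\gamma+(1-\phi)\gamma=\gamma$.

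The main obstacle is the third step: producing a smooth cutoff $\phi$ which lives on the compactification $\bar X$ (so that multiplication by $\phi$ and by $1-\phi$ preserves the logarithmic behavior along the boundary NCD) and which genuinely separates the two ``pure'' components $Z\setminus W$ and $W\setminus Z$. Once this $\phi$ is in hand, the fact that $\phi\gamma$ and $(1-\phi)\gamma$ have log singularities only along $Z$ (resp.\ $W$) and $\bar X\setminus X$ follows from the local models defining $\mmD^*_{\log}$, because on a neighborhood of the unwanted divisor $W\setminus Z$ (resp.\ $Z\setminus W$) the cutoff is identically zero. Injectivity and middle exactness are the easy parts; surjectivity is where all the geometric content resides.
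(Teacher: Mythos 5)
The paper does not prove this lemma directly: it simply invokes Theorem~3.6 of \cite{BurgosKuhnKramer2}, which establishes the Mayer--Vietoris property for the logarithmic Deligne complexes. Your plan to prove it by hand is reasonable in outline, and the reduction to exactness at the level of log forms, as well as the injectivity and the inclusion $\im i\subseteq\ker j$, are fine. (Even for middle exactness, though, the assertion that the glued form is automatically logarithmic along $Z\cap W$ and along infinity deserves more care than ``the condition is local on a compactification'': a priori you only know the glued form is log along the larger divisor coming from $Z$, resp.\ $W$, on each piece of the cover, and you must descend to the smaller boundary divisor of a compactification of $X\setminus(Z\cap W)$.)

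The genuine gap is in the surjectivity step. The function $\phi\in C^{\infty}(\overline{X})$ with $\phi\equiv 1$ near $\overline{Z\setminus W}$ and $\phi\equiv 0$ near $\overline{W\setminus Z}$ does not exist whenever $Z\cap W\neq\emptyset$: the sets $Z\setminus W$ and $W\setminus Z$ are disjoint and closed only \emph{inside} $X\setminus(Z\cap W)$, whereas their closures in $X$ (hence in $\overline{X}$) both contain $Z\cap W$, so they cannot be separated by neighborhoods in $\overline{X}$. Disjointness in $X\setminus(Z\cap W)$ only yields a cutoff on $X\setminus(Z\cap W)$, not on $\overline{X}$ --- and this is precisely where all the difficulty of the lemma sits. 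To repair the argument one must pass to an embedded resolution $\widetilde{X}\to\overline{X}$ (a compactification of $X\setminus(Z\cap W)$) on which the strict transforms of $Z$ and $W$ are separated by the exceptional divisor, choose the cutoff there, and then check that $\phi\gamma$ and $(1-\phi)\gamma$ still lie in $\mmD^*_{\log}(X\setminus Z,p)$ and $\mmD^*_{\log}(X\setminus W,p)$ respectively; this uses the independence of $E^*_{\log}$ of the chosen compactification and a local analysis near the exceptional divisor. That verification is essentially the content of the result the paper cites, so your proof as written does not close the loop.
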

\begin{proof}
It follows from \cite{BurgosKuhnKramer2}, Theorem 3.6.
\end{proof}

In addition, Deligne-Beilinson cohomology with supports satisfies
a \emph{semipurity property}.  Namely, let $Z$ be a codimension
$p$ subvariety of an equidimensional complex manifold $X$, and let
$Z_1,\dots,Z_r$ be its codimension $p$ irreducible components. Then
\begin{equation}\label{semipurity}
H_{\mmD,Z}^n(X,\R(p))=\left\{\begin{array}{ll} 0 & n<2p,  \\
\bigoplus_{i=1}^r \R[Z_i] & n=2p. \end{array}\right.
\end{equation}

For the next proposition, let $\delta_{Z}$ denote the current
integration along an irreducible variety $Z$. In the sequel we will
use the conventions of  \cite{BurgosKuhnKramer} \S 5.4 with respect to
the current associated to a locally integrable form and to the current
$\delta _{Z}$.

\begin{prop}\label{cycleclass2} Let $X$ be an equidimensional complex
  algebraic manifold and $Z$ a
codimension $p$ irreducible subvariety of $X$. Let
$j:X\rightarrow \overline{X}$ be a smooth compactification of $X$
(with a normal crossing divisor as its complement) and
$\overline{Z}$ the closure of $Z$ in $\overline{X}$.   The
isomorphism
$$cl: \R[Z]\xrightarrow{\cong} H_{\mmD,Z}^{2p}(X,\R(p))$$ sends
$[Z]$ to $[(j^*w,j^*g)]$, for any $[(w,g)]\in
H_{\mmD,\overline{Z}}^{2p}(\overline{X},\R(p))$ satisfying the
relation of currents  in $\overline{X}$
\begin{equation}\label{cycleclass}
-2\partial\bar{\partial}[g]=[w]-\delta_{\overline{Z}}.
\end{equation}
\end{prop}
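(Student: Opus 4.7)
The plan is to reduce the statement to the compactification $\overline{X}$, where the Deligne cohomology class can be identified by passing to the quasi-isomorphic complex of currents, in which the cycle class is explicitly represented by $\delta _{\overline{Z}}$. First, the open immersion $j$ induces a restriction morphism
\[
j^{*}\colon \mmD^{*}_{\log,\overline{Z}}(\overline{X},p)\longrightarrow \mmD^{*}_{\log,Z}(X,p),
\]
and, by the semipurity isomorphism \eqref{semipurity} applied on both $\overline{X}$ and $X$ (using that $\overline{Z}$ is irreducible of codimension $p$ in $\overline{X}$), both $H_{\mmD,\overline{Z}}^{2p}(\overline{X},\R(p))$ and $H_{\mmD,Z}^{2p}(X,\R(p))$ are canonically isomorphic to $\R[Z]$, with $j^{*}$ corresponding to the identity. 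It therefore suffices to prove that any closed $(w,g)\in \mmD^{2p}_{\log,\overline{Z}}(\overline{X},p)$ satisfying \eqref{cycleclass} represents $cl([\overline{Z}])\in H_{\mmD,\overline{Z}}^{2p}(\overline{X},\R(p))$.

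Next I would pass to the Deligne complex of currents $\mmD^{*}_{D}$ on $\overline{X}$. The natural inclusions $\mmD^{*}_{\log}(\overline{X},p)\hookrightarrow \mmD^{*}_{D}(\overline{X},p)$ and $\mmD^{*}_{\log}(\overline{X}\setminus\overline{Z},p)\hookrightarrow \mmD^{*}_{D}(\overline{X}\setminus\overline{Z},p)$ are quasi-isomorphisms, so they induce a quasi-isomorphism between the corresponding simple complexes with supports in $\overline{Z}$. In the current model, $(\delta _{\overline{Z}},0)$ is closed and, via \eqref{simplekernel}, defines the cycle class $cl([\overline{Z}])$ by the standard description of this class on a smooth compactification (residue/Poincar\'e duality).

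It remains to show that $(w,g)$, viewed in the current model with supports, is cohomologous to $(\delta _{\overline{Z}},0)$. Since $g$ has only logarithmic singularities along $\overline{Z}$ it is locally integrable and extends to a current $[g]$ on all of $\overline{X}$. Applying the simple-complex differential $d_{s}(a,b)=(d_{\mmD}a,\,a-d_{\mmD}b)$ from Section 1.1 gives
\[
d_{s}([g],0)=(d_{\mmD}[g],\,g),
\]
and the Deligne normalization of \cite{BurgosKuhnKramer} \S 5.4 identifies $d_{\mmD}$ on $\mmD^{2p-1}(\cdot,p)$ with $-2\partial\bar{\partial}$ composed with projection onto the Hodge-type $(p,p)$ component; hence the relation \eqref{cycleclass} is exactly $d_{s}([g],0)=(w,g)-(\delta _{\overline{Z}},0)$. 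Pulling back by $j^{*}$ then yields the proposition.

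The main obstacle is the last step: carefully reconciling the Deligne normalization conventions (the factor $-2$ and the signs relating $d_{\mmD}$ with $\partial\bar{\partial}$ on the piece $\mmD^{2p-1}(\cdot,p)$), verifying that $w$ and $g$ lie in the Hodge-type components for which the displayed coboundary equation is literally \eqref{cycleclass}, and checking that the representative $(\delta _{\overline{Z}},0)$ in the current model genuinely corresponds, under the composition of the comparison quasi-isomorphism with \eqref{simplekernel}, to the generator $[\overline{Z}]$ of the semipurity isomorphism $\R[\overline{Z}]\cong H_{\mmD,\overline{Z}}^{2p}(\overline{X},\R(p))$.
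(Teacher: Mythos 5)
The paper gives no argument here: its ``proof'' is the citation to \cite{BurgosKuhnKramer}, Proposition 5.58. Your sketch reconstructs essentially the argument behind that reference --- reduce to $\overline{X}$ by compatibility of the cycle class with restriction along $j$, pass to a current model in which $(\delta_{\overline{Z}},0)$ is the tautological representative of $cl([\overline{Z}])$, and observe that, since $d_{\mmD}=-2\partial\bar{\partial}$ on the piece $\mmD^{2p-1}(\cdot,p)$, the relation \eqref{cycleclass} says exactly that $(w,g)-(\delta_{\overline{Z}},0)=d_{s}([g],0)$ in the simple complex with supports. That central computation is correct (no extra projection is even needed: $\partial\bar\partial$ of a $(p-1,p-1)$-current already lands in the right Hodge type and reality twist). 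The one step that is not right as literally stated is the intermediate quasi-isomorphism $\mmD^{*}_{\log}(\overline{X}\setminus\overline{Z},p)\hookrightarrow \mmD^{*}_{D}(\overline{X}\setminus\overline{Z},p)$: arbitrary currents on the open manifold $\overline{X}\setminus\overline{Z}$ do not compute Deligne--Beilinson cohomology of the open variety, since that theory depends on the log structure at the boundary and the naive Hodge filtration on currents of an open manifold is the wrong one. The correct substitute, which is what \cite{BurgosKuhnKramer} \S 5.4 uses, is the complex of currents on the compact $\overline{X}$ together with the subcomplex of currents supported on $\overline{Z}$ (equivalently the quotient complex playing the role of ``currents on the complement''); the degree-$2p$ cohomology of the supported subcomplex is $\R\,\delta_{\overline{Z}}$, which is where your representative $(\delta_{\overline{Z}},0)$ genuinely lives. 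With that replacement your argument goes through and coincides with the proof of the cited proposition.
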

\begin{proof}
See \cite{BurgosKuhnKramer}, Proposition 5.58.
\end{proof}

In particular, assume that $Z=\Div(f)$ is a principal divisor,
where $f$ is a rational  function on $X$. Then $[Z]$ is
represented by the couple
$$(0,-\frac{1}{2}\log(f\bar{f}))\in H_{\mmD,Z}^{2p}(X,\R(p)).$$

The definition of the cohomology with support in a subvariety can
be extended to the definition of the cohomology with support in a
set of subvarieties of $X$. We explain here the case used in the
sequel. Let $\mathcal{Z}^p$ be a subset of the set of codimension
$p$ closed subvarieties of $X$, that is closed under finite
unions. The inclusion of subsets turns
$\mathcal{Z}^p$ into a directed ordered set. We
define the complex
\begin{equation}
\mathcal{D}_{\log}^*(X\setminus
\mmZ^p,p):=\lim_{\substack{\rightarrow \\ Z\in
\mathcal{Z}^p}}\mathcal{D}_{\log}^*(X\setminus Z,p),
\end{equation}
which is provided with an injective map
$$\mathcal{D}_{\log}^*(X,p) \xrightarrow{i}
\mathcal{D}_{\log}^*(X\setminus \mmZ^p,p). $$ As above, we define
$$\mmD_{\log,\mathcal{Z}^p}^*(X,p):=s(i)^*$$
and  the Deligne-Beilinson cohomology with supports in $\mmZ^p$ as
$$H_{\mmD,\mathcal{Z}^p}^n(X,\R(p)):= H^n(\mmD_{\log,\mathcal{Z}^p}^*(X,p)).$$

\subsection{Real varieties}\label{realvarieties}
A \emph{real variety}  $X$ consists of a couple
$(X_{\C},F_{\infty})$, with $X_{\C}$ a complex algebraic manifold
and $F_{\infty}$ an antilinear involution of $X_{\C}$.

If $X=(X_{\C},F_{\infty})$ is a real variety, we will denote by
$\sigma $ the involution of $\mmD^n_{\log}(X_{\C},p)$ given by
\begin{displaymath}
  \sigma (\eta)=\overline {F_{\infty}^{\ast}\eta}.
\end{displaymath}
Then the \emph{real
Deligne-Beilinson cohomology of $X$} is defined by
$$H_{\mmD}^{n}(X,\R(p)) := H_{\mmD}^n(X_{\C},\R(p))^{\sigma }, $$
where the superindex $\sigma $ means the fixed part under $\sigma $.

The real cohomology of $X$ is expressed as the cohomology of the
real Deligne complex
$$\mmD^n_{\log}(X,p):=
\mmD^n_{\log}(X_{\C},p)^{\sigma },$$
i.e. there is an isomorphism
$$H^n_{\mmD}(X,\R(p))\cong H^n(\mmD^n_{\log}(X,p),d_{\mmD}).$$

\subsection{Truncated Deligne complex}
In the rest of the work, we will consider the Deligne complex
(canonically) truncated at degree $2p$. For simplicity we will denote
it by
$$\tau \mmD_{\log}^*(X,p)=\tau_{\leq 2p} \mmD_{\log}^*(X,p).$$
The truncated Deligne complex with supports in a variety $Z$ is
denoted by $\tau \mmD_{\log,Z}^*(X,p)=\tau_{\leq 2p}
\mmD_{\log,Z}^*(X,p)$ and the truncated Deligne
complex with supports in $\mathcal{Z}^p$ is denoted by $\tau
\mmD_{\log,\mathcal{Z}^p}^*(X,p)=\tau_{\leq 2p}
\mmD_{\log,\mathcal{Z}^p}^*(X,p)$.

Note that, since the truncation is not an exact functor, it is not
true that $\tau
\mmD_{\log,\mathcal{Z}^p}^*(X,p)$ is the simple complex of the map $\tau
\mmD_{\log}^*(X,p)\rightarrow \tau
\mmD_{\log}^*(X\setminus \mathcal{Z}^p,p)$.

\section{Differential forms and higher Chow groups}

In this section we construct a complex of differential forms
which is quasi-isomorphic to the complex $Z^p(X,*)_0\otimes \R$. This
last complex
computes the higher
algebraic Chow groups introduced by Bloch in \cite{Bloch1} with real
coefficients. The key point of this construction is the set of
isomorphisms given in \eqref{semipurity}.

This complex is very similar to the complex introduced by Bloch in
\cite{Bloch4} in order to construct the cycle map for the higher
Chow groups. In both constructions one considers a $2$-iterated
complex of differential forms on a cubical or simplicial scheme. Since
this leads to a second quadrant spectral sequence, to avoid
convergence problems, one has to truncate the complexes involved.
The main difference between both constructions is the direction of the
truncation. We
truncate the $2$-iterated complex at the degree given by the
differential forms, while in \emph{loc. cit.} the complex is truncated
at the degree
given by the simplicial scheme.

\subsection{The cubical Bloch complex}\label{higherchow}\label{cubicalp1}
We recall here the definition and main properties of the
\emph{higher Chow groups} defined by Bloch in \cite{Bloch1}.
Initially, they were defined using the chain complex associated to
a simplicial abelian group. However, since we are interested in the
product structure,
it is more convenient to use the
cubical presentation, as given by Levine in \cite{Levine1}.

Fix a base field $k$ and let $\P^1$ be the projective line over
$k$. Let $ \square = \P^1\setminus \{1\}(\cong \A^1).$
The cartesian
product $(\P^1)^{\cdot}$ has a cocubical scheme structure.
For $i=1,\dots,n$, we denote by $t_{i}\in (k\cup\{\infty\})\setminus
\{1\}$  the absolute coordinate of the $i$-th factor. Then the
coface and codegeneracy maps
are defined as
\begin{eqnarray*}
  \delta_0^i(t_1,\dots,t_n) &=& (t_1,\dots,t_{i-1},0,t_{i},\dots,t_n), \\
\delta_1^i(t_1,\dots,t_n) &=& (t_1,\dots,t_{i-1},\infty,t_{i},\dots,t_n), \\
\sigma^i(t_1,\dots,t_n) &=& (t_1,\dots,t_{i-1},t_{i+1},\dots,t_n).
\end{eqnarray*}
Then, $\square^{\cdot}$ inherits a cocubical scheme structure from that of $(\P^1)^{\cdot}$.
An $r$-dimensional \emph{face} of $\square^n$ is any subscheme of the form
$\delta^{i_1}_{j_1}\cdots \delta^{i_r}_{j_r}(\square^{n-r})$.

We have chosen to represent $\A^{1}$ as $\P^{1}\setminus \{1\}$ so
that the face maps are represented by the inclusion at zero and the
inclusion at infinity. In this way the cubical structure of
$\square^{\cdot}$ is compatible with the cubical structure of
$(\P^{1})^{\cdot}$ in \cite{Burgos1}. In the literature it is often
used the usual representation $\A^{1}=\P^{1}\setminus \{\infty\}$. We
will translate from one definition to another by using the involution
\begin{equation}
  \label{eq:5}
  x\longmapsto \frac{x}{x-1}.
\end{equation}
This involution has the fixed points $\{0,2\}$ and interchanges the
points $1$ and $\infty$.

Let $X$ be an equidimensional quasi-projective algebraic scheme of
dimension $d$ over the field $k$. Let $Z^{p}(X,n)$ be the free
abelian group generated by the codimension $p$ closed irreducible
subvarieties of $X\times \square^{n}$, which intersect properly
all the faces of $\square^n$.
The pull-back by the coface and codegeneracy maps of $\square^{\cdot}$
endow $Z^{p}(X,\cdot)$ with a cubical abelian group structure. Let
$(Z^{p}(X,*),\delta)$ be the associated chain complex and
consider the \emph{normalized chain complex} associated to $Z^p(X,*)$,
$$Z^{p}(X,n)_{0}:= NZ^p(X,n)=\bigcap_{i=1}^{n} \ker \delta^1_i.$$

\begin{df} Let $X$ be a quasi-projective equidimensional algebraic scheme  over a field
$k$. The \emph{higher Chow groups} defined by Bloch are
$$CH^p(X,n):=H_n(Z^{p}(X,*)_{0}).$$
\end{df}

Let $N_0$ be the refined normalized
complex of Definition \eqref{normalized2}. Let $Z^p(X,*)_{00}$ be
the complex with
$$Z^p(X,n)_{00} := N_0Z^p(X,n)=\bigcap_{i=1}^{n} \ker \delta^1_i\cap
\bigcap_{i=2}^{n} \ker \delta^0_i.$$

Fix $n\geq 0$. For every $j=1,\dots,n$, we define a map
\begin{eqnarray}\label{mapsh}
 \square^{n+1} & \xrightarrow{h^j} & \square^n  \\
(t_1,\dots,t_{n+1}) &\mapsto & (t_1,\dots,t_{j-1},1-(t_j-1)(t_{j+1}-1),t_{j+2},\dots,t_{n+1}). \nonumber
\end{eqnarray}
The refined normalized complex of \cite{Bloch3} \S 4.4 is given by considering the elements in the kernel of all faces but $\delta^1_1$, instead of 
$\delta^0_1$ like here. Taking this into account, together with the involution \eqref{eq:5}, the map $h^{j}$ agrees with the map denoted $h^{n-j}$ in
\cite{Bloch3} \S 4.4.  Therefore, the maps $h_{j}$ are smooth, hence
flat, so they induce pull-back maps
\begin{equation}
  \label{eq:4}
  h_{j}: Z^p(X,n) \longrightarrow Z^p(X,n+1),\  j=1,\dots,n+1,
\end{equation}
that satisfy the conditions of Proposition
\ref{normalized3}. Therefore the
 inclusion
$$Z^p(X,n)_{00} := N_0 Z^p(X,n)\rightarrow Z^p(X,n)_0 $$
is a homotopy equivalence (see \cite{Bloch3} \S 4.4).

\subsection{Functoriality.}
\label{functchow}
It follows easily from the definition that the complex $Z^{p}(X,*)_0$
is covariant with respect to proper maps (with a shift in the
grading) and contravariant for flat maps.

Let $f:X\rightarrow Y$ be an arbitrary map between two smooth
schemes $X,Y$. Let $Z_{f}^{p}(Y,n)_0\subset Z^{p}(Y,n)_0$ be the
subgroup generated by the  codimension $p$ irreducible
subvarieties $Z\subset Y\times \square^{n}$, intersecting properly
the faces of $\square^n$ and such that the pull-back $X\times Z$
intersects properly the graph of $f$, $\Gamma_f$. Then,
$Z_{f}^{p}(Y,*)_0$ is a chain complex and  the inclusion of
complexes $Z^{p}_f(Y,*)_{0} \subseteq Z^{p}(Y,*)_0$ is a
quasi-isomorphism. Moreover, the pull-back by $f$ is defined for
algebraic cycles in $Z_{f}^{p}(Y,*)_0$ and hence there is a
well-defined pull-back morphism
$$CH^{p}(Y,n) \xrightarrow{f^*} CH^p(X,n). $$
A proof of this fact can be found in \cite{Levine2}, $\S$3.5. See
also \cite{AmalenduLevine}.

\subsection{Product structure.}\label{chowprod} Let $X$ and $Y$ be
quasi-projective algebraic schemes over $k$. Then, there is a
chain morphism
$$s(Z^{p}(X,*)_0\otimes Z^{q}(Y,*)_0)\xrightarrow{\cup} Z^{p+q}(X\times Y,*)_0$$
inducing exterior products
$$CH^p(X,n)\otimes CH^q(Y,m)\xrightarrow{\cup} CH^{p+q}(X\times Y,n+m).$$

More concretely, let $Z$ be a  codimension $p$ irreducible subvariety
of $X\times \square^n$, intersecting properly the faces of
$\square^n$ and let $W$ be a codimension $q$ irreducible
subvariety of $Y\times \square^m$, intersecting properly the faces
of $\square^m$. Then, the codimension $p+q$ subvariety
$$Z \times W \subseteq X\times \square^n\times Y \times \square^m
\cong X\times Y \times \square^n\times \square^m
\cong X \times Y \times \square^{n+m}, $$ intersects properly the
faces of $\square^{n+m}$. By linearity, we obtain a morphism
$$Z^p(X,n)\otimes Z^q(Y,m)  \xrightarrow{\cup} Z^{p+q}(X\times Y,n+m). $$
It induces a chain morphism on the normalized complexes
$$ s(Z^p(X,*)_{0}\otimes Z^q(Y,*)_{0})  \xrightarrow{\cup}
Z^{p+q}(X\times Y,*)_{0}, $$ and hence there is an external product
\begin{equation}
 \cup: CH^{p}(X,n)\otimes CH^q(Y,m)\rightarrow CH^{p+q}(X\times Y,n+m),
\end{equation}
for all $p,q,n,m$.

If $X$ is smooth, then the pull-back by the diagonal map
$\Delta:X\rightarrow X\times X$ is defined on the higher Chow
groups, $CH^{p}(X\times X,*) \xrightarrow{\Delta^*} CH^p(X,*).$
Therefore, for all $p,q,n,m$, we obtain an internal product
\begin{equation}\label{chowprod1}
\cup: CH^p(X,n)\otimes CH^q(X,m)\rightarrow CH^{p+q}(X\times
X,n+m)\xrightarrow{\Delta^*} CH^{p+q}(X,n+m). \end{equation} In
the derived category of chain complexes,  the internal product is
given by the morphism {\small
$$\xymatrix{s(Z^p(X,*)_{0}\otimes Z^q(X,*)_{0} )
\ar[r]^(0.57){\cup} & Z^{p+q}(X\times X,*)_0 \\ &
Z^{p+q}_{\Delta}(X\times X,*)_{0} \ar[u]_{\sim}
 \ar[r]^(0.55){\Delta^*}& Z^{p+q}(X,*)_{0}.}$$}

 \begin{prop} Let $X$ be a quasi-projective algebraic scheme over
   $k$. The pairing  \eqref{chowprod1}
 defines an associative product on
 $CH^*(X,*)=\bigoplus_{p,n}CH^p(X,n)$. This product
 is graded commutative with respect to the degree
 given by $n$.
 \end{prop}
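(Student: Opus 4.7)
The plan is to verify both properties by reducing the internal product on $CH^{\ast}(X,\ast)$ to statements about the external product on $X^{n}$ combined with pull-back along iterated diagonals, and to check those statements at the cycle level (modulo boundaries) in the normalized cubical complex.

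For graded commutativity, fix cycles $\alpha \in Z^{p}(X,n)_{0}$ and $\beta \in Z^{q}(X,m)_{0}$. The external products $\alpha\cup\beta$ and $\beta\cup\alpha$ live on $X\times X\times \square^{n+m}$ and differ by two maps: the transposition $\tau_{X}\colon X\times X\to X\times X$ and the transposition of cubical factors $\tau_{\square}\colon \square^{n}\times \square^{m}\to \square^{m}\times \square^{n}$. A standard computation on the normalized chain complex of a product of cubical abelian groups shows that $\tau_{\square}^{\ast}$ acts by $(-1)^{nm}$ on the degree $(n,m)$ component. On the other hand, $\Delta\circ\tau_{X}=\Delta$, so after composing with $\Delta^{\ast}$ the swap $\tau_{X}$ disappears. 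Combining these two observations yields $\beta\cup\alpha=(-1)^{nm}\alpha\cup\beta$ in $CH^{p+q}(X,n+m)$.

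For associativity, the external cup product is strictly associative at the level of cycles: for $\alpha,\beta,\gamma$ as above (with $\gamma\in Z^{r}(X,l)_{0}$), the cycle $\alpha\times\beta\times\gamma\subseteq X^{3}\times \square^{n+m+l}$ does not depend on the order in which the pairwise products are formed, using the associativity of the cartesian product of varieties and of the cocubical structure on $\square^{\cdot}$. The two iterated internal products $(\alpha\cup\beta)\cup\gamma$ and $\alpha\cup(\beta\cup\gamma)$ are both obtained by pulling back $\alpha\times\beta\times\gamma$ along the triple diagonal $\Delta^{(3)}\colon X\to X^{3}$, which factors as either $(\Delta\times\mathrm{id})\circ\Delta$ or $(\mathrm{id}\times\Delta)\circ\Delta$. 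Functoriality of the pull-back on higher Chow groups then gives equality on $CH^{p+q+r}(X,n+m+l)$.

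The main technical obstacle is that the pull-back $\Delta^{\ast}$ is only defined on the subcomplex $Z^{p+q}_{\Delta}(X\times X,\ast)_{0}$ of cycles meeting the diagonal properly, so one is forced to work with a zig-zag involving the quasi-isomorphism $Z^{p+q}_{\Delta}(X\times X,\ast)_{0}\hookrightarrow Z^{p+q}(X\times X,\ast)_{0}$ as in diagram \eqref{chowprod1}. For associativity one has to iterate this procedure and check that both factorizations of $\Delta^{(3)}$ yield cycles that lie in the appropriate refined subcomplex on which the iterated pull-back is defined; this is done by an elementary moving-lemma type argument, and is already implicit in the theory developed by Levine in \cite{Levine1}. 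On homology the choice of representative within the quasi-isomorphic subcomplex is irrelevant, so the identities established above descend to equalities in $CH^{\ast}(X,\ast)$, proving both graded commutativity and associativity.
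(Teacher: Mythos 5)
The paper does not prove this proposition itself; it simply cites Levine, \emph{Bloch's higher Chow groups revisited}, Theorem 5.2. Your associativity argument is essentially the standard one (strict associativity of the external product plus the two factorizations of the triple diagonal and functoriality of pull-back on the refined subcomplexes), and modulo the moving-lemma bookkeeping you acknowledge, that part is sound.

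The commutativity argument, however, has a genuine gap at its central step. You assert that ``a standard computation on the normalized chain complex of a product of cubical abelian groups shows that $\tau_{\square}^{\ast}$ acts by $(-1)^{nm}$ on the degree $(n,m)$ component.'' This is false at the chain level: if $Z\subset X\times X\times\square^{n+m}$ is a normalized cycle, then $\sigma_{n,m}^{\ast}(Z)$ is a genuinely different subvariety (the cubical coordinates are permuted), not $(-1)^{nm}Z$, and normalization does not change this. The correct statement is only that $z\mapsto(-1)^{nm}\sigma_{n,m}^{\ast}(z)$ is \emph{chain homotopic} to the identity, and producing that homotopy is precisely the nontrivial content of the commutativity proof. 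It is done by Levine (\emph{loc.\ cit.}, \S 4) --- and reproduced in \S 5 of the present paper for the arithmetic setting --- via the correspondences $W_{n}\subset\square^{n+1}\times\P^{1}$, the maps $h_{n}=\varphi_{n}^{\ast}\pi_{n}^{\ast}$, and the resulting operators $H_{n,m}$ satisfying $\delta H_{n,m}(\alpha)-H_{n-1,m}\delta_{1}^{0}(\alpha)-(-1)^{n}H_{n,m-1}\delta_{n+1}^{0}(\alpha)=\alpha-(-1)^{nm}\sigma_{n,m}^{\ast}(\alpha)$ on the refined normalized complex (where all faces except $\delta_{1}^{0}$ and $\delta_{n+1}^{0}$ vanish); one also needs the homotopy equivalence $N_{0}\hookrightarrow N$ to reduce to that subcomplex. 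Your proof as written replaces this construction by an unproved (and, as a chain-level claim, incorrect) assertion, so the sign $(-1)^{nm}$ in the conclusion is not justified. Everything else in your argument would go through once this homotopy is supplied.
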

 \begin{proof}
See \cite{Levine1}, Theorem 5.2.
 \end{proof}

\subsection{Differential forms and affine lines}\label{diffformsaffine}
For every $n,p\geq 0$, let $\tau\mathcal{D}_{\log}^*(X\times
\square^n,p)$ be the truncated Deligne complex of differential forms in
$X\times \square^n$, with logarithmic singularities at infinity. The structural maps of the cocubical
scheme $\square^{\cdot}$ induce a cubical structure
on $\tau\mathcal{D}_{\log}^r(X\times \square^*,p)$ for every $r$ and $p$.

Consider the $2$-iterated cochain complex
$$\mathcal{D}_{\A}^{r,-n}(X,p)=\tau\mathcal{D}_{\log}^r(X\times \square^n,p),$$
with differential $(d_{\mmD},\delta=\sum_{i=1}^n
(-1)^i(\delta_i^0-\delta_i^1))$.
Let
$$\mmD_{\A}^*(X,p)=s(\mathcal{D}_{\A}^{*,*}(X,p))$$
be the associated simple complex. Hence its differential $d_s$ in
$\mmD_{\A}^*(X,p)$ is given, for every $\alpha\in
\mathcal{D}_{\A}^{r,-n}(X,p)$, by $d_s(\alpha) = d_{\mmD}(\alpha)
+(-1)^r \delta (\alpha)$.
Since we are using cubical structures, this complex does not compute the
right cohomology and we have to normalize it.

For every $r,n$, we write
 $$\mmD_{\A}^{r,-n}(X,p)_0=\tau\mmD_{\log}^r(X\times \square^n,p)_0:=
N\tau\mmD_{\log}^r(X\times \square^n,p).$$
Hence $\mmD_{\A}^{\ast,\ast}(X,p)_0$ is the normalized $2$-iterated
complex and we denote by $\mmD^*_{\A}(X,p)_0$ the associated simple
complex.

\begin{prop}\label{affine3}
The natural morphism of complexes
$$\tau\mmD^*_{\log}(X,p)= \mmD_{\A}^{*,0}(X,p)_0 \rightarrow
\mmD_{\A}^{*}(X,p)_0$$
is a quasi-isomorphism.
\end{prop}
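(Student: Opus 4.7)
The plan is to view $\mmD_{\A}^*(X,p)_0$ as the total complex of the double complex
$$\mmD_{\A}^{r,-n}(X,p)_0 = N\tau\mmD_{\log}^r(X\times \square^{\cdot}, p)_n,$$
with vertical differential $d_{\mmD}$ and horizontal differential $\delta$, and to study the spectral sequence associated to the filtration by the cubical degree $-n$. The truncation $r\le 2p$ together with $n\ge 0$ forces each total degree to receive contributions from only finitely many bidegrees, so convergence is automatic.

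To identify the $E_1$ page, I fix $n$ and compute the $d_{\mmD}$-cohomology of the column $\mmD_{\A}^{*,-n}(X,p)_0$. This column is exactly the degree-$n$ piece of the cubical normalization of the cubical cochain complex
$$X^{*}_{\cdot}:=\tau\mmD^{*}_{\log}(X\times \square^{\cdot},p),$$
so Proposition \ref{cubchain2} yields a canonical isomorphism
$$H^{r}\bigl(\mmD_{\A}^{*,-n}(X,p)_0\bigr)\;\cong\; NH^{r}(X^{*}_{\cdot})_{n} \;=\; N\bigl(\tau_{\le 2p}H^{r}_{\mmD}(X\times \square^{\cdot},\R(p))\bigr)_{n}.$$

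The key geometric input is then the homotopy invariance of real Deligne-Beilinson cohomology: the projection $X\times \square^{n}\to X\times \square^{n-1}$ dropping the $i$-th coordinate induces an isomorphism on $H^{*}_{\mmD}(-,\R(p))$, and since the face $\delta^{i}_{1}$ (insertion of $\infty$ in the $i$-th factor) is a scheme-theoretic section of this projection, its pull-back is the inverse isomorphism on cohomology. Consequently, for every $n\ge 1$, the intersection $\bigcap_{i=1}^{n}\ker (\delta^{i}_{1})^{*}$ in the cubical abelian group of cohomology vanishes, so the $E_{1}$ page is concentrated in the single column $n=0$, where it equals $H^{r}(\tau\mmD^{*}_{\log}(X,p))$.

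The spectral sequence therefore collapses at $E_{1}$, and the resulting isomorphism on total cohomology is precisely the one induced by the inclusion of the column $n=0$ into the total complex, i.e.\ by the morphism in the statement. The proof thus reduces to combining Proposition \ref{cubchain2} with homotopy invariance of Deligne-Beilinson cohomology; the only mild subtlety is to verify that the normalization conventions line up and that the edge morphism at $(0,r)$ is realized by the natural inclusion $\tau\mmD^{*}_{\log}(X,p)\hookrightarrow \mmD_{\A}^{*}(X,p)_0$.
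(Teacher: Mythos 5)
Your proof is correct and follows essentially the same route as the paper: the same second-quadrant spectral sequence (with convergence guaranteed by the truncation at degree $2p$), and the same vanishing of the columns $n>0$ via homotopy invariance of Deligne--Beilinson cohomology combined with the fact that normalized elements lie in the kernel of the faces $\delta_i^1$. The only cosmetic difference is that you route the vanishing through Proposition \ref{cubchain2} (identifying $H^r$ of the normalized column with the normalized part of the cohomology cubical group), whereas the paper argues directly that $H^*(\tau\mmD^{*}_{\log}(X\times \square^n,p)_0)$ is a direct summand killed by the isomorphism $\delta_1^1\circ\cdots\circ\delta_1^1$; these are equivalent.
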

\begin{proof}
 Consider the second quadrant spectral sequence with $E_1$ term given by
$$E_1^{r,-n}= H^r(\mmD_{\A}^{*,-n}(X,p)_0).$$
Since
\begin{displaymath}
  \mmD_{\A}^{r,-n}(X,p)_0=0,\text{ for }r<0 \text{ or }r>2p,
\end{displaymath}
this spectral sequence converges to the cohomology groups
$H^*(\mmD_{\A}^{*}(X,p)_0)$. This is the main reason why we use
the truncated complexes.

If we see that, for all $n>0$, the cohomology of the complex
$\mmD_{\A}^{*,-n}(X,p)_0$ is zero, the spectral sequence degenerates
and the proposition is proven. By the homotopy invariance of
Deligne-Beilinson cohomology, there is an isomorphism
$$\delta_1^1 \circ \cdots \circ
\delta_1^1:H^*(\tau\mmD^{*}_{\log}(X\times \square^n,p)) \rightarrow
H^*(\tau\mmD^{*}_{\log}(X,p)).$$ By definition, the image of
$H^*(\tau\mmD^{*}_{\log}(X\times \square^n,p)_0)$ by this isomorphism
is zero. Since $H^*(\tau\mmD^{*}_{\log}(X\times \square^n,p)_0)$ is a
direct summand of $H^*(\tau\mmD^{*}_{\log}(X\times \square^n,p))$, it
vanishes for all $n>0$.
\end{proof}

We define the complex $\mmD_{\A}^*(X,p)_{00}$ to be the simple
complex associated to the $2$-iterated complex with
$$\mmD_{\A}^{r,-n}(X,p)_{00}=
N_0\tau\mmD_{\log}^r(X\times \square^n,p).$$

\begin{cor}\label{affine4}
The natural morphism of complexes
$$\tau\mmD^*_{\log}(X,p)= \mmD_{\A}^{*,0}(X,p)_{00} \rightarrow \mmD_{\A}^{*}(X,p)_{00}$$
is a quasi-isomorphism.
\end{cor}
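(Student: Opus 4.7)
The plan is to reduce the corollary to Proposition \ref{affine3} by showing that the inclusion
$$\mmD_{\A}^{*}(X,p)_{00} \hookrightarrow \mmD_{\A}^{*}(X,p)_{0}$$
is a quasi-isomorphism. Since both complexes are identical in bidegree $(r,0)$ (namely $\tau\mmD^{r}_{\log}(X,p)$), it suffices to establish this inclusion as a quasi-isomorphism; then composition with the quasi-isomorphism of Proposition \ref{affine3} gives the desired result.

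To prove this, I would apply Proposition \ref{normalized3} to the cubical abelian group $\tau\mmD^{r}_{\log}(X\times \square^{\cdot},p)$ for each fixed $r$ and $p$. The maps $h^j:\square^{n+1}\to \square^n$ from \eqref{mapsh} are smooth, hence induce pull-back maps
$$h_j:\tau\mmD^{r}_{\log}(X\times \square^n,p)\longrightarrow \tau\mmD^{r}_{\log}(X\times \square^{n+1},p),\quad j=1,\dots,n+1,$$
on differential forms. The identities \eqref{eq:3} required by Proposition \ref{normalized3} depend only on the cocubical structure of $\square^{\cdot}$ and on the combinatorics of how $h^{j}$ interacts with the coface and codegeneracy maps; hence they hold for pull-back on forms exactly as they hold for pull-back on cycles in \eqref{eq:4}. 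Proposition \ref{normalized3} therefore yields a homotopy equivalence of chain complexes (in the cubical direction)
$$\mmD_{\A}^{r,-*}(X,p)_{00}=N_0\tau\mmD^{r}_{\log}(X\times \square^{*},p)\hookrightarrow N\tau\mmD^{r}_{\log}(X\times \square^{*},p)=\mmD_{\A}^{r,-*}(X,p)_{0}.$$

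The next step is to promote this row-wise homotopy equivalence to a quasi-isomorphism of simple complexes. The explicit chain homotopy exhibited in the proof of Proposition \ref{normalized3} is built out of the maps $h_j$ (via the formulas $g_j=(-1)^{n-j}h_{n-j}$ and their iterates), and pull-back by smooth morphisms of complex manifolds commutes with the Deligne differential $d_{\mmD}$. Consequently, the homotopy equivalence above is compatible with the first differential $d_{\mmD}$, so it is in fact a homotopy equivalence of $2$-iterated complexes $\mmD_{\A}^{*,*}(X,p)_{00}\hookrightarrow \mmD_{\A}^{*,*}(X,p)_{0}$. Passing to the associated simple complexes, this gives a quasi-isomorphism $\mmD_{\A}^{*}(X,p)_{00}\hookrightarrow \mmD_{\A}^{*}(X,p)_{0}$.

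The only delicate point is the compatibility of the homotopy with $d_{\mmD}$, but this is automatic from the fact that $h^j$ is smooth and that pull-back by a smooth morphism defines a morphism of Deligne complexes. Combining the quasi-isomorphism just obtained with Proposition \ref{affine3}, the composite
$$\tau\mmD^{*}_{\log}(X,p)=\mmD_{\A}^{*,0}(X,p)_{00}\longrightarrow \mmD_{\A}^{*}(X,p)_{00}\hookrightarrow \mmD_{\A}^{*}(X,p)_{0}$$
is a quasi-isomorphism, so the first arrow, which is the map in the statement of the corollary, must also be a quasi-isomorphism.
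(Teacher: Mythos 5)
Your proof is correct and follows essentially the same route as the paper, which simply cites Proposition \ref{affine3} together with Proposition \ref{normalized3} (applied with the maps $h_j$ induced by the $h^{j}$ of \eqref{mapsh}) and Proposition \ref{cubchain}. Your explicit verification that the homotopy of Proposition \ref{normalized3} is compatible with $d_{\mmD}$ (because it is built from pull-backs along the smooth maps $h^j$) is exactly the point the paper leaves to the cited propositions.
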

\begin{proof}
It follows from Proposition \ref{affine3}, Proposition
\ref{normalized3} (using as maps $\{h_j\}$ the ones induced by the
maps $h^{j}$ defined
in \ref{mapsh}) and Proposition \ref{cubchain}.
\end{proof}

\subsection{A complex with differential forms for the higher Chow
  groups}\label{diffformschow}
Let $\mathcal{Z}^p_{n,X}$ be the set of all codimension $p$
closed
subvarieties of $X\times \square^n$ intersecting properly the
faces of $\square^n$. We consider it as an ordered set by
inclusion. When there is no source of confusion, we
simply write $\mmZ^p_n$ or even $\mmZ^p$. Consider the cubical
abelian group
\begin{equation}
\mcH^p(X,*):=H^{2p}_{\mmD,\mathcal{Z}^p_*}(X\times
\square^*,\R(p)),
\end{equation}
with faces and degeneracies induced by those of $\square^{\cdot}$. Let
$\mcH^p(X,*)_0$ be the
associated normalized complex.

\begin{lema}\label{blochdif1} Let $X$ be a  complex algebraic
  manifold. For every $p\geq 0$, there is an isomorphism of chain
  complexes
$$
f_1: Z^p(X,*)_{0}\otimes \R  \xrightarrow{\cong}  \mcH^p(X,*)_0,
$$
sending $z$ to $cl(z)$.
\end{lema}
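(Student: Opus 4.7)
The plan is to establish the isomorphism degreewise at each level $n$ and then check compatibility with the cubical structures on both sides. Fix $n\ge 0$. By construction, $\mmD^*_{\log,\mmZ^p_n}(X\times \square^n,p)$ is built from a filtered colimit over $\mmZ^p_n$, which is directed by inclusion because it is closed under finite unions. Since filtered colimits commute with cohomology, we obtain
\begin{equation*}
H^{2p}_{\mmD,\mmZ^p_n}(X\times \square^n,\R(p)) \;\cong\; \lim_{\substack{\longrightarrow \\ Z\in \mmZ^p_n}} H^{2p}_{\mmD,Z}(X\times \square^n,\R(p)).
\end{equation*}
By the semipurity property \eqref{semipurity}, each term on the right is the free $\R$-module on the codimension $p$ irreducible components of $Z$. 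For $Z\subset Z'$ in $\mmZ^p_n$, any codimension $p$ irreducible component of $Z$ is (by dimension reasons) also a codimension $p$ irreducible component of $Z'$, so the transition maps are inclusions of summands. Hence the colimit is canonically the free $\R$-module generated by all codimension $p$ irreducible subvarieties of $X\times \square^n$ that intersect every face properly, which is exactly $Z^p(X,n)\otimes \R$.

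This degreewise bijection $f_1$ sends an irreducible cycle $Z$ to its class $cl(Z)$, with an explicit representative provided by Proposition \ref{cycleclass2}. To assemble these level-$n$ isomorphisms into a morphism of cubical abelian groups, I need to show that $f_1$ intertwines the face maps. On $Z^p(X,n)$ these are pullbacks of cycles by the cubical cofaces of $\square^{\cdot}$, well-defined precisely because of the proper-intersection condition; on $\mcH^p(X,n)$ they are the induced pullbacks in Deligne-Beilinson cohomology with supports. Compatibility then follows from the functoriality of the cycle class under a pullback transverse to the cycle, a standard fact which in this setting can also be checked directly from the current-theoretic description of Proposition \ref{cycleclass2}: the proper-intersection hypothesis ensures that the restriction of a Green current representative to the face is again a Green current for the restricted cycle.

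Finally, applying the normalization functor $N$ to both sides preserves the degreewise isomorphism, yielding the asserted chain isomorphism $f_1\colon Z^p(X,*)_0\otimes \R \xrightarrow{\cong} \mcH^p(X,*)_0$. The main technical obstacle is the compatibility between the cycle class map and the cubical pullback: it rests entirely on the proper-intersection hypothesis built into the definition of $Z^p(X,n)$. Once this is in place, the rest of the argument is a routine combination of semipurity, a filtered colimit calculation, and the standard normalization procedure for cubical abelian groups.
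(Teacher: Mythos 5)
Your proof is correct and follows the same route as the paper, whose entire argument is the one-line appeal ``It follows from the isomorphism \eqref{semipurity}.'' You have simply made explicit the steps the paper leaves implicit: the filtered-colimit computation over $\mmZ^p_n$, the identification of the colimit as the free $\R$-module on codimension-$p$ irreducible subvarieties meeting the faces properly, the compatibility of the cycle class with the cubical face maps, and the passage to normalized complexes.
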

\begin{proof}
It follows from the isomorphism \eqref{semipurity}.
\end{proof}

\begin{obs}
Observe that the complex $\mcH^{p}(X,*)_0$ has the same functorial
properties as $Z^p(X,*)_{0}\otimes \R$.
\end{obs}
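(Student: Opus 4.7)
The plan is to show that each operation on $Z^p(X,*)_0 \otimes \R$ from $\S$\ref{functchow} and $\S$\ref{chowprod}---flat pullback, contravariance under arbitrary maps between smooth schemes, proper pushforward with a shift in grading, and the external/internal product---admits a natural description directly on $\mcH^p(X,*)_0$ via the functoriality of Deligne-Beilinson cohomology with supports, and that under the isomorphism $f_1$ of the preceding lemma these cohomological operations correspond to the cycle-theoretic ones. In principle one could simply transport the cycle-level operations along $f_1$; giving the intrinsic description makes the naturality transparent and fits the framework used to define the regulator $\rho$ in the next section.

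For flat $f \colon X \to Y$, the base change $f \times \Id \colon X \times \square^n \to Y \times \square^n$ is flat and sends $\mmZ^p_{n,Y}$ into $\mmZ^p_{n,X}$, so pullback on Deligne-Beilinson cohomology with supports yields a chain map $\mcH^p(Y,*)_0 \to \mcH^p(X,*)_0$. For an arbitrary map $f$ between smooth schemes, one mimics the cycle construction by letting $\mcH^p_f(Y,*)_0 \subseteq \mcH^p(Y,*)_0$ be the subcomplex supported on those $Z \in \mmZ^p_{n,Y}$ whose pullback $X \times_Y Z$ meets the graph $\Gamma_f$ properly; via $f_1$ this identifies with $Z^p_f(Y,*)_0 \otimes \R$, so the inclusion is a quasi-isomorphism and on it the ordinary pullback in cohomology with supports is defined. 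Proper pushforward with the required grading shift is provided by the Gysin morphism in Deligne-Beilinson cohomology.

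For the external product, the fact that a cycle in $\mmZ^p_{n,X}$ times a cycle in $\mmZ^q_{m,Y}$ lies in $\mmZ^{p+q}_{n+m,X\times Y}$ under $\square^n \times \square^m \cong \square^{n+m}$ makes the cup product
\[
  \mcH^p(X,*)_0 \otimes \mcH^q(Y,*)_0 \longrightarrow \mcH^{p+q}(X\times Y,*)_0
\]
in cohomology with supports well defined, and the internal product on a smooth $X$ is obtained as in \eqref{chowprod1} by restricting to supports transverse to the diagonal and pulling back by $\Delta$. The main issue will be verifying that each of these cohomological operations agrees, under $f_1$, with its cycle-theoretic counterpart; this reduces to naturality of the cycle class map under flat pullback and proper pushforward together with its multiplicativity in cohomology with supports, both of which follow from the explicit description of $\mathrm{cl}$ recalled in Proposition \ref{cycleclass2}.
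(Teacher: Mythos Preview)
The paper offers no proof of this remark at all: it is stated as an immediate observation following the isomorphism $f_1\colon Z^p(X,*)_0\otimes\R\xrightarrow{\cong}\mcH^p(X,*)_0$ of the preceding lemma. An isomorphism of chain complexes transports any functorial structure from source to target, and that is the entire content the authors intend. Indeed, when the product is later needed (beginning of \S\ref{defiprod}), they write explicitly that ``the external product on the complex $\mcH^*(X,*)_0$ can be defined by means of this isomorphism.''

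Your approach is correct but does substantially more: you give intrinsic definitions of each operation (flat pullback, moving-lemma pullback, proper pushforward, external product) directly in Deligne--Beilinson cohomology with supports, and then argue that these agree with the transported operations via the compatibility of the cycle class map. This is a genuine strengthening---it shows the structures on $\mcH^p(X,*)_0$ are natural in their own right rather than merely inherited---and it anticipates constructions the paper carries out later piecemeal (the subcomplex $\mcH^p_f(Y,*)_0$ in Proposition~\ref{chowpullbackth}, the compatibility with pullback discussed after Lemma~\ref{funct1}). What the paper's bare transport buys is brevity; what your intrinsic description buys is that the morphisms $g_1$ and $\rho$ in the diagram defining $\widehat{CH}^p(X,n)$ are visibly compatible with functoriality without further argument.
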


Let $\mmD^{*,*}_{\A,\mathcal{Z}^p}(X,p)_0$ be the $2$-iterated
cochain complex, whose component of bidegree $(r,-n)$ is
$$\tau\mmD_{\log,\mathcal{Z}_n^p}^{r}(X\times \square^n,p)_0
=N\tau\mmD_{\log,\mathcal{Z}_n^p}^{r}(X\times \square^n,p)
=N\tau_{\le 2p}\mmD_{\log,\mathcal{Z}_n^p}^{r}(X\times
\square^n,p), $$
and whose differentials are
 $(d_{\mmD},\delta)$. As usual,
we denote by $\mmD^{*}_{\A,\mathcal{Z}^p}(X,p)_0$ the associated
simple complex and by $d_s$ its differential.

Let $\mmD^{2p-*}_{\A,\mathcal{Z}^p}(X,p)_0$ be the chain complex
whose $n$-graded piece is $\mmD^{2p-n}_{\A,\mathcal{Z}^p}(X,p)_0$.

\begin{prop}\label{difcubs}
For every $p\geq 0$, the family of morphisms
\begin{eqnarray*}
\mmD^{2p-n}_{\A,\mathcal{Z}^p}(X,p)_0 & \xrightarrow{g_1}&
\mcH^{p}(X,n)_0 \\
((\omega_n,g_n),\dots,(\omega_0,g_0))  & \mapsto &
[(\omega_n,g_n)]
\end{eqnarray*}
defines a quasi-isomorphism of chain complexes.
\end{prop}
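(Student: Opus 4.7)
The strategy is to analyze the simple complex $\mmD^*_{\A,\mathcal{Z}^p}(X,p)_0$ via the second quadrant spectral sequence associated to its filtration by the first (Deligne) degree, exactly as in the proof of Proposition \ref{affine3}, and then show that $g_1$ is the edge morphism of this spectral sequence. The key input is the semipurity property \eqref{semipurity} of Deligne--Beilinson cohomology with supports in codimension $p$.

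First I would check that $g_1$ is well-defined at the level of chain complexes: for an element $x=((\omega_{n},g_{n}),\dots,(\omega_{0},g_{0}))$ of total degree $2p-n$, the top component $(\omega_n,g_n)$ sits in bidegree $(2p,-n)$, hence by the definition of $\tau_{\le 2p}$ is automatically a $d_{\mmD}$-cycle and defines a class in $H^{2p}_{\mmD,\mathcal{Z}^p_n}(X\times\square^n,\R(p))=\mcH^p(X,n)$, which is in the normalized part since $x$ is. The total cycle condition at bidegree $(2p,-(n-1))$ forces $\delta(\omega_n,g_n)$ to be a $d_{\mmD}$-coboundary, so $\delta[(\omega_n,g_n)]=0$ in $\mcH^p(X,n-1)_0$; if $x=d_s y$, then $[(\omega_n,g_n)]$ is in the image of the cubical differential. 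Thus $g_1$ is a morphism of chain complexes.

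Next I would set up the spectral sequence
\begin{displaymath}
 E_1^{r,-n}=H^{r}\bigl(\mmD^{*,-n}_{\A,\mathcal{Z}^p}(X,p)_0\bigr)
 \Longrightarrow H^{r-n}\bigl(\mmD^*_{\A,\mathcal{Z}^p}(X,p)_0\bigr).
\end{displaymath}
It converges because, thanks to the truncation $\tau_{\le 2p}$ and the fact that the cubical index satisfies $n\ge 0$, the double complex has bounded support in $r$ (namely $0\le r\le 2p$). By Proposition \ref{cubchain2},
\begin{displaymath}
 E_1^{r,-n}=NH^{r}(\tau\mmD^*_{\log,\mathcal{Z}^p_n}(X\times\square^n,p)),
\end{displaymath}
and the truncation at $2p$ preserves cohomology in degrees $\le 2p$, while killing higher ones. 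Now semipurity \eqref{semipurity}, extended to the directed set $\mathcal{Z}^p_n$ by passing to the direct limit (which commutes with cohomology), gives $H^{r}_{\mmD,\mathcal{Z}^p_n}(X\times\square^n,\R(p))=0$ for $r<2p$, and for $r=2p$ yields $\mcH^p(X,n)$. Therefore $E_1$ is concentrated in the single row $r=2p$, where it equals $\mcH^p(X,n)_0$. A direct inspection of the double complex shows that the differential $d_1$ on this row is, up to sign, the cubical differential $\delta$ induced on $\mcH^p(X,*)_0$, so that
\begin{displaymath}
 E_2^{2p,-n}=H_n(\mcH^p(X,*)_0).
\end{displaymath}

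Since $E_1$ (hence $E_2$) is concentrated on one row, the sequence degenerates and produces an isomorphism $H^{2p-n}(\mmD^*_{\A,\mathcal{Z}^p}(X,p)_0)\cong H_n(\mcH^p(X,*)_0)$. Finally I would identify this isomorphism with the map induced by $g_1$: the edge morphism of the spectral sequence sends a total cycle to the cohomology class of its highest $r$-component $(\omega_n,g_n)$, which is precisely the definition of $g_1$. I expect the most delicate point to be the identification of $d_1$ on the surviving row with the cubical differential on $\mcH^p(X,*)_0$ (taking the normalization into account) and the compatibility of semipurity with the direct limit defining cohomology with supports in $\mathcal{Z}^p_n$; everything else is essentially a bookkeeping argument.
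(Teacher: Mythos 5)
Your proposal is correct and follows essentially the same route as the paper: the second quadrant spectral sequence of the truncated double complex, semipurity \eqref{semipurity} to kill $E_1^{r,-n}$ for $r<2p$ (via Proposition \ref{cubchain}), the truncation to kill $r>2p$, and Proposition \ref{cubchain2} to identify the surviving row with $\mcH^p(X,n)_0$. The extra details you flag (well-definedness of $g_1$ as a chain map and the identification of the edge morphism) are left implicit in the paper but are handled exactly as you describe.
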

\begin{proof}
Consider the second quadrant spectral sequence with $E_1$-term
$$E_1^{r,-n}= H^r(\tau\mmD^{*}_{\log,\mathcal{Z}^p}(X\times \square^n,p)_0).$$
By construction, $E_1^{r,-n}=0$ for all $r>2p$.
Moreover, for all $r<2p$ and for all $n$, the semipurity property
of Deligne-Beilinson cohomology implies that
\begin{equation}\label{purity1}
H^r(\tau\mmD^*_{\log,\mathcal{Z}^p}(X\times \square^n,p))=0.
\end{equation}
Hence, by Proposition \ref{cubchain},
$$H^r(\tau\mmD^*_{\log,\mathcal{Z}^p}(X\times \square^n,p)_0)=0,\qquad r<2p. $$
Therefore, the $E_1$-term of the spectral sequence is
$$ E_1^{r,-n} = \left\{ \begin{array}{ll}
0 & \textrm{ if }r\neq 2p, \\
H^{2p}(\tau\mmD^*_{\log,\mathcal{Z}^p}(X\times \square^n,p)_0) &
\textrm{ if }r= 2p.
\end{array}\right.$$
Finally, from Proposition \ref{cubchain2}, it follows that the
natural map
$$H^{2p}(\tau\mmD^*_{\log,\mathcal{Z}^p}(X\times
\square^n,p)_0) \rightarrow \mcH^{p}(X,n)_0$$ is an isomorphism,
and the proposition is proved.
\end{proof}

We denote
$$CH^{p}(X,n)_{\R} = CH^{p}(X,n)\otimes {\R}. $$

\begin{cor}\label{blochdif2}
Let $z\in CH^{p}(X,n)_{\R}$ be the class of an algebraic cycle $z$
in $X\times \square^n$. By the isomorphisms of Lemma
\ref{blochdif1} and Proposition \ref{difcubs}, the algebraic cycle
$z$ is represented, in $H^{2p-n}(\mmD_{\A,\mathcal{Z}^p}(X,p)_0)$,
by any cycle $$((\omega _n,g_n), \dots,(\omega _0,g_0))\in
 \mmD_{\A,\mathcal{Z}^p}^{2p-n}(X,p)_0$$ such that
 $$cl(z)=[(\omega _n,g_n)].$$
\end{cor}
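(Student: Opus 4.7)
The plan is to chase the definitions through the two isomorphisms involved. By Lemma \ref{blochdif1} and Proposition \ref{difcubs}, the identification of $CH^p(X,n)_{\R}$ with $H_n(\mmD_{\A,\mmZ^p}^{2p-*}(X,p)_0)$ is realized by the zig-zag
$$Z^p(X,*)_0\otimes\R \xrightarrow{f_1} \mcH^p(X,*)_0 \xleftarrow{g_1} \mmD_{\A,\mmZ^p}^{2p-*}(X,p)_0,$$
in which $g_1$ is a quasi-isomorphism. Consequently, to prove that $\xi:=((\omega_n,g_n),\dots,(\omega_0,g_0))$ represents the class of $z$, it suffices to verify that the images $f_1([z])$ and $g_1([\xi])$ agree in $H_n(\mcH^p(X,*)_0)$.

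These images are easy to read off from the explicit definitions. By Lemma \ref{blochdif1}, $f_1$ sends the class of $z$ to $cl(z)\in\mcH^p(X,n)_0$. By the construction of $g_1$ in Proposition \ref{difcubs}, the class of $\xi$ maps to $[(\omega_n,g_n)]\in\mcH^p(X,n)_0$, i.e.\ to its top component. The hypothesis $cl(z)=[(\omega_n,g_n)]$ then yields $f_1([z])=g_1([\xi])$, so $[\xi]$ corresponds to $[z]$ under $g_1^{-1}\circ f_1$, as required.

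The one delicate point, which I would address before the comparison above, is that the description of $g_1$ on cycles is consistent: the top component $(\omega_n,g_n)$ of a cycle of total degree $2p-n$ must indeed define a class in $\mcH^p(X,n)_0$. The vanishing $d_{\mmD}(\omega_n,g_n)=0$ holds automatically because of the canonical truncation at degree $2p$, and the vanishing of $\delta[(\omega_n,g_n)]$ in $\mcH^p(X,n-1)_0$ follows from the cycle condition $d_s\xi=0$, which forces $\delta(\omega_n,g_n)$ to be $d_{\mmD}$-exact modulo the next component $(\omega_{n-1},g_{n-1})$. Passing from the cycle condition to such a statement about cohomology classes is the essential input, but it is already implicit in the degeneration of the second-quadrant spectral sequence used in the proof of Proposition \ref{difcubs}, so no substantial new argument is needed; the corollary is essentially just the diagrammatic content of the preceding two results.
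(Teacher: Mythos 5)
Your argument is correct and coincides with what the paper intends: the corollary is stated without proof precisely because it is the immediate diagram chase you describe, with $f_1(z)=cl(z)$, $g_1(\xi)=[(\omega_n,g_n)]$, and $g_1$ a quasi-isomorphism, so the hypothesis $cl(z)=[(\omega_n,g_n)]$ forces $[\xi]=g_1^{-1}f_1[z]$. Your additional remark that $g_1$ sends cycles to cycles is already guaranteed by Proposition \ref{difcubs}, which establishes $g_1$ as a morphism of chain complexes, so nothing further is needed.
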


\begin{obs}\label{cycleclassbloch} Our construction differs from the construction given by
Bloch, in \cite{Bloch4}, in two points:
\begin{itemize}
\item He considered the $2$-iterated complex of differential forms
on the simplicial scheme $\A^n$, instead of the differential forms
on the cubical scheme $\square^n$. \item In order to ensure the
convergence of the spectral sequence in the proof of last
proposition, he truncated the $2$-iterated complex in the
direction given by the affine schemes.
\end{itemize}
\end{obs}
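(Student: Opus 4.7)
The plan is to justify both statements of the remark by a direct comparison with the relevant sections of \cite{Bloch4}, since neither assertion requires a new mathematical argument beyond what has already been set up in $\S$\ref{diffformschow}. For the first bullet, the verification is essentially definitional. Bloch's construction starts from the cosimplicial algebraic scheme $\A^{\cdot}$ with $\A^{n}=\Spec k[t_{0},\dots,t_{n}]/(\sum t_{i}-1)$, and considers the $2$-iterated complex of logarithmic differential forms on $X\times \A^{n}$, together with the coface and codegeneracy maps inherited from the cosimplicial structure. In the present paper, $\A^{\cdot}$ is replaced throughout by the cocubical scheme $\square^{\cdot}=(\P^{1}\setminus\{1\})^{\cdot}$ introduced in $\S$\ref{cubicalp1}, with coface maps given by inclusion at $0$ and at $\infty$. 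To write this point up I would simply point to the relevant pages of \cite{Bloch4} and note that the only change is the geometric model of the standard simplex/cube; the reason for the change here is compatibility with the cubical presentation of $Z^{p}(X,\ast)_{0}$ used throughout the paper, which is better adapted to the product structure.

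For the second bullet the plan is to inspect the convergence argument Bloch uses. The natural spectral sequence associated to the $2$-iterated complex $\mmD_{\A,\mmZ^{p}}^{r,-n}(X,p)_{0}$ lives in the second quadrant and, without a boundedness condition, does not converge to the cohomology of the associated simple complex. In the proof of Proposition \ref{difcubs}, convergence is forced by the truncation $\tau_{\leq 2p}$ in the direction of the differential form degree $r$: after this truncation, $E_{1}^{r,-n}=0$ for $r>2p$, so for each total degree only finitely many rows contribute. I would then verify by direct inspection of \cite{Bloch4} that Bloch, working with simplicial schemes, instead bounds the simplicial degree $-n$ so that only finitely many columns contribute to each total degree. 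Both truncations cure the convergence problem, but they are applied in transverse directions of the double complex and they lead to genuinely different underlying complexes of differential forms.

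The main obstacle, such as it is, is not mathematical but textual: one needs to locate the precise passage in \cite{Bloch4} where the truncation is introduced and match it carefully to the truncation $\tau_{\leq 2p}$ used here, being attentive to sign and indexing conventions so as to confirm that the direction of truncation really does differ rather than merely appearing to differ. Once that bookkeeping is done, the remark follows simply by displaying the two constructions side by side, and the write-up reduces to two short comparison paragraphs, one for each bullet point.
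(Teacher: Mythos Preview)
Your proposal is correct and appropriate. The statement in question is a \emph{Remark} (the \texttt{obs} environment), and the paper provides no proof for it; it is simply asserted as an observation comparing the present construction to Bloch's. Your plan to verify it by direct textual comparison with \cite{Bloch4} is the only sensible approach, and your identification of the two differences (simplicial $\A^{\cdot}$ versus cubical $\square^{\cdot}$, and truncation in the simplicial direction versus truncation $\tau_{\leq 2p}$ in the form degree) matches exactly what the remark asserts.
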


\subsection{Functoriality of $\mmD_{\A,\mathcal{Z}^p}^*(X,p)_0$}

In many aspects, the complex $\mmD_{\A,\mathcal{Z}^p}^*(X,p)_0$
behaves  like the complex $Z^{*}(X,*)_0$.

\begin{lema}\label{funct1}
Let $f:X\rightarrow Y$ be a flat map between two equidimensional
complex algebraic manifolds. Then there is a pull-back map
$$f^*: \mmD_{\A,\mathcal{Z}^p}^*(Y,p)_0\rightarrow \mmD_{\A,\mathcal{Z}^p}^*(X,p)_0.$$
\end{lema}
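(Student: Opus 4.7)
The plan is to construct the pullback on each building block of the simple complex $\mmD_{\A,\mmZ^p}^*(X,p)_0$ and then assemble. For each $n\ge 0$ set $F_n=f\times \Id_{\square^n}\colon X\times\square^n\to Y\times\square^n$; since $f$ is flat, so is $F_n$.

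First I would show that $F_n^{-1}$ induces an order-preserving map of directed sets $\mmZ^p_{n,Y}\to \mmZ^p_{n,X}$. If $Z\subset Y\times\square^n$ has codimension $p$ and meets properly every face $F\subset \square^n$, then flatness of $F_n$ guarantees that $F_n^{-1}(Z)$ is of pure codimension $p$ in $X\times\square^n$; and for any face $F$ of $\square^n$, the restriction $F_n|_{X\times F}=f\times \Id_F$ is again flat, so $F_n^{-1}(Z)\cap(X\times F)=(F_n|_{X\times F})^{-1}(Z\cap(Y\times F))$ has pure codimension $p$ in $X\times F$. Hence $F_n^{-1}(Z)\in \mmZ^p_{n,X}$, and $F_n^{-1}$ preserves inclusions.

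Next, using the contravariant functoriality of $\mmD^*_{\log}$, for every $Z\in\mmZ^p_{n,Y}$ with preimage $Z'=F_n^{-1}(Z)$ the map $F_n$ induces compatible morphisms
\begin{equation*}
F_n^*\colon \mmD^*_{\log}(Y\times\square^n,p)\to \mmD^*_{\log}(X\times\square^n,p),\quad
F_n^*\colon \mmD^*_{\log}((Y\times\square^n)\setminus Z,p)\to \mmD^*_{\log}((X\times\square^n)\setminus Z',p).
\end{equation*}
Because $F_n^{-1}$ is order-preserving, taking the direct limit over $\mmZ^p_{n,Y}$ yields
\begin{equation*}
F_n^*\colon \mmD^*_{\log,\mmZ^p_{n,Y}}(Y\times\square^n,p)\longrightarrow \mmD^*_{\log,\mmZ^p_{n,X}}(X\times\square^n,p),
\end{equation*}
which is compatible with the truncation $\tau_{\le 2p}$ since truncation is functorial on morphisms of complexes.

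Finally, the family $\{F_n\}_n$ is a morphism of cocubical schemes: coface and codegeneracy maps only touch the $\square^n$ factor and hence commute with $F_n$. Consequently the family $\{F_n^*\}_n$ commutes with all faces $\delta_i^j$ and degeneracies $\sigma_i$, so it restricts to the normalized subcomplexes $N\tau\mmD^*_{\log,\mmZ^p_{n,Y}}(Y\times\square^n,p)$. Combined with the compatibility with $d_\mmD$, this gives a morphism of $2$-iterated cochain complexes and therefore a morphism of their associated simple complexes, namely the desired $f^*\colon \mmD_{\A,\mmZ^p}^*(Y,p)_0\to\mmD_{\A,\mmZ^p}^*(X,p)_0$. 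The only nontrivial input is the first step, where flatness is used twice: to preserve codimension of the cycle and to preserve the proper intersection with every face.
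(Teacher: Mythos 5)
Your proposal is correct and follows essentially the same route as the paper: flatness gives a well-defined codimension-$p$ pull-back cycle meeting the faces properly, the pull-back of logarithmic forms on complements passes to the direct limit over supports, and compatibility with the cubical structure yields a morphism of $2$-iterated (hence simple) complexes. The paper is slightly terser — it invokes \cite{GilletSouleIHES} 1.3.3 for the pull-back of forms and simply asserts compatibility with $\delta$ where you spell out the normalization — but the argument is the same.
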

\begin{proof}
We will see that in fact there is a map of iterated complexes
$$ f^*: \mmD_{\A,\mathcal{Z}^p}^{r,-n}(Y,p)\rightarrow \mmD_{\A,\mathcal{Z}^p}^{r,-n}(X,p).$$
Let  $Z$ be a codimension $p$ subvariety of $Y\times \square^n$
intersecting properly the faces of $\square^n$. Since $f$ is flat,
there is a well-defined cycle $f^*(Z)$. It is a codimension $p$ cycle of
$X\times \square^n$ intersecting properly the faces of
$\square^n$, and whose support is $f^{-1}(Z)$. Then, by
\cite{GilletSouleIHES} 1.3.3, the pull-back of differential forms
gives a morphism
$$\tau\mmD_{\log}^*(Y\times \square^n\setminus
Z,p)\xrightarrow{f^*}\tau\mmD_{\log}^*(X\times \square^n\setminus
f^{-1}(Z),p).$$
Hence, there is an induced morphism
  $$\tau\mmD_{\log}^*(Y\times \square^n\setminus
  \mmZ^p_{Y},p)\xrightarrow{f^*}\lim_{\substack{\rightarrow
  \\ Z\in \mmZ^p_{Y}}}\tau\mmD_{\log}^*(X\times \square^n\setminus f^{-1}(Z),p)\rightarrow \tau\mmD_{\log}^*(X\times
  \square^n\setminus \mmZ^p_{X},p),$$
and thus, there is a pull-back morphism
$$f^*: \mmD_{\A,\mathcal{Z}^p}^{*,-n}(Y,p)\rightarrow \mmD_{\A,\mathcal{Z}^p}^{*,-n}(X,p) $$
compatible with the differential $\delta$.
\end{proof}
\begin{obs}
The pull-back defined here agrees with the pull-back defined by
Bloch under the isomorphisms of Lemma \ref{blochdif1} and
Proposition \ref{difcubs}. Indeed, let $f:X\rightarrow Y$ be a
flat map. Then, if $Z$ is an irreducible subvariety of $Y$ and
$(\omega,g)$ a couple representing the class of $[Z]$ in the
Deligne-Beilinson cohomology with support, then the couple
$(f^*\omega,f^*g)$ represents the class of $[f^{*}(Z)]$ (see
\cite{GilletSouleIHES}, Theorem 3.6.1).
\end{obs}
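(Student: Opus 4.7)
The plan is to reduce the remark to the compatibility of the Deligne--Beilinson cycle class map (with supports) with flat pull-back, which is Theorem 3.6.1 of \cite{GilletSouleIHES}. The only non-formal input is this compatibility statement; everything else is bookkeeping with Corollary \ref{blochdif2}.

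First I would translate the assertion via Corollary \ref{blochdif2}. Let $Z$ be an irreducible codimension $p$ subvariety of $Y\times \square^n$ meeting the faces of $\square^n$ properly, and let $((\omega_n,g_n),\dots,(\omega_0,g_0)) \in \mmD^{2p-n}_{\A,\mathcal{Z}^p}(Y,p)_0$ be a cycle representing the class of $Z$ in $CH^p(Y,n)_\R$ under $g_1^{-1}\circ f_1$, so that by Corollary \ref{blochdif2} one has $cl(Z)=[(\omega_n,g_n)]$ in $H^{2p}_{\mmD,\mathcal{Z}^p}(Y\times \square^n,\R(p))$. Applying $f^{*}$ as defined componentwise in Lemma \ref{funct1} produces an element $((f^{*}\omega_n,f^{*}g_n),\dots,(f^{*}\omega_0,f^{*}g_0)) \in \mmD^{2p-n}_{\A,\mathcal{Z}^p}(X,p)_0$, and in order to prove the remark it suffices to check that $[(f^{*}\omega_n,f^{*}g_n)] = cl(f^{*}Z)$ in $H^{2p}_{\mmD,\mathcal{Z}^p}(X\times \square^n,\R(p))$. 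Indeed, once this is established, Corollary \ref{blochdif2} applied on the $X$ side identifies this new representative with Bloch's flat pull-back $f^{*}[Z]$, which is precisely the required agreement of pull-backs.

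The remaining identity is the compatibility of the cycle class with flat pull-back, applied to the flat morphism $f\times \mathrm{id}_{\square^n}\colon X\times \square^n \to Y\times \square^n$ and to the subvariety $Z$; the scheme-theoretic flat pull-back of $Z$ used here agrees with Bloch's $f^{*}Z$ by the construction recalled in \S\ref{functchow}. This compatibility is exactly Theorem 3.6.1 of \cite{GilletSouleIHES} and constitutes the main (and only) non-formal step of the argument; it has already been invoked in the proof of Lemma \ref{funct1}, so beyond this citation no further work is required.
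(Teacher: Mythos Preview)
Your proposal is correct and follows essentially the same approach as the paper: the remark itself contains only the one-line justification that $(f^{*}\omega,f^{*}g)$ represents $cl(f^{*}Z)$ by \cite{GilletSouleIHES}, Theorem 3.6.1, and you have simply made the surrounding bookkeeping with Corollary~\ref{blochdif2} explicit. One small inaccuracy: the proof of Lemma~\ref{funct1} cites \cite{GilletSouleIHES}~1.3.3 (for pull-back of forms), not Theorem~3.6.1, so that result is being invoked here for the first time rather than ``already''; this does not affect the argument.
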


\begin{prop}\label{funct2}
Let $f:X\rightarrow Y$ be a morphism of equidimensional complex
algebraic manifolds. Let $\mmZ^p_f$ be the subset consisting of
the subvarieties $Z$ of $Y\times \square^n$ intersecting properly
the faces of $\square^n$ and such that $X\times Z \times
\square^n$ intersects properly the graph of $f$, $\Gamma_f$. Then,
\begin{enumerate}[(i)] \item
The complex $\mmD_{\A,\mathcal{Z}_f^p}^*(Y,p)_0$ is
quasi-isomorphic to $ \mmD_{\A,\mathcal{Z}^p}^*(Y,p)_0$. \item
There is a well-defined pull-back
  $$f^*:\mmD_{\A,\mathcal{Z}_f^p}^*(Y,p)_0\rightarrow
  \mmD_{\A,\mathcal{Z}^p}^*(X,p)_0.
  $$
  \end{enumerate}
\end{prop}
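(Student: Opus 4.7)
For (i), the approach is to reduce to the known quasi-isomorphism $Z^p_f(Y,*)_0 \hookrightarrow Z^p(Y,*)_0$ coming from the functoriality discussion in Section 2.2, via Proposition \ref{difcubs} and Lemma \ref{blochdif1}. Specifically, I would replay the spectral sequence argument of Proposition \ref{difcubs} with $\mmZ^p$ replaced by $\mmZ^p_f$: the essential inputs are the semipurity property of Deligne--Beilinson cohomology with supports and the truncation at degree $2p$, and both depend only on the cycles being of pure codimension $p$, which is preserved on passing to $\mmZ^p_f$. This produces a quasi-isomorphism
$$\mmD_{\A, \mmZ^p_f}^{2p-*}(Y,p)_0 \xrightarrow{\sim} \mcH^p_f(Y,*)_0,$$
where $\mcH^p_f(Y,n) := H^{2p}_{\mmD,(\mmZ^p_f)_n}(Y\times \square^n,\R(p))$. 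The cycle-class computation in Lemma \ref{blochdif1} then gives $\mcH^p_f(Y,*)_0 \cong Z^p_f(Y,*)_0\otimes \R$. Fitting these isomorphisms into the evident commutative square with $\mmD_{\A, \mmZ^p}^{2p-*}(Y,p)_0 \simeq Z^p(Y,*)_0\otimes \R$ and using that $Z^p_f(Y,*)_0 \hookrightarrow Z^p(Y,*)_0$ is a quasi-isomorphism (see Section 2.2 and \cite{Levine2}, \S 3.5) proves that the inclusion $\mmD_{\A, \mmZ^p_f}^{*}(Y,p)_0 \to \mmD_{\A, \mmZ^p}^{*}(Y,p)_0$ is a quasi-isomorphism.

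For (ii), I construct the pull-back directly on the bigraded iterated complex and then descend to the normalized simple complex. Given $Z\in (\mmZ^p_f)_n$, the proper intersection of $X\times Z$ with $\Gamma_f\times\square^n$ implies that the inverse image $(f\times \text{id}_{\square^n})^{-1}(Z)$ has pure codimension $p$ in $X\times \square^n$, and standard intersection theory provides a well-defined pull-back cycle $f^*(Z)\in \mmZ^p_n$. The morphism $f\times \text{id}_{\square^n}$ restricts to $X\times \square^n\setminus (f\times \text{id})^{-1}(Z)\to Y\times \square^n\setminus Z$, and by functoriality of the Deligne complex of forms with logarithmic singularities this induces a pull-back on the truncated complexes, exactly as in the proof of Lemma \ref{funct1}. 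Taking the direct limit over $Z\in (\mmZ^p_f)_n$ and combining with the tautological pull-back on the ambient complex yields a morphism of iterated complexes
$$f^*: \mmD_{\log,\mmZ^p_f}^{*}(Y\times \square^n,p)\longrightarrow \mmD_{\log,\mmZ^p}^{*}(X\times \square^n,p)$$
compatible with the cubical differential $\delta$, because the face and degeneracy maps of $\square^{\cdot}$ act only on the $\square^{\cdot}$-factor and therefore commute with $f\times \text{id}$. Passing to the normalized simple complex gives the desired map.

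The main technical point is verifying that the pull-back of differential forms with logarithmic singularities is well-defined, since neither $Z$ nor its inverse image under $f\times \text{id}$ need be a normal crossings divisor. This is handled by choosing compatible smooth compactifications and resolutions of singularities for both $Z$ and $(f\times \text{id})^{-1}(Z)$, reducing the pull-back of log forms to the standard NCD case; once this is granted, the remainder of the construction is routine bookkeeping with direct limits, the truncation at degree $2p$, and the normalization of the cubical complex.
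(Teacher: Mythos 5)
Your proposal is correct and follows essentially the same route as the paper: part (i) is proved via the commutative square comparing $Z^p_f(Y,*)_0\otimes\R\to\mmD^*_{\A,\mmZ^p_f}(Y,p)_0$ with $Z^p(Y,*)_0\otimes\R\to\mmD^*_{\A,\mmZ^p}(Y,p)_0$, rerunning the semipurity/spectral-sequence argument of Proposition \ref{difcubs} for the restricted family of supports and invoking the moving-lemma quasi-isomorphism $Z^p_f(Y,*)_0\subseteq Z^p(Y,*)_0$; part (ii) is the same pull-back construction on forms with logarithmic singularities as in Lemma \ref{funct1}, passed through the direct limit and the normalized simple complex. The paper simply cites \cite{GilletSouleIHES}~1.3.3 for the well-definedness of the pull-back of log forms, where you sketch the compactification/resolution argument explicitly, but the content is the same.
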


\begin{proof} Arguing as in the proof of the previous proposition,
there is a pull-back map
$$f^*:\tau\mmD_{\log}^*(Y\times \square^n\setminus \mmZ^p_{f},p)
\xrightarrow{f^*} \tau\mmD_{\log}^*(X\times \square^n\setminus
\mmZ^p,p),$$
 inducing a morphism
$$f^*: \mmD_{\A,\mathcal{Z}_f^p}^{*}(Y,p)\rightarrow \mmD_{\A,\mathcal{Z}^p}^{*}(X,p),
$$ and hence a morphism
$$f^*: \mmD_{\A,\mathcal{Z}_f^p}^{*}(Y,p)_0 \rightarrow
\mmD_{\A,\mathcal{Z}^p}^{*}(X,p)_0.
$$
All that remains to be shown is that the inclusion
$$\mmD_{\A,\mathcal{Z}_f^p}^{*}(Y,p)_0 \xrightarrow{i} \mmD_{\A,\mathcal{Z}^p}^{*}(Y,p)_0$$ is a quasi-isomorphism.
By the quasi-isomorphism mentioned in paragraph \ref{functchow}
and the quasi-isomorphism of Proposition \ref{difcubs},
there is a commutative diagram
$$ \xymatrix{
Z_f^p(Y,*)_0 \otimes \R \ar[r]\ar[d]_{\sim} & \mmD_{\A,\mathcal{Z}_f^p}^{*}(Y,p)_0 \ar[d]^{i}  \\
Z^p(Y,*)_0 \otimes \R \ar[r]^{\sim} &
\mmD_{\A,\mathcal{Z}^p}^{*}(Y,p)_0. }$$ The proof that the upper
horizontal arrow is a quasi-isomorphism is analogous to the proof
of Proposition \ref{difcubs}. Thus, we deduce   that $i$ is a
quasi-isomorphism.
\end{proof}

\section{Algebraic cycles and the Beilinson regulator}\label{regulator2}

In this section we define a chain morphism, in the derived
category of chain complexes, that induces in homology the
Beilinson regulator.

The construction is analogous to the definition of the cycle class
map given by Bloch in \cite{Bloch4}, with the minor modifications
mentioned in \ref{cycleclassbloch}. However, in \cite{Bloch4} there is
no proof of the fact that the composition of the isomorphism
$K_n(X)_{\Q}\cong \bigoplus_{p\geq 0}CH^p(X,n)_{\Q}$ with the
cycle class map agrees with the Beilinson regulator.

\subsection{Definition of the regulator}\label{regulator3}
Consider the map of iterated cochain complexes defined by the
projection onto the first factor
\begin{eqnarray*}
\mmD_{\A,\mathcal{Z}^p}^{r,-n}(X,p)=\tau_{\le 2p}s(\mmD_{\log}^{\ast}(X\times
\square^n,p)\to \mathcal{D}_{\log}^{\ast}(X\times
\square^n\setminus \mmZ^p,p))^r & \xrightarrow{\rho} &
\tau\mmD_{\log}^r(X\times \square^n,p)
\\ (\omega ,g) & \mapsto & \omega .
\end{eqnarray*}
It induces a cochain morphism
\begin{eqnarray*}
\mmD_{\A,\mathcal{Z}^p}^{*}(X,p)_0 & \xrightarrow{\rho} &
\mmD_{\A}^*(X,p)_0,
\end{eqnarray*}
and hence a chain morphism
\begin{equation}\label{regulator4}
 \mmD_{\A,\mathcal{Z}^p}^{2p-*}(X,p)_0 \xrightarrow{\rho}
\mmD_{\A}^{2p-*}(X,p)_0.\end{equation} The morphism induced by
$\rho$ in homology, together with the isomorphisms of Propositions
  \ref{affine3}, \ref{blochdif1} and \ref{difcubs}, induce a morphism
\begin{equation}\label{regulator}\rho: CH^p(X,n) \rightarrow CH^p(X,n)_{\R} \rightarrow
H_{\mmD}^{2p-n}(X,\R(p)).\end{equation} By abuse of notation, it will
also be denoted by $\rho$.

By corollary \ref{blochdif2}, we deduce that, if $z\in
Z^p(X,n)_0$, then
$$\rho(z) =  (\omega _n,\dots,\omega _0),$$
for any cycle $((\omega _n,g_n),\dots,(\omega _0,g_0))\in
\mmD_{\A,\mathcal{Z}^p}^{2p-n}(X,p)_0$  such that
$[(\omega _n,g_n)]=cl(z)$.

\begin{prop}
\begin{enumerate}[(i)]
\item The morphism $\rho:\mmD_{\A,\mathcal{Z}^p}^{2p-*}(X,p)_0
\rightarrow \mmD_{\A}^{2p-*}(X,p)_0$ is contravariant for flat
maps. \item The induced morphism $\rho:CH^p(X,n)\rightarrow
H_{\mmD}^{2p-n}(X,\R(p))$ is contravariant for arbitrary maps.
\end{enumerate}
\end{prop}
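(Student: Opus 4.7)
For part (i), I would argue that the statement is essentially a matter of unwinding the definitions. The pullback along a flat morphism $f:X\rightarrow Y$ on the source complex was constructed in Lemma \ref{funct1} by applying $(f\times \mathrm{id})^{*}$ componentwise to representatives $(\omega ,g)$, while the pullback on the target $\mmD_{\A}^{*}(Y,p)_{0}\rightarrow \mmD_{\A}^{*}(X,p)_{0}$ is simply the usual pullback of differential forms with logarithmic singularities on $Y\times \square ^{n}$. Since $\rho $ is literally the projection $(\omega ,g)\mapsto \omega $, the square
\begin{equation*}
\xymatrix{
\mmD_{\A,\mmZ^{p}}^{2p-*}(Y,p)_{0} \ar[r]^{\rho } \ar[d]_{f^{*}} & \mmD_{\A}^{2p-*}(Y,p)_{0} \ar[d]^{f^{*}} \\
\mmD_{\A,\mmZ^{p}}^{2p-*}(X,p)_{0} \ar[r]^{\rho } & \mmD_{\A}^{2p-*}(X,p)_{0}
}
\end{equation*}
commutes strictly at the cochain level, hence in the associated chain complexes.

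For part (ii), the obstruction is that for an arbitrary morphism $f$ one cannot pull back arbitrary cycles in $\mmZ ^{p}$, only those in the subfamily $\mmZ ^{p}_{f}$ whose pullback meets the graph of $f$ properly. This is precisely what Proposition \ref{funct2} handles: it provides a quasi-isomorphism $\mmD_{\A,\mmZ ^{p}_{f}}^{*}(Y,p)_{0}\xrightarrow{\sim } \mmD_{\A,\mmZ ^{p}}^{*}(Y,p)_{0}$ together with a pullback $f^{*}:\mmD_{\A,\mmZ ^{p}_{f}}^{*}(Y,p)_{0}\rightarrow \mmD_{\A,\mmZ ^{p}}^{*}(X,p)_{0}$. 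Since both $\rho $ and the target pullback $f^{*}:\mmD_{\A}^{*}(Y,p)_{0}\rightarrow \mmD_{\A}^{*}(X,p)_{0}$ only involve pullback of differential forms, the tautological argument of part (i) carries over verbatim with $\mmD_{\A,\mmZ ^{p}_{f}}^{*}(Y,p)_{0}$ in place of $\mmD_{\A,\mmZ ^{p}}^{*}(Y,p)_{0}$ in the upper left corner. Passing to homology and inverting the quasi-isomorphism yields a well-defined morphism $f^{*}:H_{n}(\mmD_{\A,\mmZ ^{p}}^{2p-*}(Y,p)_{0})\rightarrow H_{n}(\mmD_{\A,\mmZ ^{p}}^{2p-*}(X,p)_{0})$ with $\rho \circ f^{*}=f^{*}\circ \rho $ in $H_{\mmD}^{2p-n}(X,\R(p))$.

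The one point that still needs a remark is that, under the identifications of Lemma \ref{blochdif1} and Proposition \ref{difcubs}, this pullback really coincides with Bloch's pullback on $CH^{p}(Y,n)$ recalled in paragraph \ref{functchow}. This reduces to the statement that the cycle class map $cl$ is contravariant on $\mmZ ^{p}_{f}$, namely that if $[(\omega ,g)]$ represents $cl([Z])$ for an irreducible $Z\in \mmZ ^{p}_{f}$, then $[(f^{*}\omega ,f^{*}g)]$ represents $cl(f^{*}[Z])$. This is exactly the content of the remark after Lemma \ref{funct1}, which extends from the flat to the arbitrary case as soon as the pullback cycle $f^{*}[Z]$ is defined, and the latter is guaranteed by the definition of $\mmZ ^{p}_{f}$. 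I do not expect any genuinely hard step in this proposition; the whole argument is a formal consequence of Lemma \ref{funct1}, Proposition \ref{funct2}, and the fact that $\rho $ is simply a projection.
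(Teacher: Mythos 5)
Your proposal is correct and follows essentially the same route as the paper: the paper's proof simply observes that whenever the inverse image of a cycle $((\omega_n,g_n),\dots,(\omega_0,g_0))$ is defined --- which happens either when $f$ is flat or when the cycle lies in $\mmD_{\A,\mathcal{Z}_f^p}^{2p-*}(Y,p)_0$ --- it is computed componentwise as $((f^*\omega_n,f^*g_n),\dots,(f^*\omega_0,f^*g_0))$, so that the projection $\rho$ commutes with $f^*$ on the nose. Your additional remark on the compatibility with Bloch's pullback under the identifications of Lemma \ref{blochdif1} and Proposition \ref{difcubs} is a harmless elaboration of what the paper leaves implicit.
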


\begin{proof} Both assertions are obvious. Let $z=((\omega _n,g_n),
  \dots,(\omega _0,g_0))\in
 \mmD_{\A,\mathcal{Z}^p}^{2p-n}(X,p)_0$  be a cycle such that its
 inverse image  by $f$ is defined. This is the case when $f$ is flat
 or when $z$ belongs to $\mmD_{\A,\mathcal{Z}_{f}^p}^{2p-*}(X,p)_0$.
 In both cases
$$f^*((\omega _n,g_n),\dots,(\omega _0,g_0))=((f^*\omega _n,f^*g_n),
\dots,(f^*\omega _0,f^*g_0))$$ and
the claim follows.
\end{proof}

\begin{obs}\label{chowchern}
Let $X$ be an equidimensional compact complex algebraic manifold.
Observe that, by definition, the morphism
$$\rho: CH^p(X,0)=CH^p(X) \rightarrow H_{\mmD}^{2p}(X,\R(p))$$
agrees with the cycle class map $cl$.

Now let $E$ be a vector bundle of rank $n$ over $X$. For every
$p=1,\dots,n$,  there exists a characteristic class
$C_p^{CH}(E)\in CH^p(X)$ (see \cite{GrothChern}) and a
characteristic class $C_p^{\mmD}(E)\in H_{\mmD}^{2p}(X,\R(p))$,
called the $p$-th Chern class of the vector bundle $E$. By
definition, $cl(C_p^{CH}(E))=C_p^{\mmD}(E)$.
Hence,
 $$\rho(C_p^{CH}(E))=C_p^{\mmD}(E),  $$
 for all $p=1,\dots,n$.
\end{obs}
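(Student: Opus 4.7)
The plan is to verify the two assertions in Remark \ref{chowchern}. The first is that under the identifications of Propositions \ref{affine3}, \ref{blochdif1} and \ref{difcubs}, the regulator $\rho$ of \eqref{regulator} specializes at $n=0$ to the classical cycle class map $cl\colon CH^p(X)\to H^{2p}_{\mmD}(X,\R(p))$. The second, that $\rho(C_p^{CH}(E))=C_p^{\mmD}(E)$, is then formal once one invokes the standard definition of the Deligne--Beilinson Chern classes.

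For the first assertion, I would specialize all the complexes appearing in the construction to $n=0$. Since $\square^0$ is a point, $Z^p(X,0)_0=Z^p(X)$ is just the group of codimension $p$ cycles on $X$, while $\mcH^p(X,0)_0=H^{2p}_{\mmD,\mmZ^p}(X,\R(p))$ and the isomorphism $f_1$ of Lemma \ref{blochdif1} reduces to the classical cycle class in cohomology with supports, as described in Proposition \ref{cycleclass2}. By Corollary \ref{blochdif2}, the class of $z\in Z^p(X)$ in $H^{2p}(\mmD_{\A,\mmZ^p}^{\ast}(X,p)_0)$ is represented by any cocycle $(\omega_0,g_0)\in \mmD_{\A,\mmZ^p}^{2p}(X,p)_0$ satisfying $[(\omega_0,g_0)]=cl(z)$.

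Next I would unwind the definition of $\rho$ in \ref{regulator3}, which on the $(2p,0)$ piece is just the projection $(\omega,g)\mapsto \omega$. Composing with the quasi-isomorphism of Proposition \ref{affine3}, one obtains $\rho(z)=[\omega_0]\in H^{2p}_{\mmD}(X,\R(p))$. On the level of cohomology, sending $(\omega,g)\mapsto \omega$ is precisely the forgetful map $H^{2p}_{\mmD,\mmZ^p}(X,\R(p))\to H^{2p}_{\mmD}(X,\R(p))$ coming from the simple complex that defines cohomology with supports. Hence $\rho(z)$ is the image of $cl(z)$ under this forgetful map, which is by definition the cycle class of $z$ in Deligne--Beilinson cohomology. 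This proves the first assertion.

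The second assertion is immediate once the first is known: the Deligne--Beilinson Chern classes $C_p^{\mmD}(E)$ are constructed from $C_p^{CH}(E)$ via the Grothendieck splitting principle, using the image under $cl$ of the first Chern class of the tautological line bundle on $\P(E)$, and this construction is characterized by $C_p^{\mmD}(E)=cl(C_p^{CH}(E))$; combining this with $\rho=cl$ gives the conclusion. The main (and only) obstacle is bookkeeping: one must verify that after specialization to $n=0$ all the normalizations ($N$, $N_0$, truncation $\tau_{\le 2p}$) and all three intermediate quasi-isomorphisms collapse, so that the composite $CH^p(X)\to H^{2p}(\mmD_{\A,\mmZ^p}^{\ast}(X,p)_0)\xrightarrow{\rho} H^{2p}(\mmD_{\A}^{\ast}(X,p)_0)\cong H^{2p}_{\mmD}(X,\R(p))$ becomes, literally, the forgetful map applied to $cl(z)$. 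No new computation is required beyond tracing through these identifications.
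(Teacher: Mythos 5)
Your proposal is correct and follows the same route the paper intends by its ``by definition'' assertions: at $n=0$ the complexes collapse so that $f_1$ becomes the cycle class in cohomology with supports, $\rho$ becomes the forgetful map $(\omega,g)\mapsto\omega$ from $H^{2p}_{\mmD,\mmZ^p}(X,\R(p))$ to $H^{2p}_{\mmD}(X,\R(p))$, and the composite is the Deligne--Beilinson cycle class map, after which the identity $\rho(C_p^{CH}(E))=C_p^{\mmD}(E)$ is the defining compatibility of the two Chern class theories under $cl$. The paper offers no further argument, so your unwinding is exactly the content of the remark.
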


\subsection{Comparison with the Beilinson regulator}
We prove here that the regulator defined in
\eqref{regulator} agrees with the Beilinson regulator.

The comparison is based on the following facts:
\begin{itemize}
\item The morphism $\rho$ is compatible with inverse images.
\item The morphism $\rho$ is defined for quasi-projective schemes.
\end{itemize}
In view of these properties, it is enough to prove that the two
regulators agree when $X$ is a Grassmanian manifold, which in turn
follows from Remark \ref{chowchern}.

\begin{theo}\label{beichow} Let $X$ be an equidimensional complex algebraic scheme.
Let $\rho'$ be the composition of $\rho$ with the isomorphism
given by the Chern character
$$\rho': K_n(X)_{\Q} \xrightarrow{\cong} \bigoplus_{p\geq 0} CH^{p}(X,n)_{\Q} \xrightarrow{\rho}
 \bigoplus_{p\geq 0}H^{2p-n}_{\mmD}(X,\R(p)).$$
Then, the morphism $\rho'$ agrees with the Beilinson regulator.
\end{theo}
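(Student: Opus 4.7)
The plan is to realize both $\rho'$ and the Beilinson regulator as Chern characters from higher $K$-theory to Deligne–Beilinson cohomology, and then invoke a universality argument in the spirit of Gillet: any two natural transformations $K_{\ast}(-)_{\Q}\to \bigoplus_{p} H^{2p-\ast}_{\mmD}(-,\R(p))$ on the category of smooth quasi-projective $k$-schemes that coincide on Chern classes of vector bundles must agree. Since the Beilinson regulator satisfies this normalization by its construction via Chern–Weil theory, the task reduces to checking that $\rho'$ is a natural transformation with the correct degree-zero behavior, and then reducing the general statement to the case of Grassmannians.

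The first step is to verify the naturality of $\rho'$ with respect to arbitrary morphisms of smooth quasi-projective $k$-schemes. For a flat $f:X\to Y$, Lemma \ref{funct1} already gives a strict pull-back $f^{\ast}:\mmD_{\A,\mmZ^{p}}^{2p-\ast}(Y,p)_{0}\to \mmD_{\A,\mmZ^{p}}^{2p-\ast}(X,p)_{0}$ intertwined with $\rho$. For arbitrary $f$, one passes through the quasi-isomorphic subcomplex $\mmD_{\A,\mmZ_{f}^{p}}^{2p-\ast}(Y,p)_{0}$ of Proposition \ref{funct2}, in parallel with the definition of $f^{\ast}$ on $Z^{p}(Y,\ast)_{0}$. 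This makes $\rho$ a morphism in the derived category of chain complexes that is contravariantly functorial in $f$; combined with the functoriality of the Bloch decomposition $K_{n}(-)_{\Q}\cong \bigoplus_{p}CH^{p}(-,n)_{\Q}$, it follows that $\rho'$ is a natural transformation on the category of smooth quasi-projective $k$-schemes.

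The second step is the reduction to a universal example. The Bloch decomposition is constructed from Adams operations, so a natural transformation of rational contravariant theories on this category is determined by its effect on Chern classes of the tautological bundles on finite Grassmannians, by the splitting principle and the projective bundle formula, both of which hold for $K_{\ast}(-)_{\Q}$ and for Deligne–Beilinson cohomology. By Remark \ref{chowchern}, on a smooth equidimensional $X$ the map $\rho$ in degree zero is the Deligne cycle class map, so it sends the algebraic Chern class $C_{p}^{CH}(E)$ of a vector bundle $E$ to the Deligne Chern class $C_{p}^{\mmD}(E)$. Since the Beilinson regulator also produces $C_{p}^{\mmD}(E)$ from $[E]\in K_{0}(X)$, the two transformations coincide on the universal classes and hence, by naturality, on all of $K_{n}(X)_{\Q}$.

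The main obstacle is organizing the reduction rigorously at the cohomological level while only having $\rho$ defined as a morphism in the derived category of chain complexes. Concretely, one needs the multiplicativity of $\rho'$ in order to invoke the splitting principle, and one must control the weight grading so that the component of $\rho'$ landing in $H_{\mmD}^{2p-n}(X,\R(p))$ is really the one coming from $CH^{p}(X,n)_{\Q}$. Multiplicativity of $\rho$ is not established in the current section (the analogous product on $\widehat{CH}^{\ast}(X,\ast)$ is only constructed in Section 5), so either one invokes its non-arithmetic counterpart as input, or one circumvents multiplicativity by an explicit computation on projective bundle towers, which ultimately reduces everything to the case of line bundles on projective spaces that is handled directly by Remark \ref{chowchern}.
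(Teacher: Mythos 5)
Your overall strategy (reduce to Grassmannians, use Remark \ref{chowchern} to identify $\rho$ in degree zero with the cycle class map) points in the same direction as the paper, but the pivot of your argument --- the ``universality'' claim that any two natural transformations $K_{\ast}(-)_{\Q}\to \bigoplus_{p}H^{2p-\ast}_{\mmD}(-,\R(p))$ agreeing on Chern classes of vector bundles must coincide --- is asserted, not proved, and as stated it is not a usable black box. Naturality of a transformation of group-valued functors is far weaker than what is needed: an element of $K_{n}(X)$ for $n>0$ is not a pullback of universal classes in any naive sense, and the standard rigidity statements of this type (Gillet's axiomatic uniqueness of Chern classes on higher $K$-theory) are really statements about maps in the homotopy category of simplicial sheaves $\K_{X}^{N}=\Z\times\Z_{\infty}B_{\cdot}GL_{N,X}\to \mmK_{\cdot}(\mmD^{\ast}_{X}(\cdot,p)[2p])$, which are classified by universal classes on $B_{\cdot}GL_{N}$. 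The paper's proof supplies exactly this missing ingredient: because $\rho$ is a morphism of complexes functorial for flat maps (and the face maps of $B_{\cdot}GL_{N}$ and of the hypercovers $N_{\cdot}U_{k}$ are flat), $\rho$ induces a map of simplicial sheaves, so $\rho\circ C_{p}^{CH}$ is literally the Chern class map produced by the Gillet machine from the universal class $\rho(C_{p}^{CH})\in H^{2p}_{\mmD}(B_{\cdot}GL_{N},\R(p))$; the whole comparison then collapses to the single identity $\rho(C_{p}^{CH})=C_{p}^{\mmD}$ on $B_{\cdot}GL_{N}$, verified by pulling back to $N_{\cdot}U_{k}\to Gr(N,k)$ where one is in degree $n=0$. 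Your proposal never establishes that $\rho'$ arises from a sheaf-level map, which is the only reason the reduction to universal classes is legitimate.

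A second, related problem is your reliance on the splitting principle and the projective bundle formula, which require multiplicativity of $\rho'$; you concede this is unavailable and offer only a sketch of a workaround. The paper needs no multiplicativity at all: it compares Chern \emph{classes} (not characters) on the universal object, and the passage from Chern classes to the Chern character is by the same universal polynomial on both sides, applied before any product structure on the target is invoked. Finally, you should be aware that ``functoriality of the Bloch decomposition'' hides a real subtlety --- the universal classes $C_{p}^{CH,i}\in Z^{p}(B_{i}GL_{N},i)_{0}$ cannot be pulled back to arbitrary $X$ at the chain level, and Bloch's construction requires a purely transcendental extension $L/\C$ and a specialization argument in the homotopy category of sheaves; any rigorous version of your argument has to route through that construction rather than through abstract naturality.
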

\begin{proof}
The outline of the proof is as follows. We first recall the
description of the Beilinson regulator in terms of homotopy theory
of simplicial sheaves as in \cite{GilletSouleFiltrations}. Then,
we recall the construction of the Chern character given by Bloch.
We proceed reducing the comparison of the two maps to the case
$n=0$ and for $X$ a Grassmanian scheme. We finally prove that at
this stage both maps agree. Our site will always be the small
Zariski site over $X$.

Consider $X$ as a smooth quasi-projective scheme over $\C$. Let
$B_{\cdot}GL_{N}$ be the simplicial version of the classifying space
of the group $GL_{N}(\mathbb{C})$ viewed as a simplicial complex
manifold.
Recall that all the face morphisms are flat. Let
$B_{{\cdot}}GL_{N,X}$ be the simplicial sheaf over $X$ given by
the sheafification of the presheaf
$$U \mapsto B_{\cdot}GL_N(\Gamma(U,\mathcal{O}_U))$$
for every Zariski open $U\subseteq X$. This is the same as the
simplicial sheaf given by
$$U\mapsto \underline{\Hom} (U,B_{\cdot}GL_{N}),$$
where $\underline{\Hom}$ means the simplicial function complex.

Consider the inclusion morphisms $BGL_{N,X}\rightarrow
BGL_{N+1,X}$, for all $N\geq 1$, and let
$$B_{\cdot}GL_X=\lim_{\rightarrow} B_{\cdot}GL_{N,X}.$$
Let $\Z_{\infty}BGL_{N,X}$ and $\Z_{\infty}BGL_X$ be the sheaves
associated to the respective Bousfield-Kan completions. Finally,
let $\Z$ be the constant simplicial sheaf on $\Z$ and consider the
following sheaves on $X$
\begin{eqnarray*}
\K_X &=& \Z \times \Z_{\infty}B_{\cdot}GL_X, \\
\K^N_X &=& \Z \times \Z_{\infty}B_{\cdot}GL_{N,X}. \end{eqnarray*}
By \cite{GilletSouleFiltrations}, Proposition 5, there is a
natural isomorphism
$$K_m(X)\cong H^{-m}(X,\K_X)=\lim_{\substack{\rightarrow\\ N} }H^{-m}(X,\K^N_X). $$
Here $H^{-*}(\cdot,*)$ denotes the generalized cohomology with
coefficients in $\K_X$ and $\K^{N}_X$, as described in
\cite{GilletSouleFiltrations}.

\vspace{0.3cm}

\emph{The Beilinson regulator} is the Chern character taking values in
Deligne-Beilinson cohomology. The regulator can be described   in
terms of homotopy theory of sheaves as follows.

Consider the Dold-Puppe functor $\mathcal{K}_{\cdot}(\cdot)$ (see
\cite{DoldPuppe}), which associates to every cochain complex of
abelian groups concentrated in non-positive degrees, $G^*$, a
simplicial abelian group $\mmK_{\cdot}(G)$, pointed by zero. It
satisfies the property that
$\pi_i(\mmK_{\cdot}(G),0)=H^{-i}(G^*)$.

In \cite{Gillet}, Gillet constructs Chern classes
$$C_p^{\mmD}\in H^{2p}(B_{\cdot}GL_N,\R(p)), \qquad N \gg 0,$$
which induce morphisms
$$c_{p,X}^{\mmD}:
\K_{X,\cdot}^N \rightarrow
\mmK_{\cdot}(\mmD^*_X({\cdot},p)[2p]),\qquad N\gg 0.
$$
These morphisms are compatible with the morphisms
$\K^N_{X,{\cdot}}\rightarrow \K^{N+1}_{X,{\cdot}}$. Therefore, we
obtain a morphism
$$ K_m(X)=\lim_{\substack{\rightarrow \\ N} }
H^{-m}(X,\K^N_X)\xrightarrow{C_{p,X}^{\mmD}}
H_{\mmD}^{2p-m}(X,\R(p)).$$
Using the standard formula for the Chern character in terms of the
Chern classes, we obtain a morphism
$$ K_m(X) \xrightarrow{ch^{\mmD}} H_{\mmD}^{2p-m}(X,\R(p)),$$
which is the Beilinson regulator.

\par{\emph{The Chern character for higher Chow
groups.}} The description of the isomorphism $K_n(X)_{\Q}
\xrightarrow{\cong} \bigoplus_{p\geq 0} CH^{p}(X,n)_{\Q}$ given by
Bloch follows the same pattern as the description of the Beilinson
regulator. However, since the complexes that define the higher
Chow groups are not sheaves on the big Zariski site, a few
modifications are necessary. We give here a sketch of the
construction. For details see \cite{Bloch1}.

If $Y_{\cdot}$ is a simplicial scheme whose face maps are flat,
then there is a well-defined $2$-iterated cochain complex
$Z^p(Y_{\cdot},*)_0,$ whose $(n,m)$-bigraded group is
$$Z^p(Y_{-n},m)_0,$$ and induced differentials.
The higher algebraic Chow groups of $Y_{\cdot}$ are then defined
as
$$CH^p(Y_{\cdot},n)=H^n(Z^p(Y_{\cdot},*)_0).$$

Since the face maps of the simplicial scheme $B_{\cdot}GL_N$ are
flat, the group $CH^p(B_{\cdot}GL_N,n)$ is well defined for every
$p$ and $n$.

First, Bloch constructs universal Chern classes
$$C_p^{CH}\in CH^p(B_{\cdot}GL_{N},0),$$ following the ideas of
Gillet. These classes are represented by elements
$$C_p^{CH,i}\in Z^{p}(B_{i}GL_{N},i)_0.$$

Because at the level of complexes the pull-back morphism is not
defined for arbitrary maps, one cannot consider the pull-back of
these classes $C_p^{CH,i}$ to $X$, as was the case for the
Beilinson regulator. However, by \cite{Bloch1} $\S$7, there
exists a purely transcendental extension $L$ of $\C$, and classes
$C_p^{CH,i}$  defined over $L$, such that the pull-back
$f^*C_p^{CH,i}$ is defined for every $\C$-morphism $f:V\rightarrow
B_iGL_N$.

Then, there is a map of simplicial Zariski sheaves on $X$
$$B_{\cdot}GL_{N,X} \rightarrow \mmK_X(g_*Z^p_{X_L}(-,*)_0), $$
where $g:X_L\rightarrow X$ is the natural map obtained by
extension to $L$.

There is a specialization process described in \cite{Bloch1},
which, in the homotopy category of sheaves over $X$, gives a
well-defined map
$$\mmK_X(g_*Z^p_{X_L}(-,*)_0)\rightarrow \mmK_X(Z^p_{X}(-,*)_0).$$

Therefore, there are maps $C_{p,X}^{CH}\in
[B_{\cdot}GL_{N,X},\mmK_X(Z^*_{X}(\cdot,p))]$. Proceeding as
above,
 we obtain the Chern character morphism
$$K_m(X) \rightarrow \bigoplus_{p\geq 0} CH^p(X,m)_{\Q}.$$
For $m=0$, this is the usual Chern character.

\par {\emph{End of the proof.}} Since, at the level of complexes,
$\rho$ is functorial for flat maps, there is a sheaf map
$$\rho: \mmK_X(Z^*_{X}(\cdot,p)) \rightarrow \mmK_{\cdot}(\mmD_{\log}(X,p))$$
in the small Zariski site of $X$.

It follows that the composition $\rho \circ C_p^{CH}$ is obtained
by the same procedure as the Beilinson regulator, but
starting with the characteristic classes $\rho (C_p^{CH})\in
H^{2p}_{\mmD}(X,\R(p))$ instead of the classes $C_p^{\mmD}$. Therefore, it
remains to see that
\begin{equation}
\rho (C_p^{CH})= C_p^{\mmD}.
\end{equation}

For integers $N,k\geq 0$ let $Gr(N,k)$ be the complex Grassmanian scheme of $k$-planes in
$\C^N$. It is a smooth complex projective scheme.
Let $E_{N,k}$ be
the rank $N$ universal bundle of $Gr(N,k)$ and
$U_k=(U_{k,\alpha})_{\alpha}$ its standard trivialization. Let
$N_{\cdot}U_k$ denote the nerve of this cover. It is a hypercover
of $Gr(N,k)$, $N_{\cdot}U_k\xrightarrow{\pi}Gr(N,k)$. Consider the
classifying map of the vector bundle $E_{N,k}$, $\varphi_k:
N_{\cdot}U_k\rightarrow B_{\cdot}GL_N$, which satisfies
$\pi^*(E_{N,k})=\varphi_k^*(E^N_{\cdot})$, for $E^N_{{\cdot}}$ the universal vector bundle over $B_{\cdot}GL_{N}$. Observe that all the
faces and degeneracy maps of the simplicial scheme $N_{\cdot}U_k$
are flat, as well as the inclusion maps $N_lU_k\rightarrow
Gr(N,k)$. Therefore, $CH^p(N_{\cdot}U_k,m)$ is defined and there
is a pull-back map $CH^p(Gr(N,k),m)\xrightarrow{\pi^*}
CH^p(N_{\cdot}U_k,m)$.

Since $\rho$ is defined on $N_{\cdot}U_k$ and is a functorial map,
we obtain the following commutative diagram
$$
\xymatrix{ & CH^p(B_{\cdot}GL_N,0) \ar[r]^{\rho}
\ar[d]_{\varphi_k^*}& H^{2p}_{\mmD}(B_{\cdot}GL_N,\R(p))
\ar[d]^{\varphi_k^*} \\ &
CH^p(N_{\cdot}U_k,0) \ar[r]^{\rho} & H^{2p}_{\mmD}(N_{\cdot}U_k,\R(p)) \\
K_0(Gr(N,k)) \ar[r]^{C_p^{CH}} \ar@/_1.8pc/[rr]_{C_p^{\mmD}} &
CH^p(Gr(N,k),0) \ar[r]^{\rho} \ar[u]^{\pi^*} &
H^{2p}_{\mmD}(Gr(N,k),\R(p)) \ar[u]_{\pi^*} }
$$
By construction, $C_p^{CH}(E_{N,k})$ is the standard $p$-th Chern
class in the classical Chow group of $Gr(N,k)$, and
$C_p^{\mmD}(E_{N,k})$ is the $p$-th Chern class in
Deligne-Beilinson cohomology. It then follows from Remark
\ref{chowchern} that
\begin{equation}\label{chow3}
\rho(C_p^{CH}(E_{N,k}))=C_p^{\mmD}(E_{N,k}).
\end{equation}

The vector bundle $E_{N,k}\in
K_0(Gr(N,k))=\lim_{\substack{\rightarrow
\\ M}}[Gr(N,k),\K^M_{\cdot}]$ is represented by the map of sheaves on $Gr(N,k)$ induced by
$$Gr(N,k) \xleftarrow{\pi} N_{\cdot}U_k \xrightarrow{\varphi_k} B_{\cdot}GL_N.$$
Here, since $N_{\cdot}U_k$ is a hypercover of $Gr(N,k)$, the map
$\pi$ is a weak equivalence of sheaves. This means that
\begin{equation}\label{chow1}
\varphi^*_k(C_p^{CH}(E^N_{\cdot})) = \pi^*(C_p^{CH}(E_{N,k})).
\end{equation}

For each $m_0$, there exists $k_0$ such that if $m\leq m_0$ and
$k\geq k_0$, $\varphi_k^*$ is an isomorphism on the $m$-th
cohomology group. Moreover, $\pi$ also induces  an isomorphism in
Deligne-Belinson cohomology. Under these isomorphisms, we obtain
the equality
\begin{equation}\label{chow2}
C_p^{\mmD}(E_{N,k})=
(\pi^*)^{-1}\varphi^*_k(C_p^{\mmD}(E^N_{\cdot})).
\end{equation}
Hence,
\begin{eqnarray*}\rho (C_p^{CH}(E^N_{\cdot}))= C_p^{\mmD}(E^N_{\cdot})
&\Leftrightarrow &  \varphi_k^* \rho (C_p^{CH}(E^N_{\cdot}))=
\varphi^*_k C_p^{\mmD}(E^N_{\cdot}) \\ & \Leftrightarrow &\rho
\varphi^*_k(C_p^{CH}(E^N_{\cdot}))= \varphi^*_k
C_p^{\mmD}(E^N_{\cdot}).
\end{eqnarray*} The last equality follows directly from
\eqref{chow3}, \eqref{chow1} and \eqref{chow2}. Therefore, the
theorem is proved.
\end{proof}
\section{Higher arithmetic Chow groups}
Let $X$ be an arithmetic variety over a field. Using the
description of the Beilinson regulator given in section
\ref{regulator2}, we define the \emph{higher arithmetic Chow
groups}, $\widehat{CH}^n(X,p)$. The definition is analogous to the
definition given by Goncharov, in \cite{Goncharov}, but using
differential forms instead of currents.

We need to restrict ourselves to arithmetic varieties
over a field, because the theory of higher algebraic Chow groups
by Bloch is only well established for schemes over a field. That is, we
can define the higher arithmetic Chow groups for arbitrary
arithmetic varieties, but since the functoriality properties and
the product structure of the higher algebraic Chow groups are
described only for schemes over a field, we cannot give a product
structure or define functoriality for the higher arithmetic Chow
groups of arithmetic varieties over a ring. Note however that, using
work by Levine \cite{Levine3}, it should be possible to extend the
constructions here to smooth varieties over a Dedekind domain, at least after
tensoring with $\mathbb{Q}$. In fact, when extending the definition to
arithmetic varieties over a ring, it might be better to use the point
of view of
motivic homology \`a la Voevodsky  or any of its more recent variants.

\subsection{Higher arithmetic Chow groups}
Following \cite{GilletSouleIHES}, an arithmetic field is a triple
$(K,\Sigma ,F_{\infty})$, where $K$ is a field, $\Sigma $ is a
nonempty set of complex immersions $K\hookrightarrow \mathbb{C}$ and
$F_{\infty}$ is a conjugate-linear $\mathbb{C}$-algebra automorphism
of $\mathbb{C}^{\Sigma }$ that leaves invariant the image of $K$ under
the diagonal immersion.
By an \emph{arithmetic variety} $X$ over the arithmetic ring $K$ we mean a
regular quasi-projective $K$-scheme $X$.

To the arithmetic variety $X$ we associate a complex variety
$X_{\mathbb{C}}=\coprod_{\iota \in \Sigma }X_{\iota }$, and a real
variety $X_{\R}=(X_{\C},F_{\infty})$.
The Deligne complex of differential forms on $X$ is defined from the
real variety $X_{\R}$
as
$$\mmD^n_{\log}(X,p):=
\mmD^n_{\log}(X_{\C},p)^{\sigma=id},$$
where $\sigma $ is the involution as in paragraph \ref{realvarieties}.
We define
analogously the chain complexes
$$\mmD_{\A}^{2p-*}(X,p)_0,\quad \mmD^{2p-*}_{\A}(X,p)_{00},\quad
\mmD^{2p-*}_{\A,\mathcal{Z}^p}(X,p)_0,\quad
\textrm{and}\quad \mmD^{2p-*}_{\A,\mathcal{Z}^p}(X,p)_{00}.$$

Let $f_1$ be the composition
$$f_1:Z^p(X,n)_0\xrightarrow{\otimes \R} Z^{p}(X,n)_{0}\otimes \R \xrightarrow{\times_F \R }
Z^{p}(X_{\R},n)_0\otimes \R \cong  \mcH^p(X,n)_0.$$
We consider the diagram of complexes of the type of \eqref{ourdiagrambei3}
{\small
  \begin{equation}
    \label{chowcomplex1}
\widehat{\mmZ}^p(X,*)_0=\left(\begin{array}{c}\xymatrix@C=0.5pt{
& \mcH^p(X,*)_0 & & \mmD_{\A}^{2p-*}(X,p)_0 \\
Z^{p}(X,*)_{0} \ar[ur]^{f_1} & & \ar[ul]_{g_1}^{\sim}
\mmD^{2p-*}_{\A,\mathcal{Z}^p}(X,p)_0 \ar[ur]^{\rho} &&  Z\mmD^{2p}(X,p)_* \ar[ul]_{i} }
\end{array}\right)
  \end{equation}
}
where $Z\mmD^{2p}(X,p)_*$ is the chain complex which is zero in all
degrees except in degree zero, where it consists of the vector subspace
of cycles in $\mmD^{2p}(X,p)$. Note that it agrees with
$ZE^{p,p}_{\R}(X)(p)$, the subspace of $E^{p,p}_{\R}(X)(p)$ consisting
of differential forms that are real up to a product by $(2\pi i)^{p}$,
of type $(p,p)$ and that
vanish under $\partial$ and $\overline \partial$. The morphism $i$ is the
inclusion of chain complexes.

\begin{df}
The \emph{higher arithmetic Chow complex} is the simple complex associated to
the diagram $\widehat{\mmZ}^p(X,*)_0$, as defined in
\eqref{diagrambei7}:
$$\widehat{Z}^p(X,*)_0:=s(\widehat{\mmZ}^p(X,*)_0).$$
\end{df}

Recall that, by definition, $\widehat{Z}^p(X,n)_0$ consists of
5-tuples
$$(Z,\alpha_0,\alpha_1,\alpha_2,\alpha_3)\in Z^{p}(X,n)_{0}\oplus
\mmD^{2p-n}_{\A,\mathcal{Z}^p}(X,p)_0\oplus  Z\mmD^{2p}(X,p)_n \oplus
\mcH^p(X,n+1)_0\oplus \mmD_{\A}^{2p-n-1}(X,p)_0,$$ and
the differential is given by
\begin{eqnarray*}
\widehat{Z}^p(X,n)_0 & \xrightarrow{d} & \widehat{Z}^p(X,n-1)_0 \\
(Z,\alpha_0,\alpha_1,\alpha_2,\alpha_3) &\mapsto &
(\delta(Z),d_s(\alpha_0),0,
f_1(Z)-g_1(\alpha_0)-\delta(\alpha_2),\rho(\alpha_0)-\alpha_1-d_s(\alpha_3)).
\end{eqnarray*}
Note that $\alpha_1$ will be zero unless $n=0$. Its differential, however, is always zero.

\begin{df}\label{definitionchow} Let $X$ be an arithmetic variety over
  an arithmetic field.
The $(p,n)$-th \emph{higher arithmetic Chow group} of $X$ is
defined by
$$\widehat{CH}^{p}(X,n) := H_{n}(\widehat{Z}^p(X,*)_0),\qquad p,n\geq 0. $$
\end{df}

By its definition as the cohomology of a simple of a diagram of
complexes it comes equipped with the following morphisms
\begin{alignat*}{2}
\zeta&:\widehat{CH}^{p}(X,n)\longrightarrow CH^{p}(X,n),&&\qquad
\zeta[(Z,\alpha _{0},\dots,\alpha _{3})]=[Z], \\
\amap&:H_{\mmD}^{2p-n}(X,\R(p))\longrightarrow \widehat{CH}^{p}
(X,n),&&\qquad\amap([a])=[(0,0,0,0,-a)], \\
\amap&:\mmD^{2p-1}_{\log}(X,p)\longrightarrow\widehat{CH}^{p}
(X,0),&&\qquad\amap(\widetilde{a})=[(0,0,-d_{\mmD}a,0,-a)], \\
\omega&:\widehat{CH}^{p}(X,0)\longrightarrow{\rm
  Z}\mmD^{2p}_{\log}(X,p),
&&\qquad\omega([(Z,\alpha _{0},\dots,\alpha _{3})])=\alpha _{1}.
\end{alignat*}

\begin{prop}\label{zeta}
There is a long exact sequence
\begin{multline}
  \label{chowarithsequence}
  \cdots \rightarrow \widehat{CH}^{p}(X,n) \xrightarrow{\zeta}
  CH^{p}(X,n) \xrightarrow{\rho}  H_{\mmD}^{2p-n}(X,\R(p))
 \xrightarrow{\amap} \widehat{CH}^{p}(X,n-1)  \rightarrow \cdots\\
\rightarrow CH^p(X,1)  \xrightarrow{\rho}
\mmD^{2p-1}_{\log}(X,p) / \im d_{\mmD} \xrightarrow{\amap}
\widehat{CH}^p(X,0) \xrightarrow{\zeta} CH^p(X,0)  \rightarrow
0,
\end{multline}
where $\rho$ is the Beilinson regulator.
\end{prop}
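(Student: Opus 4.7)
The plan is to apply the general formalism of simple complexes of diagrams, developed in Section~1, to the diagram $\widehat{\mmZ}^p(X,*)_0$. First I would observe that this diagram has exactly the shape of \eqref{ourdiagrambei3}: the arrow $g_1$ is a quasi-isomorphism by Lemma~\ref{blochdif1} composed with Proposition~\ref{difcubs}, while $i$ is evidently a monomorphism. Corollary~\ref{sucllargachow2} then yields a long exact sequence
$$\cdots\to \widehat{CH}^p(X,n)\to CH^p(X,n)\xrightarrow{\rho_*} H_{n-1}(s(i))\to \widehat{CH}^p(X,n-1)\to\cdots$$
and the whole task reduces to identifying the term $H_{n-1}(s(i))$ and recognising $\rho_*$ as the Beilinson regulator.

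For the term, the chain analogue of \eqref{simplequotient} gives $H_{n-1}(s(i))\cong H_n(Q_*)$ with $Q_*:=\mmD^{2p-*}_{\A}(X,p)_0/Z\mmD^{2p}(X,p)_*$. A pleasant consequence of the truncation at degree $2p$ is that $\mmD^{2p}_{\A}(X,p)_0$ coincides with $Z\mmD^{2p}(X,p)$, so $i$ is already an isomorphism in chain degree $0$; hence $Q_0=0$ and $H_0(Q_*)=0$, which accounts for the surjection $\widehat{CH}^p(X,0)\twoheadrightarrow CH^p(X,0)$ at the end of the sequence. For $n\geq 2$, the homology long exact sequence associated to the short exact sequence $0\to Z\mmD^{2p}(X,p)_*\to \mmD^{2p-*}_{\A}(X,p)_0\to Q_*\to 0$ immediately gives $H_n(Q_*)\cong H_n(\mmD^{2p-*}_{\A}(X,p)_0)$, which equals $H^{2p-n}_{\mmD}(X,\R(p))$ by Proposition~\ref{affine3}.

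The case $n=1$ is the main technical obstacle, since the identification $H_1(Q_*)\cong \mmD^{2p-1}_{\log}(X,p)/\im d_{\mmD}$ does not follow directly from the preceding short exact sequence. To handle it, I would introduce the model quotient $R_*:=\tau\mmD^{2p-*}_{\log}(X,p)/Z\mmD^{2p}(X,p)_*$, for which one reads off by inspection that $H_1(R_*)=\mmD^{2p-1}_{\log}(X,p)/\im d_{\mmD}$. Proposition~\ref{affine3} provides a quasi-isomorphism $\tau\mmD^{2p-*}_{\log}(X,p)\to \mmD^{2p-*}_{\A}(X,p)_0$ compatible with the two copies of $Z\mmD^{2p}(X,p)_*$, hence a map of short exact sequences whose leftmost vertical arrow is the identity and whose middle vertical arrow is a quasi-isomorphism; the $5$-lemma applied to the associated long exact sequences in homology then forces $R_*\to Q_*$ to be a quasi-isomorphism, yielding the desired identification.

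The identification of $\rho_*$ with the Beilinson regulator is comparatively routine. Unwinding Lemma~\ref{sucllargachow} through Lemma~\ref{quasiisodiagrams}, the map $\rho_*$ is represented on chains by $f_2\circ g_1^{-1}\circ f_1$ followed by projection to $Q_*$, and this composition is by construction the chain morphism of Section~\ref{regulator3} applied to a cycle representative of $[z]$; Theorem~\ref{beichow} then identifies the induced map $CH^p(X,n)\to H^{2p-n}_{\mmD}(X,\R(p))$ with the Beilinson regulator. Agreement of the connecting map with the $\amap$ of Definition~\ref{definitionchow} follows directly from the explicit form of the differential on $\widehat{Z}^p(X,*)_0$ and the formula describing $\amap$.
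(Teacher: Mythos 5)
Your argument is correct and follows essentially the same route as the paper: apply Corollary~\ref{sucllargachow2} to the diagram \eqref{chowcomplex1} viewed as a diagram of type \eqref{ourdiagrambei3}, compute the homology of $s(Z\mmD^{2p}(X,p)_*\xrightarrow{i}\mmD_{\A}^{2p-*}(X,p)_0)$, and invoke Theorem~\ref{beichow} to identify $\rho$ with the Beilinson regulator. The paper states the computation of $H_*(s(i))$ as a bare fact, whereas you supply the verification (in particular the observation that the canonical truncation makes $i$ an isomorphism in chain degree $0$, and the comparison with $\tau\mmD^{2p-*}_{\log}(X,p)$ for the degree-$1$ term), which is exactly what is needed.
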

\begin{proof}
It follows from Theorem \ref{beichow}, Lemma \ref{sucllargachow2} and
the fact that the homology groups of the complex $$s(Z\mmD^{2p}(X,p)_* \xrightarrow{i}  \mmD_{\A}^{2p-*}(X,p)_0)$$ are
$H_{\mmD}^{2p-n}(X,\R(p))$ in degree $n\neq 0$ and $ \mmD^{2p-1}_{\log}(X,p) / \im d_{\mmD}$ in degree $0$.
\end{proof}

\begin{obs}
Let $\widehat{\mmD}_{\A}^{*,*}(X,p)_0$ be the $2$-iterated cochain complex given by the quotient
$\mmD_{\A}^{*,*}(X,p)_0/\mmD^{2p,0}(X,p)$. That is, for all $r,n$, $$\widehat{\mmD}_{\A}^{r,-n}(X,p)_0=\left\{\begin{array}{ll}
0 & \textrm{if } r=2p\textrm{ and } n=0, \\
\mmD_{\A}^{r,-n}(X,p)_0 & \textrm{otherwise}.
\end{array}\right.
$$
Let $\widehat{\mmD}_{\A}^{*}(X,p)_0$  denote the simple complex
associated to $\widehat{\mmD}_{\A}^{*,*}(X,p)_0$. Consider the composition of $\rho$ with the projection map
$$\rho: \mmD^{2p-*}_{\A,\mathcal{Z}^p}(X,p)_0\xrightarrow{\rho} \mmD_{\A}^{2p-*}(X,p)_0 \rightarrow \widehat{\mmD}_{\A}^{2p-*}(X,p)_0.$$
Then, there is a diagram of chain complexes of the type of \eqref{ourdiagrambei2}
  {\small
    \begin{equation}
      \label{chowcomplex2}
    \left(\begin{array}{c}\xymatrix@C=0.5pt{
& \mcH^p(X,*)_0 & & \widehat{\mmD}_{\A}^{2p-*}(X,p)_0 \\
Z^{p}(X,*)_{0} \ar[ur]^{f_1} & & \ar[ul]_{g_1}^{\sim}
\mmD^{2p-*}_{\A,\mathcal{Z}^p}(X,p)_0 \ar[ur]^{\rho} }
\end{array}\right).
    \end{equation}
}
By Proposition \ref{quasiisodiagrams}, the simple complex associated
to the diagram (\ref{chowcomplex2})  is quasi-isomorphic
to the complex $\widehat{Z}^p(X,*)_0$ and hence, its homology groups
are isomorphic $\widehat{CH}^{p}(X,*)$.
Nevertheless, in order to define a product structure in
$\widehat{CH}^{*}(X,*)$ it is better to work with the diagram
(\ref{chowcomplex1}).
\end{obs}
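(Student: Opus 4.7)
The plan is to recognize diagram \eqref{chowcomplex1} as an instance of the pattern \eqref{ourdiagrambei3} and then invoke Lemma \ref{quasiisodiagrams}. Two hypotheses must be checked. First, the arrow $g_{1}: \mmD^{2p-*}_{\A,\mathcal{Z}^p}(X,p)_0 \to \mcH^p(X,*)_0$ is a quasi-isomorphism, which is exactly Proposition \ref{difcubs}. Second, the arrow $i: Z\mmD^{2p}(X,p)_* \hookrightarrow \mmD_{\A}^{2p-*}(X,p)_0$ is injective; this is immediate because $Z\mmD^{2p}(X,p)_*$ is concentrated in chain degree $0$, where $i$ is the tautological inclusion of closed $2p$-forms.

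Granted this, Lemma \ref{quasiisodiagrams} produces a quasi-isomorphism between $s(\widehat{\mmZ}^p(X,*)_0)$ and the simple complex of the diagram obtained from \eqref{chowcomplex1} by absorbing the fourth arm, i.e.\ by replacing $\mmD_{\A}^{2p-*}(X,p)_0$ with the quotient $\mmD_{\A}^{2p-*}(X,p)_0 / Z\mmD^{2p}(X,p)_*$. It then remains to identify this quotient with $\widehat{\mmD}_{\A}^{2p-*}(X,p)_0$. For chain degree $n\ge 1$ the subcomplex $Z\mmD^{2p}(X,p)_n$ vanishes, so the quotient coincides with $\mmD_{\A}^{2p-n}(X,p)_0$; this equals $\widehat{\mmD}_{\A}^{2p-n}(X,p)_0$ since the omitted bidegree $(2p,0)$-summand contributes only to total cochain degree $2p$. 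In chain degree $n=0$, the only bidegree $(r,-m)$ with $r-m=2p$, $r\le 2p$ and $m\ge 0$ is $(r,m)=(2p,0)$, and the canonical truncation yields $\mmD_{\A}^{2p,0}(X,p)_0 = \tau_{\le 2p}\mmD^{2p}_{\log}(X,p) = Z\mmD^{2p}(X,p)$, which is precisely the image of $i$ in this degree. The quotient therefore vanishes in chain degree $0$, matching $\widehat{\mmD}_{\A}^{2p}(X,p)_0 = 0$ by construction.

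Combining these two identifications, Lemma \ref{quasiisodiagrams} gives a quasi-isomorphism from $\widehat{Z}^p(X,*)_0 = s(\widehat{\mmZ}^p(X,*)_0)$ to the simple complex associated to \eqref{chowcomplex2}, and passing to homology yields the asserted isomorphism with $\widehat{CH}^p(X,*)$. The delicate step is the chain-degree-$0$ bookkeeping: precisely because the canonical truncation at $2p$ forces $\mmD_{\A}^{2p,0}(X,p)_0$ to consist only of closed forms, the quotient construction faithfully replicates the contribution of the fourth arm of \eqref{chowcomplex1}, and no information is lost in passing from diagram \eqref{chowcomplex1} to diagram \eqref{chowcomplex2}.
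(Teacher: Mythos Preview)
Your proposal is correct and follows exactly the approach indicated in the paper, which simply invokes Lemma~\ref{quasiisodiagrams}; you have merely spelled out the two hypotheses and the identification of the quotient, including the key observation that the canonical truncation at $2p$ forces $\mmD_{\A}^{2p,0}(X,p)_0=Z\mmD^{2p}(X,p)$.
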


\subsection{Agreement with the arithmetic Chow groups} Let $X$ be an
arithmetic variety and let $\widehat{CH}^p(X)$ denote the $p$-th arithmetic Chow group of $X$ as defined by Burgos in \cite{Burgos2}. We recall here its definition.

For every $p$, let $Z^p(X)=Z^p(X,0)$ and let $
Z\mmD^{2p}_{\log}(X,p)$ denote the subgroup of cycles of
$\mmD^{2p}_{\log}(X,p)$.
Let {\small
$$ \widehat{Z}^p(X)= \left\{ (Z,(\omega,\tilde{g})) \in
Z^p(X)\oplus Z\mmD^{2p}_{\log}(X,p) \oplus
\frac{\mmD^{2p-1}_{\log}(X\setminus \mathcal{Z}^p,p)}{\im d_{\mmD}}\left|
\begin{array}{c}
\omega = d_{\mmD}\tilde{g},\\
cl(Z)=[(\omega,g)]
\end{array}\right.\right\}.
$$ }
If $Z\in Z^p(X)$, a \emph{Green form} for $Z$ is a couple
$(\omega,\tilde{g})$ as before such that
$cl(Z)=[(\omega,g)],$ where $g$ is any representative of $\tilde{g}$.

Let $Y$ be a codimension $p-1$ subvariety of $X$ and let $f\in
k^*(Y)$. As shown in \cite{Burgos2}, $\S$7, there is a canonical
Green form attached to $\Div f$. It is denoted by
$\mathfrak{g}(f)$ and it is of the form $(0,\widetilde{g}(f))$ for
some class $\widetilde{g}(f)$.

One defines the following subgroup of $\widehat{Z}^p(X)$:
$$\widehat{\mathrm{Rat}}^p(X)=\{ (\Div f,\mathfrak{g}(f))|\ f\in k^*(Y),\ Y\subset X \textrm{ a codimension
}p-1 \textrm{ subvariety}\}.$$

For every $p\geq 0$, the \emph{arithmetic Chow group} of $X$ is defined by
 $$\widehat{CH}^p(X) = \widehat{Z}^p(X) / \widehat{\mathrm{Rat}}^p(X). $$

It is proved in \cite{GilletSouleIHES}, Theorem 3.3.5 and
\cite{Burgos2}, Theorem 7.3,  that these groups fit into exact
sequences
$$  CH^{p-1,p}(X)  \xrightarrow{\rho}
\mmD^{2p-1}_{\log}(X,p) / \im d_{\mmD} \xrightarrow{\amap}
\widehat{CH}^p(X) \xrightarrow{\zeta}  CH^p(X) \rightarrow 0 $$
where:
\begin{itemize}
\item $CH^{p-1,p}(X)$ is the
term $E_2^{p-1,-p}$ in the Quillen spectral sequence (see
\cite{Quillen}, $\S$7). \item The map $\rho$ is the Beilinson regulator.
\item The map $\zeta$ is the projection on the first component.
\item The map $\amap$ sends $\alpha$ to $(0,(-d_{\mmD}\alpha,-\alpha))$.
\end{itemize}

\begin{theo}\label{chowagreement}
The morphism
\begin{eqnarray*}
\widehat{CH}^p(X)  & \xrightarrow{\Phi} & \widehat{CH}^p(X,0)  \\
\ [(Z,(\omega,\tilde{g}))] & \mapsto & [(Z,(\omega,g),0,0,0)],
\end{eqnarray*}
where $g$ is any representative of $\tilde{g}\in
\mmD^{2p-1}_{\log}(X,p) / \im d_{\mmD}$, is an isomorphism.
\end{theo}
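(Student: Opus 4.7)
The plan is to apply the five-lemma to the ladder comparing the Burgos exact sequence defining $\widehat{CH}^{p}(X)$ with the long exact sequence of Proposition \ref{zeta} for $\widehat{CH}^{p}(X,0)$.

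First I would verify that $\Phi$ is well-defined. Since $\widehat{Z}^{p}(X,*)_{0}$ vanishes in negative degrees, the cycle condition in degree zero is vacuous, so the formula produces a priori a valid element of $\widehat{Z}^{p}(X,0)_{0}$. Independence from the choice of representative $g$ of $\tilde{g}$ is handled by exhibiting an explicit boundary: if $g'-g = d_{\mmD}h$, then the element $(0,(0,(0,-h)),0,0,0) \in \widehat{Z}^{p}(X,1)_{0}$ has boundary $(0,(0,d_{\mmD}h),0,0,0)$, which cancels the difference. The crux is to show that the generators $(\Div f,\mathfrak{g}(f))$ of $\widehat{\mathrm{Rat}}^{p}(X)$ go to boundaries: starting from the graph of $f\in k^{*}(Y)$, interpreted as a codimension $p$ cycle $Z_{f}\in Z^{p}(X,1)_{0}$ whose face is $\Div f$, one assembles an explicit 5-tuple in $\widehat{Z}^{p}(X,1)_{0}$ whose boundary reproduces $\Phi(\Div f,\mathfrak{g}(f))$. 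The essential input is that the regulator $\rho$ applied to $Z_{f}$ has a representative built from $-\tfrac{1}{2}\log(f\bar{f})$, which is exactly the canonical Green form $\tilde{g}(f)$.

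Second I would assemble the ladder
$$\xymatrix@C=0.3pc{
CH^{p-1,p}(X) \ar[r]^(.35){\rho}\ar[d]^{\cong} &
\mmD^{2p-1}_{\log}(X,p)/\im d_{\mmD} \ar[r]^(.65){a}\ar@{=}[d] &
\widehat{CH}^{p}(X) \ar[r]^{\zeta}\ar[d]^{\Phi} &
CH^{p}(X) \ar[r]\ar@{=}[d] & 0 \\
CH^{p}(X,1) \ar[r]^(.35){\rho} &
\mmD^{2p-1}_{\log}(X,p)/\im d_{\mmD} \ar[r]^(.65){a} &
\widehat{CH}^{p}(X,0) \ar[r]^{\zeta} &
CH^{p}(X,0) \ar[r] & 0
}$$
in which the leftmost vertical arrow is Bloch's isomorphism identifying the term $E_{2}^{p-1,-p}(X)$ of the Quillen spectral sequence with $CH^{p}(X,1)$. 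Commutativity of the left square is exactly the agreement of regulators furnished by Theorem \ref{beichow}; the right square is tautological (both $\zeta$'s read off the cycle component $Z$); the middle square follows from a direct cycle-level computation showing that $\Phi(a(\alpha))$ differs from $a(\alpha)\in \widehat{CH}^{p}(X,0)$ by the boundary of a specific element $(0,\beta,0,0,0) \in \widehat{Z}^{p}(X,1)_{0}$ built from $\alpha$. The four-term five-lemma then forces $\Phi$ to be an isomorphism.

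The main obstacle will be the verification that $\Phi$ annihilates rational equivalence. While $[\Div f]=0$ in $\widehat{CH}^{p}(X)$ is built into the definition via $\mathfrak{g}(f)$, translating this vanishing into an explicit null-homotopy inside the simple of the diagram $\widehat{\mmZ}^{p}(X,*)_{0}$ forces one to align Bloch's cubical cycle description of $CH^{p}(X,1)$ with the Deligne-complex representative of the canonical Green form. Juggling the several components of the 5-tuple, the sign conventions coming from the Beilinson-diagram simple, and the grading shifts inside the 2-iterated complex $\mmD^{*,*}_{\A,\mathcal{Z}^{p}}(X,p)_{0}$ is where the bulk of the careful bookkeeping will take place.
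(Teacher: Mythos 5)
Your overall strategy --- well-definedness of $\Phi$ followed by the five lemma applied to the ladder comparing Burgos's exact sequence with that of Proposition \ref{zeta} --- is exactly the paper's (Lemmas \ref{agreement1}--\ref{agreement3}), and your treatment of the independence of the representative $g$, of rational equivalence via a function on $X\times\square^1$ lifting $f$, and of the middle and right squares matches the paper's. Two of your steps, however, do not hold up as stated.

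First, the cycle condition in degree zero is \emph{not} vacuous: $\widehat{Z}^p(X,-1)_0$ is nonzero, since it contains the summands $\mcH^p(X,0)_0$ and $\mmD^{2p}_{\A}(X,p)_0$ coming from the shifted $B$-complexes of the diagram. The boundary of $(Z,(\omega,g),0,0,0)$ therefore has the a priori nonzero component $f_1(Z)-g_1(\omega,g)=cl(Z)-[(\omega,g)]$, and its vanishing is precisely the Green-form condition $cl(Z)=[(\omega,g)]$ built into the definition of $\widehat{Z}^p(X)$. This is a small point, but it must be checked rather than dismissed.

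Second, and more seriously, the commutativity of the left square is not ``exactly'' Theorem \ref{beichow}. The top horizontal map is Burgos's regulator on $CH^{p-1,p}(X)$, defined concretely by $\star$-products of Green forms, $\sum_i\mathfrak{g}(\tilde f_i)\star\mathfrak{g}(W_i)$; the bottom one is the new regulator $\rho$ on $CH^{p}(X,1)$; and the left vertical is the explicit isomorphism $\varphi_2$ sending $\sum_i[f_i]$ to the class of $\sum_i\Gamma_{f_i}$ in $Z^p(X,1)_0$. Theorem \ref{beichow} compares the new $\rho$ with the Beilinson regulator on $K_n(X)_{\Q}$ via the Chern character; to deduce $\rho\circ\varphi_2=\rho$ from it you would additionally have to know that $\varphi_2$ is compatible with the identifications of $CH^{p-1,p}(X)$ and $CH^p(X,1)$ with the relevant pieces of $K_1(X)_{\Q}$, which is not established anywhere and is of the same order of difficulty as the statement you want. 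The paper instead proves $\rho\varphi_2=\rho$ by a direct computation: it introduces the rational functions $h_i$ on $X\times\square^1$, forms the Green forms $\mathfrak{g}(h_i)\star\mathfrak{g}(W_i)$ for the cycle $\sum_i\Div h_i\cap W_i$, and verifies that both regulators are represented by the same closed form $g_{\rho}$, using the explicit recipe for evaluating the new $\rho$ on a cycle in $Z^p(X,1)_0$. This computation is the technical heart of the proof and is absent from your proposal.
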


\begin{proof} We first   prove that $\Phi$ is well defined. Afterwards, we will prove that
the diagram {\small $$ \xymatrix{
 CH^{p-1,p}(X)  \ar[r]^(0.43){\rho} \ar[d]^{\cong} &
\mmD^{2p-1}_{\log}(X,p) / \im d_{\mmD} \ar[r]^(0.6){a} \ar[d]^{=}
& \widehat{CH}^p(X)
\ar[r]^{\zeta} \ar[d]^{\Phi} & CH^p(X)  \ar[r] \ar[d]^{\cong} &  0 \\
 CH^p(X,1)  \ar[r]^(0.43){\rho} &
\mmD^{2p-1}_{\log}(X,p) / \im d_{\mmD} \ar[r]^(0.6){a} &
\widehat{CH}^p(X,0) \ar[r]^{\zeta} &  CH^p(X,0)  \ar[r] &  0 } $$
} is commutative. The statement then follows from the five lemma.

The proof is a consequence of Lemmas \ref{agreement1},
\ref{agreement2} and \ref{agreement3} below.

\begin{lema}\label{agreement1}
The map $\Phi$ is well defined.
\end{lema}
\begin{proof}
We have to prove that:
\begin{enumerate}[(i)]
\item The elements in the image of $\Phi$  are indeed cycles in
$\whZ^p(X,0)_0$.
 \item The map $\Phi$ does not depend on the choice of a representative of
$g$. \item The map $\Phi$ is zero on
$\widehat{\mathrm{Rat}}^p(X)$.
\end{enumerate}
Let $[(Z,(\omega,\tilde{g}))]\in \widehat{CH}^p(X)$. The claim
$(i)$ follows from the equality
$cl(Z)=[(\omega,\tilde{g})]=[(\omega,g)]$. Indeed, since
$d_s(\omega,g)=0$,
$$d(Z,(\omega,g),0,0,0)=(0,0,0,cl(Z)-cl(\omega,g),0)=0.$$

To see $(ii)$, assume that $g_1,g_2\in \mmD_{\log}^{2p-1}(X,p)$
are representatives of $\tilde{g}$, i.e. there exists $h\in
\mmD_{\log}^{2p-2}(X,p)$ such that $ d_{\mmD}h=g_1-g_2.$ Then
$$d(0,(0,h),0,0,0)=(0,(0,g_1-g_2),0,0,0)=(Z,(\omega,g_1),0,0,0)-(Z,(\omega,g_2),0,0,0) $$
and therefore we have $[(Z,(\omega,g_1),0,0,0)]=[(Z,(\omega,g_2),0,0,0)]. $

Finally, to prove $(iii)$, we have to see that, if $Y$ is a
codimension $p-1$ subvariety and $f\in k^{\ast}(Y)$, then
$$\Phi(\Div f,\mathfrak{g}(f))=0\in \widehat{CH}^p(X,0),$$
i.e.  that
$$ [( \Div f,(0,g(f)),0,0,0)]=0,$$ for any fixed representative $g(f)$
of $\widetilde{g}(f)$.

Let $\hat{f}$ be the function of $Y\times \square^1$ given by
$(y,(t_1:t_2)) \mapsto \frac{t_1-t_2f(y)}{t_1-t_2}. $
Its divisor defines a codimension $p$ subvariety of $X\times
\square^1$. Moreover, it intersects properly $X\times (0:1)$ and
$X\times (1:0)$. Fix
$g(\hat{f})$ to be any representative of
$\widetilde{g}(\hat{f})$. Since $
\delta(\widetilde{g}(\hat{f}))=\widetilde{g}(f),$ there exists
$h\in \mmD_{\log}^{2p-1}(X\setminus \Div f,p)$
with
$ d_{\mmD}h= \delta(g(\hat{f})) - g(f). $
Then,
\begin{eqnarray*}
d(\Div \hat{f}, (0,g(\hat{f}),(0,h)),0,0,0) &=& (\Div
f,(0,g(f)),0,0,0)
\end{eqnarray*}
as desired.
\end{proof}

\begin{lema}\label{agreement2}
There are isomorphisms
\begin{eqnarray*}
CH^p(X) & \xrightarrow{\varphi_1} & CH^p(X,0), \\ CH^{p-1,p}(X) &
\xrightarrow{\varphi_2} & CH^p(X,1),
\end{eqnarray*}
making the following diagrams commutative
$$ \xymatrix{
 CH^{p-1,p}(X)  \ar[r]^(0.43){\rho} \ar[d]_{\varphi_2} &
\mmD^{2p-1}_{\log}(X,p) / \im d_{\mmD} \ar[d]^{=}\\
 CH^p(X,1)  \ar[r]^(0.43){\rho} &
\mmD^{2p-1}_{\log}(X,p) / \im d_{\mmD} } \qquad  \xymatrix{
 \widehat{CH}^p(X)
\ar[r]^{\zeta} \ar[d]_{\Phi} & CH^p(X)   \ar[d]^{\varphi_1} \\
 \widehat{CH}^p(X,0)
\ar[r]^{\zeta} &  CH^p(X,0).  }
 $$
\end{lema}
\begin{proof} Both isomorphisms are well known. The morphism $\varphi_1$ is the isomorphism between
the classical Chow group $CH^p(X)$ and the Bloch Chow group
$CH^p(X,0)$. The diagram is obviously commutative, since
$\varphi_1([Z])=[Z]$.

The isomorphism $\varphi_2$ is defined as follows.  Let $f\in
CH^{p-1,p}(X)$. It can be represented by a linear combination
$\sum_{i} [f_i]$, where $f_i\in k^*(W_i)$, $W_i$ is a codimension p-1
subvariety of $X$ and $\sum \Div f_i=0$.  Let $\Gamma_{f_i}$ be the
restriction of the graph of $f_i$ in $\subset X\times \P^1$, to
$X\times \square^1$. That is, $\Gamma_{f_i}$ is the codimension $p$
subvariety of $X\times \square^1$ given by
$$\{(y,f_i(y)) | \ y\in W_i,\ f_i(y)\neq 1\}.$$
Then $\varphi_{2}(f)$ is represented by the image in
\begin{displaymath}
  Z^{p}(X,1)/DZ^{p}(X,1)\cong Z^{p}(X,1)_{0}
\end{displaymath}
of $\sum \Gamma _{f_{i}}$, where $DZ^{p}(X,1)$ are the degenerate
elements.

We want to see that $\rho\varphi_2 = \rho$, i.e., $\rho(\sum_i
\Gamma_{f_i} ) = \rho(\sum [f_i])$. See \cite{Burgos2} or
\cite{BurgosKuhnKramer} for more details on the definition of $\rho$ on the
right hand side.

Let $f=\sum_{i} [f_i] \in CH^{p-1,p}(X)$ be as above. For every $i$, we can choose:
\begin{itemize}
\item a rational function $\tilde{f}_i\in k^*(X)$ whose restriction to $W_i$ is $f_i$,
\item a Green form for $W_i$, $\mathfrak{g}(W_i)=(\omega_i,g_i).$
\end{itemize}
The form
$$\mathfrak{g}(\tilde{f}_i):=(0,-\frac{1}{2}\log \tilde{f}_i\overline{\tilde{f}}_i)$$
is a Green form for the divisor $\Div \tilde{f}_i$ on $X$.

Let $\star$ denote the $\star$-product of Green forms as described by
Burgos in \cite{Burgos2}. Then, we write
$$(\omega_{\rho},\widetilde {g}_{\rho})=\sum \mathfrak{g}(\tilde{f}_i)
\star \mathfrak{g}(W_i).$$
 Since the first component of $\mathfrak{g}(\tilde{f}_i)$ is zero, we
 have that $\omega_{\rho}=0$ as well. Moreover, since $(0,\widetilde {g}_{\rho})$
 is a Green form for $\sum_i \Div \tilde{f}_i \cap W_i = \sum_i \Div
 f_i=0$,  we can obtain a representative $g_{\rho}$ of $\widetilde{g}_{\rho }$
 that is a closed smooth form. Then $g_{\rho}$ is a representative of
 $\rho(\sum [f_i])$.

 Let us show now that $g_{\rho}$ is a representative of
 $\rho(\varphi_2(f))$ as well. By the results of the previous
 sections, the form $\rho(\sum_i \Gamma_{f_i})$ is obtained as
 follows. Let $Z\in Z^{p}(X,1)_{0}$ be a cycle in the normalized group
 that differs from $\sum \Gamma _{f_{i}}$ by a degenerate element. We
 consider a representative $(\omega_{Z},g_{Z})\in
 \tau \mmD^{2p}_{\mathcal{Z}^{p}}(X\times \square _{1},p)_{0}$ of $Z$.
 Since
$$ \beta = \delta_1^0(\omega_{Z},g_{Z})
-\delta_1^1 (\omega_{Z},g_{Z})$$
represents the class of $\sum_i \Div f_i=0$, the class of $\beta$ is
zero and hence there exists $(\omega,g)$ such that
$d_{\mmD}(\omega,g)= \beta$. Moreover, since
$d_{\mmD}\omega_{Z}=0$ and the complex $\tau \mmD^{\ast}(X\times
\square^{1},p)_{0}$ is acyclic (see the proof of Proposition
\ref{affine3}),  there exists $\gamma\in
\mmD^{2p-1}_{\log}(X\times \square^1,p)_{0}$ such that
$d_{\mmD}(\gamma)=\omega_{Z}$.  Then, $\rho(\sum_i
\Gamma_{f_i})$ is represented by $\omega+\delta(\gamma)$.

Therefore, we start by constructing the cycle $Z$ and suitable forms
$(\omega_{Z},g_{Z})$ representing the class of $Z$.
Consider the rational function $h_i \in k^*(X\times \square^1)$ given by
$$(y,(t_1:t_2)) \mapsto \frac{t_1-t_2\tilde{f_i}(y)}{t_1-t_2}.$$
If we write $\Div f_i = (\Div f_i)^0 - (\Div f_i)^{\infty}$ where
$(\Div f_i)^0$ is the divisor of
zeroes and $(\Div f_i)^\infty$ is the divisor of poles, the intersection of the divisor
of $h_i$ with $W_i$, $\Div h_i \cap W_i$, is exactly $\Gamma_{f_i} -
(\Div f_i)^{\infty}$.
Observe that $(\Div f_i)^{\infty}$ is a codimension $p$ degenerate
cycle. Moreover $\Div h_i \cap W_i$ belongs to $Z^p(X,1)_0$.
 Hence
$$Z = \sum \Div h_i \cap W_i$$
is the cycle we need.
Let $\mathfrak{g}(h_i)=(0,-\frac{1}{2}\log h_i\overline{h_i})$ be the
canonical Green form for $ \Div h_i$.
Then, as above, a Green form for $Z$ is given by
$$  \sum \mathfrak{g}(h_i) \star \mathfrak{g}(W_i)=(0,\widetilde g_Z).$$
Now, observe that
\begin{eqnarray*}
\delta(0,\widetilde g_Z) &=&
\sum_i \delta_1^0(\mathfrak{g}(h_i)) \star \mathfrak{g}(W_i) = \sum_i\mathfrak{g}(\tilde{f}_i) \star \mathfrak{g}(W_i)
= (0,\widetilde g_{\rho}).
 \end{eqnarray*}
Since we can assume that $g_{\rho}$ is a smooth representative of
$\widetilde g_{\rho }$, we have that $d_s(g_{\rho},0)= (0,g_{\rho}),$
and hence by the above description of $\rho$ we see that
$$\rho(\sum_i \Gamma_{f_i})=g_{\rho}.$$
This finishes the proof of the lemma.
\end{proof}

\begin{lema}\label{agreement3}
The following diagram is commutative:
$$ \xymatrix@R=2pt{ & \widehat{CH}^p(X) \ar[dd]^{\Phi} \\ \mmD^{2p-1}_{\log}(X,p) / \im d_{\mmD}
\ar[ur]^(0.6){\amap}\ar[dr]_(0.6){\amap} & \\
&\widehat{CH}^p(X,0) } $$
\end{lema}
\begin{proof} Let $\tilde{\alpha}\in \mmD^{2p-1}_{\log}(X,p) / \im
d_{\mmD}$. Then, the lemma follows from the equality
$$d(0,(\alpha,0),0,0,0)=(0,(d_{\mmD}\alpha,\alpha),0,0,0)+(0,0,0,0,\alpha)$$
 in $\widehat{CH}^p(X,0)$.
\end{proof}
This finishes the proof of Theorem \ref{chowagreement}.
\end{proof}

\subsection{Functoriality of the higher arithmetic Chow groups}

\begin{prop}[Pull-back] \label{chowpullbackth} Let $f:X\rightarrow Y$ be a morphism between two arithmetic varieties.
 Then, for all $p\geq 0$, there exists a chain complex,
$\whZ^p_f(Y,*)_0$ such that:
\begin{enumerate}[(i)]
\item  There is a quasi-isomorphism
$$\whZ^p_f(Y,*)_0 \xrightarrow{\sim} \whZ^p(Y,*)_0.$$
\item There is a pull-back
morphism
$$f^*: \whZ^p_f(Y,*)_0 \rightarrow \whZ^p(X,*)_0, $$
inducing a pull-back morphism of higher arithmetic Chow groups
$$\widehat{CH}^p(Y,n) \xrightarrow{f^*} \widehat{CH}^p(X,n), $$ for every
$p,n\geq 0$.
\item The pull-back is compatible with the morphisms
$\amap$ and $\zeta$. That is, there are commutative diagrams
\begin{equation}\label{functdiagram}\xymatrix{
\cdots \ar[r] & H_{\mmD}^{2p-n-1}(Y,\R(p)) \ar[r]^(0.57){\amap}
\ar[d]_{f^*} & \widehat{CH}^p(Y,n)  \ar[r]^{\zeta} \ar[d]_{f^*} &
CH^p(Y,n) \ar[r] \ar[d]_{f^*} & \cdots \\ \cdots \ar[r] &
H_{\mmD}^{2p-n-1}(X,\R(p)) \ar[r]_(0.57){\amap}   &
\widehat{CH}^p(X,n) \ar[r]_{\zeta} & CH^p(X,n)  \ar[r]  & \cdots }
\end{equation}
\end{enumerate}
\end{prop}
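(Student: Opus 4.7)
The plan is to mimic the strategy used for $Z^{p}(Y,\ast)_{0}$ (see \S\ref{functchow}) and for
$\mmD_{\A,\mathcal{Z}^{p}}^{\ast}(Y,p)_{0}$ (Proposition \ref{funct2})
at the level of the whole diagram \eqref{chowcomplex1}.
First I would define an auxiliary diagram
$\widehat{\mmZ}^{p}_{f}(Y,\ast)_{0}$ by replacing every $Y$-based
entry of \eqref{chowcomplex1} by its $f$-admissible version: namely
$Z^{p}_{f}(Y,\ast)_{0}$ on the lower left,
$\mmD^{2p-\ast}_{\A,\mathcal{Z}^{p}_{f}}(Y,p)_{0}$ on the middle
lower, and a corresponding cubical abelian group
$\mcH^{p}_{f}(Y,\ast)_{0}$ built from $H^{2p}_{\mmD,
\mathcal{Z}^{p}_{f,\ast}}(Y_{\C}\times \square^{\ast},\R(p))^{\sigma}$,
while keeping $\mmD^{2p-\ast}_{\A}(Y,p)_{0}$ and $Z\mmD^{2p}(Y,p)$
unchanged (since the source of the pull-back for these two complexes
requires no admissibility).
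Setting $\widehat{Z}^{p}_{f}(Y,\ast)_{0}:=s(\widehat{\mmZ}^{p}_{f}(Y,\ast)_{0})$
then gives the desired candidate.

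For (i), the inclusion
$\widehat{\mmZ}^{p}_{f}(Y,\ast)_{0}\hookrightarrow \widehat{\mmZ}^{p}(Y,\ast)_{0}$
is a morphism of diagrams in which each component is a
quasi-isomorphism: for $Z^{p}_{f}\hookrightarrow Z^{p}$ this is
recalled in \S\ref{functchow}, for
$\mmD^{2p-\ast}_{\A,\mathcal{Z}^{p}_{f}}\hookrightarrow
\mmD^{2p-\ast}_{\A,\mathcal{Z}^{p}}$ this is Proposition
\ref{funct2}(i), and for $\mcH^{p}_{f}\hookrightarrow \mcH^{p}$ this is
deduced from these two by applying Lemma \ref{blochdif1} and
Proposition \ref{difcubs} to the (sub)complexes with $f$-admissible
supports, following verbatim the argument at the end of the proof of
Proposition \ref{funct2}. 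By the general fact recalled in
\S\ref{diagrambei5}, it follows that the induced map
$s(\widehat{\mmZ}^{p}_{f})\to s(\widehat{\mmZ}^{p})$ is a
quasi-isomorphism.

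For (ii), I would check that each entry of
$\widehat{\mmZ}^{p}_{f}(Y,\ast)_{0}$ admits a pull-back to the
corresponding entry of $\widehat{\mmZ}^{p}(X,\ast)_{0}$: on
$Z^{p}_{f}(Y,\ast)_{0}\to Z^{p}(X,\ast)_{0}$ this is the
classical definition; on
$\mmD^{2p-\ast}_{\A,\mathcal{Z}^{p}_{f}}(Y,p)_{0}\to
\mmD^{2p-\ast}_{\A,\mathcal{Z}^{p}}(X,p)_{0}$ this is Proposition
\ref{funct2}(ii); on $\mmD^{2p-\ast}_{\A}(Y,p)_{0}\to
\mmD^{2p-\ast}_{\A}(X,p)_{0}$ and on $Z\mmD^{2p}(Y,p)\to
Z\mmD^{2p}(X,p)$ it is the ordinary pull-back of differential forms,
which is defined for arbitrary morphisms since no support condition
intervenes; and on $\mcH^{p}_{f}(Y,\ast)_{0}\to \mcH^{p}(X,\ast)_{0}$
it is induced by pull-back of Deligne cohomology classes with
supports along admissible subvarieties. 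The nontrivial point is that
these five pull-backs assemble into a morphism of diagrams, i.e.\
commute with $f_{1}$, $g_{1}$, $\rho$ and $i$. For $\rho$, $g_{1}$ and
$i$ commutativity is immediate from the definition of these arrows as
projections or inclusions of forms. For the cycle class map $f_{1}$,
one uses that a Green form $(\omega,g)$ representing the class of a
cycle $Z\in Z^{p}_{f}(Y,n)_{0}$ pulls back along $f\times \Id$ to a
Green form $(f^{\ast}\omega,f^{\ast}g)$ representing the class of
$f^{\ast}(Z)$, as recalled in the remark after Lemma \ref{funct1}; this
is the only genuine verification and is the main point of the
argument. Having a morphism of diagrams, we obtain $f^{\ast}:
\widehat{Z}^{p}_{f}(Y,\ast)_{0}\to \widehat{Z}^{p}(X,\ast)_{0}$ by
passing to simples as in \S\ref{diagrambei5}, and hence the
pull-back on $\widehat{CH}^{p}(Y,n)\to \widehat{CH}^{p}(X,n)$ via the
quasi-inverse supplied by (i).

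For (iii), once $f^{\ast}$ is realized as a morphism of diagrams, the
maps $\amap$ and $\zeta$ are visibly functorial: $\zeta$ is the
projection onto the $Z^{p}$-coordinate, which is compatible with
pull-back by the construction in \S\ref{functchow}; and $\amap$ is
built from the $\mmD_{\A}^{\ast}$ and $Z\mmD^{2p}$ coordinates, on
which the pull-back is the ordinary pull-back of forms and hence
coincides with the map used to define the pull-back on
Deligne--Beilinson cohomology. Commutativity of the squares in
\eqref{functdiagram} then follows from the associated long exact
sequence in Proposition \ref{zeta} together with functoriality of
each of the three theories, the pull-back on $CH^{p}(\cdot,n)$ being
the one recalled in \S\ref{functchow}. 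The only step where something
genuinely has to be checked is the compatibility of the cycle class
map with $f^{\ast}$ mentioned above; everything else is an exercise in
unwinding the definitions, so I would not grind through the routine
cases in detail.
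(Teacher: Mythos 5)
Your proposal is correct and follows essentially the same route as the paper: replace the $Y$-based entries of the diagram \eqref{chowcomplex1} by their $f$-admissible versions, observe that the resulting inclusion of diagrams is a componentwise quasi-isomorphism (hence a quasi-isomorphism on simples by \S\ref{diagrambei5}), and assemble the componentwise pull-backs into a morphism of diagrams, the only substantive check being the compatibility of the cycle class map $f_{1}$ with pull-back. The paper's own proof is in fact terser than yours, simply asserting the commutativity of the relevant diagram "by construction" and deducing (iii) likewise, so your explicit identification of the $f_{1}$ square as the one nontrivial verification is a faithful (and slightly more careful) rendering of the same argument.
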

\begin{proof}
Recall that there are inclusions of complexes
\begin{eqnarray*}
Z^p_f(Y,*)_{0} & \subseteq & Z^p(Y,*)_0, \\
\mcH^p_f(Y,*)_{0} &\subseteq &
\mcH^p(Y,*)_0, \\
\mmD_{\A,\mmZ_f^p}^*(Y,p)_{0} &\subseteq &
\mmD_{\A,\mmZ^p}^*(Y,p)_0,
\end{eqnarray*}
which are quasi-isomorphisms. The pull-back by $f$ is defined for
any $\alpha$ in $Z^p_f(Y,*)_{0},$ in $\mcH^p_f(Y,*)_{0}$ or in
$\mmD_{\A,\mmZ_f^p}^*(Y,p)_{0} $. Moreover, by construction, there
is a commutative diagram
$$\xymatrix{
Z^p_{f}(Y,*)_{0} \ar[d]_{f^*} \ar[r]^{f_1} & \mcH^p_f(Y,*)_{0}
\ar[d]_{f^*} & \mmD_{\A,\mmZ_f^p}^*(Y,p)_{0} \ar[d]_{f^*}
 \ar[l]_{g_1}^{\sim} \ar[r]^(0.55){\rho}& \mmD_{\A}^*(Y,p)_0 \ar[d]_{f^*} & \ar[l]_{i} Z\mmD^{2p}(X,p)_* \ar[d]^{f^*}\\
Z^p (X,*)_0 \ar[r]_{f_1} & \mcH^p(X,*)_0 &
\mmD_{\A,\mmZ^p}^*(X,p)_0 \ar[l]^{g_1}_{\sim} \ar[r]_(0.55){\rho}
& \mmD_{\A}^*(X,p)_0 & \ar[l]_{i} Z\mmD^{2p}(Y,p)_*.}
$$
Let $\whZ^p_f(Y,*)_0$ denote the simple associated to the first
row diagram. Then, there is a pull-back morphism
$$f^*:\whZ^p_f(Y,*)_0 \rightarrow \whZ^p(X,*)_0.  $$
Moreover, as noticed in $\S$\ref{diagrambei5}, the natural
map
$$\whZ^p_f(Y,*)_0 \rightarrow \whZ^p(Y,*)_0 $$
is a quasi-isomorphism. Therefore, $(i)$ and $(ii)$ are proved.
Statement $(iii)$ follows from the construction.
\end{proof}

\begin{obs}
If the map is flat, then the pull-back is already defined at the
level of the chain complexes $\whZ^p(Y,*)_0$ and $\whZ^p(X,*)_0$.
\end{obs}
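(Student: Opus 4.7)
The plan is to produce, for flat $f:X\to Y$, a morphism of diagrams
$\widehat{\mmZ}^p(Y,*)_0\to \widehat{\mmZ}^p(X,*)_0$ in the sense of
$\S$\ref{diagrambei5}, consisting of a compatible family of
pull-back maps on each of the five chain complexes making up the
diagram \eqref{chowcomplex1}; by the general principle recalled in
$\S$\ref{diagrambei5}, any such morphism of diagrams descends
automatically to the associated simple complexes, which by definition
are $\whZ^p(Y,*)_0$ and $\whZ^p(X,*)_0$.

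First I would exhibit the five pull-back maps. Flat pull-back of
algebraic cycles preserves codimension and proper intersection with
all faces of $\square^n$, so it directly induces
$f^*:Z^p(Y,*)_0\to Z^p(X,*)_0$. Pull-back of smooth closed forms gives
$f^*:Z\mmD^{2p}(Y,p)\to Z\mmD^{2p}(X,p)$, and pull-back of forms with
logarithmic singularities at infinity gives
$f^*:\mmD^{2p-*}_{\A}(Y,p)_0\to \mmD^{2p-*}_{\A}(X,p)_0$. Lemma
\ref{funct1} supplies $f^*:\mmD^{2p-*}_{\A,\mmZ^p}(Y,p)_0\to
\mmD^{2p-*}_{\A,\mmZ^p}(X,p)_0$ by precisely the flat-pull-back
argument used there. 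Finally, flat pull-back of cohomology classes
with supports yields $f^*:\mcH^p(Y,*)_0\to \mcH^p(X,*)_0$.

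Next I would verify compatibility of these five maps with the four
structural arrows $f_1$, $g_1$, $\rho$, $i$ of the diagram.
Commutation with $f_1$ is exactly the assertion that the Deligne
cycle class map commutes with flat pull-back, which is the remark
following Lemma \ref{funct1}. Commutation with $g_1$, which reads off
the top-degree component of a representative
$((\omega_n,g_n),\dots,(\omega_0,g_0))$, is immediate because the
pull-back on $\mmD^*_{\A,\mmZ^p}$ was defined componentwise.
Commutation with $\rho$ is part (i) of the proposition in
$\S$\ref{regulator3} asserting contravariance of $\rho$ for flat
maps. Commutation with the inclusion
$i:Z\mmD^{2p}(-,p)_*\hookrightarrow \mmD_{\A}^{2p-*}(-,p)_0$ is
tautological. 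Passing to $\sigma $-invariants is harmless since
$f$ is defined over the base field.

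Assembling these verifications gives the sought morphism of diagrams,
and hence a chain map $f^*:\whZ^p(Y,*)_0\to\whZ^p(X,*)_0$ that refines
the pull-back of Proposition \ref{chowpullbackth} in the sense that it
agrees, under the quasi-isomorphism $\whZ^p_f(Y,*)_0\hookrightarrow
\whZ^p(Y,*)_0$, with the previously constructed $f^*$. The only
non-formal ingredient is the compatibility of the cycle class with
flat pull-back; every other verification is essentially a bookkeeping
check, and none requires enlarging or restricting the complexes.
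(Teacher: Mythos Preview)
Your proposal is correct and is essentially what the paper leaves implicit: the remark is stated without proof, but in the proof of Proposition \ref{chowpullbackth} the paper already exhibits the commutative ladder of five pull-back maps, and for flat $f$ the restricted subcomplexes $Z^p_f(Y,*)_0$, $\mcH^p_f(Y,*)_0$, $\mmD_{\A,\mmZ_f^p}^*(Y,p)_0$ coincide with the full complexes, so the morphism of diagrams lives directly on $\widehat{\mmZ}^p(Y,*)_0$. Your write-up simply makes explicit the five maps and the four compatibility checks, citing Lemma \ref{funct1} and the subsequent remark where appropriate; this is exactly the intended justification.
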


\begin{prop}[Functoriality of pull-back]
Let $f:X\rightarrow Y$ and $g:Y\rightarrow Z$ be two morphisms of
arithmetic varieties. Then,
$$ f^* \circ g^* =(g\circ f)^*:\widehat{CH}^p(Z,n) \rightarrow \widehat{CH}^p(X,n). $$
\end{prop}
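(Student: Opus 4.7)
The plan is to reduce to a chain-level statement on a common subcomplex where both iterated pull-backs are simultaneously defined, and then check the equality there by the elementary functoriality of pull-backs on the three ingredient complexes.

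First, I would introduce an intermediate ``bi-restricted'' version of the chain complex $\whZ^p(Z,*)_0$: let $\whZ^p_{g, f\circ g}(Z,*)_0$ be the simple of the diagram analogous to \eqref{chowcomplex1} but with each of $Z^{p}(Z,*)_0$, $\mcH^p(Z,*)_0$ and $\mmD^{2p-*}_{\A,\mmZ^p}(Z,p)_0$ replaced by the subcomplex generated by cycles (or supported on cycles) $W\subset Z\times \square^{n}$ that intersect properly the faces of $\square^{n}$, meet the graph $\Gamma _{g}$ properly, and whose pull-back $g^{*}W$ further meets the graph $\Gamma _{f}$ properly. By the same moving-lemma type argument used in the proof of Proposition \ref{funct2} (together with Proposition \ref{difcubs} to transfer the quasi-isomorphism from $Z^{p}$ to $\mmD^{2p-*}_{\A,\mmZ^p}$), the natural inclusion
\begin{equation*}
  \whZ^p_{g,f\circ g}(Z,*)_0 \xrightarrow{\ \sim\ } \whZ^p(Z,*)_0
\end{equation*}
is a quasi-isomorphism. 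In particular it induces an isomorphism on $\widehat{CH}^{p}(Z,n)$.

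Next, by construction $g^{*}$ restricts to a morphism $\whZ^p_{g,f\circ g}(Z,*)_0\to \whZ^p_{f}(Y,*)_0$, so $f^{*}\circ g^{*}$ is defined on $\whZ^p_{g,f\circ g}(Z,*)_0$; similarly $(g\circ f)^{*}$ is defined there because cycles that intersect $\Gamma _{g}$ properly and whose $g$-pullbacks meet $\Gamma _{f}$ properly necessarily intersect $\Gamma _{g\circ f}$ properly. Thus both compositions land in $\whZ^p(X,*)_0$ after passing through the appropriate $f$- and $g\circ f$-restricted subcomplexes.

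Now the actual equality $f^{*}\circ g^{*} = (g\circ f)^{*}$ must be verified chain-wise on each of the five summands. For the Bloch complex $Z^{p}(Z,*)_{0}$ this is the usual functoriality of the pull-back of algebraic cycles in the higher Chow complex (see \cite{Levine2}, $\S$3.5). For $\mcH^{p}(Z,*)_{0}$ it is the functoriality of the cycle class map in cohomology with supports. For $\mmD^{2p-*}_{\A,\mmZ^p}(Z,p)_{0}$ and $\mmD^{2p-*}_{\A}(Z,p)_{0}$ it reduces to the standard identity $(g\circ f)^{*}\omega = f^{*}g^{*}\omega$ for pull-backs of differential forms, extended termwise to the iterated cubical complex. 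Since both compositions are defined by the same diagrammatic recipe on the simple complex, these pointwise identities assemble to the equality of morphisms of simple complexes
\begin{equation*}
  f^{*}\circ g^{*} \;=\; (g\circ f)^{*} \colon \whZ^p_{g,f\circ g}(Z,*)_0 \longrightarrow \whZ^p(X,*)_0.
\end{equation*}

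Finally, passing to homology and using the quasi-isomorphism in the first step to identify $H_{n}(\whZ^p_{g,f\circ g}(Z,*)_0)$ with $\widehat{CH}^{p}(Z,n)$ yields the desired identity of maps $\widehat{CH}^{p}(Z,n)\to\widehat{CH}^{p}(X,n)$. The only delicate step is the first one: establishing that the double moving condition still gives a quasi-isomorphism. This is where the argument really uses the flatness of the faces of $\square ^{\cdot}$ and the functoriality statements of Propositions \ref{funct2} and \ref{difcubs}; everything else is a straightforward verification.
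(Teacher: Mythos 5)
Your proposal is correct and follows essentially the same route as the paper: the paper likewise introduces the doubly restricted subcomplex $\whZ^p_{gf\cup g}(Z,*)_0=\whZ^p_{gf}(Z,*)_0\cap \whZ^p_{g}(Z,*)_0$ (a mild rephrasing of your bi-restricted condition), observes that both $(g\circ f)^*$ and $f^*\circ g^*$ are defined there and fit into a commutative triangle through $\whZ^p_f(Y,*)_0$, and concludes via the quasi-isomorphism with $\whZ^p(Z,*)_0$. Your write-up merely makes explicit the chain-level verification on each summand and the moving-lemma input that the paper leaves implicit.
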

\begin{proof}
Let $\whZ^p_{gf\cup g}(Z,n)_0$ be the subgroup of $\whZ^p(Z,n)_0$
obtained considering, at each of the complexes of the diagram
$\widehat{\mmZ}^p(Z,*)_0$, the subvarieties $W$ of $Z\times
\square^n$ intersecting properly the faces of $\square^n$ and such
that
\begin{itemize}
\item $X\times W \times \square^n$ intersects properly the graph
of $g\circ f$,
\item $Y\times W \times \square^n$ intersects
properly the graph of $g$.
\end{itemize}
That is,
$$\whZ^p_{gf\cup g}(Z,n)_0=\whZ^p_{gf}(Z,n)_0\cap \whZ^p_{g}(Z,n)_0.$$
Then, the proposition follows from the commutative diagram
$$ \xymatrix@R=5pt{
& \whZ^p(X,*)_0 & \\ \whZ^p_{gf\cup g}(Z,*)_0 \ar[ur]^{(g\circ
f)^*} \ar[dr]_{g^*} &
\\ & \whZ^p_f(Y,*)_0. \ar[uu]_{f^*}
}$$
\end{proof}

\begin{cor}[Homotopy invariance] \label{homotopyinvariancecor}
Let $\pi: X\times \A^m \rightarrow X$ be the projection on $X$.
Then, the pull-back map
$$\pi^*: \widehat{CH}^p(X,n)  \rightarrow \widehat{CH}^p(X\times \A^m,n)   $$
is an isomorphism for all $n\geq 1$.
\end{cor}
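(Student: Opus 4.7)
The plan is to derive the statement from the long exact sequence of Proposition \ref{zeta} by the five lemma, using the naturality of the pullback supplied by Proposition \ref{chowpullbackth}(iii). Applying $\pi^*$ to the long exact sequences for $X$ and $X\times\A^m$ yields, for each $n\geq 1$, a commutative ladder with exact rows whose middle column is $\pi^*\colon \widehat{CH}^p(X,n)\to \widehat{CH}^p(X\times\A^m,n)$, and whose four remaining columns are the maps
\begin{equation*}
\pi^*\colon CH^p(X,n+1)\to CH^p(X\times\A^m,n+1),\qquad \pi^*\colon H^{2p-n-1}_{\mathcal{D}}(X,\R(p))\to H^{2p-n-1}_{\mathcal{D}}(X\times\A^m,\R(p)),
\end{equation*}
\begin{equation*}
\pi^*\colon CH^p(X,n)\to CH^p(X\times\A^m,n),\qquad \pi^*\colon H^{2p-n}_{\mathcal{D}}(X,\R(p))\to H^{2p-n}_{\mathcal{D}}(X\times\A^m,\R(p)).
\end{equation*}

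Next, I would invoke two classical homotopy-invariance statements. The first is Bloch's homotopy invariance for the higher algebraic Chow groups from \cite{Bloch1}, which makes the two $CH^p$ columns isomorphisms for every degree. The second is the homotopy invariance of real Deligne--Beilinson cohomology, which makes the two $H^{\ast}_{\mathcal{D}}$ columns isomorphisms for every degree and every weight; this is a standard consequence of $\A^1$-invariance of the truncated Deligne complex, and can be iterated to obtain $\A^m$-invariance. An application of the five lemma then forces the middle column to be an isomorphism, which is the statement of the corollary.

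The restriction $n\geq 1$ is genuinely necessary and not a technical artifact. For $n=0$, the right end of the long exact sequence of Proposition \ref{zeta} replaces $H^{2p}_{\mathcal{D}}(X,\R(p))$ by the larger quotient $\mmD^{2p-1}_{\log}(X,p)/\im d_{\mathcal{D}}$, and this quotient is strictly enlarged when one passes from $X$ to $X\times\A^m$ (one acquires all differential forms genuinely involving the affine coordinates, modulo exactness), so the corresponding $\pi^*$ is far from being an isomorphism and the five-lemma argument breaks precisely at that column. No serious obstacle is expected in carrying out the plan: every ingredient is already present either in the paper or in the standard literature, and the only point that requires care is the naturality of the connecting map $\amap$ under $\pi^*$, which is exactly the content of Proposition \ref{chowpullbackth}(iii).
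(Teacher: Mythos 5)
Your proof is correct and is essentially the paper's own argument: the five lemma applied to the ladder of long exact sequences from Proposition \ref{zeta}, with the vertical maps supplied by Proposition \ref{chowpullbackth}(iii) and the outer columns isomorphisms by homotopy invariance of the higher Chow groups and of real Deligne--Beilinson cohomology. The only imprecision is that for $n=1$ the fifth column of the ladder is $\mmD^{2p-1}_{\log}(\cdot,p)/\im d_{\mmD}$ rather than a Deligne cohomology group; this quotient is not homotopy invariant, but the five lemma only requires injectivity of $\pi^*$ there, which follows because the zero section splits $\pi$.
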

\begin{proof}
It follows from the five lemma in the diagram
\eqref{functdiagram}, using the fact that both the higher Chow
groups and the Deligne-Beilinson cohomology groups are homotopy
invariant.
\end{proof}

\section{Product structure}\label{defiprod}
Let $X,Y$ be arithmetic varieties over an arithmetic field $K$. In this
section, we define an external product,
$ \widehat{CH}^*(X,*)\otimes \widehat{CH}^*(Y,*)\rightarrow \widehat{CH}^*(X\times Y,*),$
and an internal product
$ \widehat{CH}^*(X,*)\otimes \widehat{CH}^*(X,*)\rightarrow \widehat{CH}^*(X,*),$
for the higher arithmetic Chow groups. The internal product endows
$\widehat{CH}^*(X,*)$ with a ring structure. It will be shown that
this product is commutative and associative. There are two main
technical difficulties. The first one is that we are representing
a cohomology class with support in a cycle by a pair of forms, the
first one smooth on the whole variety and the second one with
singularities along the cycle. The product of two singular forms has
singularities along the union of the singular locus. Therefore, in
order to define a cohomology class with support on the intersection of
two cycles we need a little bit of homological algebra. To this end we
adapt the technique used in \cite{Burgos2}. The second difficulty is
that the external product in higher Chow groups is not graded
commutative at the level of complexes, but only graded commutative up
to homotopy. To have explicit homotopies we will adapt the techniques
of \cite{Levine1}.

Recall that the higher arithmetic Chow groups are the homology
groups of the simple complex associated to a diagram of complexes.
Therefore, in order to define a product, we use the general
procedure developed by Beilinson, as recalled in
$\S$\ref{productdiagram}. To this end, we need to define a product for each of the
complexes in the diagram $\whmZ^p(X,*)_0$  (\ref{chowcomplex1}), commuting with the
morphisms $f_1$, $g_1$, $\rho$ and $i$.
The pattern for the external product construction is analogous to
the pattern followed to define the external product for the cubical higher
Chow groups, described in $\S$\ref{chowprod}.

For the complex $Z^p(X,*)_0$ we already have an external product
recalled in \S \ref{chowprod}.
Since the complex $\mcH^p(X,*)_0$ is isomorphic to
$Z^p_{\R}(X_{\R},*)_0$,
  the external product on the complex $\mcH^*(X,*)_0$ can
be defined by means of this isomorphism. We will now construct the
product for the remaining complexes.

\subsection{Product structure on the complexes $\mmD^*_{\A}(X,p)$ and $Z\mmD^{2p}(X,p)_*$}
We start by defining a product structure on $\mmD^*_{\A}(X,p)$. Let
$$ X\times Y \times \square^n\times \square^m  \xrightarrow{p_{13}} X \times \square^n, \qquad
   X\times Y \times \square^n\times \square^m  \xrightarrow{p_{24}}
Y \times \square^m $$
be the projections indicated by
the subindices. For every $\omega_1\in \tau\mmD^{r}_{\log}(X\times
\square^n,p)$ and $\omega_2\in \tau\mmD^{s}_{\log}(Y\times
\square^m,q)$, we define
$$\omega_1\bullet_{\A} \omega_2 := (-1)^{ns} p_{13}^*\omega_1\bullet p_{24}^*\omega_2\in
\tau\mmD^{r+s}_{\log}(X\times Y\times \square^{n+m},p+q). $$ This
gives a map
\begin{eqnarray*}
\mmD^{r_1}_{\A}(X,p) \otimes \mmD^{r_2}_{\A}(Y,q) &
\xrightarrow{\bullet_{\A}} & \mmD^{r_1+r_2}_{\A}(X\times Y,p+q) \\
(\omega_1,\omega_2) & \mapsto & \omega_1\bullet_{\A} \omega_2,
\end{eqnarray*}
where $\bullet$ in the right hand side is the product in the Deligne
complex (see \S \ref{dbcohomology}).

\begin{lema}\label{prod}
The map $\bullet_{\A}$ satisfies the Leibniz rule. Therefore,
there is a cochain morphism
$$s(\mmD^{*}_{\A}(X,p) \otimes \mmD^{*}_{\A}(Y,q))
\xrightarrow{\bullet_{\A}}  \mmD^{*}_{\A}(X\times Y,p+q) . $$
\end{lema}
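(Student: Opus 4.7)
The plan is to verify directly the Leibniz identity
$$d_s(\omega_1 \bullet_{\A} \omega_2) = (d_s\omega_1)\bullet_{\A}\omega_2 + (-1)^{r-n}\omega_1\bullet_{\A}(d_s\omega_2),$$
for $\omega_1\in \tau\mmD^r_{\log}(X\times \square^n,p)$ and $\omega_2\in \tau\mmD^s_{\log}(Y\times \square^m,q)$. Since the total degree of $\omega_1$ in the simple complex $\mmD^*_{\A}(X,p)$ is $r-n$, this is the Koszul form of Leibniz with respect to the tensor-product differential, so it implies automatically that $\bullet_{\A}$ extends to a cochain morphism from $s(\mmD^*_{\A}(X,p)\otimes \mmD^*_{\A}(Y,q))$ to $\mmD^*_{\A}(X\times Y,p+q)$. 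Beforehand I note that $\bullet_{\A}$ preserves the truncation: since $r\le 2p$ and $s\le 2q$, the product has Deligne degree $r+s\le 2(p+q)$.

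The verification splits according to the two differentials $d_{\mmD}$ and $\delta$ on the $2$-iterated complex. For the $d_{\mmD}$ part, I use that $p_{13}^*$ and $p_{24}^*$ commute with $d_{\mmD}$ and that the Deligne product $\bullet$ satisfies the Leibniz rule (see \S\ref{dbcohomology}); matching the sign $(-1)^{ns}$ in the definition of $\bullet_{\A}$ with the shifted sign $(-1)^{n(s+1)}$ that appears when one writes $\omega_{1}\bullet_{\A}(d_{\mmD}\omega_{2})$ on the right-hand side yields exactly the $d_{\mmD}$ contribution $(d_{\mmD}\omega_1)\bullet_{\A}\omega_2+(-1)^{r-n}\omega_1\bullet_{\A}(d_{\mmD}\omega_2)$.

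For the cubical part the key geometric observation is that a face map $\delta^j_i$ on $X\times Y\times \square^{n+m}$ splits according to whether $i\le n$ or $i>n$: for $i\le n$ it restricts the $\square^{n}$ factor seen by $p_{13}^*$ and leaves $p_{24}^*\omega_2$ unchanged, and symmetrically for $i>n$ it acts through $p_{24}^*$. Re-indexing $i=n+j$ in the second sum produces
$$\delta(p_{13}^*\omega_1 \bullet p_{24}^*\omega_2) = p_{13}^*(\delta \omega_1) \bullet p_{24}^*\omega_2 + (-1)^{n}\, p_{13}^*\omega_1 \bullet p_{24}^*(\delta \omega_2),$$
the factor $(-1)^n$ arising exactly from the index shift. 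Combining this with the prefactor $(-1)^{ns}$ and with the sign $(-1)^{r+s}$ in front of $\delta$ in $d_s$ on a bidegree $(r+s,-(n+m))$ element gives the missing term $(-1)^{r}(\delta\omega_1)\bullet_{\A}\omega_2+(-1)^{r-n+s}\omega_1\bullet_{\A}(\delta\omega_2)$, which matches Leibniz. The main obstacle here is purely sign-bookkeeping; once the face-map splitting above is in hand, no further homological input is needed, and the induced cochain morphism on the simple follows immediately.
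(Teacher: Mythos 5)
Your proof is correct and follows essentially the same route as the paper's: the same splitting of $d_s$ into the $d_{\mmD}$ and $\delta$ contributions, the same key identity $\delta(p_{13}^*\omega_1\bullet p_{24}^*\omega_2)=p_{13}^*(\delta\omega_1)\bullet p_{24}^*\omega_2+(-1)^n p_{13}^*\omega_1\bullet p_{24}^*(\delta\omega_2)$ (which the paper simply asserts ``by definition of $\delta$'' and you justify via the face-map splitting), and the same sign bookkeeping, noting that $(-1)^{r-n}=(-1)^{r+n}$ matches the paper's stated sign.
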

\begin{proof} Let
 $\omega_1\in \tau\mmD^r_{\log}(X,n)$ and $\omega_2\in
 \tau\mmD^s_{\log}(Y,m)$. By definition of $\delta$, the following
 equality
  holds
 $$ \delta(p_{13}^*\omega_1\bullet p_{24}^*\omega_2)=p_{13}^*(\delta \omega_1)\bullet p_{24}^*\omega_2 +(-1)^{n}
p_{13}^*\omega_1\bullet p_{24}^*(\delta \omega_2).$$ Then,
\begin{eqnarray*}
d_s(\omega_1\bullet_{\A} \omega_2) &=&
(-1)^{ns}d_s(p_{13}^*\omega_1\bullet p_{24}^*\omega_2)
\\ &=& (-1)^{ns}d_{\mmD}(p_{13}^*\omega_1\bullet
p_{24}^*\omega_2)+(-1)^{r+s+ns}\delta(p_{13}^*\omega_1\bullet p_{24}^*\omega_2)\\
&=& (-1)^{ns}d_{\mmD}(p_{13}^*\omega_1)\bullet p_{24}^*\omega_2
+(-1)^{r+ns} p_{13}^*\omega_1\bullet d_{\mmD}(p_{24}^*\omega_2) +
\\ && + (-1)^{r+s+ns}p_{13}^*(\delta \omega_1)\bullet p_{24}^*\omega_2 +(-1)^{r+s+n+ns}
p_{13}^*\omega_1\bullet p_{24}^*(\delta \omega_2) \\
&=&d_{\mmD}\omega _{1}\bullet_{\A} \omega_2+(-1)^{r+n} \omega_1\bullet_{\A}
d_{\mmD}(\omega_2)+\\
&& + (-1)^{r}\delta \omega _{1}\bullet_{\A} \omega_2+(-1)^{r+n+s} \omega_1\bullet_{\A}
\delta (\omega_2)\\
&=&
d_s(\omega_1)\bullet_{\A} \omega_2+(-1)^{r+n} \omega_1\bullet_{\A}
d_s(\omega_2),
\end{eqnarray*}
as desired.
\end{proof}

\begin{df}\label{AA}
Let $\tau\mmD_{\log}^*(X\times Y\times \square^*\times \square^*,p)_0$ be the
$3$-iterated cochain complex whose $(r,-n,-m)$-th graded piece is
the group $\tau\mmD_{\log}^r(X\times Y\times \square^n\times \square^m,p)_0$
and whose differentials are $(d_{\mmD},\delta,\delta)$. Let
\begin{equation}\label{doble}
\mmD^*_{\A\times \A}(X\times Y,p)_0:= s \big(
\tau\mmD_{\log}^*(X\times Y\times
\square^*\times \square^*,p)_0\big) \end{equation} be the associated
simple complex.
\end{df}

\begin{obs} \label{RA}
Observe that there is a cochain morphism
$$\mmD^*_{\A\times \A}(X\times Y,p)_0 \xrightarrow{\kappa}
\mmD^*_{\A}(X\times Y,p)_0 $$
sending $\alpha\in\tau\mmD_{\log}^r(X\times Y\times \square^n\times
\square^m,p) $ to $\alpha \in \tau\mmD_{\log}^r(X\times Y\times
\square^{n+m},p)$ under the identification
\begin{eqnarray*}
\square^{n+m}  & \xrightarrow{\cong} & \square^n\times \square^m \\
(x_1,\dots,x_{n+m})& \mapsto &
((x_1,\dots,x_n),(x_{n+1},\dots,x_{n+m})) .
\end{eqnarray*}
Moreover, the product $\bullet _{A}$ that we have defined previously,
factors through
the morphism $\kappa $ and a
product, also denoted by $\bullet_{A}$,
\begin{displaymath}
  \mmD^{\ast}_{\A}(X,p) \otimes \mmD^{\ast}_{\A}(Y,q)
\xrightarrow{\bullet_{\A}} \mmD^{\ast}_{\A\times \A}(X\times Y,p+q).
\end{displaymath}
\end{obs}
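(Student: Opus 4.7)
The plan is to verify the two claims of the remark: that $\kappa$ is a well-defined cochain morphism, and that the product $\bullet_{\A}$ defined previously factors through $\kappa$.

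First I would set up $\kappa$ on graded pieces. The isomorphism of complex varieties $\square^{n+m} \cong \square^n \times \square^m$ sending $(x_1,\dots,x_{n+m})$ to $((x_1,\dots,x_n),(x_{n+1},\dots,x_{n+m}))$ induces a canonical identification of the underlying spaces of logarithmic forms that preserves the degree $r$, the weight $p$, the twist, and the truncation $\tau_{\le 2p}$. Consequently the $(r,-n,-m)$-graded piece of $\mmD^*_{\A\times\A}(X\times Y,p)_0$ maps canonically to the $(r,-(n+m))$-graded piece of $\mmD^*_{\A}(X\times Y,p)_0$, so $\kappa$ preserves total degree.

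To check that $\kappa$ commutes with the total differentials I would analyze the cubical structure under this identification. The Deligne differential $d_{\mmD}$ is intrinsic to the form and is preserved verbatim. For the cubical face maps of $\square^{n+m}$, the indices $1 \le i \le n$ act only on the $\square^n$-factor while the indices $n+1 \le i \le n+m$ act only on the $\square^m$-factor; hence the total cubical differential $\delta$ on $\mmD^*_{\A}(X\times Y,p)$ decomposes, up to the Koszul sign arising from the simple-complex convention, as the sum of the two cubical differentials of the 3-iterated complex. The normalization is likewise compatible: an element lies in $\bigcap_{i=1}^n \ker \delta_i^1$ for the $\square^n$-faces and in $\bigcap_{j=1}^m \ker \delta_j^1$ for the $\square^m$-faces if and only if it lies in $\bigcap_{k=1}^{n+m}\ker \delta_k^1$ on $\square^{n+m}$.

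For the factorization, observe that the defining formula $\omega_1 \bullet_{\A} \omega_2 = (-1)^{ns} p_{13}^*\omega_1 \bullet p_{24}^*\omega_2$ naturally lives in $\tau\mmD^{r+s}_{\log}(X\times Y \times \square^n \times \square^m, p+q)$ before any identification is made; this is precisely a component of the triple complex. The same formula thus defines a refined product into $\mmD^*_{\A\times\A}(X\times Y, p+q)$, whose composition with $\kappa$ recovers the original product. To confirm that this refined $\bullet_{\A}$ is itself a cochain morphism one repeats the Leibniz calculation of Lemma \ref{prod}, but now with the two independent cubical differentials $\delta_1$ (from $\square^n$) and $\delta_2$ (from $\square^m$) in play. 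The main obstacle is the sign bookkeeping: one must verify that the Koszul sign $(-1)^{ns}$ continues to produce a correct Leibniz rule when $\delta_1$ acts only on $\omega_1$ and $\delta_2$ only on $\omega_2$, without any mixing; this is a routine adaptation of the computation already carried out.
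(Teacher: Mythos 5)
Your proposal is correct and follows exactly the route the paper intends: the paper states this as an unproved observation, and your verification (the face maps $\delta_i$ of $\square^{n+m}$ with $i\le n$ and $i>n$ giving the two cubical differentials with the Koszul sign $\delta=\delta'+(-1)^n\delta''$, the compatibility of normalizations, and the fact that $p_{13}^*\omega_1\bullet p_{24}^*\omega_2$ already lives on $X\times Y\times\square^n\times\square^m$ before the identification) is precisely the content being left to the reader.
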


In order to define the product on the complex $
Z\mmD^{2p}(X,p)_{\ast}$,
recall that we have an isomorphism (see \cite{Burgos2})
\begin{displaymath}
  Z\mmD^{2p}(X,p)\cong ZE^{p,p}_{\R}(X)(p)
\end{displaymath}
and that the restriction of the product $\bullet$ to this subspace is
given by the product $\wedge$.

The inclusion $i$ is compatible with the product $\bullet_{\A}$ and
the product $\wedge$. That is, consider the projections
$p_X:X\times Y\rightarrow X$ and $p_Y:X\times Y\rightarrow Y$. Then,
if $\alpha\in ZE^{p,p}_{\R}(X)(p)$ and $\beta\in ZE^{q,q}_{\R}(Y)(q)$,
we put
$$\alpha\wedge \beta = p_X^*(\alpha) \wedge p_Y^*(\beta) \in
ZE^{p+q,p+q}_{\R}(X\times Y)(p+q).$$
We have a commutative diagram
$$
\xymatrix{
ZE^{p,p}(X)(p)\otimes ZE^{q,q}(Y)(q) \ar[r]^(.5){\wedge}
\ar[d]_{i\otimes i} & ZE^{p+q,p+q}(X\times Y)(p+q) \ar[d]^{i}\\
s(\mmD^{*}_{\A}(X,p)_0  \otimes
\mmD^{*}_{\A}(Y,q)_0) \ar[r]_(.53){\bullet_{\A}} &
\mmD^{*}_{\A}(X\times Y,p+q)_0. }
$$

\subsection{Product structure on the complex $\mmD_{\A,\mathcal{Z}^p}^*(X,p)$}
We define here a product on the complex
$\mmD_{\A,\mathcal{Z}^p}^*(X,p)$. It will be compatible with the
product on $\mmD_{\A}^*(X,p)$, under the morphism $\rho$, and with
the product on $\mcH^p(X,*)_0$ under $g_1$.

Let $X,Y$ be two real varieties. For every $p$, let
$\mmZ_{X,n}^p$ be the subset of codimension $p$ subvarieties of
$X\times \square^n$ intersecting properly the faces of
$\square^n$. Let
$$ \mathcal{Z}_{X,Y,n,m}^{p,q}\subseteq \mmZ^{p+q}_{X\times Y,n+m}$$
be the subset
of the set of codimension $p+q$ subvarieties of $X\times Y\times
\square^{n+m}$, intersecting properly the faces of
$\square^{n+m}$, which are obtained as the cartesian product
$Z\times W$ with $Z\in \mmZ^p_{X,n}$ and $W\in \mmZ^q_{Y,m}$.

For shorthand, we make the following identifications:
\begin{eqnarray*}
\mmZ_{Y,m}^q =\{X\times Z\mid Z\in \mmZ_{Y,m}^q\}& \subseteq &
\mmZ^{q}_{X\times Y,n+m},\\
\mmZ^p_{X,n}=\{W\times Y\mid W\in \mmZ_{X,m}^p\}& \subseteq & \mmZ_{X\times
Y,n+m}^{p}.\end{eqnarray*}

To ease the notation, we write temporarily
$$  \square^{n,m}_{X,Y}:=X\times Y \times \square^{n}\times \square^{m}.$$

For every $n,m,p,q$, let $j_{X,Y}^{p,q}(n,m)$ be the morphism
{\small $$\mmD_{\log}^*(\square^{n,m}_{X,Y}\setminus
\mathcal{Z}_{X,n}^p,p+q)\oplus
\mmD_{\log}^*(\square^{n,m}_{X,Y}\setminus\mmZ_{Y,m}^q,p+q)\xrightarrow{j_{X,Y}^{p,q}(n,m)}
\mmD_{\log}^*(\square^{n,m}_{X,Y} \setminus\mathcal{Z}_{X,n}^p\cup
\mmZ_{Y,m}^q,p+q)
$$}
induced on the limit complexes by the morphism $j$ in Lemma \ref{delignesupports}.

\begin{lema}
There is a short exact sequence {\small $$0\rightarrow
\mmD_{\log}^*(\square^{n,m}_{X,Y}\setminus
\mathcal{Z}_{X,Y,n,m}^{p,q},p+q) \rightarrow
\mmD_{\log}^*(\square^{n,m}_{X,Y}\setminus \mathcal{Z}_{X,n}^p
,p+q)\oplus
\mmD_{\log}^*(\square^{n,m}_{X,Y}\setminus\mmZ_{Y,m}^q,p+q)$$
$$\xrightarrow{j_{X,Y}^{p,q}(n,m)} \mmD_{\log}^*(\square^{n,m}_{X,Y}
\setminus\mathcal{Z}_{X,n}^p\cup \mmZ_{Y,m}^q,p+q)\rightarrow 0.
$$}
\end{lema}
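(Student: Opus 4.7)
The plan is to apply Lemma \ref{delignesupports} pointwise to each pair $(Z,W)$ and pass to the filtered colimit. Fix $Z\in\mathcal{Z}_{X,n}^p$ and $W\in\mathcal{Z}_{Y,m}^q$. Under the identifications made in the paper, view $\widetilde Z:=Z\times Y\times\square^m$ and $\widetilde W:=X\times W\times\square^n$ as closed subvarieties of $\square^{n,m}_{X,Y}$. After reorganizing the factors, their intersection $\widetilde Z\cap\widetilde W$ equals $Z\times W$, a typical element of $\mathcal{Z}_{X,Y,n,m}^{p,q}$, while their union $\widetilde Z\cup\widetilde W$ lies in the family of finite unions of elements of $\mathcal{Z}_{X,n}^p\cup\mmZ_{Y,m}^q$.

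Lemma \ref{delignesupports}, applied to the pair $(\widetilde Z,\widetilde W)$ inside $\square^{n,m}_{X,Y}$, then produces a short exact sequence of Deligne complexes whose kernel has support on $Z\times W$, whose middle term is the direct sum of the complexes with support on $\widetilde Z$ and on $\widetilde W$, and whose right term has support on $\widetilde Z\cup\widetilde W$; moreover the surjective arrow is $(\alpha,\beta)\mapsto-\alpha+\beta$. One then takes the direct limit over the directed set $\mathcal{Z}_{X,n}^p\times\mmZ_{Y,m}^q$, ordered componentwise by inclusion. Since filtered colimits of abelian groups are exact, the resulting sequence remains short exact, and by construction its arrows match $j_{X,Y}^{p,q}(n,m)$ and the canonical inclusion.

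The only genuine thing to verify is that the four colimits computed in this way agree with the complexes appearing in the statement. For the two middle summands this is immediate, because the projections of the pair system onto each factor are cofinal with the systems $\mathcal{Z}_{X,n}^p$ and $\mmZ_{Y,m}^q$ used to define $\mmD_{\log}^*(\square^{n,m}_{X,Y}\setminus\mathcal{Z}_{X,n}^p,p+q)$ and $\mmD_{\log}^*(\square^{n,m}_{X,Y}\setminus\mmZ_{Y,m}^q,p+q)$. For the leftmost term the pair system $(Z,W)\mapsto Z\times W$ is already the defining system of $\mmD_{\log}^*(\square^{n,m}_{X,Y}\setminus\mathcal{Z}_{X,Y,n,m}^{p,q},p+q)$. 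For the rightmost term one observes that the family $\{\widetilde Z\cup\widetilde W\}_{(Z,W)}$ is cofinal in $\mathcal{Z}_{X,n}^p\cup\mmZ_{Y,m}^q$, since any finite union of elements of the latter can be rewritten as $\widetilde{Z_1\cup\dots\cup Z_r}\cup\widetilde{W_1\cup\dots\cup W_s}$. This cofinality check is the only technical point; everything else follows formally from Lemma \ref{delignesupports} together with the exactness of filtered colimits.
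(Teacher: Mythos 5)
Your argument is correct and is exactly the route the paper takes: its proof consists of the single line ``It follows from Lemma \ref{delignesupports}'', and what you have written out --- applying that two-subvariety Mayer--Vietoris sequence to each pair $(\widetilde Z,\widetilde W)$ and passing to the filtered colimit over pairs, which preserves exactness --- is precisely the implicit content. Your cofinality checks (in particular that $\{\widetilde Z\cup\widetilde W\}$ is cofinal among finite unions of elements of $\mathcal{Z}_{X,n}^p\cup\mmZ_{Y,m}^q$) are the right details to supply and are correctly handled.
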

\begin{proof}
It follows from Lemma \ref{delignesupports}.
\end{proof}

By the quasi-isomorphism between the simple complex and the kernel
of an epimorphism (see \eqref{simplekernel}), for every
$n,m$, there is a
quasi-isomorphism
\begin{eqnarray*}
\mmD_{\log}^*(\square^{n,m}_{X,Y}\setminus
\mathcal{Z}_{X,Y,n,m}^{p,q},p+q) & \xrightarrow{\sim}
 & s(-j_{X,Y}^{p,q}(n,m))^* \\
\omega & \mapsto & (\omega,\omega,0).\end{eqnarray*}
It induces a quasi-isomorphism
\begin{equation}\label{quasiiso1}
\mmD_{\log,\mathcal{Z}_{X,Y,n,m}^{p,q}}^*(\square^{n,m}_{X,Y},p+q)\xrightarrow{\sim}
s\left(\mmD^*_{\log}(\square^{n,m}_{X,Y},p+q)^*
\xrightarrow{i_{X,Y}^{p,q}(n,m)} s(-j_{X,Y}^{p,q}(n,m))\right)^*,
\end{equation}
where $i_{X,Y}^{p,q}(n,m)$ is defined by
\begin{eqnarray*}
\mmD^*_{\log}(\square^{n,m}_{X,Y},p+q) &
\xrightarrow{i_{X,Y}^{p,q}(n,m)}&
  s(-j_{X,Y}^{p,q}(n,m))^* \\
\omega \qquad& \mapsto & (\omega,\omega,0).
\end{eqnarray*}

\begin{obs}
\label{si} Observe that there is an induced bicubical cochain complex
structure on
$s(i_{X,Y}^{p,q}({\cdot},{\cdot}))^*$. For every $r$, let
$s(i_{X,Y}^{p,q}(*,*))^r_0$ denote the $2$-iterated complex obtained
by taking the normalized complex functor to both cubical
structures. Consider the $3$-iterated complex
$s(i_{X,Y}^{p,q}(*,*))_0^*$ whose piece of degree $(r,-n,-m)$ is the
group $\tau _{r\leq 2p+2q}s(i_{X,Y}^{p,q}(n,m))_0^r$, and whose
differential is
$(d_s,\delta,\delta)$. Denote by $s(i_{X,Y}^{p,q})_0^*$ the
associated simple complex. Observe that the differential of
$\alpha=(\alpha_0,(\alpha_1,\alpha_2),\alpha_3)\in
s(i_{X,Y}^{p,q})_0^r$ is given by
$$d_s'(\alpha_0,(\alpha_1,\alpha_2),\alpha_3) =
(d_{\mmD}\alpha_0,(\alpha_0-d_{\mmD}\alpha_1,\alpha_0-
d_{\mmD}\alpha_2),-\alpha_1+\alpha_2+d_{\mmD}\alpha_3).$$
\end{obs}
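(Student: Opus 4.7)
The remark makes two assertions: (a) that $s(i_{X,Y}^{p,q}(\cdot,\cdot))^*$ inherits a bicubical cochain complex structure, and (b) the explicit formula for the differential $d_s'$ of the associated simple complex. Both are bookkeeping consequences of the constructions already set up, but the formula in (b) requires some care with signs.

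For (a), the plan is to observe that for each fixed $(n,m)$ the four complexes entering into $s(i_{X,Y}^{p,q}(n,m))^*$ are functorial in $(n,m)$ with respect to the cocubical operations acting independently on the two $\square$-factors. Here one uses that the families $\mmZ^p_{X,\cdot}$ and $\mmZ^q_{Y,\cdot}$ are stable under pullbacks by the coface and codegeneracy maps, so the direct limits defining the complexes with supports continue to be well defined and functorial. The morphisms $j_{X,Y}^{p,q}(n,m)$ and $i_{X,Y}^{p,q}(n,m)$ are given by universal formulas (inclusion, difference of two inclusions, and the diagonal $\omega\mapsto(\omega,\omega,0)$), so they commute with all structure maps of the bicubical objects. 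Hence the simple complex depends bicubically on $(n,m)$, and the normalized complex functor can be applied in each cubical direction independently to obtain $s(i_{X,Y}^{p,q}(n,m))_0^r$. Assembling over $(r,n,m)$ with the differential $(d_s,\delta,\delta)$ and truncation at $r\leq 2p+2q$ then produces the 3-iterated complex.

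For (b), the computation is a direct unwinding of the definition $d_s(a,b)=(d_Aa,\, f(a)-d_Bb)$ of the simple of a cochain map, applied iteratively. An element of $s(i_{X,Y}^{p,q}(n,m))^r$ has the shape $(\alpha_0,(\alpha_1,\alpha_2),\alpha_3)$ with $\alpha_0\in\mmD^r_{\log}$, the pair $(\alpha_1,\alpha_2)$ in degree $r-1$, and $\alpha_3$ in degree $r-2$. The outer differential sends $(\alpha_0,\gamma)$ to $(d_\mmD\alpha_0,\, i(\alpha_0)-d_{s(-j)}\gamma)$, where $i(\alpha_0)=(\alpha_0,\alpha_0,0)$. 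The inner differential on $s(-j)$ sends $((\alpha_1,\alpha_2),\alpha_3)$ to $((d_\mmD\alpha_1, d_\mmD\alpha_2),\, (-j)(\alpha_1,\alpha_2)-d_\mmD\alpha_3)$, and since $j(\alpha_1,\alpha_2)=-\alpha_1+\alpha_2$, the last slot simplifies to $\alpha_1-\alpha_2-d_\mmD\alpha_3$. Subtracting this expression from $i(\alpha_0)$ yields the stated formula, with the final $-\alpha_1+\alpha_2+d_\mmD\alpha_3$ arising from the combination of the inner minus sign of $-j$ and the outer subtraction. The main, and really only, potential pitfall is the two layers of signs; everything else is immediate from the definitions recalled in the preliminaries.
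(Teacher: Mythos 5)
Your verification is correct and is exactly the routine unwinding the paper intends (the remark is stated without proof): the degree bookkeeping $s(i)^r=\mmD^r\oplus s(-j)^{r-1}$ with $s(-j)^{r-1}$ splitting into degrees $r-1$ and $r-2$, the sign $(-j)(\alpha_1,\alpha_2)=\alpha_1-\alpha_2$ coming from $j(\alpha,\beta)=-\alpha+\beta$ in Lemma \ref{delignesupports}, and the outer subtraction $i(\alpha_0)-d_{s(-j)}\gamma$ all reproduce the stated formula. Your justification of the bicubical structure via functoriality of the support families and of the universal maps $i$ and $j$ is likewise the intended argument.
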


\begin{df} Let $\bullet_{\A}$ be the map
\begin{eqnarray*}
\mmD^{r}_{\log,\mathcal{Z}^p}(X\times \square^n,p)_0  \otimes
\mmD^{s}_{\log,\mathcal{Z}^q}(Y\times \square^m,q)_0  &
\xrightarrow{\bullet_{\A}} & s(i_{X,Y}^{p,q}(n,m))_0^{r+s}
\end{eqnarray*} defined by sending $ (\omega,g)\otimes (\omega',g')$  to
$$  (-1)^{ns}(\omega\bullet
\omega',(g\bullet  \omega',(-1)^r\omega\bullet g'),(-1)^{r-1}
g\bullet g').
$$
\end{df}

\begin{lema}
The map $\bullet_{\A}$ defines a pairing of complexes
$$s\left(\mmD^{*}_{\A,\mathcal{Z}^p}(X,p)_0  \otimes
\mmD^{*}_{\A,\mathcal{Z}^q}(Y,q)_0\right)  \xrightarrow{\bullet_{\A}}
s(i_{X,Y}^{p,q})^{*}_0. $$
\end{lema}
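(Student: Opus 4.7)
\bigskip

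\noindent\emph{Proof plan.} The statement asks us to verify that the explicit formula for $\bullet_{\A}$ assembles into a morphism of chain complexes from the simple of the tensor product of the source $3$-iterated complexes to the simple $s(i_{X,Y}^{p,q})^{*}_{0}$. Since both sides are built from two cubical directions (one coming from $X\times \square ^n$, the other from $Y\times \square ^m$) and from a Deligne direction, there are three things to verify: that the image lies in the normalized subcomplex in each cubical direction, that $\bullet_{\A}$ commutes with the two cubical differentials, and that it commutes with the Deligne-cone differential $d_s$ on each factor. By the standard Koszul-sign formalism for simples of iterated complexes, once each of these holds, the map is a chain map on the total simple complexes.

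For the first point, I would use that $p_{13}^{\ast}$ and $p_{24}^{\ast}$ intertwine faces: the face $\delta _{i}^{1}$ with $1\le i\le n$ acts only on $p_{13}^{\ast}$ pullbacks, so by normalization of $(\omega ,g)$ it annihilates every component of $\bullet_{\A}((\omega ,g)\otimes (\omega ',g'))$; the same holds for $1\le i\le m$ in the $\square ^{m}$ direction using normalization of $(\omega ',g')$. Similarly, the intersection $\mmZ_{X,n}^{p}\cup \mmZ_{Y,m}^{q}$ appearing in the component $g\bullet g'$ is exactly the locus where both factors may be singular, while $g\bullet \omega '$ is smooth off $\mmZ_{X,n}^{p}$ and $\omega \bullet g'$ off $\mmZ_{Y,m}^{q}$, so the tuple really belongs to $s(i_{X,Y}^{p,q}(n,m))_{0}^{r+s}$. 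Compatibility with the cubical differentials $\delta $ then follows from the fact that $\delta (p_{13}^{\ast}\omega \bullet p_{24}^{\ast}\omega ')=p_{13}^{\ast}\delta \omega \bullet p_{24}^{\ast}\omega '+(-1)^{n}p_{13}^{\ast}\omega \bullet p_{24}^{\ast}\delta \omega '$, exactly as in Lemma~\ref{prod}; the sign $(-1)^{ns}$ in the definition is precisely the Koszul sign needed so that this identity matches the differential of the simple of the tensor product.

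For the third and main point, I would plug the values
\begin{align*}
\alpha _{0}&=(-1)^{ns}\omega \bullet \omega ',& \alpha _{1}&=(-1)^{ns}g\bullet \omega ',\\
\alpha _{2}&=(-1)^{ns+r}\omega \bullet g',& \alpha _{3}&=(-1)^{ns+r-1}g\bullet g'
\end{align*}
into the formula for $d_{s}'$ given in Remark~\ref{si} and compare componentwise with $\bullet_{\A}(d_{s}(\omega ,g)\otimes (\omega ',g'))+(-1)^{r-n}\bullet_{\A}((\omega ,g)\otimes d_{s}(\omega ',g'))$, where $d_{s}(\omega ,g)=(d_{\mmD}\omega ,\omega -d_{\mmD}g)$. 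Each of the four components then reduces to an instance of the Leibniz rule for the Deligne product $\bullet $, as stated in $\S \ref{dbcohomology}$. For instance, $d_{\mmD}\alpha _{0}=(-1)^{ns}(d_{\mmD}\omega \bullet \omega '+(-1)^{r}\omega \bullet d_{\mmD}\omega ')$ matches the first component of the sum; the components $\alpha _{0}-d_{\mmD}\alpha _{1}$ and $\alpha _{0}-d_{\mmD}\alpha _{2}$ collapse, after Leibniz, to $(-1)^{ns}(\omega -d_{\mmD}g)\bullet \omega '+\ldots$ and $(-1)^{ns+r}\omega \bullet (\omega '-d_{\mmD}g')+\ldots$ respectively, matching what one gets from the source; and finally $-\alpha _{1}+\alpha _{2}+d_{\mmD}\alpha _{3}$ is exactly the mixed contribution produced by the tensor differential on the $g\bullet g'$ component.

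The calculations themselves are elementary once the signs are pinned down, so the real obstacle is bookkeeping: one must simultaneously juggle the Koszul sign $(-1)^{ns}$ used to pass $\omega '$ past $\square ^{n}$, the sign $(-1)^{r}$ arising from moving $d_{\mmD}$ past $\omega $ in Leibniz, and the cone-type sign convention encoded in the formula for $d_{s}'$ in Remark~\ref{si}. Since none of these signs depends on the support condition, the whole argument is formally identical to the verification of the Leibniz rule for $\bullet _{\A}$ carried out in Lemma~\ref{prod}, enriched by two extra components recording the singularities of $g$ and $g'$.
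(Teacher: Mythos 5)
Your proposal is correct and follows essentially the same route as the paper: the paper likewise splits the verification into the cubical part (declared ``analogous to the proof of Lemma~\ref{prod}'') and the Deligne-cone part (declared ``a direct computation'' with the differential of Remark~\ref{si}), which is exactly the componentwise Leibniz check you sketch. You merely make explicit some points the paper leaves implicit (the normalization and support conditions, and the sign bookkeeping), and your component-by-component identities do check out.
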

\begin{proof} Let $(\omega,g)\in \mmD^{r}_{\log,\mathcal{Z}^p}(X\times
\square^n,p)_0 $ and $(\omega',g')\in
\mmD^{s}_{\log,\mathcal{Z}^q}(Y\times \square^m,q)_0$. Then, we
have to see that
$$d_s'((\omega,g)\bullet_{\A} (\omega',g'))=
d_s'(\omega,g)\bullet_{\A}
(\omega',g')+(-1)^{r-n}(\omega,g)\bullet_{\A} d_s'(\omega',g').
$$
That is, we have to show that the following two equalities hold:
\begin{eqnarray*}
d_{s} ((\omega,g)\bullet_{\A} (\omega',g')) &=&
d_s(\omega,g)\bullet_{\A}
(\omega',g')+(-1)^{r-n}(\omega,g)\bullet_{\A} d_s(\omega',g') \\
\delta ((\omega,g)\bullet_{\A} (\omega',g')) &=& (-1)^s
\delta(\omega,g)\bullet_{\A}
(\omega',g')+(-1)^{n}(\omega,g)\bullet_{\A} \delta(\omega',g').
\end{eqnarray*}
The proof of the second equality is analogous to the proof
of Lemma \ref{prod}. The first equality is a direct computation.
\end{proof}

 We define a
complex $\mmD_{\A\times \A,\mathcal{Z}_{X,Y}^{p,q}}^{*}(X\times
Y,p+q)_0$ that is analogous to the complex $\mmD^*_{\A\times
  \A}(X,p)_0$ of Definition \ref{AA}.

\begin{df} Let
  $\mmD_{\A\times
\A,\mathcal{Z}_{X,Y}^{p,q}}^{*}(X\times Y,p+q)_0$ be the simple
complex
associated to the $3$-iterated complex whose $(r,-n,-m)$ graded
piece is $\tau\mmD_{\log,\mathcal{Z}_{X,Y,n,m}^{p,q}}^{r}(X\times
Y\times \square^n\times\square^m,p+q)_0$.
\end{df}

As in Remark \ref{RA}, we will denote by $\kappa $ the morphisms obtained
by identifying
$\square^{n}\times \square^{m}$ with $\square^{n+m}$.
\begin{displaymath}
  \mmD_{\A\times \A,\mathcal{Z}^{p+q}}^{*}(X\times
Y,p+q)_0 \xrightarrow{\kappa}
\mmD_{\A,\mathcal{Z}^{p+q}}^{*}(X\times Y,p+q)_0.
\end{displaymath}
We will denote by $\rho $ the morphisms obtained by forgetting the
support
\begin{eqnarray*}
\mmD_{\A\times \A,\mathcal{Z}_{X,Y}^{p,q}}^{*}(X\times Y,p+q)_0
&\xrightarrow{\rho} & \mmD^*_{\A\times \A}(X\times Y,p)_0,\\
s(i_{X,Y}^{p,q})_0^* &\xrightarrow{\rho} & \mmD^*_{\A\times
\A}(X\times Y,p)_0.
\end{eqnarray*}
There are also natural morphisms, whose definitions are obvious,
\begin{eqnarray*}
\mmD_{\A\times \A,\mathcal{Z}_{X,Y}^{p,q}}^{*}(X\times Y,p+q)_0
&\rightarrow & \mmD_{\A\times \A,\mathcal{Z}^{p+q}}^{*}(X\times
Y,p+q)_0,\\
\mmD_{\A\times \A,\mathcal{Z}_{X,Y}^{p,q}}^{*}(X\times Y,p+q)_0
&\rightarrow& s(i_{X,Y}^{p,q})_0^*.
\end{eqnarray*}

\begin{lema}
The natural map
\begin{equation}\label{eq:6}
\mmD_{\A\times \A,\mathcal{Z}_{X,Y}^{p,q}}^{*}(X\times Y,p+q)_0
\rightarrow s(i_{X,Y}^{p,q})_0^*
\end{equation}
is a quasi-isomorphism. Moreover, it commutes with  $\rho$.
\end{lema}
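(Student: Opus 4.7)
The plan is to reduce the statement to the quasi-isomorphism \eqref{quasiiso1}, which is already established at the level of cochain complexes (not iterated, not truncated, not normalized) for each fixed pair $(n,m)$, and then to propagate this quasi-isomorphism through the operations of canonical truncation, cubical normalization (in both directions), and assembly into a simple complex.

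More concretely, I would argue in four steps. First, fix $(n,m)$. The quasi-isomorphism \eqref{quasiiso1} gives
\begin{equation*}
\mmD_{\log,\mathcal{Z}_{X,Y,n,m}^{p,q}}^*(\square^{n,m}_{X,Y},p+q) \xrightarrow{\sim} s(i_{X,Y}^{p,q}(n,m))^*.
\end{equation*}
Second, apply the canonical truncation $\tau_{\leq 2p+2q}$. Canonical truncation preserves quasi-isomorphisms, so the induced map is still a quasi-isomorphism of cochain complexes for each fixed $(n,m)$. Third, apply the normalization functor in each of the two cubical directions. Since the maps in question are morphisms of bicubical cochain complexes compatible with all the cubical structure, Proposition \ref{cubchain} applied twice (once in each cubical index) shows that normalization preserves the quasi-isomorphism. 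At this stage we have, for every fixed $(n,m)$, a quasi-isomorphism between the corresponding pieces of the two $3$-iterated complexes.

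Fourth, I would pass to simple complexes via a spectral sequence argument. Both $3$-iterated complexes give rise to second-quadrant spectral sequences converging to the cohomology of the respective simple complexes; convergence is guaranteed precisely because the canonical truncation $\tau_{\leq 2p+2q}$ has been applied, making both complexes bounded in the $r$-direction (this is the same mechanism already used in the proof of Proposition \ref{affine3}). The step-three quasi-isomorphism identifies the $E_1$ pages and, being a map of $3$-iterated complexes, the map commutes with all differentials of the spectral sequence. Hence the map of simple complexes is a quasi-isomorphism. Compatibility with $\rho$ (the map that forgets supports) is immediate from the definitions, since both versions of $\rho$ are induced by sending $(\omega,g)$ (or $(\alpha_0,(\alpha_1,\alpha_2),\alpha_3)$) to its first component $\omega$ (respectively $\alpha_0$), and the natural map \eqref{eq:6} is built so that these first components correspond.

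The only subtle point, which I expect to be the mildest of obstacles, is verifying that the normalization functors and canonical truncation genuinely commute with the formation of the simple complex well enough for the spectral sequence comparison to apply; this is essentially bookkeeping, since $\tau_{\leq 2p+2q}$ is applied degree by degree in $r$ and leaves the cubical differentials untouched, while normalization acts only in the cubical directions and leaves the differential $d_\mmD$ (and $d_s$) alone. So the three operations commute, and the spectral sequence comparison goes through without further input.
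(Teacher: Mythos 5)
Your proposal is correct and follows essentially the same route as the paper, whose entire proof is the single sentence ``It follows from the quasi-isomorphism \eqref{quasiiso1}''; you have simply made explicit the bookkeeping (truncation, normalization via Proposition \ref{cubchain} in each cubical direction, and the bounded spectral sequence of the $3$-iterated complex) that the paper leaves to the reader. The observation that $\tau_{\leq 2p+2q}$, the normalization functors, and the formation of the simple complex interact harmlessly because they act in independent directions is exactly the implicit content of the paper's one-line proof.
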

\begin{proof} It follows from the quasi-isomorphism \eqref{quasiiso1}.
\end{proof}

The external product on
$\mmD^{*}_{\A,\mathcal{Z}^*}({\cdot},*)_0$ is given, in the
derived category of complexes, by {\small
$$ \xymatrix@C=15pt{\mmD^{r}_{\A,\mathcal{Z}^p_X}(X,p)_0 \otimes
\mmD^{s}_{\A,\mathcal{Z}_Y^q}(Y,q)_0  \ar[r]^(0.65){\bullet_{\A}} &
s(i_{X,Y}^{p,q})_0^{r+s} & \\ & \mmD^{r+s}_{\A\times
\A,\mathcal{Z}^{p,q}_{X,Y}}(X\times Y,p+q)_0 \ar[u]^{\sim}
\ar[r]^(0.52){\kappa} & \mmD^{r+s}_{\A,\mathcal{Z}_{X\times Y}^{p+q}}(X\times
Y,p+q)_0.} $$ }
The fact that to define the product in this complex we need to invert
a quasi-isomorphism is the main reason of the complexity of the
definition of the product on the higher arithmetic Chow groups.

By definition, it is clear that this morphism commutes with the
morphism defined on the complex $\mmD^*_{\A}(X,p)$. It
remains to be seen
that the product on $\mmD_{\A,\mmZ^p}^{2p-n}(X,p)_0$ is compatible
with the product on $\mcH^p(X,n)_0$, under the quasi-isomorphism
$g_1$.

Let $\omega\in s(i_{X\times Y}^{p,q})^{2p+2q-l}_0$ and let
$$(\omega_{l}^0,\dots,\omega_l^l)\in \bigoplus_{j=0}^l
\tau_{\leq 2p+2q}s(i_{X,Y}^{p,q}(j,l-j))_0^{2p+2q}$$ be the components of $\omega$
corresponding to the degree $(2p+2q,-j,j-l)$. These are the components
that have maximal degree as differential forms and, by the definition
of the truncated complex they satisfy
$d_{s}\omega_{l}^j=0$. Thus, the form
$\omega ^{j}_{l}$ defines a
cohomology class $[\omega_{l}^j]$ in the complex
$s(i_{X,Y}^{p,q}(j,l-j))_0^{*}$.
Since there is a quasi-isomorphism
$$\mmD^{*}_{\log,\mmZ^{p,q}_{X,Y}}(X\times Y\times \square^{l},p+q)_0
\xrightarrow{\sim} s(i_{X,Y}^{p,q}(j,l-j))_0^{*}, $$ we obtain a
cohomology class  in
$H^*(\mmD^{*}_{\log,\mmZ^{p,q}_{X,Y}}(X\times Y\times
\square^{l},p+q)_0)$. Hence, a cohomology class
$[\omega_l^j]\in \mcH^{p+q}(X\times Y,l)_{0}$. This procedure defines a chain
morphism, denoted $g_{1}$,
\begin{eqnarray*}
s(i_{X,Y}^{p,q})_0^{2p+2q-l}&\xrightarrow{g_{1}}& \mcH^{p+q}(X\times
Y,l)_0\\
\omega &\longmapsto & \sum_{j}[\omega^{j}_{l} ].
\end{eqnarray*}
By composition, we can define a morphism, also denoted $g_{1}$,
\begin{displaymath}
  \mmD^{*}_{\A\times A,\mmZ^{p,q}_{X,Y}}(X\times Y,p+q)_0
  \xrightarrow{g_{1}}
  \mcH^{p+q}(X\times Y,\ast)_0.
\end{displaymath}
Moreover there is a commutative diagram
\begin{displaymath}
  \xymatrix{ \mmD^{*}_{\A\times \A,\mmZ^{p,q}_{X,Y}}(X\times Y,p+q)_0
    \ar_{\kappa }[dd] \ar^{g_{1}}[dr]&\\
  & \mcH^{p+q}(X\times Y,\ast)_0\\
   \mmD^{*}_{\A,\mmZ^{p+q}}(X\times Y,p+q)_0 \ar_{g_{1}}[ur]&
}
\end{displaymath}

\begin{prop} Let $Z \in \mmZ^p_{X,n}$ and $T\in \mmZ^q_{Y,m}$.
Let $[(\omega_Z,g_Z)]\in \mcH^{p}(X,n)_0$ represent the class of a
cycle $z\in Z^{p}(X,n)_0$ with support on $Z$ and $[(\omega_T,g_T)]\in
\mcH^{q}(Y,m)_0$ represent the class of a cycle $t \in	
Z^{q}(Y,m)_0$ with support on $T$. Then,
$$ [(\omega_Z,g_Z)\bullet_{\A} (\omega_T,g_T)]\in \mcH^{p+q}(X\times Y,n+m)_0$$
represents the class of the cycle $z\times t$ in $Z^{p+q}(X\times
Y,n+m)_0$.
\end{prop}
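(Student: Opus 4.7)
The plan is to verify that the external product $\bullet_{\A}$, followed by the maps to $\mcH^{p+q}(X\times Y,n+m)_0$, intertwines the Bloch external product of cycles with the Deligne product of representing cocycles. First I will check that the tuple
\[
\alpha := (-1)^{ns}\bigl(\omega_Z\bullet\omega_T,\,(g_Z\bullet\omega_T,\omega_Z\bullet g_T),\,-g_Z\bullet g_T\bigr)\in s(i_{X,Y}^{p,q}(n,m))^{2p+2q}_0
\]
is a cocycle of top degree, which should follow directly from $d_\mmD(\omega_Z,g_Z)=0$, $d_\mmD(\omega_T,g_T)=0$ and the Leibniz rule for $\bullet$. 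Via the quasi-isomorphism \eqref{quasiiso1} and the definition of $g_{1}$, the class of $\alpha$ determines a class in $\mcH^{p+q}(X\times Y,n+m)_0 = H^{2p+2q}_{\mmD,\mmZ^{p+q}_{X\times Y,n+m}}(X\times Y\times\square^{n+m},\R(p+q))$, and the goal is to identify this class with $cl(z\times t)$.

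Next, I will pass to a smooth compactification of $X\times Y\times\square^{n+m}$ and appeal to Proposition \ref{cycleclass2}, which characterizes $cl(z\times t)$ as the class of any pair $(\tilde\omega,\tilde g)$ satisfying the current equation $-2\partial\bar\partial[\tilde g]=[\tilde\omega]-\delta_{\overline{Z\times T}}$. Using the Mayer--Vietoris sequence of Lemma \ref{delignesupports} (which is the very reason $s(i_{X,Y}^{p,q}(n,m))$ appears in the construction), I will lift $\alpha$ back across \eqref{quasiiso1} to a pair in $\mmD^{2p+2q}_{\log,\mmZ^{p,q}_{X,Y,n,m}}(X\times Y\times\square^{n+m},p+q)$; the smooth component will be $\tilde\omega=(-1)^{ns}\omega_Z\bullet\omega_T$, while $\tilde g$ will be obtained by assembling $g_Z\bullet\omega_T$, $\omega_Z\bullet g_T$ and $g_Z\bullet g_T$ on the common complement via a \v{C}ech-type gluing.

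Finally, using the current equations $-2\partial\bar\partial g_Z=\omega_Z-\delta_{\overline Z}$ and $-2\partial\bar\partial g_T=\omega_T-\delta_{\overline T}$ on the compactification, the Leibniz rule for $\partial\bar\partial$ applied to $\bullet$, and the fact that $Z$ and $T$ live in complementary factors so that they meet properly and $\delta_{\overline{Z\times T}}=p_{13}^*\delta_{\overline Z}\wedge p_{24}^*\delta_{\overline T}$, I expect the current equation for $(\tilde\omega,\tilde g)$ to follow from a double application of the Leibniz rule. The main obstacle will be the lifting step: the preimage of $\alpha$ through the Mayer--Vietoris quasi-isomorphism is only defined up to coboundary, so verifying the current equation will require a careful choice of representative and attentive sign bookkeeping, particularly to ensure that the mixed terms $\omega_Z\bullet\delta_{\overline T}$ and $\delta_{\overline Z}\bullet\omega_T$ cancel appropriately and leave only $\delta_{\overline Z}\wedge\delta_{\overline T}=\delta_{\overline{Z\times T}}$ on the right-hand side.
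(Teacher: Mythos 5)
Your plan is mathematically the right one, but you should know that the paper's entire proof of this proposition is a citation: it invokes Theorem 4.2.3 of \cite{GilletSouleIHES} and Theorem 7.7 of \cite{Burgos2}, which together say precisely that the $\star$-product of Green forms for properly intersecting cycles represents the class of the intersection in Deligne--Beilinson cohomology with supports. What you propose is to reprove those results by hand in the special case of an external product, where proper intersection is automatic because $Z$ and $T$ sit in complementary factors. Your outline is faithful to how those theorems are actually proved: the Mayer--Vietoris lift across \eqref{quasiiso1} \emph{is} the $\star$-product of Green forms, and the current equation of Proposition \ref{cycleclass2} is the correct characterization of $cl(z\times t)$. (Your signs are also consistent with the paper's definition of $\bullet_{\A}$ on $s(i_{X,Y}^{p,q}(n,m))_0$, since in top degree $r=2p$ the factors $(-1)^r$ and $(-1)^{r-1}$ reduce to $+1$ and $-1$.)

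One caution: you locate the main difficulty in the sign bookkeeping of the lift, but the genuinely hard step is the one you dispatch as ``a double application of the Leibniz rule.'' The identity
\begin{equation*}
-2\partial\bar\partial\,[g_Z\bullet g_T]=[\omega_Z]\wedge[g_T]-\delta_{\overline Z}\wedge[g_T]+(\text{symmetric terms})
\end{equation*}
is not a formal consequence of Leibniz: $[g_Z\bullet g_T]$ is the current attached to a product of two singular, merely locally integrable forms, and differentiating it produces residue contributions; likewise the mixed products $[g_Z]\wedge\delta_{\overline T}$ and $\delta_{\overline Z}\wedge[g_T]$ must first be shown to exist as currents (one uses that $g_Z$ is of logarithmic type, so its restriction to $\overline{X\times\square^n}\times\overline T$ is again locally integrable, and symmetrically). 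This analytic input is exactly what the cited theorems supply; if you intend a self-contained argument you must provide it, and otherwise the honest proof is the paper's one-liner: reduce to the external $\star$-product statement and cite \cite{GilletSouleIHES} and \cite{Burgos2}.
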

\begin{proof}
It follows from \cite{GilletSouleIHES}, Theorem 4.2.3 and
\cite{Burgos2}, Theorem 7.7.
\end{proof}

\begin{cor}
For every $p,q,n,m$, the following diagram is commutative:
$$\xymatrix{
\mmD^{2p-n}_{\A,\mathcal{Z}^p}(X,p)_0\otimes
\mmD^{2q-m}_{\A,\mathcal{Z}^q}(Y,q)_0 \ar[r]^(.55){g_1} \ar[d]_{\bullet_{\A}}
& \mcH^{p}(X,n)_0 \otimes \mcH^{q}(Y,m)_0 \ar[d]^{\times} \\
s(i_{X,Y}^{p,q})^{2p+2q-n-m}_0 \ar[r]_{g_1} &\mcH^{p+q}(X\times Y,n+m)_0
}$$
\ \hfill $\square$
\end{cor}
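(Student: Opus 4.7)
The plan is to chase a pure tensor $(\omega,g)\otimes (\omega',g')$ around both sides of the diagram and check that the two outputs coincide. Writing each factor as a tuple indexed by cubical degree, $(\omega,g)=((\omega_n,g_n),\dots,(\omega_0,g_0))$ with $(\omega_j,g_j)\in \tau\mmD^{2p-n+j}_{\log,\mmZ^p}(X\times\square^j,p)_0$, and similarly for $(\omega',g')$, I will use that, by Proposition \ref{difcubs}, the morphism $g_1$ retains only the top form-degree component, namely $[(\omega_n,g_n)]\in \mcH^p(X,n)_0$.

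First I would carry out the form-degree bookkeeping for the product. The summand $\omega_j\bullet_{\A}\omega'_k$ of $(\omega,g)\bullet_{\A}(\omega',g')$ lives in form-degree $2p+2q-n-m+j+k$, so among all summands only the pair $(j,k)=(n,m)$ contributes to the form-degree $2p+2q$ piece of $s(i_{X,Y}^{p,q}(n,m))_0$ that $g_1$ reads off. Therefore
\begin{equation*}
g_1\bigl((\omega,g)\bullet_{\A}(\omega',g')\bigr)=\bigl[(\omega_n,g_n)\bullet_{\A}(\omega'_m,g'_m)\bigr]\in \mcH^{p+q}(X\times Y,n+m)_0.
\end{equation*}

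To close the loop I would apply Lemma \ref{blochdif1} to represent $[(\omega_n,g_n)]=cl(z)$ and $[(\omega'_m,g'_m)]=cl(t)$ for (real combinations of) algebraic cycles $z$ and $t$, and then invoke the preceding proposition to conclude that $[(\omega_n,g_n)\bullet_{\A}(\omega'_m,g'_m)]=cl(z\times t)$. Since the external product on $\mcH^{\ast}$ is defined through the isomorphism $f_1$ precisely so as to match the cycle-level external product, one has $cl(z)\times cl(t)=cl(z\times t)$, and the two paths around the diagram agree. By bilinearity the conclusion extends from pure tensors to the full tensor product. The only mildly delicate point is the tri-index bookkeeping when unwinding the simple of the $3$-iterated complex $s(i_{X,Y}^{p,q})^{\ast}_0$; no further cohomological computation is required beyond what the preceding proposition already supplies.
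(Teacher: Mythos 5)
Your proposal is correct and follows exactly the route the paper intends: the corollary is left as an immediate consequence of the preceding proposition, and your argument simply makes explicit the two ingredients that make it immediate, namely the degree bookkeeping showing that $g_1$ of the product only sees the component $(\omega_n,g_n)\bullet_{\A}(\omega'_m,g'_m)$, and the proposition identifying its class with $cl(z\times t)=cl(z)\times cl(t)$. No gaps.
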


\subsection{Product structure on the higher arithmetic Chow
  groups}\label{diagramgros}

Once we have defined a compatible product on each of the complexes
involved, the
product on the higher arithmetic Chow groups is given by the following
diagram.
{\footnotesize $$
\xymatrix@C=-35pt{ & \mcH^{p}(X,n)_0 \otimes \mcH^{q}(Y,m)_0
\ar[dd]^{\times} & & \mmD_{\A}^{2p-n}(X,p)_0\otimes
\mmD_{\A}^{2q-m}(Y,q)_0 \ar[dd]^{\bullet_{\A}} \\
Z^{p}(X,n)_0\otimes Z^q(Y,m)_0 \ar[dd]^{\times} \ar[ur]^{f_1} & &
\ar[ul]_{g_1}^{\sim} \mmD^{2p-n}_{\A,\mathcal{Z}^p}(X,p)_0\otimes
\mmD^{2q-m}_{\A,\mathcal{Z}^q}(Y,q)_0 \ar[ur]^{\rho}
\ar[dd]^{\bullet_{\A}} & & ZD^{2p}(X,p)_n\otimes ZD^{2q}(Y,q)_m
\ar[ul]_{i} \ar[dd]^{\wedge}
 \\
 & \mcH^{p+q}(X\times Y,n+m)_0 & & \mmD_{\A\times \A}^{2(p+q)-n-m}
 (X\times Y,p+q)_0  \\
Z^{p+q}(X\times Y,n+m)_0  \ar[ur]^{f_1} & & \ar[ul]_{g_1}
s(i_{X,Y}^{p,q})^{2p+2q-n-m}_0 \ar[ur]^{\rho} & & \ar[ul]_{i}
ZD^{2(p+q)}(X\times Y,p+q)_{n+m}   \ar@{=}[dd]
 \\
 & \mcH^{p+q}(X\times Y,n+m)_0 \ar@{=}[dd]
 \ar@{=}[uu] & & \mmD_{\A\times \A}^{2(p+q)-n-m}(X\times Y,p+q)_0
 \ar[dd]^{\kappa}  \ar@{=}[uu]\\
Z^{p+q}(X\times Y,n+m)_0 \ar@{=}[dd] \ar@{=}[uu] \ar[ur]^{f_1} & &
\ar[ul]_{g_1} \mmD^{2(p+q)-n-m}_{\A\times
\A,\mathcal{Z}^{p,q}_{X,Y}}(X\times Y,p+q)_0 \ar[ur]^{\rho}
\ar[dd]^{\kappa} \ar@{->}[uu]_{\sim}  & & \ar[ul]_{i}
ZD^{2(p+q)}(X\times Y,p+q)_{n+m}   \ar@{=}[dd]
 \\
 & \mcH^{p+q}(X\times Y,n+m)_0 & & \mmD_{\A}^{2(p+q)-n-m}(X\times Y,p+q)_0 \\
Z^{p+q}(X\times Y,n+m)_0 \ar[ur]^{f_1} & & \ar[ul]_{g_1}^{\sim}
\mmD^{2(p+q)-n-m}_{\A,\mathcal{Z}^{p+q}}(X\times Y,p+q)_0
\ar[ur]^{\rho}
& & \ar[ul]_{i} ZD^{2(p+q)}(X\times Y,p+q)_{n+m}
}
$$ }
Observe that, in the first set of vertical arrows is where the product
is defined, in the second set of vertical arrows we are just inverting
the quasi-isomorphism \eqref{eq:6}, finally in the last set of
vertical arrows we are applying the morphism $\kappa $.

The above diagram induces a morphism in the
derived category of chain complexes
$$s\big(\whmZ^p(X,*)_0\otimes \whmZ^q(Y,*)_0\big)\xrightarrow{\cup}
s\big(\whmZ^{p+q}(X\times Y,*)_0\big)=\whZ^{p+q}(X\times Y,*)_0.$$
Recall here the notation we are using, the symbol $\whmZ^p(X,*)_0$
denotes the diagram where the symbol $\whZ^p(X,*)_0$ denotes the
associated simple complex.

By $\S$\ref{productdiagram}, for any  $\beta \in \Z$ there is
a morphism $\star_{\beta}$
$$\whZ^p(X,*)_0\otimes \whZ^q(Y,*)_0 \xrightarrow{\star_{\beta}} s\big(\whmZ^p(X,*)_0\otimes \whmZ^q(Y,*)_0\big). $$
The composition of $\star_{\beta}$ with $\cup$ induces a product
$$ \widehat{CH}^p(X,n) \otimes \widehat{CH}^q(Y,m) \xrightarrow{\cup} \widehat{CH}^{p+q}(X\times Y,n+m),$$
independent of $\beta$.

Finally the pull-back by the diagonal map $X\xrightarrow{\Delta}
X\times X $ gives an internal product on $\widehat{CH}^p(X,*)$:
$$ \widehat{CH}^p(X,n) \otimes \widehat{CH}^q(X,m) \xrightarrow{\cup }
\widehat{CH}^{p+q}(X\times X,n+m)\xrightarrow{\Delta^*}
\widehat{CH}^{p+q}(X,n+m).$$

Thus, in the derived category of complexes, the product is given by the
composition {\small $$ \xymatrix{ \whZ^p(X,n)_0\otimes
\whZ^q(X,m)_0 \ar[d]^{\star_{\beta}}
  \\ s\big(\whmZ^p(X,n)_0\otimes \whmZ^q(X,m)_0\big)  \ar[r]^{\cup} &
  \whZ^{p+q}(X\times X,n+m)_0
   \\ &   \whZ^{p+q}_{\Delta}(X\times X,n+m)_0
   \ar[u]_{\sim} \ar[r]^(0.55){\Delta^*} & \whZ^{p+q}(X,n+m)_0.} $$ }

\begin{obs} It follows from the definition that, for $n=0$, the product $\cup$ agrees with the product on the arithmetic
Chow group $\widehat{CH}^p(X)$ defined in
\cite{Burgos2}.
\end{obs}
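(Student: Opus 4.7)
The plan is to unwind both products at the chain level and use the isomorphism $\Phi:\widehat{CH}^p(X)\xrightarrow{\cong}\widehat{CH}^p(X,0)$ of Theorem \ref{chowagreement}. Fix classes $x_i=[(Z_i,(\omega_i,\widetilde g_i))]\in\widehat{CH}^{p_i}(X)$ and write $y_i=\Phi(x_i)=[(Z_i,(\omega_i,g_i),0,0,0)]$, with $g_i$ a fixed representative of $\widetilde g_i$. The task is to verify
\begin{displaymath}
y_1\cup y_2 \;=\; \Phi\bigl([\,(Z_1\cdot Z_2,(\omega_1\wedge\omega_2,\widetilde g_1\star\widetilde g_2))\,]\bigr),
\end{displaymath}
where $\star$ denotes the $\star$-product of Green forms defined in \cite{Burgos2}.

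First I would apply Beilinson's operator $\star_0$ to the pair of $5$-tuples $(Z_i,(\omega_i,g_i),0,0,0)$. In the diagram $\widehat{\mmZ}^{p_i}(X,*)_0$ the last three entries sit in the $A$-part as zero while the $Z_i$ and $(\omega_i,g_i)$ are the only non-vanishing entries; by the formulas of Lemma \ref{star}, every mixed term involving the vanishing $B$-components drops out, leaving only the tensor products $Z_1\otimes Z_2$ and $(\omega_1,g_1)\otimes(\omega_2,g_2)$ and the cross terms built from $\varphi_1,\varphi_2$ applied to these. Applying next the cup product along each complex of the diagram of \S\ref{diagramgros}, the $Z^p$-entry contributes the external intersection $Z_1\cdot Z_2$, while on the $\mmD^{2p-*}_{\A,\mmZ^p}$-entry the only non-trivial contribution at bidegree $(2p,0)$ is
\begin{displaymath}
(\omega_1,g_1)\bullet_{\A}(\omega_2,g_2)=\bigl(\omega_1\bullet\omega_2,\,(g_1\bullet\omega_2,\,-\omega_1\bullet g_2),\,-g_1\bullet g_2\bigr)
\end{displaymath}
in $s(i_{X,X}^{p_1,p_2}(0,0))_0^{2p_1+2p_2}$. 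After inverting the quasi-isomorphism \eqref{eq:6}, applying $\kappa$, and pulling back along the diagonal, this element represents exactly the Green-form pair $(\omega_1\wedge\omega_2,\widetilde g_1\star\widetilde g_2)$ in view of the very definition of the $\star$-product of Green forms in \cite{Burgos2}. The remaining three entries of the output are built from $\varphi_1$ or $\varphi_2$ applied to the zero inputs, hence vanish, so $y_1\cup y_2$ is represented by the $5$-tuple $(Z_1\cdot Z_2,(\omega_1\wedge\omega_2,g_1\star g_2),0,0,0)$, which is precisely $\Phi(x_1\cdot x_2)$.

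The main obstacle is essentially bookkeeping: one must verify that a choice of cochain-level quasi-inverse of \eqref{eq:6} yields a class in $\widehat{CH}^{p_1+p_2}(X,0)$ independent of the choice (this holds because \eqref{eq:6} is a quasi-isomorphism of chain complexes), and that the construction does not depend on the parameter $\beta$ nor on the lift $g_i$ of $\widetilde g_i$ — these follow respectively from Lemma \ref{star}(ii) and from Lemma \ref{agreement1}. Once the sign and orientation conventions are matched with those of \cite{Burgos2}, the identification of the two products at $n=0$ is a direct consequence of the definitions, as claimed.
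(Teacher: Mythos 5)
Your proposal is correct and is essentially the paper's (unwritten) argument: the remark is stated as following from the definitions, and your chain-level unwinding via $\Phi$, Beilinson's $\star_{0}$ (all mixed terms dying because the $B$-components of $\Phi(x_i)$ vanish), and the identification of the inverse of the quasi-isomorphism \eqref{eq:6} with the $\star$-product of Green forms of \cite{Burgos2} is exactly the intended verification. Only two small points of bookkeeping: since $r=2p_1$ is even, the middle entry of $(\omega_1,g_1)\bullet_{\A}(\omega_2,g_2)$ at bidegree $(0,0)$ is $(g_1\bullet\omega_2,\,+\omega_1\bullet g_2)$ rather than $(g_1\bullet\omega_2,\,-\omega_1\bullet g_2)$, and at the level of cycles the product is the external product $Z_1\times Z_2$ followed by diagonal pull-back in the derived category, so the comparison with $Z_1\cdot Z_2$ presupposes (as in \cite{Burgos2}) that representatives have been moved into proper intersection position.
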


\subsection{Commutativity of the product}\label{commut12} Let $X,Y$
be arithmetic varieties over a field $K$. We
prove here that the pairing defined in the previous subsection on the
higher arithmetic Chow groups is commutative, in the sense
detailed below.

We first introduce some notation:
\begin{itemize}
\item If $B_*,C_*$ are chain complexes, let
$$\sigma: s(B_*\otimes C_*) \rightarrow s(C_*\otimes B_*) $$ be
the map sending $b\otimes c \in B_n\otimes C_m$ to $(-1)^{nm}c\otimes
b \in C_m\otimes B_n$.
\item Let $\sigma_{X,Y}$ be the morphism
$$ \sigma_{X,Y}: Y\times X \rightarrow X\times Y$$ interchanging
$X$ with $Y$.
\end{itemize}
We will prove that there is a commutative diagram
$$\xymatrix{ \widehat{CH}^p(X,n) \otimes \widehat{CH}^q(Y,m)
  \ar[r]^{\cup } \ar[d]_{\sigma}
  & \widehat{CH}^{p+q}(X\times Y,n+m) \ar[d]^{\sigma^*_{X,Y}}\\
  \widehat{CH}^q(Y,m) \otimes \widehat{CH}^p(X,n) \ar[r]^{\cup } &
  \widehat{CH}^{p+q}(Y\times
  X,n+m) }$$
In particular, the internal product on the higher
arithmetic Chow groups will be graded commutative with respect to
the degree $n$.
That is, if $W\in \widehat{CH}^p(X,n)$ and $Z\in
\widehat{CH}^q(X,m)$, then
$$W\cup  Z = (-1)^{nm} Z \cup  W. $$

Recall that, by definition, the product factorizes as {\small
$$\widehat{CH}^p(X,n) \otimes \widehat{CH}^q(Y,m)
\xrightarrow{\star_{\beta}} H_{n+m}(s(\whmZ^p(X,*)_0\otimes
\whmZ^q(Y,*)_0)) \xrightarrow{\cup} \widehat{CH}^{p+q}(X\times
Y,n+m).$$}

By Lemma \ref{star}, this factorization is independent on the integer
$\beta $. Moreover, there is a commutative diagram
\begin{displaymath}
\xymatrix{ \widehat{CH}^p(X,n) \otimes \widehat{CH}^q(Y,m)
  \ar[r]^(.44){\star_{\beta}} \ar[d]_{\sigma}
  & H_{n+m}(s(\whmZ^p(X,*)_0\otimes
\whmZ^q(Y,*)_0)) \ar[d]^{\sigma}\\
  \widehat{CH}^q(Y,m) \otimes \widehat{CH}^p(X,n) \ar[r]^(.43){\star_{1-\beta}} &
  H_{n+m}(s(\whmZ^p(Y,*)_0\otimes
\whmZ^q(X,*)_0))}
\end{displaymath}
Therefore,
all that remains is to check the commutativity for
\begin{equation}\label{commut10}
s\big(\whmZ^p(X,*)_0\otimes \whmZ^q(Y,*)_0\big)
\stackrel{\cup}{\dashrightarrow} \whZ^{p+q}(X\times Y,*)_0.
\end{equation}
Hence, we want to see that, in the derived category of chain complexes,
there is a commutative diagram
$$ \xymatrix{ s\big(\whmZ^p(X,*)_0\otimes \whmZ^q(Y,*)_0\big)
  \ar[d]_{\sigma} \ar[r]^(.57){\cup} &
 \whZ^{p+q}(X\times
Y,*)_0 \ar[d]^{\sigma_{X,Y}^*} \\
  s\big(\whmZ^q(Y,*)_0\otimes \whmZ^p(X,*)_0\big) \ar[r]_(.57){\cup}  &
  \whZ^{p+q}(Y\times X,*)_0. }$$

The obstruction to strict commutativity comes from the change of
coordinates
\begin{eqnarray}
\square^{n+m}=\square^{m}\times \square^n &
\xrightarrow{\sigma_{n,m}} & \square^n\times
\square^m=\square^{n+m}
\label{sigma2} \\
(y_1,\dots,y_m,x_1,\dots,x_n) &\mapsto &
(x_1,\dots,x_n,y_1,\dots,y_m). \nonumber
\end{eqnarray}

Recall that the product is described by the big diagram in \S
\ref{diagramgros}. In order to prove the commutativity, we change
the second and third row diagrams of this big diagram, by more
suitable diagrams. These changes do not modify the definition of
the product, but ease the study of the commutativity.

We define a complex $Z^{p}_{\A\times\A}(X,n)_0$ analogously to the
definition complex $\mmD_{\A\times \A}(X,p)_0$ (see $\S$\ref{AA}). Let
$$Z^p(X,n,m)_{0}:=Z^p(X,n+m)_{0},$$ and let
$\delta'=\sum_{i=1}^n (-1)^i\delta_i^0$ and
$\delta^{''}=\sum_{i=n+1}^{n+m} (-1)^{i-n}\delta_i^0.$
Then, $(Z^p(X,*,*)_{0},\delta',\delta^{''})$
is a $2$-iterated chain complex. For the sake of simplicity, we
denote both $\delta'$ and $\delta^{''}$ by $\delta$.

Denote by $Z^p_{\A\times \A}(X,*)_0$ the associated simple
complex. The complex $\mcH^{p}_{\A\times \A}(X,*)_0$ is defined
analogously.

Let $\whmZ^{p,q}_{\A\times \A}(X\times Y,*)_0$ be the
 diagram
  {\small $$ \xymatrix@C=-10pt{ &
\mcH^{p+q}_{\A\times \A}(X\times Y,*)_0 &&
\mmD_{\A\times \A}^{2(p+q)-*}
 (X\times Y,p+q)_0. \\ Z^{p+q}_{\A\times \A}(X\times Y,*)_0  \ar[ur]^{f_1}
 &&
 s(i_{X,Y}^{p,q})^{2(p+q)-*}_0 \ar[ur]^{\rho} \ar[ul]_{g_1}^{\sim} & &
 ZD^{2p+q}(X\times Y,p+q)_* \ar[ul]_{i}
  }$$ }
This diagram will fit in the second row of the new big diagram.
Denote by $\whZ^{p,q}_{\A\times \A}(X\times Y,*)_0$ the simple complex
associated to this diagram.

The third row of the new big diagram corresponds to a diagram
whose complexes are obtained from the refined normalized complex
of Definition \ref{normalized2}. The fact that, in these
complexes, most of the face maps vanish is the key point to
construct explicit homotopies for the commutativity of the
product. So, consider the following complexes:
\begin{itemize}
\item  Let $Z^q(X,*,*)_{00}$ be the $2$-iterated chain complex
with
$$Z^q(X,n,m)_{00}:=\bigcap_{i\neq 0,n+1} \ker \delta_i^0 \subset
Z^q(X,n+m)_{0}, $$
and with differentials $(\delta',\delta{''})=( -\delta_1^0,
-\delta_{n+1}^0)$. Denote by $Z^q_{\A\times \A}(X,*)_{00}$ the
associated simple complex.
\item
Let $\tau\mmD^*_{\log}(X\times \square^*\times
\square^*,p)_{00}$ be the $3$-iterated complex whose
$(r,-n,-m)$-graded piece is
$$\tau\mmD^r_{\log}(X\times \square^{n}\times \square^{m},p)_{00}=\bigcap_{i\neq 0,n+1} \ker \delta_i^0 \subset
\tau\mmD^r_{\log}(X\times \square^{n+m},p)_{0},$$
and with differentials $(d_{\mmD},-\delta_1^0,-\delta_{n+1}^0)$.
Let $\mmD^{*}_{\A\times\A}(X,p)_{00}$ be the associated simple
complex.
\item Let
$\tau\mmD^{*}_{\mathcal{Z}^{p,q}_{X,Y,*,*}}(X\times Y\times
\square^*\times \square^*,p+q)_{00}$ be the $3$-iterated complex
with
$$\tau\mmD^{r}_{\mathcal{Z}^{p,q}_{X,Y,n,m}}(X\times Y\times \square^n\times
\square^m,p+q)_{00}= \bigcap_{i\neq 0,n+1} \ker \delta_i^0$$
as a subset of $\tau\mmD^{r}_{\mathcal{Z}^{p,q}_{X,Y,n,m}}(X\times Y\times
\square^{n+m},p+q)_{0}$.
The differentials are given by $(d_{\mmD},-\delta_1^0,-\delta_{n+1}^0)$.
Let $\mmD^{*}_{\A\times\A,\mathcal{Z}^{p,q}_{X,Y}}(X\times
Y,p)_{00}$ be the associated simple complex.
\end{itemize}

\begin{obs}
Observe that there are induced morphisms
\begin{eqnarray*}
Z^{p+q}_{\A\times \A}(X\times Y,*)_{00} & \xrightarrow{f_1} &
\mcH^{p+q}_{\A\times \A}(X\times Y,*)_{00}, \\
\mmD^{2(p+q)-*}_{\A\times \A,\mathcal{Z}^{p,q}_{X,Y}}(X\times
Y,p+q)_{00} & \xrightarrow{g_1} & \mcH^{p+q}_{\A\times \A}(X\times
Y,*)_{00}, \\ \mmD^{2(p+q)-*}_{\A\times
\A,\mathcal{Z}^{p,q}_{X,Y}}(X\times Y,p+q)_{00} &
\xrightarrow{\rho} & \mmD_{\A\times \A}^{2(p+q)-*}
 (X\times Y,p+q)_{00}.
\end{eqnarray*}
\end{obs}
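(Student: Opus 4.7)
The claim of the Remark is that the morphisms $f_1$, $g_1$, and $\rho$, which were previously defined on the singly-normalized complexes with subscript $0$, restrict to morphisms between the refined doubly-normalized subcomplexes with subscript $00$. By construction, inside each $(\cdot)_0$ the subcomplex $(\cdot)_{00}$ is the intersection of the kernels of the face maps $\delta_i^0$ for $i\neq 1, n+1$, together with the face conditions already imposed by $N$. Hence the plan is simply to check that each of $f_1$, $g_1$, $\rho$ commutes strictly with those extra face maps $\delta_i^0$; once this is done, the image of an element that vanishes under these face maps automatically vanishes under them in the target, and the restriction is well-defined.

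For $f_1$: on an irreducible cycle $Z$, the map sends $Z$ to its real cycle class $cl(Z)$, and $cl$ is functorial under the flat inclusions $\delta_0^i$ of cocubical faces (cf.\ the Remark following Lemma~\ref{funct1}). Therefore $\delta_i^0 \circ f_1 = f_1 \circ \delta_i^0$, and $f_1$ carries $Z^{p+q}_{\A\times \A}(X\times Y,\ast)_{00}$ into $\mcH^{p+q}_{\A\times \A}(X\times Y,\ast)_{00}$. For $\rho$: on $\mmD^{\ast}_{\A,\mathcal{Z}^p}(X,p)_0$ this is just the projection $(\omega,g)\mapsto \omega$ of the simple cone, which commutes strictly with pull-back by any face inclusion, so in particular with each $\delta_i^0$. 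For $g_1$: by the definition in Proposition~\ref{difcubs} (and its iterated-complex version via the map $g_{1}$ introduced in \S\ref{diagramgros}), $g_1$ picks out the top-degree component of a chain and takes its class in the supported cohomology group; both operations are natural with respect to pull-back of forms along the face inclusions, so $g_1$ again commutes with the $\delta_i^0$.

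Finally, one should observe that the differentials of the $(\cdot)_{00}$ complexes, given by $(d_{\mmD},-\delta_1^0,-\delta_{n+1}^0)$ or the analogous $(\delta',\delta'')$, are by definition the restrictions of the differentials on $(\cdot)_0$; the commutation of $f_1$, $g_1$, $\rho$ with those differentials is therefore inherited directly from the already-established commutation on $(\cdot)_0$. There is no genuine difficulty in the argument; the only point to be careful about is bookkeeping of the 2-iterated vs.\ 3-iterated cubical indices when one identifies $\square^{n+m}$ with $\square^n\times \square^m$, but the identification is chosen precisely so that the faces $\delta_i^0$ for $i\neq 1, n+1$ of the right-hand side match the face maps one has killed on the left.
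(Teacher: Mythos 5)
Your proposal is correct and is exactly the routine verification the paper leaves implicit: the remark carries no proof because $f_1$, $g_1$ and $\rho$ are morphisms of cubical objects, so they commute with all the face maps $\delta_i^0$ and hence carry the subcomplexes cut out by $\bigcap_{i\neq 1,n+1}\ker\delta_i^0$ into the corresponding subcomplexes of the targets. Your case-by-case check (functoriality of $cl$ for $f_1$, strict compatibility of the cone projection for $\rho$, naturality of taking the top-degree class for $g_1$) is the intended argument.
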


Let $\whmZ^{p,q}_{\A\times \A}(X\times Y,*)_{00}$ be the
 diagram
  {\footnotesize $$ \xymatrix@C=-15pt{ &
\mcH^{p+q}_{\A\times \A}(X\times Y,*)_{00} &&
\mmD_{\A\times \A}^{2(p+q)-*}
 (X\times Y,p+q)_{00}. \\ Z^{p+q}_{\A\times \A}(X\times Y,*)_{00}  \ar[ur]^{f_1}
 &&
\mmD^{2(p+q)-*}_{\A\times \A,\mathcal{Z}^{p,q}_{X,Y}}(X\times
Y,p+q)_{00} \ar[ur]^{\rho} \ar[ul]_{g_1}^{\sim} &
  & ZD^{2p+q}(X\times Y,p+q)_* \ar[ul]_{i}  } $$ }
This is the diagram fitting in the third row of the new diagram. Let $\whZ^{p,q}_{\A\times
\A}(X\times Y,*)_{00}$ be the simple complex associated to this
diagram.

\begin{lema}\label{hom} Let $X$ be an arithmetic variety over a
field.
\begin{enumerate}[(i)]
\item The natural chain morphisms
\begin{eqnarray}
 Z^q_{\A\times \A}(X,*)_{00} & \xrightarrow{i} & Z^q_{\A\times \A}(X,*)_{0},\label{kappa3}
\\
 Z^q_{\A\times
\A}(X,*)_{0} & \xrightarrow{\kappa} & Z^q(X,*)_{0}, \label{kappa1}
\end{eqnarray}
are quasi-isomorphisms. \item The natural cochain morphisms
\begin{eqnarray}
\mmD^{*}_{\A\times\A}(X,p)_{00} & \xrightarrow{i} &
\mmD^{*}_{\A\times\A}(X,p)_{0}, \\
\mmD^{*}_{\A\times \A,\mathcal{Z}^{p,q}_{X,Y}}(X\times Y,p+q)_{00}
& \xrightarrow{i} & \mmD^{*}_{\A\times
\A,\mathcal{Z}^{p,q}_{X,Y}}(X\times Y,p+q)_{0},
\\ \mmD^{*}_{\A\times\A}(X,p)_{0} & \xrightarrow{\kappa} &
\mmD^{*}_{\A}(X,p)_{0}, \label{kappa2}
\end{eqnarray}
are quasi-isomorphisms.
\end{enumerate}
\end{lema}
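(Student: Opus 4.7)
The five quasi-isomorphisms in the statement split into two families: three inclusions of type $i$ comparing the refined normalization $(\cdot)_{00}$ with the standard normalization $(\cdot)_{0}$, and two surjections of type $\kappa$ comparing the simple complex of a bicubical structure with the simple complex of the corresponding single cubical structure. The plan is to handle them separately, in both cases making essential use of the flat pull-backs along the maps $h^{j}$ of \eqref{mapsh}.

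For the three $i$-type inclusions, the strategy is to apply Proposition \ref{normalized3} once in each cubical direction. The required operators come from the pull-backs along the flat projections $h^{j}$, acting on a single $\square$-factor in the cubical direction being treated. These pull-backs are defined on cycles (by flatness), on logarithmic Deligne forms, and on forms with support in $\mathcal{Z}^{p,q}_{X,Y,n,m}$; in the last case because $h^{j}$ modifies only one $\square$-factor while the support condition is a cartesian product, and so pull-back preserves the support. The identities \eqref{eq:3} hold by inspection, as discussed after \eqref{mapsh}. Applying Proposition \ref{normalized3} in the first direction (at each fixed second-cubical degree) produces a chain homotopy equivalence of bicomplexes, and iterating in the second direction yields the quasi-isomorphism after passage to the totalized complex.

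For the two $\kappa$-type surjections, the map $\kappa$ is induced by the scheme identification $\square^{n}\times\square^{m}=\square^{n+m}$ and is assembled as the sum over bidegrees in each total degree. In the Deligne case, we iterate Proposition \ref{affine3}: both source and target are quasi-isomorphic to $\tau\mathcal{D}^{*}_{\log}(X,p)$ (the source by applying Proposition \ref{affine3} in each cubical direction in turn, treating $X\times \square^{m}$ as the intermediate base; the target by a single application), and $\kappa$ is compatible with both reductions, so it is itself a quasi-isomorphism. In the cycle case, we exhibit the natural section $s$ placing each element at bidegree $(N,0)$, verify that $\kappa\circ s=\mathrm{id}$, and then build an explicit chain homotopy between $s\circ \kappa$ and the identity using the pull-backs along the $h^{j}$: the homotopy shifts mass between bidegrees along the first cubical direction, and the required chain-homotopy identity is verified using \eqref{eq:3} exactly as in the proof of Proposition \ref{normalized3}.

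The principal technical obstacle is constructing this explicit contracting homotopy in the cycle case, since the cycle complex $Z^{p}(X,\ast)_{0}$ lacks the homotopy invariance enjoyed by the Deligne complex through Proposition \ref{affine3}, and thus cannot be handled by the spectral-sequence argument used in the Deligne case. Compatibility of all arguments with the truncation $\tau_{\le 2p}$ and with the support condition $\mathcal{Z}^{p,q}_{X,Y}$ is automatic, since both interact trivially with the cubical face maps and with the flat pull-backs along the $h^{j}$.
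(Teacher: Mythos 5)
Your handling of the three inclusions of type $i$ matches the paper's argument: Proposition \ref{normalized3} is applied in each cubical direction in turn, with a spectral-sequence (or levelwise homotopy-equivalence) argument to pass to the total complexes, and your remarks on compatibility with the truncation and with the supports $\mathcal{Z}^{p,q}_{X,Y}$ are consistent with what is needed. The Deligne-complex case of $\kappa$, i.e.\ \eqref{kappa2}, is also fine: reducing both sides to $\tau\mmD^{*}_{\log}(X,p)$ by iterating Proposition \ref{affine3} is equivalent to the paper's row-acyclicity argument, which for the Deligne complex rests on homotopy invariance of Deligne--Beilinson cohomology.

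The gap is in the cycle case \eqref{kappa1}. You correctly exhibit a section $s$ with $\kappa\circ s=\mathrm{id}$ (the paper places it at bidegree $(0,m)$ rather than $(N,0)$, which is immaterial), but you then claim that $s\circ\kappa\simeq\mathrm{id}$ can be produced by an explicit chain homotopy built from the pull-backs along the $h^{j}$ of \eqref{mapsh} and verified from the identities \eqref{eq:3}, justifying this detour by the assertion that $Z^{p}(X,\ast)_{0}$ ``lacks homotopy invariance'' and so cannot be treated by the spectral-sequence argument. Both halves of this are incorrect. Higher Chow groups \emph{are} homotopy invariant --- this is a theorem of Bloch, used elsewhere in the paper (e.g.\ Corollary \ref{homotopyinvariancecor}) --- and the paper's proof of \eqref{kappa1} is precisely the spectral-sequence argument you discard: in the first-quadrant spectral sequence with $E^{1}_{n,m}=H_{m}(Z^{q}(X,n,\ast)_{0})$ one shows $E^{1}_{n,m}=0$ for $n\geq 1$, because $Z^{q}(X,n,\ast)_{0}$ is the normalized complex, in the first cubical direction, of $n\mapsto Z^{q}(X\times\square^{n},\ast)_{0}$, and the homotopy invariance of higher Chow groups together with Proposition \ref{cubchain} forces this normalized complex to be acyclic. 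Conversely, the explicit contracting homotopy you invoke is never constructed, and it cannot come out of \eqref{eq:3} alone: the identities satisfied by the $h_{j}$ are tailored to the comparison of $N_{0}C$ with $NC$, whereas the acyclicity of the rows $Z^{q}(X,n,\ast)_{0}$ for $n\geq 1$ is essentially equivalent to homotopy invariance of the higher Chow groups, a genuine geometric theorem resting on the moving lemma and not a formal consequence of the cubical structure. As written, therefore, the proof of \eqref{kappa1} is incomplete at exactly the step you yourself flag as the ``principal technical obstacle.''
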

\begin{proof} The proofs of the facts that the morphisms $i$ are
quasi-isomorphisms are analogous for the three cases. For every
$n,m$, let $B(n,m)$ denote either $Z^p(X,n,m)$,
$\tau\mmD_{\log}^r(X\times \square^n\times \square^m,p)$ or
$\mmD_{\tau\log,\mmZ^{p,q}_{X,Y,n,m}}^r(X\times Y\times
\square^n\times \square^m,p+q)$, for some $r$. The groups
$B(n,m)_{0}$ and $B(n,m)_{00}$ are defined analogously.

Observe that for every $n,m$, $B({\cdot},m)$ and $B(n,{\cdot})$
are cubical abelian groups. We want to see that there is a
quasi-isomorphism
\begin{equation}\label{n2}
s(N_0^2N_0^1B(*,*))\xrightarrow{i} s(N^2N^1B(*,*)),
\end{equation}
 where superindex $1$ refers to the cubical structure given by the first
index $n$ and superindex $2$ to the cubical structure given by the
second index $m$.
An spectral sequence argument together with Lemma \ref{normalized3}
and  Proposition \ref{cubchain}
show that there is a quasi-isomorphism
$
s(N^2N_0^1B(*,*))\xrightarrow{\sim} s(N^2N^1B(*,*)).
$
By Lemma \ref{normalized3} and an spectral sequence argument again, we
obtain that
there is a quasi-isomorphism
$
s(N^2_0N_0^1B(*,*))\xrightarrow{i} s(N^2N^1_0B(*,*)).
$ Therefore, \eqref{n2} is a quasi-isomorphism.

The proofs of the facts that the morphisms in \eqref{kappa1} and
\eqref{kappa2} are quasi-isomorphisms are
analogous to each other. Therefore, we just prove the statement for
the morphism
\eqref{kappa1}. Consider the composition morphism
$$j: Z^q(X,m)_{0} \rightarrow Z^q(X,0,m)_{0} \rightarrow
Z^q_{\A\times \A}(X,m)_{0}.$$
The composition  of morphisms $Z^q(X,m)_{0} \xrightarrow{j}
Z^q_{\A\times \A}(X,m)_{0}\xrightarrow{\kappa} Z^q(X,m)_{0}$ is
the identity. Hence, it is enough to see that $j$ is a
quasi-isomorphism. Consider the 1st quadrant spectral sequence
with
$$E^1_{n,m}= H_m(Z^q(X,n,*)_0). $$
We will see that if $n\geq 1$, $E^1_{n,m}=0$.  By the homotopy
invariance of higher Chow groups, the map
$$f: Z^q(X\times \square^n,*)_0 \xrightarrow{\delta_1^1\cdots
  \delta_1^1}Z^q(X,*)_0  $$
is a quasi-isomorphism. By Proposition \ref{cubchain}, it induces
a quasi-isomorphism
$$f: Z^q(X\times \square^n,*)_0= NZ^q(X\times \square^n,*)_0
\rightarrow NZ^q(X,*)_0  $$ where the cubical structure on
$Z^q(X,*)_0$ is the trivial one. Since for a trivial cubical
abelian group $NZ^q(X,*)_0 =0$, we see that
$$ H_m(Z^q(X,n,*)_0)=0,\qquad n>0,$$
and hence
$$E^1_{n,m}=\left\{ \begin{array}{ll} 0 & \textrm{ if }n>0, \\
    CH^q(X,m) & \textrm{ if }n=0. \end{array}
\right.$$
 \end{proof}

It follows from the lemma that the product on the higher
arithmetic Chow groups is also represented by the following diagram of
complexes
 {\footnotesize $$\xymatrix@C=-35pt@R=25pt{ &
\mcH^{p}(X,n)_0 \otimes \mcH^{q}(Y,m)_0 \ar[dd]^{\times} & &
\mmD_{\A}^{2p-n}(X,p)_0\otimes
\mmD_{\A}^{2q-m}(Y,q)_0 \ar[dd]^{\bullet_{\A}} \\
Z^{p}(X,n)_0\otimes Z^q(Y,m)_0 \ar[dd]^{\times} \ar[ur]^{f_1} & &
\ar[ul]_{g_1}^{\sim} \mmD^{2p-n}_{\A,\mathcal{Z}^p}(X,p)_0\otimes
\mmD^{2q-m}_{\A,\mathcal{Z}^q}(Y,q)_0 \ar[ur]^{\rho}
\ar[dd]^{\bullet_{p,q}} & & ZD^{2p}(X,p)_n\otimes ZD^{2q}(X,q)_m
\ar[ul]_{i} \ar[dd]^{\wedge}
 \\
 & \mcH^{p+q}_{\A\times \A}(X\times Y,n+m)_0 & & \mmD_{\A\times \A}^{2(p+q)-n-m}
 (X\times Y,p+q)_0  \\
Z^{p+q}_{\A\times \A}(X\times Y,n+m)_0  \ar[ur]^{f_1} & &
\ar[ul]_{g_1} s(i_{X,Y}^{p,q})^{2p+2q-n-m}_0 \ar[ur]^{\rho} & &
ZD^{2(p+q)}(X\times Y,p+q)_{n+m}\ar[ul]_{i} \ar@{=}[dd]
 \\
 & \mcH^{p+q}_{\A\times \A}(X\times Y,n+m)_{00} \ar@{=}[dd]
 \ar[uu]_{i}^{\sim} & & \mmD_{\A\times \A}^{2(p+q)-n-m}(X\times Y,p+q)_{00}
 \ar@{=}[dd]  \ar[uu]_{i}^{\sim}\\
Z^{p+q}_{\A\times \A}(X\times Y,n+m)_{00} \ar@{=}[dd]
\ar[uu]_{i}^{\sim} \ar[ur]^{f_1} & & \ar[ul]_{g_1}
\mmD^{2(p+q)-n-m}_{\A\times \A,\mathcal{Z}^{p,q}_{X,Y}}(X\times
Y,p+q)_{00} \ar[ur]^{\rho} \ar[dd]^{\eta}
\ar@{->}[uu]_{i}^{\sim} & & ZD^{2(p+q)}(X\times
Y,p+q)_{n+m}\ar[ul]_{i} \ar@{=}[dd]
 \\
 &\mcH^{p+q}_{\A\times \A}(X\times Y,n+m)_{00} \ar[dd]^{\kappa}
    & & \mmD_{\A\times \A}^{2(p+q)-n-m}(X\times Y,p+q)_{00}  \ar[dd]^{\kappa}  \\
Z^{p+q}_{\A\times \A}(X\times Y,n+m)_{00} \ar[dd]^{\kappa}
  \ar[ur]^{f_1} & & \ar[ul]_{g_1}^{\sim}
\mmD^{2(p+q)-n-m}_{\A\times \A,\mathcal{Z}^{p+q}}(X\times
Y,p+q)_{00} \ar[ur]^{\rho} \ar[dd]^{\kappa} & & ZD^{2(p+q)}(X\times
Y,p+q)_{n+m}\ar[ul]_{i} \ar@{=}[dd]
 \\
 & \mcH^{p+q}(X\times Y,n+m)_0 & & \mmD_{\A}^{2(p+q)-n-m}(X\times Y,p+q)_0 \\
Z^{p+q}(X\times Y,n+m)_0 \ar[ur]^{f_1} & & \ar[ul]_{g_1}^{\sim}
\mmD^{2(p+q)-n-m}_{\A,\mathcal{Z}^{p+q}}(X\times Y,p+q)_0
\ar[ur]^{\rho} & & ZD^{2(p+q)}(X\times Y,p+q)_{n+m}\ar[ul]_{i} } $$}

In the first set of vertical arrows of this diagram is where the
product is defined. In the second set of vertical rows we invert the
quasi-isomorphisms that relate the normalized complex and the refined
normalized complex. Moreover, we also invert the  quasi-isomorphism
analogous to \eqref{eq:6}. In the third set of vertical arrows we just
consider the change of supports $\mathcal{Z}^{p,q}_{X,Y}\subset
\mathcal{Z}^{p+q}$. We will denote the map induced by this change of
support by $\eta$. Finally in the last set of vertical arrows we
apply the morphisms $\kappa$ induced by the identification
$\square^{n}\times \square^{m}=\square^{n+m}$.

Let $\whZ^{p+q}_{\A\times \A}(X\times Y,*)_{00}$ denote the simple
of the diagram of the fourth row. Hence, in the derived category
of complexes, this product is described by the composition
 $$\xymatrix{ s(\whmZ^p(X,*)_0\otimes \whmZ^q(Y,*)_0) \ar[r]^(0.54){\cup} &
 \whZ^{p,q}_{\A\times \A}(X\times Y,*)_{0} \\  &
\whZ^{p,q}_{\A\times \A}(X\times Y,*)_{00}\ar@{-->}[u]_{i}
\ar[r]^(0.5){\eta} & \whZ^{p+q}_{\A\times \A}(X\times Y,*)_{00}
\ar[d]^{\kappa}\\
  & & \whZ^{p+q}(X\times Y,*)_0. } $$
Note that the difference between the complexes  $\whZ^{p,q}_{\A\times \A}(X\times Y,*)_{00}$  and  $\whZ^{p+q}_{\A\times \A}(X\times Y,*)_{00}$ lies on the change of supports $\mathcal{Z}^{p,q}_{X,Y}\subset
\mathcal{Z}^{p+q}$. This is indicated by either two codimension superindices $p,q$ in the first one or a unique codimension superindex $p+q$ in the second.

We next use this description of the product in the higher arithmetic Chow
groups in order to prove
its commutativity.

Recall that the map $\sigma_{n,m}$ is defined by
\begin{eqnarray*}
\square^{n+m}=\square^{m}\times \square^n &
\xrightarrow{\sigma_{n,m}} & \square^n\times
\square^m=\square^{n+m}
 \\
(y_1,\dots,y_m,x_1,\dots,x_n) &\mapsto &
(x_1,\dots,x_n,y_1,\dots,y_m). \nonumber
\end{eqnarray*}
Let $$\sigma_{X,Y,n,m}: Y\times X \times \square^m\times \square^n
\rightarrow X \times Y \times \square^n\times \square^m
$$ be the map $\sigma_{X,Y}\times \sigma_{n,m}$.

 We define a morphism of diagrams $$
\whmZ^{p,q}_{\A\times \A}(X\times Y,*)_{0}
  \xrightarrow{\sigma_{X,Y,\square}^*}   \whmZ^{q,p}_{\A\times \A}(Y\times X,*)_0$$
as follows:
\begin{itemize}
\item Let $\sigma_{X,Y,\square}^*:Z^{p+q}_{\A\times \A}(X\times
Y,*)_{0}\rightarrow Z^{p+q}_{\A\times \A}(Y\times X,*)_{0}$ be the
map sending
$$Z\in Z^{p+q}(X\times Y,n,m)_{0}\ \text{to}\
(-1)^{nm}\sigma_{X,Y,n,m}^*(Z)\in Z^{p+q}(Y\times X,m,n)_{0}.$$
The morphism $\sigma_{X,Y,\square}^*:\mcH^{p+q}_{\A\times
\A}(X\times Y,*)_{0}\rightarrow \mcH^{p+q}_{\A\times \A}(Y\times
X,*)_{0}$ is defined analogously.
\item Let
$\sigma_{X,Y,\square}^*:  \mmD^{*}_{\A\times \A}(X\times Y,p+q)_0
\rightarrow \mmD^{*}_{\A\times \A}(Y\times X,p+q)_0$
 be the map that, at the $(*,-n,-m)$ component, is
{\small $$(-1)^{nm}\sigma_{X,Y,n,m}^*:\tau\mmD^{*}_{\log}(X\times
Y\times \square^{n}\times \square^{m},p+q)_0 \rightarrow
\tau\mmD^{*}_{\log}(Y\times X\times \square^{m}\times
\square^{n},p+q)_0. $$} Observe that it is a cochain morphism.
\item We define analogously the morphism
$\sigma_{X,Y,\square}^*: s(i_{X,Y}^{p,q})^{*}_0\rightarrow s(i_{Y,X}^{q,p})^{*}_0. $
\end{itemize}
These morphisms commute with the morphisms
$f_1,g_1$ and $\rho$. Hence, they induce a morphism of diagrams
and therefore a morphism on the associated simple complexes:
$$ \whZ^{p,q}_{\A\times \A}(X\times Y,*)_{0}
  \xrightarrow{\sigma_{X,Y,\square}^*}   \whZ^{q,p}_{\A\times \A}(Y\times X,*)_0. $$

Note that the morphism $\sigma_{X,Y,\square}^*$ restricts to $
\whZ^{p,q}_{\A\times \A}(X\times Y,*)_{00}$ and to
$\whZ^{p+q}_{\A\times \A}(X\times Y,*)_{00}$.

\begin{lema}\label{lemmacommut1}
The following diagram is  commutative:
$$\xymatrix{\whZ^{p,q}_{\A\times \A}(X\times
Y,*)_{0} \ar[d]_{\sigma_{X,Y,\square}^*} & \whZ^{p,q}_{\A\times
\A}(X\times Y,*)_{00} \ar[l]_{i} \ar[r]^{\eta}
\ar[d]_{\sigma_{X,Y,\square}^*}  & \whZ^{p+q}_{\A\times
\A}(X\times Y,*)_{00} \ar[d]_{\sigma_{X,Y,\square}^*}
 \\
 \whZ^{q,p}_{\A\times \A}(Y\times X,*)_0 &
 \whZ^{q,p}_{\A\times \A}(Y\times X,*)_{00} \ar[l]_{i} \ar[r]^{\eta} &
 \whZ^{p+q}_{\A\times \A}(Y\times X,*)_{00}. } $$
\end{lema}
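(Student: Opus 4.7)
The plan is to verify commutativity of both squares componentwise on the diagrams of complexes. Since $i$, $\eta$, and $\sigma_{X,Y,\square}^*$ are defined componentwise on the five constituent complexes ($Z^{p+q}_{\A\times \A}$, $\mcH^{p+q}_{\A\times \A}$, $\mmD^{*}_{\A\times \A,\mathcal{Z}}$, $\mmD^{*}_{\A\times \A}$, and $s(i_{X,Y}^{p,q})$), and since the morphisms $f_1$, $g_1$, $\rho$, $i$ of the diagrams play no essential role in this check, it suffices to verify commutativity on each component separately. The key observation is that both $i$ and $\eta$ act as the identity on underlying elements: $i$ simply includes a subcomplex, while $\eta$ relaxes the support condition from $\mathcal{Z}^{p,q}_{X,Y}$ to $\mathcal{Z}^{p+q}$.

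For the left square I would show that $\sigma_{X,Y,\square}^*$ carries the refined normalized subcomplex in bidegree $(n,m)$ into the refined normalized subcomplex in bidegree $(m,n)$. A direct coordinate computation on the cubical face maps shows that $\sigma_{n,m}$ pulls back the face $\delta_k^0$ on $\square^{m+n}$ (on the $Y\times X$ side) to $\delta_{k+n}^0$ when $1\leq k\leq m$, and to $\delta_{k-m}^0$ when $m+1\leq k\leq m+n$ (on the $X\times Y$ side). Under this bijection the distinguished indices $k\in\{1,m+1\}$ correspond exactly to the distinguished indices $j\in\{n+1,1\}$, so vanishing of all non-distinguished faces of an element in the source implies vanishing of all non-distinguished faces of its image. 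This yields the required restriction, and commutativity with $i$ follows immediately since $i$ is just inclusion.

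For the right square I would use that $\sigma_{X,Y,n,m}$ carries the class $\mathcal{Z}^{p,q}_{X,Y,n,m}$ bijectively onto $\mathcal{Z}^{q,p}_{Y,X,m,n}$ (sending a product cycle $Z\times W$ to $W\times Z$) and carries $\mathcal{Z}^{p+q}_{X\times Y,n+m}$ onto $\mathcal{Z}^{p+q}_{Y\times X,m+n}$. The inclusion of support classes defining $\eta$ is preserved by the swap, so pullback of differential forms along $\sigma_{X,Y,n,m}$ commutes with the change of support. The sign $(-1)^{nm}$ incorporated in the definition of $\sigma_{X,Y,\square}^*$ depends only on the bidegree, which is preserved by both $i$ and $\eta$, so the signs agree on both sides of each square.

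The main subtlety, which is notational rather than conceptual, is handling the complex $s(i_{X,Y}^{p,q})$ whose components $(\alpha_0,(\alpha_1,\alpha_2),\alpha_3)$ encode three distinct singularity conditions along $\mathcal{Z}^p_{X,n}$, $\mathcal{Z}^q_{Y,m}$, and their union. The swap $\sigma_{X,Y,\square}^*$ targeting $s(i_{Y,X}^{q,p})$ interchanges the roles of the first and second singular sets (so $\alpha_1$ and $\alpha_2$ get swapped in addition to being pulled back), but this interchange is built into the definition of $\sigma_{X,Y,\square}^*$ on this component, and commutativity on each of the three singularity conditions then follows from the same support-bijection argument applied in parallel. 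Putting these componentwise verifications together gives the claimed commutativity of the full diagram of simple complexes.
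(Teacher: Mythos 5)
Your proof is correct and takes the same route as the paper, whose entire proof of this lemma is the single sentence ``The statement follows from the definitions.'' Your componentwise verification --- checking that $\sigma_{X,Y,\square}^*$ carries the refined normalized ($00$) subcomplexes into one another via the face-index correspondence $\delta_1^0\leftrightarrow\delta_{n+1}^0$, and that it intertwines the support classes $\mathcal{Z}^{p,q}_{X,Y}$ and $\mathcal{Z}^{q,p}_{Y,X}$ inside $\mathcal{Z}^{p+q}$ --- is exactly the unwinding of definitions the authors leave to the reader.
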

\begin{proof}
The statement follows from the definitions.
\end{proof}

\begin{lema}\label{lemmacommut2} The following diagram is commutative
$$\xymatrix { s(\whmZ^p(X,*)_0\otimes \whmZ^q(Y,*)_0)
\ar[d]_{\sigma} \ar[r]^(0.55){\cup} & \whZ^{p,q}_{\A\times
\A}(X\times Y,*)_{0} \ar[d]^{\sigma_{X,Y,\square}^*}
 \\ s(\whmZ^q(Y,*)_0\otimes \whmZ^p(X,*)_0) \ar[r]^(0.55){\cup} &
 \whZ^{q,p}_{\A\times \A}(Y\times X,*)_0 . } $$
\end{lema}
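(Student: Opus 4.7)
The plan is to reduce the commutativity of the whole diagram to the componentwise commutativity of the product $\cup$ on each of the five complexes that make up the diagrams $\whmZ^{p,q}_{\A\times\A}(X\times Y,*)_{0}$ and $\whmZ^{q,p}_{\A\times\A}(Y\times X,*)_{0}$, and then verify each case by a direct sign computation. By the construction of the product on the simple of a tensor product of diagrams recalled in $\S\ref{productdiagram}$, the map $\cup$ is assembled from componentwise external products; the signs coming from the simple complex construction are accounted for by the Koszul convention built into $\sigma$. Therefore it is enough to show that for each complex among $Z^{p+q}_{\A\times\A}(X\times Y,*)_{0}$, $\mcH^{p+q}_{\A\times\A}(X\times Y,*)_{0}$, $s(i_{X,Y}^{p,q})^{2(p+q)-*}_{0}$, $\mmD_{\A\times\A}^{2(p+q)-*}(X\times Y,p+q)_{0}$ and $Z\mmD^{2(p+q)}(X\times Y,p+q)_{*}$, the externally defined product intertwines $\sigma$ with $\sigma_{X,Y,\square}^{*}$, and that these componentwise commutativities are compatible with the structural morphisms $f_{1}$, $g_{1}$, $\rho$, $i$.

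For the cycle-level product, if $Z\in Z^{p}(X,n)_{0}$ and $W\in Z^{q}(Y,m)_{0}$, then $Z\times W\in Z^{p+q}(X\times Y,n,m)_{0}$ and $\sigma_{X,Y,n,m}^{*}(W\times Z)=Z\times W$; the sign $(-1)^{nm}$ in $\sigma_{X,Y,\square}^{*}$ matches exactly the Koszul sign of $\sigma$ on $Z^{p}(X,n)_{0}\otimes Z^{q}(Y,m)_{0}$, giving strict commutativity. The same argument, transported through $f_{1}$, handles $\mcH^{p+q}_{\A\times\A}(X\times Y,*)_{0}$. For $Z\mmD^{2p}(X,p)_{*}\otimes Z\mmD^{2q}(Y,q)_{*}\to Z\mmD^{2(p+q)}(X\times Y,p+q)_{*}$, the product is $(\alpha,\beta)\mapsto p_{X}^{*}\alpha\wedge p_{Y}^{*}\beta$; the graded commutativity of $\wedge$, applied to closed forms of pure bidegree $(p,p)$ and $(q,q)$, gives the required sign compatibility (the cubical swap is trivial here since these complexes are concentrated in cube degree zero).

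For the Deligne complexes, recall that the product $\bullet_{\A}$ is defined by $\omega_{1}\bullet_{\A}\omega_{2}=(-1)^{ns}p_{13}^{*}\omega_{1}\bullet p_{24}^{*}\omega_{2}$ where $n$ is the cube degree of $\omega_{1}$ and $s$ the total degree of $\omega_{2}$. Combining the graded commutativity of the Deligne product $\bullet$, the explicit prefactor $(-1)^{ns}$, the sign $(-1)^{nm}$ introduced by $\sigma_{X,Y,\square}^{*}$ on the target, and the Koszul sign of $\sigma$ on the source, one verifies by a direct sign calculation that
\[
\sigma_{X,Y,\square}^{*}(\omega_{1}\bullet_{\A}\omega_{2})=(-1)^{(r+n)(s+m)}\omega_{2}\bullet_{\A}\omega_{1}
\]
where $r$ is the total degree of $\omega_{1}$, matching exactly the sign produced by $\sigma$. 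The same calculation, performed on each of the four entries $(\omega\bullet\omega', g\bullet\omega', \omega\bullet g', g\bullet g')$ defining the product into $s(i_{X,Y}^{p,q})^{*}_{0}$, settles the case of $\mmD^{2p-n}_{\A,\mmZ^{p}}(X,p)_{0}\otimes\mmD^{2q-m}_{\A,\mmZ^{q}}(Y,q)_{0}$; the signs match because $\bullet$ is graded commutative on all four components and the simple complex of the morphism $i_{X,Y}^{p,q}$ introduces the same Koszul sign on both sides of the swap.

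Finally, one checks that $\sigma_{X,Y,\square}^{*}$ commutes with each of $f_{1}$, $g_{1}$, $\rho$ and $i$; this is immediate from the definitions since the swap is induced by pullback along the isomorphism $\sigma_{X,Y,n,m}$, which commutes with the cycle class map, with the projection $\rho$ forgetting the support, and with the inclusion of closed forms. Assembling these componentwise equalities yields the commutativity of the diagram in the lemma. The main obstacle is purely bookkeeping: keeping track of the sign conventions in $\bullet_{\A}$, in the simple complex of $i_{X,Y}^{p,q}$, and in the Koszul rule for $\sigma$; no new conceptual input is required beyond the graded commutativity of the Deligne product stated in $\S\ref{dbcohomology}$ and the graded commutativity of the cartesian product of algebraic cycles.
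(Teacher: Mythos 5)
Your proposal is correct and follows essentially the same route as the paper, whose proof consists precisely of the observation that $\sigma_{X,Y,\square}^*$ commutes with $\times$ on $Z^*(\cdot,*)_0$ and $\mcH^*(\cdot,*)_0$ by definition and with $\bullet_{\A}$ and $\bullet_{p,q}$ by a direct sign computation; you simply write out more of that bookkeeping. (One small caution: in your displayed sign $(-1)^{(r+n)(s+m)}$ the letters $r,s$ must be read as the Deligne-complex degrees of $\omega_1,\omega_2$ — as in Lemma \ref{prod} — not as the total degrees in the simple complex, since only then does $(r+n)(s+m)\equiv (r-n)(s-m)\pmod 2$ reproduce the Koszul sign of $\sigma$.)
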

\begin{proof}
It follows from the definition that the morphism
$\sigma_{X,Y,\square}^*$ commutes with the product $\times$ in
$Z^*(X,*)_0$ and in $\mcH^*(X,*)_0$. The fact that it commutes with
$\bullet_{\A}$ and $\bullet_{p,q}$ is an easy computation.
\end{proof}

By Lemmas \ref{lemmacommut1} and \ref{lemmacommut2}, we are left
to see that the diagram
\begin{equation}\label{diagram4}
\xymatrix{  \whZ^{p+q}_{\A\times \A}(X\times Y,*)_{00}
\ar[d]_{\sigma_{X,Y,\square}^*} \ar[r]^{\kappa} &
\whZ^{p+q}(X\times Y,*)_{0} \ar[d]^{\sigma_{X,Y}^*}
 \\
 \whZ^{p+q}_{\A\times \A}(Y\times X,*)_{00}\ar[r]^{\kappa} & \whZ^{p+q}(Y\times X,*)_{0} }
 \end{equation}
is commutative up to homotopy. We follow the ideas used by Levine,
in \cite{Levine1}, $\S 4$, in order to prove the commutativity of
the product on the higher algebraic Chow groups. We will end up
with an explicit homotopy for the commutativity of diagram
\ref{diagram4}.

\begin{obs}
For any scheme  $X$,  consider the morphism
$$  \whZ^{p}_{\A\times
\A}(X,*)_{00} \xrightarrow{\sigma_{\square}^*}
 \whZ^{p}_{\A\times \A}(X,*)_{00}$$
  induced  by
$(-1)^{nm}\sigma_{n,m}^*$ at each component. Then,
$\sigma_{X,Y,\square}^*=\sigma_{X,Y}^*\sigma_{\square}^*$ and
hence, the commutativity of the diagram \eqref{diagram4} will follow
from the commutativity (up to homotopy) of the diagram
$$  \xymatrix@R=4pt{ \whZ^{p}_{\A\times
\A}(X,*)_{00} \ar[dd]_{\sigma_{\square}^*} \ar[drr]^{\kappa} \\ &&
\whZ^{p}(X,*)_{0}.  \\
 \whZ^{p}_{\A\times
\A}(X,*)_{00} \ar[urr]_{\kappa} }  $$
\end{obs}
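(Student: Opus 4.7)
The plan is to prove the two parts of the remark in sequence: first the algebraic factorization $\sigma_{X,Y,\square}^{\ast}=\sigma_{X,Y}^{\ast}\sigma_{\square}^{\ast}$, and then the logical reduction of diagram \eqref{diagram4} to the smaller diagram. I expect the whole argument to be a purely formal diagram chase, with the only subtle point being the sign bookkeeping.

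For the factorization, I would verify it at each bigraded piece $(n,m)$ on each of the four complexes that make up $\whmZ^{p,q}_{\A\times\A}(X\times Y,\ast)_{00}$. By construction the underlying geometric map
\begin{displaymath}
\sigma_{X,Y,n,m}=\sigma_{X,Y}\times \sigma_{n,m}\colon Y\times X\times \square^{m}\times \square^{n}\longrightarrow X\times Y\times \square^{n}\times \square^{m}
\end{displaymath}
is a product of two maps acting on disjoint factors, so the pull-back factors as the composition of the pull-back along $\sigma_{X,Y}$ (affecting only the $X\times Y$ factor) and the pull-back along $\sigma_{n,m}$ (affecting only the cubical factors). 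The Koszul sign $(-1)^{nm}$ appearing in the definition of $\sigma_{X,Y,\square}^{\ast}$ is exactly the sign that was absorbed into the definition of $\sigma_{\square}^{\ast}$, so the factorization holds on the nose. A routine check confirms compatibility with the morphisms $f_{1}$, $g_{1}$, $\rho$ and $i$, so the identity lifts to a morphism of diagrams and hence to the associated simple complexes.

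For the reduction, the key observation is that $\sigma_{X,Y}^{\ast}$ commutes strictly with $\kappa$ at every slot of the diagram: the map $\kappa$ is induced by the identification $\square^{n}\times \square^{m}\cong \square^{n+m}$ which touches only the cubical coordinates, while $\sigma_{X,Y}^{\ast}$ is pull-back along a map acting only on the $X\times Y$ component, so the two live in disjoint tensor factors. Granting an explicit chain homotopy $h$ with $\kappa-\kappa\circ \sigma_{\square}^{\ast}=\partial h+h\partial$ on $\whZ^{p+q}_{\A\times\A}(X\times Y,\ast)_{00}$ (to be constructed in the sequel by Levine-style shuffle arguments), one computes
\begin{displaymath}
\kappa\circ \sigma_{X,Y,\square}^{\ast}=\kappa\circ \sigma_{X,Y}^{\ast}\circ \sigma_{\square}^{\ast}=\sigma_{X,Y}^{\ast}\circ \kappa\circ \sigma_{\square}^{\ast}\simeq \sigma_{X,Y}^{\ast}\circ \kappa,
\end{displaymath}
with explicit homotopy $\sigma_{X,Y}^{\ast}\circ h$, which is exactly the homotopy commutativity of diagram \eqref{diagram4}. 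The genuine difficulty of the whole section, namely the construction of $h$ itself, lies outside the scope of this remark and will be taken up next, exploiting the vanishing of all but two face maps in the refined normalized complex indexed by $_{00}$ to write down explicit cubical interpolations between the two orderings of the square factors.
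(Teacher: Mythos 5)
Your argument is correct and matches what the paper intends: the remark is stated without proof, and the factorization $\sigma_{X,Y,\square}^{\ast}=\sigma_{X,Y}^{\ast}\sigma_{\square}^{\ast}$ together with the strict commutation of $\sigma_{X,Y}^{\ast}$ with $\kappa$ (disjoint factors) is exactly the intended reduction, with $\sigma_{X,Y}^{\ast}\circ h$ as the transported homotopy. You also correctly defer the construction of $h$ itself to the subsequent proposition, which is where the paper places it (Proposition \ref{homotopy4}).
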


Let $W_n$ be the closed subvariety of $\square^{n+1}\times \P^1$
defined by the equation
\begin{equation}
t_1(1-x_1)(1-x_{n+1})=t_1-t_0,
\end{equation}
where $(t_0:t_1)$ are  the coordinates in $\P^1$ and
$(x_1,\dots,x_{n+1})$ are the coordinates in $\square^{n+1}$. Recall
that we have identified $\square^{1}$ with the subset $t_{0}\not =
t_{1}$ of $\P^{1}$, with coordinate $x=t_{0}/t_{1}$.
Then, there is an isomorphism $W_n \cong \square^{n}\times
\square^1$. The inverse of this isomorphism is  given by
\begin{eqnarray*}
\square^{n+1}& \xrightarrow{\varphi_n} & W_n \\
(x_1,\dots,x_{n+1}) & \mapsto &
(x_1,\dots,x_{n+1},x_1+x_{n+1}-x_1x_{n+1}).
\end{eqnarray*}
Consider the projection
$$\pi_n:W_n \rightarrow \square^n, \qquad (x_1,\dots,x_{n+1},t) \mapsto (x_2,\dots,x_{n},t). $$
Let $\tau$ be the permutation
$$
\square^{n}  \xrightarrow{\tau}  \square^n, \qquad (x_1,\dots,x_n)
\mapsto  (x_2,\dots,x_n,x_1).$$

\begin{obs}
Let $\sigma_{n,m}$ be the map defined in \eqref{sigma2}. Observe
that it is decomposed as $ \sigma_{n,m} = \tau \circ \stackrel{m}{\dots} \circ \tau.$
Therefore,
$\sigma_{n,m}^*=\tau^*\circ \stackrel{m}{\dots} \circ \tau^*. $
\end{obs}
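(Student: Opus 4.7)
The plan is a direct coordinate-level verification. The key preliminary remark is that although $\tau$ is defined in the excerpt as a self-map of $\square^{n}$, the formula $(z_{1},z_{2},\ldots,z_{k})\mapsto(z_{2},\ldots,z_{k},z_{1})$ defines a cyclic shift on any $\square^{k}$; in the decomposition claim, $\tau$ is understood to act on $\square^{n+m}$, which we identify with both $\square^{m}\times\square^{n}$ and $\square^{n}\times\square^{m}$ by concatenation of coordinates. I would then write a generic point of $\square^{m}\times\square^{n}$ as $(y_{1},\ldots,y_{m},x_{1},\ldots,x_{n})$ and prove by induction on $j$, for $0\le j\le m$, that
\[
\tau^{\circ j}(y_{1},\ldots,y_{m},x_{1},\ldots,x_{n})=(y_{j+1},\ldots,y_{m},x_{1},\ldots,x_{n},y_{1},\ldots,y_{j}).
\]
The base case $j=0$ is trivial, and the inductive step is a single application of the cyclic shift formula: the new leading coordinate $y_{j+1}$ is moved to the tail of the list, producing the next case of the claimed pattern. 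Setting $j=m$ exhausts the $y$-entries from the front and places them all at the back, yielding
\[
\tau^{\circ m}(y_{1},\ldots,y_{m},x_{1},\ldots,x_{n})=(x_{1},\ldots,x_{n},y_{1},\ldots,y_{m}),
\]
which is exactly $\sigma_{n,m}(y_{1},\ldots,y_{m},x_{1},\ldots,x_{n})$ by the formula \eqref{sigma2}. This establishes the first identity.

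For the second identity, I would apply contravariance of pullback to the factorization just obtained. In general $(f\circ g)^{*}=g^{*}\circ f^{*}$, so
\[
\sigma_{n,m}^{*}=(\tau\circ\cdots\circ\tau)^{*}=\tau^{*}\circ\cdots\circ\tau^{*}\quad(m\text{ times}),
\]
with the order reversal invisible because all factors coincide. No step in the argument presents a genuine obstacle; the role of this observation is organizational, namely to reduce the forthcoming construction of a homotopy for the commutativity of diagram \eqref{diagram4} to the analysis of the single cyclic shift $\tau^{*}$ iterated $m$ times, rather than the apparently more rigid block-permutation $\sigma_{n,m}^{*}$. In particular, it means that a cochain homotopy between $\kappa\circ\tau^{*}$ and $\kappa$ will, by iteration, yield one between $\kappa\circ\sigma_{n,m}^{*}$ and $\kappa$, which is the real purpose of recording the observation at this point of the paper.
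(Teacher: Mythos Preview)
Your proposal is correct; in fact the paper states this observation without any proof, treating it as an immediate fact, and your direct coordinate-level verification by induction on the number of applications of $\tau$ is exactly the routine check that is being left implicit. Your closing remark about the purpose of the observation (reducing the homotopy for $\sigma_{n,m}^{*}$ to iterating a homotopy for $\tau^{*}$) is also accurate and matches how the paper proceeds in Lemma~\ref{H} and Proposition~\ref{homotopy4}.
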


It is easy to check that the following identities are satisfied:
\begin{eqnarray} \label{homotdelta}
\pi_n\varphi_n\delta^i_0 & = & \left\{ \begin{array}{ll} id & \textrm{if }i=1, \\
\delta^{i-1}_0\pi_{n-1}\varphi_{n-1} & \textrm{if }i=2,\dots,n, \\
\tau & \textrm{if }i=n+1.
\end{array}\right. \\
\pi_n\varphi_n\delta^i_1 & = & \left\{ \begin{array}{ll} \delta^n_1\sigma ^{n} & \textrm{if }i=1, \\
\delta^{i-1}_1\pi_{n-1}\varphi_{n-1} & \textrm{if }i=2,\dots,n, \\
\delta_1^n\sigma ^{n} \tau & \textrm{if }i=n+1.
\end{array}\right. \nonumber
\end{eqnarray}

Let $W_n^X$ be the pull-back of $W_n$ to $X\times \square^n$.
Then, the maps
$$\pi_n: W^X_n\rightarrow X\times \square^n,\quad \textrm{and}\quad
\varphi_n:X\times \square^{n+1}\rightarrow W_n^X$$ are defined
accordingly.

\begin{prop}\label{commut2} Let $X$ be a quasi-projective regular scheme over a field $k$.
\begin{enumerate}[(i)] \item The scheme $W_n$ is a flat regular scheme over $\square^n$.
\item There is a well-defined map
$$Z^p(X,n)  \xrightarrow{h_n}  Z^p(X,n+1), \qquad Y  \mapsto   \varphi_n^*\pi_n^*(Y).
$$
\end{enumerate}
\end{prop}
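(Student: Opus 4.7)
The plan is to treat (i) and (ii) in sequence, using an explicit identification of $\pi_n\circ\varphi_n$ with a product of a trivial projection and the multiplication map on $\A^1$, then combining flatness with the face identities \eqref{homotdelta}.

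First, for (i), on the open chart $t_1 \neq 0$ of $\P^1$ the defining equation $t_1(1-x_1)(1-x_{n+1})=t_1-t_0$ of $W_n$ rewrites as $t_0/t_1 = 1-(1-x_1)(1-x_{n+1})$, so the projection $W_n \to \square^{n+1}$ dropping the $\P^1$ coordinate is an inverse to $\varphi_n$; moreover $W_n$ contains no point with $t_1 = 0$, since that would force $t_0 = 0$, impossible in $\P^1$. Hence $\varphi_n$ is an isomorphism and $W_n$ is regular. The composition
\begin{equation*}
\pi_n \circ \varphi_n: \square^{n+1} \longrightarrow \square^n,\quad (x_1, \dots, x_{n+1}) \longmapsto (x_2, \dots, x_n, 1-(1-x_1)(1-x_{n+1})),
\end{equation*}
after the change of coordinates $y_i = 1-x_i$ (identifying $\square \cong \A^1$ by $1 \mapsto 0$), becomes $(y_1,\dots,y_{n+1}) \mapsto (y_2,\dots,y_n, y_1 y_{n+1})$, which is the product of $\mathrm{id}_{\A^{n-1}}$ on the middle coordinates with the multiplication map $\mu: \A^1 \times \A^1 \to \A^1$, $(a,b)\mapsto ab$. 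The latter is flat because $k[a,b]$ is a free $k[c]$-module with basis $\{a^i:i\ge 0\}\cup\{b^j:j\ge 1\}$ (where $c = ab$). Thus $\pi_n\circ\varphi_n$ is flat, and since $\varphi_n$ is an isomorphism, so is $\pi_n$.

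Next, for (ii), the flatness from (i) ensures $\varphi_n^* \pi_n^* Y$ is a well-defined codimension $p$ cycle in $X \times \square^{n+1}$ for any $Y \in Z^p(X,n)$. To verify proper intersection with an arbitrary face $F = \delta_{i_1}^{j_1} \cdots \delta_{i_r}^{j_r}(\square^{n+1-r})$ of $\square^{n+1}$, I would compute $(\delta_{i_1}^{j_1}\cdots \delta_{i_r}^{j_r})^* \varphi_n^* \pi_n^* Y = (\pi_n \varphi_n \delta_{i_1}^{j_1} \cdots \delta_{i_r}^{j_r})^* Y$ by iteratively applying the relations \eqref{homotdelta}. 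Each application either eliminates a factor $\pi_k \varphi_k$ (boundary cases $i=1$ and $i=k+1$ in $\pi_k \varphi_k \delta_i^j$, yielding the identity, the cyclic permutation $\tau$, or a single face composed with a permutation) or converts $\pi_k \varphi_k \delta_i^j$ into $\delta_{i-1}^j \circ \pi_{k-1} \varphi_{k-1}$ (middle case $2\le i\le k$), which lowers $k$. After finitely many steps the composition reduces to a composition $g$ of cofaces and cube permutations on some $\square^{n'}$, possibly still preceded by a flat map $\pi_{n'}\varphi_{n'}$. The pullback $g^* Y$ is (a permutation of) a face of $Y$, hence of codimension $p$ by the proper-face hypothesis on $Y$, and any remaining flat pullback preserves this codimension, so $\varphi_n^* \pi_n^* Y \in Z^p(X, n+1)$.

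The main obstacle is the combinatorial bookkeeping in (ii): one must track, through iterated applications of \eqref{homotdelta}, how an arbitrary composition of cofaces on $\square^{n+1}$ gets rewritten as a flat pullback of a face of $Y$ on a lower-dimensional cube, across the six branches of the identities (three values of $i$ for each $j \in \{0,1\}$). Once this reduction is in place, the codimension bound is automatic. Part (i) is essentially formal once the factorization through the multiplication map $\mu$ is observed.
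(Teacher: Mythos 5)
The paper does not actually prove this proposition: its ``proof'' is the citation of \cite{Levine1}, Lemma 4.1, so you are supplying an argument the authors outsource. Your overall strategy --- show $\varphi_n$ is an isomorphism, factor $\pi_n\varphi_n$ through the multiplication map on the line to get flatness, then use the identities \eqref{homotdelta} to reduce proper intersection with faces of $\square^{n+1}$ to proper intersection with faces of $\square^n$ --- is the right one and is essentially Levine's.

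Two of your steps are wrong as stated, both because you treat $\square=\P^1\setminus\{1\}$ as if it had no point at $\infty$; in this paper's conventions $\infty$ is one of the two face points, so it cannot be ignored. First, the claim that $W_n$ contains no point with $t_1=0$ is false. Homogenizing in $x_1=a_0/a_1$ and $x_{n+1}=b_0/b_1$, the equation becomes $t_1(a_1-a_0)(b_1-b_0)=a_1b_1(t_1-t_0)$, and at $a_1=0$ this forces $t_1(b_1-b_0)=0$, hence $t_1=0$ since $x_{n+1}\ne 1$; so the entire locus of $W_n$ over $x_1=\infty$ or $x_{n+1}=\infty$ sits at $t=\infty$. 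The conclusion that the projection inverts $\varphi_n$ survives (the fibre there is still the single point $(1:0)$), but it must be checked on the charts at infinity. Likewise $y=1-x$ identifies $\square$ with $\P^1\setminus\{0\}$, not with $\A^1$, so your freeness computation for $k[a,b]$ over $k[ab]$ only gives flatness of the multiplication map on the finite chart; you need the chart $u=1/y$ as well (where the map is again $(u_1,u_2)\mapsto u_1u_2$, so the same computation applies). Second, in (ii) the boundary cases of \eqref{homotdelta} for $j=1$ yield $\delta_1^n\sigma^n$ and $\delta_1^n\sigma^n\tau$, and $\sigma^n$ is a codegeneracy (the projection $\square^n\to\square^{n-1}$), not a permutation. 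Your claimed terminal form ``a composition of cofaces and cube permutations'' is therefore not what the rewriting produces: one must also commute codegeneracies past cofaces via the cubical identities, ending with (cofaces and permutations) composed with a flat map built from codegeneracies and at most one $\pi_{n'}\varphi_{n'}$. Since codegeneracies are flat the codimension count still goes through, but the bookkeeping you explicitly defer is exactly where these terms arise, so the reduction needs to be restated before it can be called complete.
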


\begin{proof}
See  \cite{Levine1},  Lemma 4.1.
\end{proof}

For every $n\geq 1$, we define the morphisms
\begin{eqnarray*}
\mcH^{p}(X,n) &\xrightarrow{h_n} & \mcH^p(X,n+1), \\
\tau\mmD^*_{\log}(X\times \square^n,p) &\xrightarrow{h_n} &
\tau\mmD^*_{\log}(X\times \square^{n+1},p), \\
\tau\mmD^*_{\log,\mmZ^p}(X\times \square^n,p) &\xrightarrow{h_n} &
\tau\mmD^*_{\log,\mmZ^p}(X\times \square^{n+1},p),
\end{eqnarray*}
by $h_n=\varphi_n^*\pi_n^*. $ By Proposition \ref{commut2}, (ii),
these morphisms are well defined.

\begin{lema}
Let $\alpha$ be an element of $Z^q(X,n)_{0}$, $\mcH^{p}(X,n)_0$,
$\tau\mmD^*_{\log,\mmZ^p}(X\times \square^n,p)_0$ or $\tau\mmD^*_{\log}(X\times \square^n,p)_0$. Then, the following
equality is satisfied
$$  \delta h_{n}(\alpha)+ \sum_{i=1}^{n-1}(-1)^{i} h_{n-1}\delta_{i}^0(\alpha)=
-\alpha +(-1)^{n+1}\tau^*(\alpha). $$
\end{lema}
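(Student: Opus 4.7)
The plan is to reduce the asserted identity to a bookkeeping consequence of the geometric identities \eqref{homotdelta} together with the normalization assumption $\delta^1_i(\alpha)=0$ for all $i$, which is built into the subscript $0$ on each of the four complexes.

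First I would observe that in every one of the four settings the map $h_n=\varphi_n^\ast\pi_n^\ast$ is a pull-back along flat morphisms: Proposition~\ref{commut2}(i) gives the flatness of $\pi_n$, and flat pull-back is available for cycles (Proposition~\ref{commut2}(ii)), for $\tau\mmD_{\log}^\ast$, for the supported variant $\tau\mmD_{\log,\mathcal{Z}^p}^\ast$ (by the same argument as in Lemma~\ref{funct1}), and by transport for $\mcH^p(X,\cdot)_0$. Consequently, for each face index $i$ and each $\epsilon\in\{0,1\}$ one has the purely formal identity
\begin{equation*}
\delta^\epsilon_i\circ h_n=(\pi_n\varphi_n\delta^i_\epsilon)^\ast.
\end{equation*}

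Second I would feed the identities \eqref{homotdelta} into this formula. On the $\delta^0$ side they give $\delta^0_1 h_n(\alpha)=\alpha$, $\delta^0_i h_n(\alpha)=h_{n-1}\delta^0_{i-1}(\alpha)$ for $2\le i\le n$, and $\delta^0_{n+1} h_n(\alpha)=\tau^\ast(\alpha)$. On the $\delta^1$ side every one of the $n+1$ terms factors through some $\delta^1_j(\alpha)$ with $1\le j\le n$: for $i=1$ and $i=n+1$ through $(\delta^n_1)^\ast\alpha=\delta^1_n(\alpha)$ (via $\sigma^n$, and additionally $\tau$ in the second case), and for $2\le i\le n$ it equals $h_{n-1}\delta^1_{i-1}(\alpha)$. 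Since $\alpha$ lies in the normalized complex, each such $\delta^1_j(\alpha)$ vanishes, and so the entire $\delta^1$ contribution to $\delta h_n(\alpha)$ is zero.

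Third I would assemble the pieces. With the convention $\delta=\sum_{i=1}^{n+1}(-1)^i(\delta^0_i-\delta^1_i)$ (which reduces to $\sum_i(-1)^i\delta^0_i$ on the normalized complex), the previous step yields
\begin{equation*}
\delta h_n(\alpha)=-\alpha+\sum_{i=2}^{n}(-1)^i h_{n-1}\delta^0_{i-1}(\alpha)+(-1)^{n+1}\tau^\ast(\alpha),
\end{equation*}
and the reindexing $j=i-1$ in the middle sum, transferred to the left-hand side with sign $(-1)^j=-(-1)^{j+1}$, gives precisely the formula of the lemma.

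I do not expect a real obstacle. The only non-trivial input is the set of geometric identities \eqref{homotdelta}, which are already established, and the pull-back description of $h_n$, which is Proposition~\ref{commut2}(ii) for the cycle complex and standard flat pull-back for the two Deligne-complex variants (and thus for $\mcH^p(X,\ast)_0$ by transport). Everything else is a signed combinatorial rearrangement of $n+1$ face-map contributions.
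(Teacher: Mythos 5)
Your proof is correct and follows essentially the same route as the paper's: both arguments use the normalization hypothesis $\delta_i^1(\alpha)=0$ to kill all $\delta^1$ contributions, then substitute the identities \eqref{homotdelta} into $\delta h_n(\alpha)=\sum_i(-1)^i\delta_i^0\varphi_n^*\pi_n^*(\alpha)$ and reindex the middle sum. Your explicit justification that $h_n$ acts by flat pull-back in each of the four settings is a slight elaboration of what the paper leaves implicit, but the computation is the same.
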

\begin{proof}  By hypothesis,
$\delta_i^1(\alpha)=0$ for all $i=1,\dots,n$. Then, by the
pull-back of the equalities \eqref{homotdelta}, we see that
$\delta_i^1\varphi_n^*\pi_n^*(\alpha)=0$. Therefore, using
\eqref{homotdelta},
\begin{eqnarray*}
\delta h_n(\alpha) &=& \sum_{i=1}^{n+1}\sum_{j=0,1}(-1)^{i+j}
\delta_i^j \varphi_n^*\pi_n^*(\alpha) =\sum_{i=1}^{n+1}(-1)^{i}
\delta_i^0 \varphi_n^*\pi_n^*(\alpha)
\\ &=& -\alpha + \sum_{i=2}^{n}(-1)^{i}  \varphi_n^*\pi_{n-1}^*\delta_{i-1}^0(\alpha)
+(-1)^{n-1}\tau^*(\alpha)\\
&=& -\alpha - \sum_{i=1}^{n-1}(-1)^{i} h_{n-1}\delta_{i}^0(\alpha)
+(-1)^{n+1}\tau^*(\alpha),
\end{eqnarray*}
as desired.
\end{proof}

\begin{prop}\label{homotopy4} Let $X$ be an arithmetic variety over a
  field. Then the following diagram is commutative up to homotopy.
{\small $$ \xymatrix@R=3pt{\widehat{Z}^p_{\A\times
\A}(X,n)_{00}
\ar[drr]^{\kappa} \ar[dd]_{\sigma_{\square}^*} \\ && \widehat{Z}^p(X,n)_{0}. \\
\widehat{Z}^p_{\A\times \A}(X,n)_{00} \ar[urr]_{\kappa} } $$ }
\end{prop}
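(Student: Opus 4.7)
The plan is to construct, on each of the four complexes appearing in the diagram $\widehat{\mmZ}^p_{\A\times \A}(X,*)_{00}$, an explicit chain homotopy between $\kappa$ and $\kappa\circ\sigma_{\square}^*$, and to check that these homotopies are compatible with the structural maps $f_1$, $g_1$, $\rho$, $i$, so that they assemble into a homotopy on the associated simple complex $\widehat{Z}^p_{\A\times \A}(X,*)_{00}$. This imitates Levine's strategy in \cite{Levine1}, \S4, for proving graded commutativity of the product on higher Chow groups.

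For an element $\alpha$ in bidegree $(n,m)$ of any of the complexes $Z^p_{\A\times \A}(X,*)_{00}$, $\mcH^p_{\A\times \A}(X,*)_{00}$, $\mmD^{2p-*}_{\A\times \A,\mathcal{Z}^{p}}(X,p)_{00}$ or $\mmD^{2p-*}_{\A\times \A}(X,p)_{00}$, I would set
\begin{displaymath}
H(\alpha)=\sum_{k=0}^{m-1}\varepsilon_{n,m,k}\, h_{n+m}\bigl((\tau^{*})^{k}\alpha\bigr),
\end{displaymath}
with signs $\varepsilon_{n,m,k}$ to be determined. By the remark preceding the proposition, $\sigma_{n,m}=\tau\circ\stackrel{m}{\cdots}\circ\tau$, hence $(\tau^{*})^{m}=\sigma_{n,m}^{*}$. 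The previous lemma exhibits $h_{N}$ as a chain homotopy between $-\mathrm{id}$ and $(-1)^{N+1}\tau^{*}$ in the direction of the total cubical degree; iterating this relation $m$ times telescopes into a homotopy between $\kappa$ and $\kappa\circ\sigma_{\square}^*$, the sign $(-1)^{nm}$ incorporated in the definition of $\sigma_{\square}^*$ being forced by the accumulated alternating signs. The key simplification from working in the refined normalized complex $N_{0}$ is that, for $\alpha$ lying in the bidegree $(n,m)$ piece of any of the four complexes, the face maps $\delta_i^0\alpha$ vanish for every $i\notin\{1,n+1\}$. Consequently, the sum $\sum_{i=1}^{N-1}(-1)^{i}h_{N-1}\delta_{i}^{0}$ appearing in the lemma collapses to at most two terms, which match the bicubical differentials $\delta'$ and $\delta''$ transported to $\square^{n+m}$ via $\kappa$. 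This is precisely what makes $H$ a chain homotopy for the total bicubical differential rather than merely for the normalized one-cubical differential.

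Next I would verify that $H$ commutes with each of the four morphisms $f_{1}$, $g_{1}$, $\rho$, $i$ of the diagram. The homotopy operator is built from $h_n=\varphi_n^{*}\pi_n^{*}$, that is, from pullbacks along the flat projection $\pi_n\colon W_n\to\square^{n}$ (flat by Proposition \ref{commut2}(i)) and the isomorphism $\varphi_{n}$. Because the cycle class map $f_{1}$, the reading-off quasi-isomorphism $g_{1}$, the projection $\rho$, and the inclusion $i$ are all natural for flat pullbacks (using the functoriality of Proposition \ref{funct2} for $\rho$ and $g_1$, and the fact that cycle classes are compatible with flat pullback for $f_1$), these maps all commute with the operators $h_n$ and $\tau^{*}$, hence with $H$. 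On the remaining component $Z\mmD^{2p}(X,p)_{*}$, concentrated in degree zero, both $\kappa$ and $\sigma_{\square}^*$ act as the identity, so $H$ may be taken to vanish there. The compatibilities assemble $H$ into a chain homotopy on the simple complex $\widehat{Z}^p_{\A\times \A}(X,*)_{00}$ that realises $\kappa\sim\kappa\circ\sigma_{\square}^*$.

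The main obstacle is sign bookkeeping: one must pin down the signs $\varepsilon_{n,m,k}$ so that, after combining the four complexes into the simple complex with its Beilinson-type differential $d_s$, the identity $d_{s}H+Hd_{s}=\kappa-\kappa\circ\sigma_{\square}^*$ holds on the nose. I would handle this by first verifying the base case $m=1$, where $\sigma_{n,1}=\tau$ and $H$ reduces essentially to $h_{n+1}$, so the identity is the lemma itself; and then proceeding by induction on $m$, using that if $h$ realises $\mathrm{id}\sim T$ and $h'$ realises $T\sim T^{2}$, then $h+h'$ realises $\mathrm{id}\sim T^{2}$. Once the signs are fixed on $Z^p_{\A\times\A}(X,*)_{00}$, the same formula with the same signs works on the remaining three complexes because each structural morphism is a chain map of degree zero and commutes with $h_{n+m}$ and $\tau^{*}$.
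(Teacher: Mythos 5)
Your proposal follows essentially the same route as the paper: the paper likewise builds the homotopy out of the operators $h_N=\varphi_N^{*}\pi_N^{*}$ composed with powers of $\tau^{*}$, uses the vanishing of all faces of an $N_{0}$-element except $\delta^0_1$ and $\delta^0_{n+1}$ so that the face terms of the key lemma recombine into the homotopies in lower bidegree, and assembles the result over the diagram via commutation with $f_1,g_1,\rho,i$, with the signs pinned down explicitly in Lemma \ref{H}. The only cosmetic difference is the direction of the telescope: you pass from $\mathrm{id}$ to $(\tau^{*})^{m}=\sigma_{n,m}^{*}$ in $m$ steps, whereas the paper goes from $(\tau^{*})^{m}$ to $(\tau^{*})^{m+n}=\mathrm{id}$ in $n$ steps (setting $H_{0,m}=0$); both recombinations work out.
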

\begin{proof}
We start by defining maps
\begin{eqnarray*}
Z^{p}(X,n,m)_{00} & \xrightarrow{H_{n,m}} & Z^p(X,n+m+1)_0,
\\ \mcH^p(X,n,m)_{00} & \xrightarrow{H_{n,m}} & \mcH^p(X,n+m+1)_0, \\ \tau\mmD^{*}_{\log}(X\times
\square^n\times \square^m,p)_{00} & \xrightarrow{H_{n,m}} &
\tau\mmD^{*}_{\log}(X\times \square^{n+m+1},p)_{0}, \\
\tau\mmD^*_{\log,\mmZ^p}(X\times \square^n\times \square^m,p)_{00} &
\xrightarrow{H_{n,m}} & \tau\mmD^{*}_{\log,\mmZ^p}(X\times
\square^{n+m+1},p)_{0}.
\end{eqnarray*}
By construction, these maps will commute with $f_1,g_1$ and
$\rho$. This will allow us to define the homotopy for the
commutativity of the diagram in the statement.

All the maps $H_{n,m}$ will be defined in the same way. Thus, let
$B(X,n,m)_{00}$ denote either $Z^{p}(X,n,m)_{00}$,
$\mcH^p(X,n,m)_{00}$, $\tau\mmD^{*}_{\log}(X\times \square^n\times
\square^m,p)_{00},$ or $\tau\mmD^*_{\log,\mmZ^p}(X\times
\square^n\times \square^m,p)_{00} .$ For the last two cases,
$B(X,n,m)_{00}$ is a cochain complex, while for the first two
cases, it is  a group. Analogously, denote by $B(X,n+m+1)_{0}$ the
groups/complexes that are the target of $H_{n,m}$. The map
$H_{n,m}$ will be a cochain complex for the last two cases.

Let $\alpha \in B(X,n,m)_{00}$. Then, let $H_{n,m}(\alpha)\in
B(X,n+m+1)_0$ be defined by
\begin{equation}
H_{n,m}(\alpha)=\left\{ \begin{array} {ll}
\sum_{i=0}^{n-1}(-1)^{(m+i)(n+m-1)}
h_{n+m+1}((\tau^*)^{m+i}(\alpha)), & n\neq 0, \\ 0 & n=0.
\end{array} \right.
\end{equation}

From the definition it follows that:
\begin{enumerate*}[$\rhd$]
\item If $B(X,n,m)_{00}$ is $\tau\mmD^{*}_{\log}(X\times
\square^n\times \square^m,p)_{00},$ or
$\tau\mmD^*_{\log,\mmZ^p}(X\times \square^n\times \square^m,p)_{00}$,
 then
$$ d_{\mmD}H_{n,m}(\alpha)=H_{n,m}d_{\mmD}(\alpha),$$ i.e. $H_{n,m}$ is a
cochain morphism. \item $f_1H_{n,m}=H_{n,m}f_1$,
$g_1H_{n,m}=H_{n,m}g_1$ and $\rho H_{n,m}=H_{n,m}\rho$.
\end{enumerate*}

Recall that in all these complexes,
\begin{eqnarray*}
\delta'(\alpha)& = & -\delta_1^0(\alpha)\in B(X,n-1,m)_{00},
\\ \delta''(\alpha)&=& -\delta_{n+1}^0(\alpha)\in B(X,n,m-1)_{00}. \end{eqnarray*}

\begin{lema}\label{H}
For every $\alpha\in B(X,n,m)_{00}$  we have
$$\delta H_{n,m}(\alpha)  - H_{n-1,m}\delta_1^0(\alpha)-(-1)^n H_{n,m-1}\delta_{n+1}^0(\alpha)=
\alpha-(-1)^{nm}\sigma_{n,m}^*(\alpha). $$
\end{lema}
\begin{proof}
If $n=0$, since $\alpha =\sigma_{0,m}(\alpha)$ and
$H_{0,m}(\alpha)=0$  the equality is satisfied. For simplicity,
for every $i=0,\dots,n-1$, we denote
$$
H_{n,m}^i(\alpha)=(-1)^{(m+i)(n+m-1)}
h_{n+m+1}((\tau^*)^{m+i}(\alpha))\in B(X,n+m+1)_{0}.
$$

An easy computation shows that
$$ \delta_j^0\tau^*(\alpha)= \left\{\begin{array}{ll}
\tau^* \delta_{j-1}^0(\alpha) & \textrm{ if }j\neq 1, \\
\delta_n^0(\alpha) & \textrm{ if }j=1,
\end{array} \right.$$
and hence,
$$ \delta_j^0(\tau^*)^{i}(\alpha)= \left\{\begin{array}{ll}
(\tau^*)^{i} \delta_{j-i}^0(\alpha) & \textrm{ if }j > i, \\
(\tau^*)^{i-1}\delta_n^0(\alpha)  & \textrm{ if }j=i, \\
(\tau^*)^{i-1}\delta_{n-i+j}^0(\alpha) & \textrm{ if }j<i.
\end{array} \right.$$
Therefore,
\begin{eqnarray*}
\delta H_{n,m}^i(\alpha) &=&
\sum_{j=1}^{n+m+1}(-1)^{j+(m+i)(n+m-1)}\delta_{j}^0h_{n+m+1}( (\tau^*)^{m+i}(\alpha))\\
&=&  (-1)^{1+(m+i)(n+m-1)}(\tau^*)^{m+i}(\alpha) +(-1)^{(m+i+1)(n+m-1)} (\tau^*)^{m+i+1}(\alpha)\\
&&+ \sum_{j=2}^{n+m}(-1)^{j+(m+i)(n+m-1)}
h_{n+m}(\delta_{j-1}^0(\tau^*)^{m+i}(\alpha)).
\end{eqnarray*}

Recall that the only non-zero faces of $\alpha$ are $\delta_1^0$
and $\delta_{n+1}^0$. Therefore,
from the equalities \eqref{homotdelta}, we see that the only
non-zero faces are the faces corresponding to the indices
$j=m+i+2$ and $j=i+2$. In these cases, they take the values
$(\tau^*)^{m+i}\delta_1^0$ and $(\tau^*)^{m+i-1}\delta_{n+1}^0$
respectively. Therefore, if $i\neq n-1$, we obtain
\begin{eqnarray*}
\delta H_{n,m}^i(\alpha) &=&
-(-1)^{(m+i)(n+m-1)}(\tau^*)^{m+i}(Z)\\ && +(-1)^{(m+i+1)(n+m-1)}
(\tau^*)^{m+i+1}(\alpha)
\\ &&+ (-1)^{(m+i)(n+m-2)} h_{n+m}((\tau^*)^{m+i}\delta_1^0(\alpha))\\ &&
+(-1)^{i+(m+i)(n+m-1)}h_{n+m}((\tau^*)^{m-1+i}\delta_{n+1}^0(\alpha)).
\end{eqnarray*}
Observe that $(-1)^{i+(m+i)(n+m-1)}=(-1)^{(m+i-1)(n+m)+n}$.
Therefore, the last summand in the previous equality is exactly
$$H_{n-1,m}^i(\delta_1^0(\alpha))+(-1)^n H_{n,m-1}^i(
\delta_{n+1}^0(\alpha)).$$ If $i=n-1$, then
$\delta_{j-1}^0(\tau^*)^{m+i}(\alpha)=0$, for $j=2,\dots,n-m$.
Therefore,
\begin{eqnarray*} \delta H_{n,m}^{n-1}(\alpha)& =&
(-1)^{1+(m+n-1)(n+m-1)}(\tau^*)^{m+n-1}(\alpha)\\ && +(-1)^{(m+n)(n+m-1)} (\tau^*)^{m+n}(\alpha)\\
&&
+(-1)^{n-1+(m+n-1)(n+m-1)}h_{n+m}((\tau^*)^{m-1+i}\delta_{n+1}^0(\alpha))
\\ &=& -(-1)^{(m+n-1)(n+m-1)}(\tau^*)^{m+n-1}(\alpha)+\alpha \\ &&
+(-1)^{n+(m+n-2)(n+m)}h_{n+m}((\tau^*)^{m-1+i}\delta_{n+1}^0(\alpha)).
\end{eqnarray*}

Finally, we have seen that
\begin{eqnarray*}
\delta H_{n,m}(\alpha) &=& -(-1)^{m(n+m-1)}(\tau^*)^{m}(\alpha) +
\sum_{i=0}^{n-2} H_{n-1,m}^i(\delta_1^0(\alpha))\\ && +
\sum_{i=0}^{n-1}(-1)^n H_{n,m-1}^i(\delta_{n+1}^0(\alpha))
+\alpha,
\end{eqnarray*}
and since $(-1)^{m(n+m-1)}=(-1)^{nm}$, we obtain the equality
 $$\delta H_{n,m}(\alpha)
 - H_{n-1,m}(\delta_1^0(\alpha))-(-1)^n H_{n,m-1}(\delta_{n+1}^0(\alpha))=\alpha- (-1)^{nm}\sigma_{n,m}^*(\alpha).$$
\end{proof}

Let
$$ Z^{p}_{\A\times \A}(X,*)_{00} \xrightarrow{H} Z^p(X,*+1)_0,
\qquad
 \mcH^p_{\A\times \A}(X,*)_{00}  \xrightarrow{H}  \mcH^p(X,*+1)_0, $$
be the maps which are $H_{n,m}$ on the $(n,m)$-component.
Let
$$\mmD^{2p-*}_{\A\times \A,\mmZ^p}(X,p)_{00}  \xrightarrow{H} \mmD^{2p-*-1}_{\A,\mmZ^p}(X,p)_{0},$$
be the maps which are $(-1)^r H_{n,m}$ on the $(r,-n,-m)$-component. Observe that now
$$d_{\mmD}H = -H d_{\mmD}. $$

Let
$$H: \whZ^p_{\A \times \A}(X,n)_{00} \rightarrow \whZ^p(X,n+1)_{0}$$
be defined  by
$$H(Z,\alpha_0,\alpha_1,\alpha_2,\alpha_3)=(H(Z),H(\alpha_0),\alpha_1,-H(\alpha_2),-H(\alpha_3)).$$
Let $x=(Z,\alpha_0,\alpha_1,\alpha_2,\alpha_3)\in \widehat{Z}^p_{\A\times
\A}(X,n)_{00}$. Then,
{\small
\begin{align*}
dH(x)& =(\delta H(Z),d_s H(\alpha_0),d_{\mmD}(\alpha_1),f_1H(Z) - g_1H(\alpha_0)+
\delta
H(\alpha_2),\rho H(\alpha_0)+d_{s}H(\alpha_3)-\alpha_1)\\
H d(x) & =
(H\delta(Z),Hd_s(\alpha_0),d_{\mmD}(\alpha_1),-Hf_1(Z)+Hg_1(\alpha_0)+H\delta(\alpha_2),\\ & \qquad -H\rho(\alpha_0)+Hd_{s}(\alpha_3)+H(\alpha_1)).
\end{align*}}
Observe that for $\alpha_0\in \tau\mmD^{r}_{\log,\mmZ^p}(X\times
\square^n\times \square^m,p)_{00}$, we have
\begin{eqnarray*}
H d_s(\alpha_0)& = &  Hd_{\mmD}(\alpha_0)+(-1)^r
H\delta(\alpha_0)= -d_{\mmD}H(\alpha_0) + (-1)^r
H\delta(\alpha_0), \\
d_s H(\alpha_0) &=& d_{\mmD}H(\alpha_0)+ (-1)^r \delta
H(\alpha_0).
\end{eqnarray*}
The same remark applies to $\alpha_3\in \tau\mmD^{r}_{\log}(X\times
\square^n\times \square^m,p)_{00}$.
Moreover, since $\alpha_1$ equals zero in all degrees but $0$ and $H$ is the identity in degree zero, we have, by Lemma \ref{H},
$$dH(x)+Hd(x) = x-\sigma_{\square}^*(x). $$
\end{proof}

\begin{cor} The following diagram is commutative up to homotopy
$$\xymatrix{  \whZ^{p+q}_{\A\times
\A}(X\times Y,*)_{00} \ar[d]_{\sigma_{X,Y,\square}^*}
\ar[r]^{\kappa} & \whZ^{p+q}(X\times Y,*)_{0}
\ar[d]^{\sigma_{X,Y}^*}
 \\
 \whZ^{p+q}_{\A\times \A}(Y\times X,*)_{00}\ar[r]^{\kappa} & \whZ^{p+q}(Y\times X,*)_{0} } $$
\end{cor}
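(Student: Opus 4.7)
The corollary is essentially a packaging of Proposition \ref{homotopy4} combined with the factorization of $\sigma_{X,Y,\square}^{\ast}$ recorded in the remark preceding that proposition. My plan is therefore to show that the square in the statement factors as two subsquares, one of which commutes strictly and the other of which commutes up to an explicit homotopy coming from Proposition \ref{homotopy4}.

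First, I would apply Proposition \ref{homotopy4} to the arithmetic variety $X\times Y$ (which is again quasi-projective and regular over the same arithmetic field) with the codimension index $p+q$. This yields a chain homotopy
\begin{displaymath}
  H\colon \widehat{Z}^{p+q}_{\A\times \A}(X\times Y,\ast)_{00}
  \longrightarrow \widehat{Z}^{p+q}(X\times Y,\ast+1)_{0}
\end{displaymath}
satisfying $dH+Hd=\kappa-\kappa\circ \sigma_{\square}^{\ast}$.

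Next I would use the observation made just before Proposition \ref{homotopy4} that $\sigma_{X,Y,\square}^{\ast}=\sigma_{X,Y}^{\ast}\circ \sigma_{\square}^{\ast}$, together with the strict compatibility between $\kappa$ and $\sigma_{X,Y}^{\ast}$. The latter is immediate from the definitions: $\sigma_{X,Y}^{\ast}$ acts only on the factors $X$ and $Y$ (and on coefficient data), while $\kappa$ is the identification $\square^{n}\times \square^{m}\cong \square^{n+m}$ which involves only the cubical factors; thus at the level of each of the four complexes in the diagram $\widehat{\mmZ}^{p+q}_{\A\times \A}$ one has the strict identity $\kappa\circ \sigma_{X,Y}^{\ast}=\sigma_{X,Y}^{\ast}\circ \kappa$, and this passes to the associated simple complexes.

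Composing the two steps gives
\begin{displaymath}
  \kappa\circ \sigma_{X,Y,\square}^{\ast}
  =\kappa\circ \sigma_{X,Y}^{\ast}\circ \sigma_{\square}^{\ast}
  =\sigma_{X,Y}^{\ast}\circ \kappa\circ \sigma_{\square}^{\ast}
  \simeq \sigma_{X,Y}^{\ast}\circ \kappa,
\end{displaymath}
where the last step uses the homotopy $\sigma_{X,Y}^{\ast}\circ H$ supplied by Proposition \ref{homotopy4}. This is the desired homotopy commutativity.

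I do not anticipate any real obstacle: the only mild point to verify is that $\sigma_{X,Y}^{\ast}$ really does commute on the nose with $\kappa$ on all four of the complexes $Z^{p+q}_{\A\times \A}$, $\mcH^{p+q}_{\A\times \A}$, $\mmD^{\ast}_{\A\times \A}$, and $\mmD^{\ast}_{\A\times \A,\mathcal{Z}^{p+q}}$ (and on $Z\mmD^{2(p+q)}$, where $\kappa$ acts trivially), but this is a direct consequence of the fact that the isomorphism $\sigma_{X,Y}\colon Y\times X\to X\times Y$ does not mix with the cubical coordinates.
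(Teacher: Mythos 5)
Your proposal is correct and follows essentially the same route as the paper: the paper's proof is the one-line reduction to Proposition \ref{homotopy4}, with the factorization $\sigma_{X,Y,\square}^*=\sigma_{X,Y}^*\circ\sigma_{\square}^*$ and the (strict) commutation of $\kappa$ with $\sigma_{X,Y}^*$ supplied by the remark immediately preceding that proposition. You have merely made explicit the details the authors leave implicit, including the transported homotopy $\sigma_{X,Y}^*\circ H$.
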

\begin{proof}
It follows from Proposition \ref{homotopy4}.
\end{proof}

\begin{cor} Let $X,Y$ be arithmetic varieties.
\begin{enumerate}[(i)]
\item Under the canonical isomorphism $X\times Y\cong Y\times X$,
  the pairing
  $$ \widehat{CH}^p(X,n) \otimes \widehat{CH}^q(Y,m) \xrightarrow{\cup} \widehat{CH}^{p+q}(X\times Y,n+m),$$
  is graded commutative with respect to the degree $n$.
\item The internal pairing
  $$ \widehat{CH}^p(X,n) \otimes \widehat{CH}^q(X,m) \xrightarrow{\cup} \widehat{CH}^{p+q}(X,n+m),$$
  is graded commutative with respect to the degree $n$.
\end{enumerate}
\end{cor}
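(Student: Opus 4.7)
The plan is to observe that part (i) is the direct homology consequence of stringing together the three homotopy-commutative squares that have been established in \S\ref{commut12}, and then to deduce part (ii) from (i) by pulling back along the diagonal $\Delta\colon X\to X\times X$.

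For (i), recall from the big diagram of \S\ref{commut12} that the external product, in the derived category of chain complexes, factors as the composition
\begin{equation*}
s(\whmZ^p(X,*)_0\otimes\whmZ^q(Y,*)_0)\xrightarrow{\cup}\whZ^{p,q}_{\A\times\A}(X\times Y,*)_{0}\xleftarrow{i}\whZ^{p,q}_{\A\times\A}(X\times Y,*)_{00}\xrightarrow{\eta}\whZ^{p+q}_{\A\times\A}(X\times Y,*)_{00}\xrightarrow{\kappa}\whZ^{p+q}(X\times Y,*)_{0},
\end{equation*}
where $i$ and $g_{1}$ are quasi-isomorphisms by Lemma \ref{hom} and by \eqref{eq:6}. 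Lemma \ref{lemmacommut2} shows that the first arrow intertwines the Koszul symmetry $\sigma$ of the tensor product with the geometric symmetry $\sigma_{X,Y,\square}^{*}$; Lemma \ref{lemmacommut1} shows that both $i$ and $\eta$ strictly commute with $\sigma_{X,Y,\square}^{*}$; finally, the corollary to Proposition \ref{homotopy4} provides an explicit chain homotopy witnessing $\sigma_{X,Y}^{*}\circ\kappa\simeq\kappa\circ\sigma_{X,Y,\square}^{*}$, built from the pull–push map $h_{n}=\varphi_{n}^{*}\pi_{n}^{*}$ along the auxiliary scheme $W_{n}$.

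Concatenating these four commutativities yields a chain homotopy
\begin{equation*}
\sigma_{X,Y}^{*}\circ\cup\;\simeq\;\cup\circ\sigma\colon s(\whmZ^p(X,*)_0\otimes\whmZ^q(Y,*)_0)\longrightarrow\whZ^{p+q}(Y\times X,*)_{0}.
\end{equation*}
Passing to homology through Beilinson's pairing $\star_{\beta}$ on the simple of a diagram (whose independence of $\beta$ and compatibility with $\sigma$ are given by Lemma \ref{star}) produces the graded commutativity
\begin{equation*}
\sigma_{X,Y}^{*}(W\cup Z)=(-1)^{nm}(Z\cup W)\quad\text{in }\widehat{CH}^{p+q}(Y\times X,n+m)
\end{equation*}
for $W\in\widehat{CH}^{p}(X,n)$ and $Z\in\widehat{CH}^{q}(Y,m)$, which is precisely (i).

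For (ii), recall that the internal product is defined as $W\cup Z=\Delta^{*}(W\cup Z)$, where $\Delta\colon X\to X\times X$ is the diagonal. Since $\sigma_{X,X}\circ\Delta=\Delta$, applying $\Delta^{*}$ to the identity of part (i) (with $Y=X$) and using the functoriality of pull-back (Proposition \ref{chowpullbackth}) yields $W\cup Z=(-1)^{nm}Z\cup W$ in $\widehat{CH}^{p+q}(X,n+m)$. The main technical subtlety is purely bookkeeping: because the homotopy of Proposition \ref{homotopy4} is defined only on the refined subcomplex $\whZ^{p+q}_{\A\times\A}(X\times Y,*)_{00}$, one must work throughout with the enlarged factorization of the product introduced in \S\ref{commut12} and invert the quasi-isomorphisms of Lemma \ref{hom} in the derived category, which is already the form in which the product has been written.
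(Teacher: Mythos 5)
Your proposal is correct and follows essentially the same route as the paper: part (i) is obtained by concatenating Lemma \ref{lemmacommut2}, Lemma \ref{lemmacommut1}, and the homotopy-commutativity of diagram \eqref{diagram4} (via Proposition \ref{homotopy4}), inverting the quasi-isomorphisms of Lemma \ref{hom} in the derived category, and then passing to homology through $\star_{\beta}$ using Lemma \ref{star}; part (ii) follows by pulling back along the diagonal, exactly as the internal product is defined in \S\ref{diagramgros}. The only slip is cosmetic: in your factorization the relevant quasi-isomorphisms to invert are $i$ (Lemma \ref{hom}) and the analogue of \eqref{eq:6}, not $g_{1}$.
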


\subsection{Associativity} \label{assoc4} We prove here
that the product for the higher arithmetic Chow groups is
associative. First of all, observe that the product on
$Z^*(X,*)_0$ is strictly associative. Hence, all that remains is
 to study the associativity of the product in the complexes
with differential forms, except for $ZD^{2p}(X,p)_*$, where it is already associative. The key point will be Proposition
\ref{commutassoc}.

Denote by $h$ the homotopy for the associativity of the product in
the Deligne complex of differential forms of Proposition
\ref{commutassoc}. Let $X,Y,Z$ be complex algebraic manifolds.
Then, the external product $\bullet_{\A}$ is associative, in the
sense that there is a commutative diagram up to homotopy: {\small
\begin{equation}\label{assoc1} \xymatrix@C=-35pt{ & \mmD^{r}_{\A}(X,p)_0\otimes
\mmD^{s}_{\A}(Y,q)_0\otimes \mmD^{t}_{\A}(Z,l)_0
\ar[dl]_{\bullet_{\A}\otimes id }
\ar[dr]^{id\otimes \bullet_{\A}} & \\
\mmD^{r+s}_{\A}(X\times Y,p+q)_0 \otimes \mmD^{t}_{\A}(Z,l)_0
\ar[dr]_{\bullet_{\A}} & & \mmD^{r}_{\A}(X,p)_0 \otimes
\mmD^{s+t}_{\A}(Y\times
Z,q+l)_0 \ar[dl]^{\bullet_{\A}} \\
& \mmD^{r+s+t}_{\A}(X\times Y \times Z,p+q+l)_0 } \end{equation} }
This follows from the fact that the homotopy  $h$ is functorial
(see \cite{Burgos2}).
 
\begin{prop}\label{assoc2}
Let $X,Y,Z$ be complex algebraic manifolds. Then, there is a
commutative diagram, up to homotopy: {\small
$$ \xymatrix@C=-80pt{ & \mmD^{r}_{\A,\mmZ^p}(X,p)_0\otimes
\mmD^{s}_{\A,\mmZ^q}(Y,q)_0\otimes \mmD^{t}_{\A,\mmZ^l}(Z,l)_0
\ar[dl]_{\bullet_{\A}\otimes id }
\ar[dr]^{id\otimes \bullet_{\A}} & \\
\mmD^{r+s}_{\A,\mmZ^{p+q}}(X\times Y,p+q)_0 \otimes
\mmD^{t}_{\A,\mmZ^l}(Z,l)_0 \ar[dr]_{\bullet_{\A}} & &
\mmD^{r}_{\A,\mmZ^p}(X,p)_0 \otimes
\mmD^{s+t}_{\A,\mmZ^{q+l}}(Y\times
Z,q+l)_0 \ar[dl]^{\bullet_{\A}} \\
& \mmD^{r+s+t}_{\A,\mmZ^{p+q+l}}(X\times Y \times Z,p+q+l)_0 } $$}
\end{prop}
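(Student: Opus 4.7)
The plan is to parallel the construction of the pairing $\bullet_{\A}$ itself: introduce a three-support version of the target complex, factor both iterated products through this common complex, and then use the Deligne associativity homotopy $h$ of Proposition~\ref{commutassoc} to build the required chain homotopy component by component.

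First I would construct a three-fold analogue of the complex $s(i^{p,q}_{X,Y}(n,m))^{*}_{0}$. Given subvariety data $\mmZ^{p}_{X,n}$, $\mmZ^{q}_{Y,m}$, $\mmZ^{l}_{Z,k}$ on $X\times Y\times Z\times \square^{n}\times \square^{m}\times \square^{k}$, define a complex $s(i^{p,q,l}_{X,Y,Z}(n,m,k))^{*}_{0}$ whose components are indexed by the subsets $S\subseteq\{1,2,3\}$, the $S$-component consisting of forms with logarithmic singularities along $\bigcup_{i\in S}\mmZ_{i}$, sitting in degree $r-|S|+1$ for $S\ne\emptyset$ and degree $r$ for $S=\emptyset$; the differentials are dictated by the iterated simple-complex convention. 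A cohomology-with-supports argument parallel to Proposition~\ref{difcubs}, combined with Lemma~\ref{delignesupports} and \eqref{simplekernel}, gives a quasi-isomorphism
\[
\mmD^{*}_{\A\times\A\times\A,\,\mmZ^{p,q,l}_{X,Y,Z}}(X\times Y\times Z,p+q+l)_{0}\xrightarrow{\sim} s(i^{p,q,l}_{X,Y,Z})^{*}_{0}.
\]
After composing with the map $\kappa$ that identifies $\square^{n}\times\square^{m}\times\square^{k}$ with $\square^{n+m+k}$, it suffices to construct the homotopy with target $s(i^{p,q,l}_{X,Y,Z})^{*}_{0}$.

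Iterating $\bullet_{\A}$ in the two possible orders produces two chain maps
\[
\mmD^{*}_{\A,\mmZ^{p}}(X,p)_{0}\otimes \mmD^{*}_{\A,\mmZ^{q}}(Y,q)_{0}\otimes \mmD^{*}_{\A,\mmZ^{l}}(Z,l)_{0}\rightrightarrows s(i^{p,q,l}_{X,Y,Z})^{*}_{0}
\]
which agree on the smooth component ($S=\emptyset$), but on each other $S$-component differ by exactly the Deligne associator
\[
(\alpha_{1}\bullet \alpha_{2})\bullet \alpha_{3}-\alpha_{1}\bullet(\alpha_{2}\bullet \alpha_{3}),
\]
where $\alpha_{i}=\omega_{i}$ if $i\notin S$ and $\alpha_{i}=g_{i}$ if $i\in S$, up to a fixed sign depending only on the cubical bidegrees. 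I then define the homotopy $H$ component-wise by
\[
H\bigl((\omega_{1},g_{1})\otimes(\omega_{2},g_{2})\otimes(\omega_{3},g_{3})\bigr)_{S}=\varepsilon_{S}\,h(\alpha_{1}\otimes \alpha_{2}\otimes \alpha_{3}),
\]
with explicit signs $\varepsilon_{S}\in\{\pm1\}$ chosen so that the various $(-1)^{ns}$ twists in $\bullet_{\A}$ cancel. Proposition~\ref{commutassoc} then yields, on each $S$-component, the relation $d_{\mmD}H+H d_{\mmD}=(\text{associator on }S)$; functoriality of $h$ under the face and degeneracy pullbacks handles the cubical differential $\delta$, and the two identities assemble into the required chain homotopy for the total differential $d_{s}$.

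The main obstacle I anticipate is sign bookkeeping: the $(-1)^{ns}$ twists in $\bullet_{\A}$, the simple-complex signs inherent to $s(i^{p,q}_{X,Y}(n,m))$ and its three-fold generalization, and the signs generated by iterating $\bullet_{\A}$ must all conspire so that $h$ applied component-wise produces a single genuine chain homotopy rather than merely a piecewise one. That this works reflects the symmetry and functoriality of $h$; the vanishing property \eqref{commutassoc2} moreover ensures that the homotopy is trivial on the cycle-class representatives, which is what will make the associativity compatible with $f_{1}$, $g_{1}$ and $i$ when $\bullet_{\A}$ is assembled into the product on the full diagram $\widehat{\mmZ}^{*}(X,\ast)_{0}$.
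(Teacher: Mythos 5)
Your proposal follows essentially the same route as the paper's proof: it introduces the three-support complex $s(i^{p,q,l}_{X,Y,Z})^{*}_{0}$, uses the quasi-isomorphism from $\mmD^{*}_{\A\times\A\times\A,\,\mmZ^{p,q,l}_{X,Y,Z}}(X\times Y\times Z,p+q+l)_{0}$ to reduce to that target, and builds the chain homotopy component-wise from the Deligne associator homotopy $h$ with sign bookkeeping, invoking functoriality of $h$ for the cubical differential and the vanishing \eqref{commutassoc2} in top degree. One small correction: the two iterated products do \emph{not} agree on the $S=\emptyset$ component (the Deligne product $\bullet$ fails to be strictly associative on smooth forms as well), so the homotopy must carry the term $h(\omega_{1}\otimes\omega_{2}\otimes\omega_{3})$ there too --- which your general formula in fact supplies, despite the misstatement.
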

\begin{proof}
In order to prove the proposition, we need to introduce some new
complexes, which are analogous to $s(i_{X,Y}^{p,q})^*$, but with
the three varieties $X,Y,Z$. Due to the similarity, we will
leave the details to the reader.

We write $\square^{n,m,d}_{X,Y,Z}=X\times Y\times Z\times
\square^{n+m+d}$. Let {\small $$A^*=
\mmD_{\log}^*(\square^{n,m,d}_{X,Y,Z}\setminus \mmZ^{p}_{X,n},k)
\oplus \mmD_{\log}^*(\square^{n,m,d}_{X,Y,Z}\setminus
\mmZ^{q}_{Y,m},k) \oplus
\mmD_{\log}^*(\square^{n,m,d}_{X,Y,Z}\setminus \mmZ^{l}_{Z,d},k),
$$} and {\small $$B^*=\mmD_{\log}^*(\square^{n,m,d}_{X,Y,Z}
\setminus \mmZ^{p,q}_{X,Y,n,m},k)\oplus
\mmD_{\log}^*(\square^{n,m,d}_{X,Y,Z} \setminus
\mmZ^{p,l}_{X,Z,n,d},k)\oplus
\mmD_{\log}^*(\square^{n,m,d}_{X,Y,Z} \setminus
\mmZ^{q,l}_{Y,Z,m,d},k), $$ } and consider the sequence of
morphisms of complexes {\small $$ A^* \xrightarrow{i} B^*
\xrightarrow{j} \mmD_{\log}^*(\square^{n,m,d} \setminus
\mmZ^{p,q,l}_{X,Y,Z},k).
$$} By analogy with the definition of $s(-j^{p,q}_{X,Y}(n,m))^*$, denote by
$s(-j^{p,q,l}_{X,Y,Z}(n,m,d))^*$ the simple complex associated to
this sequence of morphisms. Consider
the morphism
\begin{eqnarray*}
\mmD_{\log}^*(\square^{n,m,d}_{X,Y,Z},k) &
\xrightarrow{i_{X,Y,Z}^{p,q,l}(n,m,d)} & s(-j^{p,q,l}_{X,Y,Z}(n,m,d))^* \\
\omega & \mapsto & (\omega,\omega,\omega,0,0,0,0).
\end{eqnarray*}
Observe that for every $n,m,d$, the simple of this morphism is a
cochain complex. Moreover, considering the normalized complex
associated to the cubical structure at every component of
$s(i_{X,Y,Z}^{p,q,l}({\cdot},{\cdot},{\cdot}))^*$, we obtain the
cochain complex  $s(i^{p,q,l}_{X,Y,Z})_0^*$ (analogous to the
construction of $s(i^{p,q}_{X,Y})_0^*$ in Remark \ref{si}).

Let $\mmD_{\A\times \A\times \A,\mmZ^{p,q,l}_{X,Y,Z}}^*(X\times
Y\times Z,p+q+l)_0$ be the complex analogous to $\mmD_{\A\times
\A,\mmZ^{p,q}_{X,Y}}^*(X\times Y,p+q)_0$, but with the cartesian
product of $3$ varieties. It is the simple complex associated to
the analogous $4$-iterated complex (see Remark \ref{si}).

Observe that there is a quasi-isomorphism
$$\mmD_{\A\times \A\times \A,\mmZ^{p,q,l}_{X,Y,Z}}^*(X\times Y\times
Z,p+q+l)_0\xrightarrow{\sim} s(i^{p,q,l}_{X,Y,Z})_0^*. $$

We define a pairing
 $$
s(i^{p,q}_{X,Y}(n,m))_0^r\otimes \mmD_{\A,\mmZ^l}^{s,d}(Z,l)_0
\xrightarrow{\bullet} s(i^{p,q,l}_{X,Y,Z}(n,m,d))_0^{r+s} $$ by
\begin{eqnarray*}
 (a,(b,c),d)\bullet (a',b') &=& (-1)^{(n+m)s}(a\bullet
 a',(b\bullet a',c\bullet a',(-1)^r a\bullet b'),\\ && \quad( d\bullet a',(-1)^{r-1} b\bullet b', (-1)^{r-1}c\bullet b'),
 (-1)^{r-2} d\bullet b').
\end{eqnarray*}
Define analogously a pairing
 $$
 \mmD^{r,n}_{\A,\mmZ^p}(X,p)_0 \otimes
s(i^{q,l}_{Y,Z}(m,d))_0^s\xrightarrow{\bullet}
s(i^{p,q,l}_{X,Y,Z}(n,m,d))_0^{r+s}
$$
by
\begin{eqnarray*}
 (a,b)\bullet (a',(b',c'),d') &=& (-1)^{ns}(a\bullet
 a',(b\bullet a',(-1)^r a\bullet b',(-1)^r a\bullet c'),\\ && \quad((-1)^{r-1} b\bullet b',(-1)^{r-1} b\bullet c',
 a\bullet d'),
   b\bullet d').
\end{eqnarray*}
It is easy to check that these two morphisms are chain morphisms.

\begin{lema}
The diagram {\small
\begin{equation}\label{commut11} \xymatrix@C=-35pt{ &
\mmD^{r}_{\A,\mmZ^p}(X,p)_0\otimes
\mmD^{s}_{\A,\mmZ^q}(Y,q)_0\otimes \mmD^{t}_{\A,\mmZ^l}(Z,l)_0
\ar[dl]_{\bullet_{p,q}\otimes id }
\ar[dr]^{id\otimes \bullet_{\A}} & \\
 s(i^{p,q}_{X,Y})_0^{r+s} \otimes \mmD^{t}_{\A,\mmZ^l}(Z,l)_0
\ar[dr]_{\bullet} & & \mmD^{r}_{\A,\mmZ^p}(X,p)_0 \otimes
s(i^{q,l}_{Y,Z})_0^{s+t} \ar[dl]^{\bullet } \\
& s(i^{p,q,l}_{X,Y,Z})_0^{r+s+t} }
\end{equation}}
is commutative up to homotopy.
\end{lema}
\begin{proof}
Let
$(\omega_1,g_1)
\in  \tau\mmD^{r}_{\log,\mmZ^p}(X\times \square^n,p)_0,(\omega_2,g_2)
\in  \tau\mmD^{s}_{\log,\mmZ^q}(Y\times \square^m,q)_0,$ and $
(\omega_3,g_3) \in  \tau\mmD^{t}_{\log,\mmZ^l}(Z\times
\square^d,l)_0.$
Then, the composition of the morphisms on the left side of the
diagram is
\begin{eqnarray*}
(-1)^{(n+m)t+ns}((\omega_1\bullet \omega_2)\bullet
\omega_3,((g_1\bullet \omega_2)\bullet \omega_3,(-1)^r
(\omega_1\bullet g_2)\bullet\omega_3, \\
(-1)^{r+s}(\omega_1\bullet \omega_2)\bullet g_3 ),
((-1)^{r-1}(g_1\bullet g_2)\bullet \omega_3,(-1)^{r+s-1}
(g_1\bullet \omega_2)\bullet g_3, \\ (-1)^{s-1} (\omega_1\bullet
g_2) \bullet g_3), (-1)^{s-1}(g_1\bullet g_2)\bullet g_3).
\end{eqnarray*}
The composition of the morphisms on the right side of the diagram
is
\begin{eqnarray*}
(-1)^{(n+m)t+ns}(\omega_1\bullet (\omega_2\bullet
\omega_3),(g_1\bullet( \omega_2\bullet \omega_3),(-1)^r
\omega_1\bullet ( g_2\bullet\omega_3), \\
(-1)^{r+s}\omega_1\bullet (\omega_2\bullet g_3) ),
((-1)^{r-1}g_1\bullet (g_2\bullet \omega_3),(-1)^{r+s-1}
g_1\bullet (\omega_2\bullet g_3), \\ (-1)^{s-1} \omega_1\bullet
(g_2 \bullet g_3)), (-1)^{s-1} g_1\bullet (g_2\bullet g_3)).
\end{eqnarray*}
Then, the homotopy for the commutativity of the diagram is given
by
\begin{eqnarray*}
H_{n,m,d}& = & (-1)^{(n+m)t+ns}((h(\omega_1\otimes
\omega_2\otimes \omega_3),h(g_1\otimes \omega_2\otimes\omega_3),\\
&& (-1)^r h(\omega_1\otimes g_2\otimes\omega_3),
(-1)^{r+s}h(\omega_1\otimes \omega_2\otimes g_3) ), \\ &&
((-1)^{r-1} h(g_1\otimes g_2\otimes \omega_3),(-1)^{r+s-1}
h(g_1\otimes
\omega_2\otimes g_3), \\
&&(-1)^{s-1} h(\omega_1\otimes g_2 \otimes g_3)), (-1)^{s-1}
h(g_1\otimes g_2\otimes g_3)).
\end{eqnarray*}
Observe that it gives indeed a homotopy, since $H$ and $\delta$
commute.
\end{proof}

Finally, the claim of Proposition \ref{assoc2} follows from the
 commutative diagram (all squares and triangles, apart from the one
 marked with $\#$ are strictly commutative),
{\small $$
\xymatrix@C=-85pt@R=12pt{ & \mmD^{r}_{\A,\mmZ^p}(X,p)_0\otimes
\mmD^{s}_{\A,\mmZ^q}(Y,q)_0\otimes \mmD^{t}_{\A,\mmZ^l}(Z,l)_0
\ar[dl]_(0.65){\bullet_{p,q}\otimes id }
\ar[dr]^(0.65){id\otimes \bullet_{q,l}} & \\
 s(i^{p,q}_{X,Y})_0^{r+s} \otimes \mmD^{t}_{\A,\mmZ^l}(Z,l)_0
\ar[dr]_{\bullet} & \# & \mmD^{r}_{\A,\mmZ^p}(X,p)_0 \otimes
s(i^{q,l}_{Y,Z})_0^{s+t} \ar[dl]^{\bullet } \\
& s(i^{p,q,l}_{X,Y,Z})_0^{r+s+t}  \ar@{=}[dd] \\
\mmD_{\A\times \A,\mmZ^{p,q}}^{r+s}(X\times Y,p+q)_0 \otimes
\mmD_{\A,\mmZ^{l}}^{t}(Z,l)_0 \ar[uu]^{\sim} \ar[dd] & &  \mmD_{
\A,\mmZ^{p}}^{r}(X,p)_0 \otimes \mmD_{\A\times
\A,\mmZ^{q,t}}^{s+l}(Y\times Z,q+l)_0 \ar[uu]_{\sim} \ar[dd] \\ &
s(i^{p,q,l}_{X,Y,Z})_0^{r+s+t} \ar[dr] \ar[dl]
\\
s(i^{p+q,l}_{X\times Y,Z})_0^{r+s+t}  && s(i^{p,q+l}_{X,Y\times
Z})_0^{r+s+t}  \\ & \mmD_{\A\times \A \times
\A,\mmZ^{p,q,l}}^{r+s+t}(X\times Y \times Z,p+q+l)_0 \ar[ur]
\ar[ul] \ar[uu]^{\sim} \ar[dd]^{\kappa} \ar[dr]^{\kappa} \ar[dl]_{\kappa} \\
\mmD_{\A\times \A,\mmZ^{p+q,l}}^{r+s+t}(X\times Y \times
Z,p+q+l)_0 \ar[dr]^{\kappa } \ar[uu]^{\sim} & & \mmD_{\A\times
\A,\mmZ^{p,q+l}}^{r+s+t}(X\times Y \times Z,p+q+l)_0
 \ar[dl]^{\kappa } \ar[uu]_{\sim}
\\ & \mmD_{\A,\mmZ^{p+q+l}}^{r+s+t}(X\times Y \times Z,p+q+l)_0.
 & } $$ }
\end{proof}

\begin{obs}\label{assoc3}
Observe that the homotopy constructed in the proof of Proposition
\ref{assoc2} has no component in maximal degree, that is, in
$\mmD^{2p+2q+2l}_{\A,\mmZ^{p+q+l}}(X\times Y\times Z,p+q+l)_0$.
\end{obs}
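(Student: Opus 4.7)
The plan is to inspect the explicit formula for the homotopy $H_{n,m,d}$ constructed in the proof of Proposition \ref{assoc2} and show that its component in maximal differential form degree must vanish. Recall that $H_{n,m,d}$ is written as an $8$-tuple living in $s(i^{p,q,l}_{X,Y,Z})_0^{r+s+t}$, with components indexed by the vertices of the $3$-cube of support conditions. Only the first component, $(-1)^{(n+m)t+ns}\,h(\omega_1\otimes\omega_2\otimes\omega_3)$, contains no $g_i$ factors; the remaining seven all involve at least one $g_i$.

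Next I would track differential form degrees. An element of $\mmD^{r}_{\A,\mmZ^{p'}}$ in bidegree $(r,-n)$ is a pair $(\omega,g)$ where $\omega$ has form degree $r$ and $g$ has form degree $r-1$. Hence, inside the target complex $\mmD^{*}_{\A,\mmZ^{p+q+l}}(X\times Y\times Z,p+q+l)_0$, only the ``smooth'' part (the $\omega$-slot) can attain the maximal form degree $2(p+q+l)$. The seven components of $H_{n,m,d}$ involving some $g_i$ produce forms of degree strictly less than $r+s+t$, so they never contribute to the maximal degree piece after passing through the quasi-isomorphism into $\mmD^{*}_{\A\times\A\times\A,\mmZ^{p,q,l}}$ and applying the $\kappa$ maps.

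It remains to analyze the $h(\omega_1\otimes\omega_2\otimes\omega_3)$ piece. For this contribution to lie in maximal degree $2p+2q+2l$, we must have $r=2p$, $s=2q$, $t=2l$. This is the crucial step: since the Deligne complexes are canonically truncated at degrees $2p$, $2q$ and $2l$ respectively, any element in the top degree of such a truncation is automatically closed under $d_{\mmD}$. Hence $\omega_1,\omega_2,\omega_3$ all satisfy $d_{\mmD}\omega_i=0$, and one may invoke the vanishing property \eqref{commutassoc2} of Proposition \ref{commutassoc} to conclude $h(\omega_1\otimes\omega_2\otimes\omega_3)=0$.

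The only nontrivial input here, and hence the main point of the argument, is the appeal to \eqref{commutassoc2}: the observation rests entirely on the fact that Burgos's homotopy $h$ for the associativity of the Deligne product was chosen so that it vanishes on triples of closed forms of top weight-type. No further computation is needed, since the degree count of the preceding paragraphs rules out every other component of $H_{n,m,d}$ from reaching the maximal degree of $\mmD^{*}_{\A,\mmZ^{p+q+l}}(X\times Y\times Z,p+q+l)_0$.
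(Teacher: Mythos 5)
Your conclusion is correct, but the step you single out as ``the main point'' rests on a degree miscount. The homotopy $h$ of Proposition \ref{commutassoc} is a chain homotopy between two degree-zero maps, so it necessarily lowers cohomological degree by one: for the identity $d_{\mmD}h(\omega_1\otimes\omega_2\otimes\omega_3)+hd_{\mmD}(\omega_1\otimes\omega_2\otimes\omega_3)=(\omega_1\bullet\omega_2)\bullet\omega_3-\omega_1\bullet(\omega_2\bullet\omega_3)$ to be homogeneous, $h(\omega_1\otimes\omega_2\otimes\omega_3)$ must lie in $\mmD^{r+s+t-1}$, not $\mmD^{r+s+t}$ (the target displayed in Proposition \ref{commutassoc} has to be read this way; compare \cite{Burgos2}, Theorem 3.3, and note that in the proof of Proposition \ref{assoc2} the element $H_{n,m,d}$ is used as a homotopy, i.e.\ as an element of $s(i^{p,q,l}_{X,Y,Z})_0^{r+s+t-1}$). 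Consequently the case you identify as dangerous --- the first component reaching form degree $2p+2q+2l$ when $r=2p$, $s=2q$, $t=2l$ --- cannot occur: since $r\le 2p$, $s\le 2q$, $t\le 2l$, every component of $H_{n,m,d}$ has form degree at most $2p+2q+2l-1$, and the remark follows from this degree count alone. Your preliminary observations are fine (only the $\omega$-slot of the target can carry the maximal form degree, and the seven components involving some $g_i$ sit in strictly lower degree), but the appeal to \eqref{commutassoc2} is not what this remark rests on; that vanishing property is needed separately, in the corollary that follows, to handle the case $n=m=l=0$ where the $Z\mmD^{2p}$-components of the diagrams intervene.
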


\begin{cor} Let $X,Y,Z$ be arithmetic varieties.
\begin{enumerate}[(i)]
\item Under the canonical isomorphism $(X\times Y)\times Z \cong
X\times (Y\times Z)$, the external pairing
$$ \widehat{CH}^p(*,n) \otimes \widehat{CH}^q(*,m) \otimes \xrightarrow{\cup} \widehat{CH}^{p+q}(*\times
*,n+m),$$ is associative. \item The internal pairing
$$ \widehat{CH}^p(X,n) \otimes \widehat{CH}^q(X,m) \xrightarrow{\cup} \widehat{CH}^{p+q}(X,n+m),$$
is associative.
\end{enumerate}
\end{cor}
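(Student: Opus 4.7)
The plan is to follow the same pattern used for commutativity in Section \ref{commut12}: first, establish associativity (up to explicit homotopy) for the product on each of the complexes appearing in the diagram $\widehat{\mmZ}^p(X,*)_0$ in a way that is compatible with the structural morphisms $f_1$, $g_1$, $\rho$, $i$ of the diagram; second, appeal to Beilinson's formalism (Lemma \ref{star}(iv)) to transfer associativity to the simple of the diagram; third, deduce (ii) from (i) via pull-back along the diagonal.

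For the first step, the product on $Z^p(X,*)_0$ is strictly associative (Section \ref{chowprod}), and the product on $\mcH^p(X,*)_0$ is strictly associative via the isomorphism $f_1$ with cycles tensored with $\R$; likewise the wedge product on $Z\mmD^{2p}(X,p)\cong ZE^{p,p}_{\R}(X)(p)$ is strictly associative. For $\mmD^*_{\A}(X,p)_0$, associativity up to homotopy follows from Proposition \ref{commutassoc} applied componentwise on $X\times Y\times Z\times \square^n\times\square^m\times\square^d$, with signs coming from the rearrangement of cubical coordinates (exactly as in the commutative diagram \eqref{assoc1}). For $\mmD^*_{\A,\mmZ^p}(X,p)_0$, associativity up to homotopy is precisely Proposition \ref{assoc2}. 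The crucial fact recorded in Remark \ref{assoc3} is that the resulting homotopy vanishes in maximal degree $2(p+q+l)$; this guarantees compatibility with the inclusion $i$ from $Z\mmD^{2p}({\cdot},p)_*$, which is concentrated at that degree. One also checks that the homotopy $h$ commutes with $f_1$, $g_1$, $\rho$ and $i$ (the last three follow from the fact that $h$ is a natural operation on differential forms, and $f_1$ is strictly associative so needs no homotopy).

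These data assemble into a homotopy of morphisms of diagrams comparing the two bracketings of the triple external product
\begin{displaymath}
\widehat{\mmZ}^p(X,*)_0\otimes \widehat{\mmZ}^q(Y,*)_0\otimes \widehat{\mmZ}^r(Z,*)_0
\longrightarrow \widehat{\mmZ}^{p+q+r}(X\times Y\times Z,*)_0.
\end{displaymath}
Passing to the associated simple complexes, and using that the Beilinson star product $\star_0$ is strictly associative by Lemma \ref{star}(iv), we conclude that the external product $\cup$ induced at the level of the simple complexes $\widehat{Z}^p({\cdot},*)_0$ is associative up to homotopy. Taking homology gives (i). Assertion (ii) then follows from (i) by the functoriality of the pull-back established in Proposition \ref{chowpullbackth}: under the canonical identification $(X\times X)\times X\cong X\times (X\times X)$, the two compositions $X\xrightarrow{\Delta} X\times X\xrightarrow{\Delta\times\Id,\Id\times\Delta} X\times X\times X$ agree, so $\Delta^*$ of the associative external triple product gives the associativity of the internal product.

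The main technical obstacle will be the bookkeeping required to verify that all the homotopies constructed for the different complexes of the diagram fit together into a homotopy of morphisms of diagrams, i.e.\ that they intertwine correctly with $f_1$, $g_1$, $\rho$, and $i$. The delicate point is the interaction with $i$, which is where Remark \ref{assoc3} (the vanishing of the homotopy in maximal degree) becomes indispensable; without it the associativity would fail on the $ZD^{2p}$-factor used to cut out the subspace of closed forms of type $(p,p)$ in the higher arithmetic Chow complex.
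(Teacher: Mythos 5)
Your proposal is correct and follows essentially the same route as the paper: strict associativity on the cycle-theoretic complexes, homotopy associativity on the form-level complexes via Proposition \ref{commutassoc} (diagram \eqref{assoc1}) and Proposition \ref{assoc2}, the vanishing of the homotopy in maximal degree (Remark \ref{assoc3}, respectively equality \eqref{commutassoc2}) to handle the $Z\mmD^{2p}$-factor, and Beilinson's Lemma \ref{star}(iv) together with diagonal pull-back to conclude. Your identification of the compatibility with $i$ as the delicate point is exactly where the paper's argument also concentrates.
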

\begin{proof}
It follows from \eqref{assoc1} and Proposition \ref{assoc2},
together with Remark \ref{assoc3} and the compatibility of the
homotopies in \eqref{assoc1} and Proposition \ref{assoc2}. For
$n=m=l=0$, the associativity follows from equality
\eqref{commutassoc2}.
\end{proof}

Finally, we have proved the following theorem.

\begin{theo}\label{chowproductth}
Let $X$ be an arithmetic variety  over an arithmetic field $K$.
Then,
$$\widehat{CH}^*(X,*):=\bigoplus_{p\geq 0,n\geq 0} \widehat{CH}^p(X,n) $$
is a commutative and associative ring with unity (graded commutative with
respect to the degree $n$ and commutative with respect to the
degree $p$). Moreover, the
morphism
$\widehat{CH}^*(X,*)  \xrightarrow{\zeta}  CH^*(X,*),
$
of Proposition \ref{zeta}, is a ring morphism.
\end{theo}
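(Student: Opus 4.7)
The theorem is essentially a synthesis of the constructions and results of the preceding subsections, so the plan is to assemble these pieces and then verify the two points that have not been treated explicitly, namely the existence of a unit and the compatibility with $\zeta$.

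First I would note that the external product $\cup$ defined in \S\ref{defiprod} through the compatible pairings on each of the five complexes appearing in the diagram $\widehat{\mmZ}^p(X,*)_0$, combined with the pull-back along the diagonal $\Delta:X\to X\times X$ provided by Proposition \ref{chowpullbackth}, supplies the internal product
\begin{displaymath}
\widehat{CH}^p(X,n)\otimes \widehat{CH}^q(X,m)\xrightarrow{\cup}
\widehat{CH}^{p+q}(X,n+m).
\end{displaymath}
Graded commutativity in the index $n$ and ordinary commutativity in the index $p$ follow from the corollary to Proposition \ref{homotopy4} applied to $Y=X$ together with the fact that $\Delta = \sigma_{X,X}\circ \Delta$. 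Associativity is given by Proposition \ref{assoc2} together with diagram \eqref{assoc1} and the functoriality of the homotopy $h$ from Proposition \ref{commutassoc}, as formalized in the corollary at the end of \S\ref{assoc4}; here the key point already noted in Remark \ref{assoc3} is that the homotopy has no component in maximal Deligne degree, so it descends through the simple complex of the diagram $\widehat{\mmZ}^p(X,*)_0$ without obstruction.

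Next I would produce the unit. The class $[X]\in CH^0(X,0)$ lifts canonically to the element $\mathbf{1}=[(X,(1,0),0,0,0)]\in \widehat{CH}^0(X,0)$, where the $1\in\mmD^0_{\A,\mmZ^0}(X,0)_0$ is the constant function. Since $f_1(X)=g_1(1,0)=[X]$ in $\mcH^0(X,0)_0$, this tuple is a cycle. One checks on representatives that for any $\beta\in\Z$ the pairing $\mathbf{1}\star_{\beta}(-)$ followed by $\cup$ and $\Delta^*$ reduces to the identity, using that $1\bullet_{\A}(-) = (-)$ in each of the complexes appearing and that the diagonal pull-back of $X\times Z$ is $Z$.

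For the final assertion, I would observe that $\zeta$ is induced by the projection of the simple complex of $\widehat{\mmZ}^p(X,*)_0$ onto its $Z^p(X,*)_0$-component, and the big diagram of \S\ref{diagramgros} defining $\cup$ commutes with this projection because the external product on $Z^*(X,*)_0$ sits in the top row. Hence $\zeta$ intertwines the external, and therefore (after $\Delta^*$) the internal, products. Compatibility with units is immediate since $\zeta(\mathbf{1})=[X]$, the unit of $CH^*(X,*)$.

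The only genuinely delicate point is the one already resolved in \S\ref{commut12} and \S\ref{assoc4}: the external products in the complexes with differential forms are associative and commutative only up to explicit homotopies, and these homotopies have to be lifted to the simple complex of the diagram $\widehat{\mmZ}^p(X,*)_0$. The obstruction is the term of maximal Deligne degree, and this is precisely what is handled by Remark \ref{assoc3} and by the vanishing of $\alpha_1$ outside degree zero used in the proof of Proposition \ref{homotopy4}. Once these are in place the rest of the verification is formal.
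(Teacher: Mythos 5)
Your proposal matches the paper's own treatment: Theorem \ref{chowproductth} is stated there as a summary of \S\ref{diagramgros}, \S\ref{commut12} and \S\ref{assoc4} with no separate argument, and your assembly of the product via $\Delta^*$, the commutativity corollary to Proposition \ref{homotopy4}, and associativity via Proposition \ref{assoc2} and Remark \ref{assoc3} is exactly what the paper relies on; your observations on the unit and on $\zeta$ go slightly beyond what the paper writes down and are sound in substance. One small correction: with the differential $d(Z,\alpha_0,\alpha_1,\alpha_2,\alpha_3)=(\delta Z,\,d_s\alpha_0,\,0,\,f_1(Z)-g_1(\alpha_0)-\delta\alpha_2,\,\rho(\alpha_0)-\alpha_1-d_s\alpha_3)$ as given in the paper, the tuple $(X,(1,0),0,0,0)$ has last component $\rho(1,0)=1\neq 0$, so the unit must carry the constant function in the $Z\mmD^{0}(X,0)_0$ slot, i.e. $\mathbf{1}=[(X,(1,0),1,0,0)]$ (the same adjustment is implicitly needed for the map $\Phi$ of Theorem \ref{chowagreement}).
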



\providecommand{\bysame}{\leavevmode\hbox to3em{\hrulefill}\thinspace}
\providecommand{\MR}{\relax\ifhmode\unskip\space\fi MR }
\providecommand{\MRhref}[2]{%
  \href{http://www.ams.org/mathscinet-getitem?mr=#1}{#2}
}
\providecommand{\href}[2]{#2}

\end{document}